\newcommand{\calA}{\mathcal A}
\newcommand{\calW}{\mathcal W}
\newcommand{\calF}{\mathcal F}
\newcommand{\PSL}{\mathsf{PSL}}
\renewcommand{\r}{\rho}
\renewcommand{\P}{{\rm P}}
\newcommand{\ov}{\overline}
\newcommand{\s}{\sigma}
\newcommand{\di}{\partial_{\infty}}
\newcommand{\Gr}{{\rm Gr}}
\newcommand{\g}{\gamma}
\renewcommand{\s}{\sigma}
\newcommand{\G}{\Gamma}
\newcommand{\Id}{{\rm Id}}
\newcommand{\R}{\mathbb R}
\newcommand{\Z}{\mathbb Z}
\renewcommand{\P}{\mathbb P}
\newcommand{\C}{\mathbb C}
\newcommand{\N}{\mathbb N}
\newcommand{\SL}{\mathsf{SL}}
\newcommand{\PGL}{\mathsf{PGL}}
\newcommand{\PO}{\mathsf{PO}}
\newcommand{\GL}{\mathsf{GL}}
\renewcommand{\>}{\rangle}
\newcommand{\bord}{\partial}
\newcommand{\Hom}{{\rm Hom}}
\newcommand{\un}{\underline}
\newcommand{\dual}{\flat}
\newcommand{\sZ}{\mathsf{Z}}
\newcommand{\sX}{\mathsf{X}}
\newcommand{\sfp}{\mathsf{p}}
\newcommand{\dg}{\partial_{\infty}\Gamma}
\newcommand{\cro}{{\rm cr}}
\newcommand{\gcr}{{\rm cr}}
\newcommand{\pcr}{{\rm pcr}}
\newcommand{\tv}{\hspace{1mm}\pitchfork\hspace{1mm}}
\newcommand{\ntv}{\hspace{1mm}\cancel{\pitchfork}\hspace{1mm}}
\newcommand{\bx}{\mathrm{x} }
\newcommand{\bbx}{s}
\newcommand{\by}{\mathrm{y} }
\newcommand{\bby}{t}
\newcommand{\spa}{{\rm span}}
   \newcommand\quotient[2]{
	\mathchoice
	{
		\text{\raise.5ex\hbox{$#1$}\big/\lower.5ex\hbox{$#2$}}%
	}
	{
		\text{\raise.25ex\hbox{$#1$}\big/\lower.25ex\hbox{$#2$}}%
	}
	{
		#1\,/\,#2
	}
	{
		#1\,/\,#2
	}
}
\newcommand{\bpm}{\begin{pmatrix}}
\newcommand{\epm}{\end{pmatrix}}
\theoremstyle{plain}
\newtheorem{thm}{Theorem}[section]
\newtheorem{lem}[thm]{Lemma}
\newtheorem{prop}[thm]{Proposition}
\newtheorem{cor}[thm]{Corollary}
\newtheorem*{teo*}{Theorem}
\newtheorem{question}[thm]{Question}
\newtheorem*{notation}{Notation}
\newtheorem{thmA}{Theorem}
\theoremstyle{definition}
\newtheorem{example}[thm]{Example}
\newtheorem{defn}[thm]{Definition}
\newtheorem{remark}[thm]{Remark}
\newcommand{\thismonth}{\ifcase\month 
  \or January\or February\or March\or April\or May\or June%
  \or July\or August\or September\or October\or November%
  \or December\fi}
\title[$k$--positive surface group representations]{Degenerations of $k$--positive surface group representations}
\author{Jonas Beyrer and Beatrice Pozzetti}
\date{\today}
\begin{document}
\thanks{J.B. acknowledges funding by the Deutsche Forschungsgemeinschaft (DFG, German Research Foundation) grant 338644254 (SPP2026), and the Schweizerischer Nationalfonds (SNF, Swiss Research Foundation), P2ZHP2 184022 (Early Postdoc.Mobility). B.P acknowledges funding by the DFG, grant 427903332 (Emmy Noether), and is partially supported by the DFG under Germany's Excellence Strategy EXC-2181/1-390900948. Both authors acknowledge funding by the DFG grant 281869850 (RTG 2229). J.B. thanks I.H.E.S. for their hospitality.}
\begin{abstract}
We introduce \emph{$k$--positive representations}, a large class of $\{1,\ldots,k\}$--Anosov surface group representations into $\PGL(E)$ that share many features with Hitchin representations, 
and we study their degenerations: unless they are Hitchin, they can be deformed to non-discrete representations, but any limit is at least $(k-3)$--positive and irreducible limits are  $(k-1)$--positive.  
A major ingredient, of independent interest, is a general limit theorem for positively ratioed representations.

\end{abstract}

\maketitle
\tableofcontents

\section{Introduction}
{
For a vector space $E$ of dimension $d$, the Hitchin component is an exceptional connected component of the character variety $\Hom(\G,\PGL(E))/\!\!/\PGL(E)$ of the fundamental group $\Gamma$ of a closed hyperbolic surface in the projective linear group of $E$, which was  discovered by Hitchin with the aid of Higgs bundles \cite{Hitchin}. Thanks to the work of Labourie \cite{Labourie-IM}, we know that this component forms an  \emph{higher rank Teichm\"uller space}: it only consists of equivalence classes of injective homomorphism with discrete image.  This was independently proven by Fock-Goncharov  who also found a beautiful link between representations in these components and Lustzig's notion of positivity for $n$-tuples in the (full) flag manifold $\calF(E)$ \cite{FG}.
Many more properties of holonomies of hyperbolizations also hold for Hitchin representations: they satisfy a collar Lemma \cite{LZ}, they have $C^1$ limit sets \cite{PSW1,ZZ}, they admit a Liouville current that can be used to compute their associated length functions via intersection \cite{MZ}. It was later discovered that all these interesting geometric properties are not peculiar to higher rank Teichm\"uller spaces, but hold for more general classes of representations, for example representations satisfying transversality properties $C_k$ and $H_k$ \cite{Beyrer-Pozzetti}, which include small deformations of many Fuchsian loci.

In this paper we  investigate representations that have features in common with Hitchin representations, but are, in general, not contained in a higher rank Teichm\"uller space. The three major contributions of the paper are:  first we single out a class of representations that is rigid enough to have most of the good properties of Hitchin representation (all the above mentioned, together with additional hyperconvexity and positivity properties), but is rich enough to contain a wealth of examples: \emph{$k$-positive representations}. Second, we study new phenomena with respect to higher rank Teichm\"uller spaces: \emph{degenerations}. For $k$--positive representations degenerations can be described precisely. Third we study  positively ratioed representations, a class introduced in \cite{MZ} that includes, but is not limited to, $k$-positive representations, and we prove that their {{proximal}} limits admit continuous equivariant boundary maps  - the latter will also be an important tool in the companion paper \cite{BP3}, which studies some aspects of the higher rank Teichm\"uller theory of $\PO(p,q)$.

A $k$-positive representation is, by definition, $\{1,\ldots, k\}$--Anosov (see Section \ref{s.Anosov} for the definition and references), and thus it admits, for every $l\leq k$ and $l\geq d-k$, an injective equivariant \emph{boundary map} $\xi^l:\bord\G\to\Gr_l(E)$ parametrizing the associated limit set. For the ease of notation we use the shortcut $x^l:=\xi^l(x)\in\Gr_l(E)$. The definition of $k$--positivity is based on the \emph{truncations} and \emph{projections} of the boundary map: these are, for  $x\in \dg$, the  curves in the full flag manifold $\calF(X)$ of $X:=\quotient{E}{x^{d-k}}$ and $\calF(x^k)$ defined by
\begin{align*}
\pi_{x}: &\dg\backslash\{x\}\to \calF(x^k)\\
\pi_{x}(y):&= (y^{d-k+1}\cap x^k,\ldots,y^{d-1}\cap x^k).
\end{align*}
and
\begin{align*}
\pi_{[x]}:&\dg\backslash\{x\}\to \calF(X)\\
\pi_{[x]}(y):&= ([y^1+x^{d-k}]_X,\ldots,[y^{k-1}+x^{d-k}]_X),
\end{align*}
here $[\cdot]_X$ is the natural projection to $X$.

\begin{defn}
A representation $\r:\G\to \PGL(E)$ is  \emph{$k$--positive} if it is $\{1,\ldots, k\}$--Anosov and for any $x\in \dg$ the projection $\pi_{[x]}$ and the truncation $\pi_x$
are positive.
\end{defn}

In this definition we consider  positivity with respect to the positive structure on full flag manifolds alluded to before  (see Section \ref{s.5.1} and \cite{FG}). 
A representation $\rho$ is $1$--positive if and only if it is 1--Anosov, and it is 2--positive if and only if $\r$ and $\r^*$ are $(1,1,2)$--hyperconvex as in \cite{PSW1}. Since $k$--positivity is an open property in $\Hom(\G,\PGL(E))$ (Corollary  \ref{cor.k-pos open}), we get many examples as small deformations of Fuchsian loci:
Let $\un d=(d_1,\ldots,d_j)$ be an ordered list of positive  integers, 
  $\tau_{d_i}:\SL(\R^2)\to \SL(\R^{d_i})$  the $d_i-$dimensional irreducible representation of $\SL(\R^2)$,   $\tau_{\un d}=\tau_{d_1}\oplus \ldots\oplus \tau_{d_j}$, and  $\r_{hyp}:\G\to \SL(\R^2)$  a discrete and faithful representation. Then the composition $\tau_{\un d}\circ \r_{hyp}$ is $k$-positive for any $k$ with $2(k-1)<  (d_1-d_2)$ (cfr. Example \ref{e.Fuchpos}).

We prove that many features of Hitchin representations hold for $k$-positive representations:

\begin{itemize}
\item The boundary map $\xi^l$ has $C^1$ image  for  $l<k$ (Proposition \ref{prop.k-pos and Hk+Ck} and \cite[Proposition 8.11]{PSW1}).
\item The  $l$--collar lemma  holds for $l<k-1$ (Definition \ref{d.collar}, Corollary \ref{c.kpospos} and \cite[Theorem 1.3]{BP})
\item The representation $\r$ is $l$--positively ratioed for $l<k$ (Definition \ref{d.posrat}, Corollary \ref{c.kpospos}). Thus the $l$-th Finsler length function can be computed as intersection with a geodesic current  \cite{MZ}.
\item {{The boundary maps are hyperconvex:}} The sum
$x_1^{n_1}+\ldots+ x_l^{n_l}$
is direct for  $n_1+\ldots+ n_l=d$, $n_l\geq d-k$ and pairwise distinct $x_i\in\dg$  (Proposition \ref{prop.hyperconvex properties})

\end{itemize}
The only connected component of the $\PGL(E)$ character variety  consisting just of injective representations with discrete image is the Hitchin component, and all representations therein are irreducible. As a result it is possible to deform representations as in Example \ref{e.Fuchpos} to representations for which all Anosovness is lost. The first main theorem of the paper is that the Anosov and the $k$--positivity properties are lost in a very controlled way:

\begin{thmA}\label{thm.intro-degenerations}
Let $\{\r_n:\G\to \PGL(E)\}_{n\in \N}$ be $k$--positive and converge to $\r_0$ in $\Hom(\G, \PGL(E))$. Then the following hold:
\begin{enumerate}
\item $\r_0$ is $(k-3)$--positive.
\item If $\r_0$ is irreducible or $(k-1)$--Anosov, then $\r_0$ is $(k-1)$--positive.
\item If $\r_0$ is $k$--Anosov, then $\r_0$ is $k$--positive.
\end{enumerate}
\end{thmA}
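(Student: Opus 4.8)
The plan is to separate the two mechanisms at play and treat them independently. On the one hand, $k$--positivity is an \emph{open} condition (Corollary \ref{cor.k-pos open}), so it can only fail in the limit through a loss of transversality: positivity of a cyclically ordered tuple of flags is open, and the positive tuples form a union of connected components of the locus of suitably transverse tuples, so a limit of positive tuples is positive as soon as the limiting tuple still lies in that transverse locus. On the other hand, $\r_0$ need not a priori carry any boundary maps, and producing them is exactly what our general limit theorem for positively ratioed representations does: since each $\r_n$ is $l$--positively ratioed for all $l<k$ (Corollary \ref{c.kpospos}), that theorem provides, after passing to a subsequence, continuous $\r_0$--equivariant limit curves arising as uniform limits of the $\xi_n^l$, which for a word--hyperbolic group upgrade to an Anosov structure once they are transverse. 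The whole proof is then a bookkeeping of how much of the transversality of the curves $\xi_n^\bullet$ survives the limit.

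Concretely, I would first record the continuity statement: after passing to a subsequence, $\xi_n^l\to\xi_0^l$ uniformly on $\dg$ for $l$ in a range that always contains $\{1,\dots,k-3\}\cup\{d-k+3,\dots,d-1\}$, and contains $\{1,\dots,m\}\cup\{d-m,\dots,d-1\}$ as soon as $\r_0$ is $\{1,\dots,m\}$--Anosov (using stability of Anosovness for the latter). Since the truncations $\pi_x$ and projections $\pi_{[x]}$ are built from intersections and quotients that are continuous on the transverse locus, and the transversality needed to define them (of the type $x^m\oplus y^{d-m}=E$) holds throughout that range, this gives $\pi_x^{\r_n}\to\pi_x^{\r_0}$ and $\pi_{[x]}^{\r_n}\to\pi_{[x]}^{\r_0}$ locally uniformly on pairs of distinct points, into $\calF(x^m)$ and $\calF(E/x^{d-m})$, with $m=k-3$ for (1), $m=k-1$ for (2), $m=k$ for (3). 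Each $\pi_x^{\r_n}$ being positive, every positively ordered tuple lands, under $\pi_x^{\r_0}$, in the closure of the positive locus; by the first paragraph it then remains to check that this limiting tuple is transverse in $\calF(x^m)$. That is a finite list of direct--sum conditions among the subspaces $y_i^\bullet\cap x^m$ and $y_i^\bullet+x^{d-m}$, which reduce, using the hyperconvexity of the $\r_n$ (Proposition \ref{prop.hyperconvex properties}) and passage to the dual, to relations $\xi^{n_1}(\cdot)\oplus\dots\oplus\xi^{n_j}(\cdot)=E$ with $n_1+\dots+n_j=d$ and smallest index $\geq d-m$; such a direct sum persists in the limit precisely when the rank constraint it imposes is forced by the surviving Anosov transversality, which is the case exactly in the claimed ranges. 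The identical analysis for $\pi_{[x]}^{\r_0}$, together with the observation that $\r_0$ is $\{1,\dots,m\}$--Anosov (immediate for (2)--(3)), yields $m$--positivity of $\r_0$.

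For the individual items: (1) is unconditional, and $m=k-3$ is simply the largest range in which the limit theorem produces a transverse limiting curve with no hypothesis on $\r_0$, the three lost levels accounting roughly for passage to the proximal limit, for the triple--point hyperconvexity needed for genericity of consecutive triples of flags, and for the pairwise hyperconvexity needed for pairwise transversality. For (2), either $\r_0$ is already $\{1,\dots,k-1\}$--Anosov, which supplies exactly the pairwise transversality that was missing, or $\r_0$ is irreducible, and then one must first promote the limiting curves to a genuine $\{1,\dots,k-1\}$--Anosov structure, using irreducibility to exclude the degenerate limit in which $\xi_0^{k-1}$ or $\xi_0^{k-2}$ is confined, over a subinterval of $\dg$, to a proper subspace. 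For (3), $k$--Anosovness supplies the final level, and I expect this to be carried out by bootstrapping: first apply (2) to know $\r_0$ is $(k-1)$--positive, then upgrade one step, since a $(k-1)$--positive, $k$--Anosov representation is only one direct--sum condition short of being $k$--positive.

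The main obstacle is precisely this weak--positivity--to--positivity upgrade together with the sharp count. A sequential (as opposed to continuous one--parameter) limit of positive configurations is only ``non--negative'', and whether it is genuinely positive hinges entirely on the transversality of the limiting boundary curve; controlling that means tracking which of the hyperconvexity relations $\sum\xi^{n_i}=E$ survive the limit, and in particular excluding the pathology in which the limiting curve collapses into a proper subspace over an open subset of $\dg$. Establishing the sharp numerology — that $k-3$ levels always survive, that the $(k-1)$st survives under $(k-1)$--Anosovness or irreducibility, and that the $k$th survives under $k$--Anosovness — is the technical heart, and the irreducible case of (2) is the most delicate point, since there the Anosov structure of the limit must be reconstructed rather than assumed.
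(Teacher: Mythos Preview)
Your proposal has the overall logic reversed relative to the paper, and this reversal hides a genuine gap. You want to first pass the boundary curves $\xi_n^l$ to uniform limits $\xi_0^l$, then verify that the hyperconvexity/transversality relations persist, and finally read off positivity (with Anosovness as a byproduct). The paper does the opposite: it first proves that $\r_0$ is $\{1,\ldots,m\}$--Anosov for the appropriate $m$, and only then invokes Corollary \ref{cor.par-pos closed among Anosov} ($k$--positivity is closed among $\{1,\ldots,k\}$--Anosov representations) to conclude $m$--positivity. Once Anosovness is known, positivity is essentially automatic; the entire difficulty is in establishing Anosovness of the limit.

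The specific gap is your assumption that Theorem \ref{thmINTRO.limits of pos ratio general+proximal} produces uniform (or even pointwise) limits $\xi_n^l\to\xi_0^l$. It does not: it constructs \emph{new} continuous equivariant transverse maps for $\r_0$ as left/right continuous extensions of orbit maps, and only guarantees convergence $\xi_n^l(\g_+)\to\xi_0^l(\g_+)$ at fixed points of proximal elements (see the remark after the proof of Theorem \ref{thmINTRO.limits of pos ratio general+proximal}). So your step ``$\pi_x^{\r_n}\to\pi_x^{\r_0}$ locally uniformly'' is not available, and the plan of checking that limiting tuples stay in the transverse locus cannot be executed as stated. Moreover, having continuous transverse boundary maps does \emph{not} upgrade to Anosovness for a general hyperbolic group; the paper needs Proposition \ref{lem.dynamics preserving for strongly irred}, which requires both strong irreducibility and $(j-1)$--Anosovness to conclude $j$--Anosovness.

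Two ingredients you are missing entirely are what actually drive the sharp numerology. First, proximality of $\r_0$ in the general case (1) comes from the $j$--collar lemma for $j\leq k-2$ (Corollary \ref{c.kposcollar} and Lemma \ref{lem.almost Anosov limitcollar}): this is why $k-2$ appears as the almost-Anosov range. Second, to promote almost-Anosov to Anosov one must show the limit is \emph{$(k-2)$--coherent} (Proposition \ref{lem.limit boundary map in the same factor}): all the limit boundary maps land in a single irreducible factor $E_1$. This is the technical heart (Section \ref{s.tecnical}), proved via the strict convexity of a carefully chosen $\R\P^2$--projection (Corollary \ref{cor.strict convex for mixed quotient}). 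Only with coherence can one restrict to the irreducible factor $\eta_1$ and invoke Proposition \ref{lem.dynamics preserving for strongly irred}, which yields $(k-3)$--Anosov from $(k-2)$--almost Anosov; the further drop from $k-2$ to $k-3$ is exactly the hypothesis ``$(j-1)$--Anosov'' in that proposition. Your heuristic for the three lost levels does not match this mechanism.
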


In particular $k$--positive representations form connected components of the set of $k$--Anosov representations, and their irreducible limits can at worst lose Anosovness for the $k$-th root. This is not unexpected: while the first $k-1$ boundary maps of a $k$-positive representation have $C^1$ image, with derivative encoded by the two adjacent boundary maps, we have much less control on the last boundary map; as a result the $k$-th limit set might have fractal behaviour and much higher Hausdorff dimension. 

There are very few examples  of classes of Anosov representations whose limits are at least partially understood: notably this is the case for quasi-Fuchsian space, where active research in the past decades uncovered striking geometric phenomena, including doubly degenerate manifolds. In general, however, the mechanisms behind the loss of Anosovness is not known. Theorem \ref{thm.intro-degenerations} guarantees that degenerations of $k$--positive representations are relatively tame: if $k$ is bigger than 3  such degenerations are necessarily quasi-isometric embeddings, and we can also guarantee that they are Anosov for the first $k-3$ roots. Anna Wienhard suggested that, for this class, the Anosov properties that we can guarantee for the degenerations could be useful to better understand their geometry. For example, one could try to understand how the  $k$--Anosov property is lost by studying convergence of the associated domains of discontinuity \cite{Guichard-Wienhard-IM, KLP2}.

\medskip
As  we already remarked, $k$-positive representations are $l$-positvely ratioed for $l\leq k-1$. In the first step of the proof of Theorem \ref{thm.intro-degenerations}, we study limits of positively ratioed representations, and prove the following result, of independent interest:
\begin{thmA}\label{thmINTRO.limits of pos ratio general+proximal}
Let $\{\r_n:\G\to \PGL(E)\}_{n\in \N}$ be a sequence of $k$--positively ratioed representations converging to a $k$--proximal reductive representation $\r_0:\G\to \PGL(E)$. Then $\r_0$ admits a pair of equivariant transverse continuous $k$--positively ratioed boundary maps 
$$\xi^{k}_{\r_0}:\dg\to \Gr_k(E),\quad \xi^{d-k}_{\r_0}:\dg\to \Gr_{d-k}(E),$$ 
that are dynamics preserving at fixed points of elements with proximal image.

\end{thmA}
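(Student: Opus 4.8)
\begin{sketch}
The plan is to build the boundary maps of $\r_0$ as limits of those of the $\r_n$, to use $k$--proximality to pin down their values at fixed points of proximal elements, and to let the rigidity coming from positivity upgrade this to continuity, transversality and the positively ratioed property.

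Since a $k$--positively ratioed representation is $\{k,d-k\}$--Anosov, each $\r_n$ carries continuous, equivariant, transverse boundary maps $\xi^{k}_{n}\colon\dg\to\Gr_{k}(E)$, $\xi^{d-k}_{n}\colon\dg\to\Gr_{d-k}(E)$, and the $k$--th crossratio $\mathbf{b}^{k}_{\r_n}$ built from them is a positive crossratio. Positivity of $\mathbf{b}^{k}_{\r_n}$ amounts to a monotonicity of $\xi^{k}_{n},\xi^{d-k}_{n}$ along the circle $\dg$, sending cyclically ordered points to cyclically ordered, mutually transverse configurations of subspaces. Fix a countable $\G$--invariant dense set $D\subseteq\dg$ containing all fixed points of infinite order elements. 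By compactness of $\Gr_{k}(E)$ and $\Gr_{d-k}(E)$ and a diagonal argument, after passing to a subsequence $\xi^{k}_{n}|_{D}\to\phi^{k}$ and $\xi^{d-k}_{n}|_{D}\to\phi^{d-k}$ pointwise; these limits are $\r_0$--equivariant on $D$, and weakly monotone, since the closure of the set of positive configurations is stable under limits. (Equivalently, one can take Hausdorff limits of the graphs of $\xi^{k}_{n}$ in $\dg\times\Gr_{k}(E)$.)

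Next I would invoke $k$--proximality. If $\g\in\G$ has $\r_0(\g)$ proximal on $\Gr_{k}(E)$, then its relevant top eigenvalue stays simple under small perturbations, so $\r_n(\g)$ is proximal on $\Gr_{k}(E)$ for $n\gg 0$ with attracting fixed point converging to that of $\r_0(\g)$; as $\xi^{k}_{n}(\g^{+})$ is exactly this attracting fixed point, $\phi^{k}(\g^{+})$ equals the attracting fixed point of $\r_0(\g)$ in $\Gr_{k}(E)$, and likewise $\phi^{d-k}(\g^{-})$ is the repelling one. This yields the dynamics preserving behaviour and transversality at such pairs, and the set of such $\g^{+}$ is dense in $\dg$: conjugating one fixed proximal element $\eta$ by powers $\g_0^{-N}$ produces proximal elements $\g_0^{-N}\eta\g_0^{N}$ whose attracting fixed point tends to $\g_0^{-}$, which can be prescribed. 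Finally, the North--South dynamics of $\r_0(\g)$ on $\Gr_{k}(E)$ forces $\phi^{k}$ to be continuous at each such $\g^{+}$: its one--sided limits there are computed along $\G$--orbits $\g^{m}y_0$, hence equal $\lim_{m}\r_0(\g)^{m}\phi^{k}(y_0)$, which is the attracting fixed point as soon as $\phi^{k}(y_0)$ avoids the repelling locus, a condition one arranges by choosing $y_0$ on each side.

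The remaining, and hardest, point is to propagate continuity from this dense set of proximal periodic points to all of $\dg$, to deduce transversality everywhere, and to recover the positively ratioed property. Monotonicity and equivariance alone do not preclude a countable $\G$--invariant jump set, so positivity of the limiting configuration must be used essentially: a jump of $\phi^{k}$ at a point $x$, combined with the identification of $\phi^{k}$ on the dense set of proximal periodic points and with the fact that the strict positivity of $\mathbf{b}^{k}_{\r_n}$ near proximal axes persists as a strict inequality in the limit, should be shown incompatible with a degenerate multivalued limit. Granting this, let $\xi^{k}_{\r_0},\xi^{d-k}_{\r_0}$ denote the resulting continuous equivariant maps. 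Their non--transverse locus $\{(x,y)\in\dg^{(2)}:\xi^{k}_{\r_0}(x)\cap\xi^{d-k}_{\r_0}(y)\neq0\}$ is closed, $\G$--invariant and disjoint from the dense set of proximal periodic pairs, hence empty (using reductivity and $k$--proximality to exclude a proper invariant subvariety); so $\xi^{k}_{\r_0}$ is injective, $\mathbf{b}^{k}_{\r_0}$ is defined and equals the pointwise limit of the $\mathbf{b}^{k}_{\r_n}$, hence is $\geq1$ on ordered quadruples and $>1$ at configurations coming from proximal axes, which by the characterization of positive crossratios means $\mathbf{b}^{k}_{\r_0}$ is a positive crossratio, i.e.\ $\r_0$ is $k$--positively ratioed. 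The crux, to repeat, is ruling out jumps of the limiting boundary map, where it is the positivity of the crossratios, and not merely their monotonicity, that does the work.
\end{sketch}
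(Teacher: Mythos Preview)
Your overall architecture matches the paper's: take subsequential limits of the boundary maps, anchor them at proximal fixed points via continuity of simple eigenspaces, and use positivity of the cross ratio to upgrade to continuous transverse maps. You also correctly locate the crux. But two steps are genuine gaps, not details.

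\textbf{Continuity.} You write that a jump ``should be shown incompatible'' with positivity, but give no mechanism; this is the heart of the theorem. Your North--South argument does not even establish continuity at a proximal fixed point $\g^{+}$: it shows that along the \emph{specific} sequence $\g^{m}y_{0}$ the values converge to the attracting fixed point, but another sequence in $D$ converging to $\g^{+}$ from the same side could a priori have a different limit, since you have no monotonicity in $\Gr_{k}(E)$ to appeal to. The paper supplies two concrete ingredients. First, a telescoping product: for nested $x_{n},x_{n}'\to x_{0}$ from one side, the cocycle identity writes a fixed finite cross ratio as an infinite product of factors $\cro_{k}(\xi^{k}(x_{n}),Y_{1},Y_{2},\xi^{k}(x_{n}'))\ge 1$, forcing these factors to tend to $1$. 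Combined with a separation condition (any two distinct candidate limits $l\neq l'$ can be distinguished by $\cro_{k}(l,\xi^{d-k}(z),\xi^{d-k}(z'),l')\neq 1$ for some $z,z'$ in any open set; the paper calls this (Tr-2) and verifies it from reductivity plus proximality), this gives existence and uniqueness of one-sided limits. Second, to show left and right limits agree at an arbitrary $w\in\dg$, the paper uses that $w$ is a \emph{conical} limit point: there exist $\g_{n}$ with $\g_{n}w\to w'$ and $\g_{n}z\to z'$ for all $z\neq w$; pushing the putative jump by $\g_{n}$ and using $\PGL(E)$-invariance and continuity of $\cro_{k}$ forces $\cro_{k}(l_{-},\cdot,\cdot,l_{+})=1$, contradicting (Tr-2).

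\textbf{Transversality.} Your argument---the non-transverse locus is closed, $\G$-invariant, and misses a dense set of proximal pairs, hence empty---does not work as stated: closed $\G$-invariant subsets of $\dg^{(2)}$ with dense complement need not be empty, since the $\G$-action on $\dg^{(2)}$ is not minimal, and ``exclude a proper invariant subvariety'' has no clear meaning in $\dg^{(2)}$. The paper instead spreads transversality directly from a single seed using positivity: if $p_{0}\pitchfork u_{0}$, $q_{0}\pitchfork v_{0}$ and the cross ratio is $\ge 1$ in the limit, then also $p_{0}\pitchfork v_{0}$ and $q_{0}\pitchfork u_{0}$ (since $\cro_{k}=0$ would contradict $\ge 1$). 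A clever choice of three auxiliary cyclically ordered quadruples then promotes $\xi^{k}(x)\pitchfork\xi^{d-k}(y)$ to $\xi^{k}(x)\pitchfork\xi^{d-k}(\g y)$ for every $\g$ with $\g y\neq x$; this is the paper's Proposition~\ref{prop.limits of spr: transverse orbits}, and it is what makes a single instance of transversality (guaranteed by proximality) sufficient.
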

In particular a Zariski dense limit of positively ratioed representations is positively ratioed (Corollary \ref{cor:Zd}), and thus the set of positively ratioed representations is closed among Zariski dense representations. We will use Theorem \ref{thmINTRO.limits of pos ratio general+proximal} in the companion paper \cite{BP3} to show that $\Theta$-positive representations form  higher rank Teichm\"uller spaces, thus proving \cite[Conjecture 5.4]{GWpositivity}.
\subsection*{Ideas in the proofs}
We conclude the introduction mentioning some of the crucial ideas to prove the main results. The real input needed for Theorem \ref{thmINTRO.limits of pos ratio general+proximal} is a technical transversality assumption, called \emph{condition} (Tr), that is implied by the proximality assumption on the limit, but is more general. {{Condition (Tr)  guarantees the existence of continuous equivariant transverse boundary maps (Theorem \ref{thm.limits of 1-pos ratio}).}} While we can not provide an explicit example of a sequence that does not satisfy condition (Tr), we do think it is likely that those  exist; see Section \ref{s.7.1} for an example in this direction. The proof of Theorem {{\ref{thm.limits of 1-pos ratio}}} 
 is then  split in three major steps. First we combine the seed of transversality given by (Tr) and the positivity of the cross ratio to guarantee transversality of full orbits (Lemma \ref{lem opposition for limits of pos ratioed} and Proposition \ref{prop.limits of spr: transverse orbits}). Second, using again the positivity of the cross ratio, we show that such transverse orbits admit transverse left- and right-continuous extensions. Last, we use the conicality of the action of $\G$ on $\dg$ to show that the left- resp. right-continuous boundary maps need to be continuous.

We turn to the proof of Theorem \ref{thm.intro-degenerations}. The first step is to show that $k$--positive representations degenerate if and only if they lose some of the Anosovness (Corollary \ref{cor.par-pos closed among Anosov}). A crucial tool for this is a  powerful result by Labourie:  the boundary maps of strongly irreducible representations cannot collapse open sets (Proposition \ref{prop.labourie-irreducible}). As the example of the Fuchsian loci shows,  $k$--positive representations are not always irreducible. However, we prove that they are \emph{coherent}: a reductive $k$--positive representations $\r$ splits as $\r=\eta_1\oplus \eta_2\to \PGL(E_1\oplus E_2)$, where $\eta_1$ is strongly irreducible, and $\xi^j(\dg)\subset \Gr_j(E_1)$ for all $j=1,\ldots,k$ (Corollary \ref{cor.coherence}). Thus their boundary maps are also boundary maps of a strongly irreducible $k$--positive representation.

In the case of irreducible limits (Theorem \ref{thm.intro-degenerations} (2)),  we need to derive that the limiting representation $\r_0$ is Anosov up to $(k-1)$. We do this by induction showing that condition (Tr) is satisfied for $j<k$ if $\r_0$ is $(j-1)$--Anosov (Proposition \ref{prop.condition Tr strongly irred}), and thus obtain continuous equivariant transverse $j$--boundary maps (Theorem \ref{thm.limits of 1-pos ratio}). We then derive $j$--Anosovness thanks to the following proposition of independent interest, generalizing \cite[Proposition 4.10]{Guichard-Wienhard-IM}:
\begin{prop}[{Proposition \ref{lem.dynamics preserving for strongly irred}}]\label{prop.intro k-Ano}
	Let $\G$ be a word hyperbolic group and $\r:\G\to \PGL(E)$ be $(j-1)$--Anosov, strongly irreducible and admit a pair of equivariant, transverse continuous boundary maps 
	$$\xi^j:\di \G\to \Gr_j(E),\quad \xi^{d-j}:\di \G\to \Gr_{d-j}(E).$$
Then $\r$ is $j$--Anosov and $\xi^j,\xi^{d-j}$ are the Anosov boundary maps.
\end{prop}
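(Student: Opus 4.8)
The plan is to reduce to the projective case by passing to exterior powers and then invoke the known criterion for projective Anosov representations; the genuine work will be to verify that $\xi^j,\xi^{d-j}$ are \emph{dynamics preserving}. Recall that $\r$ is $j$--Anosov if and only if $\wedge^j\r\colon\G\to\PGL(\wedge^j E)$ is $1$--Anosov, since $\sigma_1(\wedge^j\r(\g))/\sigma_2(\wedge^j\r(\g))=\sigma_j(\r(\g))/\sigma_{j+1}(\r(\g))$; moreover the maps $x\mapsto\wedge^j\xi^j(x)\in\P(\wedge^j E)$ and $x\mapsto\wedge^j(\xi^{d-j}(x)^{\perp})\in\P((\wedge^j E)^{*})$ are continuous, equivariant and transverse, since the pairing $\langle\wedge^j\xi^j(x),\wedge^j(\xi^{d-j}(y)^{\perp})\rangle$ is nonzero exactly when $\xi^j(x)\cap\xi^{d-j}(y)=0$. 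If $\wedge^j\r$ is projective Anosov its limit maps take decomposable values, hence factor through $\Gr_j(E)$ and $\Gr_{d-j}(E)$ and recover the boundary maps of $\r$; by uniqueness of Anosov limit maps these coincide with $\xi^j,\xi^{d-j}$. Thus, applying \cite[Proposition 4.10]{Guichard-Wienhard-IM} (or the subsequent characterizations of Anosov representations, e.g.\ \cite{KLP}) to $\wedge^j\r$, everything reduces to the following: for every infinite order $\g\in\G$, with attracting and repelling fixed points $\g^{\pm}\in\di\G$, the element $\r(\g)$ is $j$--proximal (i.e.\ has an eigenvalue modulus gap at position $j$) with attracting $j$--plane $\xi^j(\g^{+})$ and repelling $(d-j)$--plane $\xi^{d-j}(\g^{-})$.

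To prove this, fix such a $\g$. By equivariance $\xi^j(\g^{+})$ and $\xi^{d-j}(\g^{-})$ are $\r(\g)$--invariant and transversality gives $E=\xi^j(\g^{+})\oplus\xi^{d-j}(\g^{-})$, and likewise $E=\xi^{j-1}(\g^{+})\oplus\xi^{d-j+1}(\g^{-})$ where, since $\r$ is $(j-1)$--Anosov, $\xi^{j-1}(\g^{+})$ is the span of the generalized eigenspaces of $\r(\g)$ for its $j-1$ largest-modulus eigenvalues, these strictly dominating all the rest. After recording the compatibilities $\xi^{j-1}(x)\subseteq\xi^j(x)$ and $\xi^{d-j}(x)\subseteq\xi^{d-j+1}(x)$ --- built into the set-up in the intended application, and in general a consequence of continuity, minimality of the $\G$--action on $\di\G$, and strong irreducibility (Proposition \ref{prop.labourie-irreducible}) --- one may write $\xi^j(\g^{+})=\xi^{j-1}(\g^{+})\oplus\ell_{\g}$ with $\ell_{\g}\subseteq\xi^{d-j+1}(\g^{-})$ an $\r(\g)$--invariant line on which $\r(\g)$ acts by an eigenvalue $\nu$; intersecting $E=\xi^j(\g^{+})\oplus\xi^{d-j}(\g^{-})$ with $\xi^{d-j+1}(\g^{-})$ yields the invariant splitting $\xi^{d-j+1}(\g^{-})=\ell_{\g}\oplus\xi^{d-j}(\g^{-})$. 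It then remains to see that $|\nu|$ strictly exceeds the modulus of every other eigenvalue of $\r(\g)|_{\xi^{d-j+1}(\g^{-})}$, and I would deduce this from two inputs. The first is linear algebra: the invariant splitting forces the generalized $\nu$--eigenspace to split off $\ell_{\g}$ as an $\r(\g)$--module summand, so $\ell_{\g}$ cannot lie in the socle of a Jordan block of size $\geq 2$, and an elementary case analysis reduces matters to ruling out that $\ell_{\g}$ is an eigenline which is not of largest modulus. The second is the dynamical identity $\r(\g^n)\xi^j(y)=\xi^j(\g^n y)\to\xi^j(\g^{+})$, which holds for every $y\neq\g^{-}$ by continuity since $\g^n y\to\g^{+}$: if $\ell_{\g}$ were not the dominant line of $\r(\g)|_{\xi^{d-j+1}(\g^{-})}$, the locus of $V\in\Gr_j(E)$ with $\r(\g^n)V\to\xi^j(\g^{+})$ would be contained in a proper subvariety of $\Gr_j(E)$ defined by incidence conditions with the eigenspaces of $\r(\g)$, so $\xi^j$ would send a neighbourhood of $\g^{+}$ into it, contradicting Proposition \ref{prop.labourie-irreducible}. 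Hence $\r(\g)$ is $j$--proximal with attracting $j$--plane $\xi^j(\g^{+})$; its repelling $(d-j)$--plane is then the unique $\r(\g)$--invariant complement of $\xi^j(\g^{+})$, and since the eigenvalue sets of $\r(\g)$ in the two summands are disjoint this complement must be $\xi^{d-j}(\g^{-})$, as required.

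The step I expect to be the main obstacle is precisely this last verification: one must identify, for each degenerate configuration of $\r(\g)$ --- a larger-modulus eigenvalue hidden inside $\xi^{d-j+1}(\g^{-})$, a repeated or merely non-dominant eigenvalue of modulus $|\nu|$, or a genuine Jordan block --- the exact proper subvariety of $\Gr_j(E)$ into which $\xi^j$ would be trapped, and confirm that Proposition \ref{prop.labourie-irreducible} as stated rules it out, while organizing the accompanying Jordan-block linear algebra uniformly. By comparison the exterior-power reduction and the appeal to \cite[Proposition 4.10]{Guichard-Wienhard-IM} are routine, as is the compatibility bookkeeping of the boundary maps.
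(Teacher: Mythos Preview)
Your dynamics-preserving argument is close in spirit to the paper's: both use the convergence $\r(\g^n)\xi^j(y)\to\xi^j(\g^+)$ together with Proposition~\ref{prop.labourie-irreducible} to rule out degenerate eigenvalue configurations for $\r(\g)$, and the paper likewise records $\xi^{j-1}(\g_+)\subset\xi^j(\g_+)$ as an intermediate step. The paper's case analysis is more explicit (it identifies a concrete $(d-j)$--plane that $\xi^j(x)$ would have to meet, rather than appealing to an unspecified ``proper subvariety''), but your sketch is correctable along the same lines.

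The genuine gap is the passage from $j$--\emph{almost} Anosov to $j$--Anosov. You claim that once dynamics preserving is verified, \cite[Proposition 4.10]{Guichard-Wienhard-IM} applied to $\wedge^j\r$ finishes the job; but that proposition requires the representation to be \emph{irreducible}, and $\wedge^j\r$ is essentially never irreducible even when $\r$ is strongly irreducible. Nor does any KLP-type characterisation say that continuous transverse dynamics-preserving boundary maps alone force the eigenvalue-gap growth of Definition~\ref{defn.Anosov}(3). This is precisely the content the paper isolates in Lemma~\ref{lem.from almost to Anosov}: one splits the reductive representation $\wedge^j\r=\eta_1\oplus\eta_2$ with $\eta_1$ irreducible and $\wedge^j\xi^j(\dg)\subset\P(V_1)$, applies \cite[Proposition 4.10]{Guichard-Wienhard-IM} to $\eta_1$ to get its gap growth, and then must still rule out $|\lambda_1^{\eta_2}(\g)|>|\lambda_2^{\eta_1}(\g)|$. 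The paper does this by taking $w\in E^{\lambda_{j+1}}_{\r(\g)}$, forming the decomposable vector $\wedge^{j-1}\g_+^{j-1}\wedge w\in E^{\lambda_2}_{\wedge^j\r(\g)}$, and using strong irreducibility of $\r$ (not of $\wedge^j\r$) via Proposition~\ref{prop.labourie-irreducible} to find $x$ with $x^{d-j}\pitchfork\langle\g_+^{j-1},w\rangle$, contradicting $V_2\subset\xi^{d'-1}_{\wedge^j\r}(x)$. Note that this step uses both strong irreducibility of $\r$ \emph{and} the $(j-1)$--Anosov hypothesis (to have $\g_+^{j-1}$ available); your outline invokes neither for this purpose, and without them the eigenvalue-gap growth is simply not established.
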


For the study of general limits of $k$-positive representations, Theorem \ref{thm.intro-degenerations} (1), we use collar lemmas to guarantee that every element $\r_0(\g)$ for $\g\in \G$ is $l$--proximal for $l\leq k-2$. As all $\r_n$ are $l$--positively ratioed, we can apply Theorem \ref{thmINTRO.limits of pos ratio general+proximal} to derive that $\r_0$ admits continuous, dynamics preserving, transverse $l$--boundary maps. It remains to show that the eigenvalue gaps grow along unbounded sequences  (Definition \ref{defn.Anosov} (3)). For this we prove an additional  hyperconvexity property of $k$--positive representations (Corollary \ref{cor.strict convex for mixed quotient}) which leads to coherence of the boundary maps of the limiting representation $\r_0$. We believe that the idea behind the proof of Corollary \ref{cor.strict convex for mixed quotient} could be helpful in other concrete problems as well. Coherence of the boundary map allows to use Proposition \ref{prop.intro k-Ano} to derive Anosovness up to $k-3$.
Although we expect that this is true in general, we are not able to show that limits of $k$-positive representations are necessarily $(k-2)$--positive, because we  cannot guarantee any transversality in the limits of $\xi^{k-1}_{\r_n}$ and $\xi^{d-k+1}_{\r_n}$ in general, and thus we don't know how to ensure that the $(k-2)$-th eigenvalue gaps grow.

In Theorem  \ref{thm.intro-degenerations}(3) we assume that $\rho_0$ is $k$-Anosov. With this we can show that $\xi^{k-2}_{\r_n}$ converge pointwise to a continuous transverse boundary map. As $\xi^{k-1}_{\r_n}$ has $C^1$ image and the tangents are encoded by $\xi^{k-2}_{\r_n}$ and $\xi^{k}_{\r_n}$, we can show that the derivatives converge to a continuous map. We can thus apply a lemma form analysis that  guarantees that $\xi^{k-2}_{\r_0}$ and $\xi^{k}_{\r_0}$ are coherent and this allows to derive that  condition (Tr) is satisfied for $k-1$.

\subsection*{Structure of the paper} In Section \ref{s.boundary map} we introduce the technical transversality assumptions \emph{condition} (Tr) for limits of positively ratioed representations and show that if a converging sequence satisfies condition (Tr), then the limit representation admits a continuous equivariant transverse boundary map (Theorem  \ref{thm.limits of 1-pos ratio}). In Section \ref{s.pos-ratio} we conditions guaranteeing that condition (Tr) is satisfied and show that, in various instances, the existence of such a boundary map can be promoted to the limit being (almost) Anosov (cfr. Definition \ref{defn.Anosov}) and we prove Theorem \ref{thmINTRO.limits of pos ratio general+proximal}. In Section \ref{s.gaps} we discuss limits of representations satisfying property $H_k$ whose boundary maps $\xi^{k-1},\xi^{k+1}$ converge. This will be a crucial step in Theorem  \ref{thm.intro-degenerations}(3). In Section \ref{s.k-pos} we introduce $k$--positive representations and study their properties. In Section \ref{s.6} we  turn to the study of degenerations and prove Theorem  \ref{thm.intro-degenerations}. In the last section, Section \ref{s.ex}, we discuss open problems (e.g. to what extend Theorem  \ref{thm.intro-degenerations} is optimal) and examples illustrating those problems.

\subsection*{Acknowledgement} We thank Anna Wienhard for insightful conversations.

 \section{Preliminaries}
 \subsection{Notation}\label{s.notation}
In this paper we will consider projective linear representations of fundamental groups $\G$ of surfaces.  We list here some standard notation and conventions that we will keep throughout the paper.
 
\noindent{\it The group $\G$ and its boundary}
 \begin{itemize}
 \item  $\G$ will always denote a surface group, i.e. the fundamental group of a closed surface of genus at least 2. Its Gromov boundary $\dg$ is homeomorphic to the circle $\mathbb{S}^1$.
 
\item $\dg^{(j)}$ is the set of $j$--tuples of $\dg$ consisting of pairwise distinct points.
 
\item $(x,y)_z\subset \dg$ denotes the interval of $\dg\backslash\{x,y\}$ that does \emph{not} contain $z$, for $(x,y,z)\in \dg^{(3)}$ \cite{MZ}.
\end{itemize}

\noindent{\it The vector space $E$ and its subspace}

\begin{itemize}
\item  $E$ will always be a real vector space of dimension $d$.
 
\item If $E_1,E_2<E$ are subspaces, we denote by $\oplus$ the \emph{interior direct sum}, i.e. $E_1\oplus E_2$ if and only if $E_1+E_2=E$ and $E_1\cap E_2=\{0\}$.
\item  $\Gr_k(E)$ denotes the Grassmannians of $k$-planes in $E$. Given $V\in \Gr_k(E)$ and $W\in \Gr_{d-k}(E)$ we say that $V$ and $W$ are \emph{transverse} if $V\oplus W$, we then write $V\tv W$. If two subspaces are not transverse we write $V\ntv W$.

\end{itemize}

\noindent{\it Eigenspaces and eigenvectors}
\begin{itemize}

\item{{ $\lambda_1(g),\ldots,\lambda_d(g)\in \C$ denote the generalized eigenvalues of an element $g\in\GL(E)$ counted with multiplicity and ordered so that their absolute values are  non-increasing, i.e. $|\lambda_i(g)|\geq |\lambda_{i+1}(g)|$. 

\item If $g\in\PGL(E)$
 $$\frac{\lambda_i}{\lambda_{i+1}}(g):=\frac{\lambda_i(\tilde{g})}{\lambda_{i+1}(\tilde{g})}$$
for a non-trivial lift $\tilde{g}\in\GL(E)$. It
does not depend on the choice of lift.}}

\item If $g\in\PGL(E)$,  $E_{g}^{\lambda_k}$ is the generalized eigenspace for $\lambda_k(\tilde g)$ of any lift $\tilde g$ of $g$. Similarly $E_{g}^{\geq\lambda_k}$ denotes the sum of the generalized eigenspaces relative to eigenvalues with absolute value bigger or equal to $|\lambda_k|$. The spaces $E_{g}^{>\lambda_k}$, $E_{g}^{<\lambda_k}$, $E_{g}^{\leq\lambda_k}$ are defined analogously.

\item An element $g\in\PGL(E)$ is \emph{$k$-proximal} if it has a unique attracting point in $\Gr_k(E)$, equivalently if $|\frac{\lambda_k}{\lambda_{k+1}}(g)|>1$.

\end{itemize}

\noindent{\it Representations}
\begin{itemize}
\item A representation $\rho:\G\to\PGL(E)$  is \emph{$k$-proximal} if its image contains one $k$-proximal element.

\item Given a splitting $E_1\oplus E_2$ and representations $\eta_i:\G\to \PGL(E_i)$, we write $\r=\eta_1\oplus \eta_2\to\PGL(E_1\oplus E_2)$ if there are lifts such that $\tilde{\r}(\g) e_i =\tilde{\eta_i}(\g) e_i$ for all $e_i\in E_i$ and $\g\in \G$. 

\item We denote by $\rho^\dual:\G\to\PGL(E^*)$ the \emph{dual} or \emph{contragradient} representation of the representation $\rho:\G\to E$, which is defined by the equality.
$$(\rho^\dual(g)\cdot\omega)(v)=\omega(\rho(g)^{-1}\cdot v),\quad v\in E, \omega\in E^*.$$

 \end{itemize}

\subsection{Anosov representations}\label{s.Anosov}
Anosov representations were introduced by Labourie in \cite{Labourie-IM} for fundamental groups of negatively curved manifolds and generalized by Guichard-Wienhard to hyperbolic groups \cite{Guichard-Wienhard-IM}. There are by now many different characterizations (e.g. \cite{KLP,Bochi-Potrie-Sambarino}). For our purposes it will be convenient to choose the characterization \cite[Theorem 1.7]{GGKW} as a definition. For this we fix a word metric $|\cdot|_{\G}$ on the Cayley graph of $\G$ for a fixed finite generating set of $\G$. The \emph{stable length} of $\g\in\G$ with respect to this metric is $|\g|_{\infty}:=\lim_{n\to\infty} |\g^n|_{\G}\slash n$.

The definition of Anosov representations is based on properties of $\rho$-equivariant boundary maps 
$\xi^k:\dg\to \Gr_k(E)$. We say that such a boundary map is  \emph{dynamics preserving} at the attracting fixed point $\eta^+\in\dg$ of an infinite order element $\eta$ if $\rho(\eta)$ is $k$-proximal and $\xi^k(\g_{+})$ is the attracting fixed points for the action of $\r(\g)$ on $\Gr_k(E)$.

\begin{defn}\label{defn.Anosov}
A homomorphism $\r:\G\to \PGL(E)$ is called \emph{$k$-Anosov}  for $k=1,\ldots,d-1$, if there exist $\r$-equivariant continuous boundary maps $\xi^k:\dg\to \Gr_k(E)$, $\xi^{d-k}:\dg\to \Gr_{d-k}(E)$ so that
\begin{enumerate}
\item $\xi^k(x)$ and $\xi^{d-k}(y)$ are transverse for all $x\neq y\in \dg$
\item $\xi^k$ and $\xi^{d-k}$ are dynamics preserving at every attracting fixed point. 
\item $\left|\frac{\lambda_k}{\lambda_{k+1}}(\r(\g_i))\right|\to \infty$ if $|\g_i|_{\infty}\to\infty$.
\end{enumerate}
We say that $\rho$ is \emph{$k$-almost Anosov} if conditions $(1)$ and $(2)$  hold.
\end{defn}

\begin{remark}\label{rem ansov rep properties}For the following facts see e.g. \cite{Guichard-Wienhard-IM}.
	
\begin{enumerate}
\item An Anosov representation is discrete and faithful.
\item A $k$--Anosov representation is also $(d-k)-$Anosov.
\item The set of $k$--Anosov representations is open in $\Hom(\G,\PGL(E))$.
\end{enumerate}
\end{remark}
In the case of irreducible representations, the existence of boundary maps is enough to guarantee $1$-Anosovness:
\begin{prop}[{\cite[Proposition 4.10]{Guichard-Wienhard-IM}}]\label{prop.GW-irreducibility}
If $\r:\G\to\PGL(E)$ is irreducible and admits a pair of equivariant transverse continuous boundary maps $\xi^1:\dg\to\P(E),\;  \xi^{d-1}:\dg\to\Gr_{d-1}(E)$, then $\r$ is 1--Anosov with boundary maps  $\xi^1$,  $\xi^{d-1}$.
\end{prop}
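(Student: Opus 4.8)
The plan is to upgrade the given data — an irreducible representation $\r$ with a transverse pair $\xi^1,\xi^{d-1}$ — to a full $1$-Anosov representation by verifying the three conditions of Definition \ref{defn.Anosov}. Condition $(1)$, transversality, is part of the hypothesis, so the work is in conditions $(2)$ and $(3)$. First I would establish dynamics preservation. Fix an infinite order $\g\in\G$ with attracting fixed point $\g_+$; by convergence-group dynamics on $\dg\cong\mathbb S^1$, for generic $y\in\dg$ the sequence $\g^n y$ accumulates to $\g_+$, and equivariance plus continuity of $\xi^1$ forces $\r(\g)^n\xi^1(y)\to\xi^1(\g_+)$. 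The key observation is that irreducibility of $\r$ prevents $\r(\g)$ from having a proper $\r$-invariant subspace that would trap the line $\xi^1(y)$ away from a proximal direction; combined with the transversality $\xi^1(\g_+)\tv\xi^{d-1}(\g_-)$, one deduces that $\xi^1(\g_+)$ must lie in $E_{\r(\g)}^{\geq\lambda_1}$ and that in fact $\r(\g)$ is proximal in $\P(E)$ with attracting line exactly $\xi^1(\g_+)$ — and dually $\xi^{d-1}(\g_+)$ is the attracting hyperplane. (The usual way to make this rigorous is a limit-map argument: since $\xi^1$ is defined on all of $\dg$ and is transverse to $\xi^{d-1}$, the image $\xi^1(\dg)$ cannot be contained in a proper projective subspace by irreducibility, and any accumulation of $\r(\g)^n$ on $\P(\End E)$ has rank controlled by the proximal structure.)

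Next, for condition $(3)$ — the quantitative gap $|\lambda_1/\lambda_2(\r(\g_i))|\to\infty$ along sequences with $|\g_i|_\infty\to\infty$ — the standard route is to invoke the characterization of Anosov representations that does not require the uniform exponential estimate a priori: having continuous transverse dynamics-preserving boundary maps defined on $\dg$, together with word-hyperbolicity of $\G$, one shows the boundary maps are automatically "conical" (each $\xi^1(x)$ is reached conically under the $\G$-action because $\G$ acts as a convergence group on $\dg$), and conicality of a continuous transverse limit map is equivalent to the representation being Anosov by \cite{GGKW} (or Kapovich–Leeb–Porti). Concretely, I would cite the criterion that a projective representation of a word hyperbolic group admitting a continuous equivariant transverse pair $(\xi^1,\xi^{d-1})$ with dynamics-preserving behavior at fixed points is $1$-Anosov; the eigenvalue-gap growth is then a consequence rather than something to be checked by hand. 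Since this Proposition is quoted verbatim from \cite[Proposition 4.10]{Guichard-Wienhard-IM}, the cleanest proof is simply to reduce to that reference after recording that irreducibility is exactly what makes the boundary maps well-behaved enough for the general criterion to apply.

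The main obstacle I anticipate is the dynamics-preservation step, specifically ruling out the degenerate possibility that $\r(\g)$ fails to be proximal even though $\xi^1(\g_+)$ and $\xi^{d-1}(\g_+)$ are $\r(\g)$-invariant — for instance if $\r(\g)$ had two eigenvalues of equal top modulus. This is precisely where irreducibility of the whole representation $\r$ (not just of individual elements) is essential: if every $\r(\g)$ failed to be proximal, one could try to build an invariant subbundle over $\dg$ from the top eigenspaces, contradicting irreducibility; making this argument precise requires a careful limit argument in $\End(E)$, controlling the rank of accumulation points of $\{\r(\g)^n/\|\r(\g)^n\|\}$ using the transversality of the boundary maps. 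Everything else — the convergence-group dynamics on $\dg$, equivariance, continuity — is routine once this point is settled, and the quantitative Condition $(3)$ follows from the general equivalence between conical limit maps and the Anosov property.
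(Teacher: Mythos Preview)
The paper does not give its own proof of this proposition: it is quoted as \cite[Proposition 4.10]{Guichard-Wienhard-IM} and used as a black box, so there is nothing in the paper to compare your argument against directly. Your outline is the standard one and is broadly correct in spirit; the closest thing in the paper to a proof of this type is the later Proposition \ref{lem.dynamics preserving for strongly irred}, which carries out the dynamics-preservation step in detail (under the additional hypothesis of $(k-1)$-almost Anosovness, with strong irreducibility in place of irreducibility).

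That said, your sketch of the dynamics-preservation step is where the real content lies, and as written it is not yet a proof. The sentence ``irreducibility of $\r$ prevents $\r(\g)$ from having a proper $\r$-invariant subspace that would trap the line $\xi^1(y)$'' conflates $\r(\G)$-invariance with $\r(\g)$-invariance; what irreducibility actually buys you is that $\xi^1(\dg)$ spans $E$ (otherwise its span would be a proper $\r(\G)$-invariant subspace), and hence you can choose $y$ with $\xi^1(y)$ transverse to any prescribed $\r(\g)$-invariant hyperplane. To rule out $|\lambda_1(\r(\g))|=|\lambda_2(\r(\g))|$ one then argues, as in the proof of Proposition \ref{lem.dynamics preserving for strongly irred}, that if the top generalized eigenspace had dimension $\geq 2$ one could produce a proper subspace meeting every $\xi^1(y)$ for $y$ in an open arc, contradicting (strong) irreducibility via Proposition \ref{prop.labourie-irreducible}. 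Your proposal gestures at this (``build an invariant subbundle \ldots contradicting irreducibility'') but does not carry it out; filling in that step is exactly the nontrivial part of the Guichard--Wienhard argument.
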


\begin{notation}
We will often use the shortcuts $x_{\r}^k$ and  $x^k$  for $\xi^k(x)$ (here $\r$ is a $k$--Anosov representation and $\xi^k$ is the associated boundary map).
Similarly we sometimes write $\g_{\r}$ instead of $\rho(\g)$ for  $\g\in \G$.
\end{notation}

A standard method to obtain Anosov representations is to deform representations in the so-called \emph{Fuchsian loci}, i.e. representations obtained by composing holonomies of hyperbolizations with $\SL(\R^2)$-representations. We fix here the notation that will be used throughout the paper:

\begin{example}[Fuchsian Loci]\label{ex.Fuchsian I}
Let $\tau_{d_i}:\SL(\R^2)\to \SL(\R^{d_i})$ be the $d_i-$dimensional irreducible representation\footnote{this is uniquely defined up to conjugation} of $\SL(\R^2)$ and set $\tau_{(d_1,\ldots,d_j)}:=\tau_{d_1}\oplus \ldots\oplus \tau_{d_j}$ for positive  integers $d_1\geq d_2 \geq \ldots\geq d_j$.
A \emph{Fuchsian locus}, or the \emph{$\un d$-Fuchsian locus} for the specified multiindex $\un d=(d_1,\ldots,d_j)$, is the set of representations obtained as {{composition (and projectivization) $\tau_{\un d}\circ \r_{hyp}:\G\to\PGL(\R^{d_1}\oplus \ldots\oplus\R^{d_j})$}} as $\r_{hyp}:\G\to \SL(\R^2)$ varies in the  set of  discrete and faithful representations. Such representations are  {{$\{1,\ldots, k\}$-Anosov for $k<\frac{1}{2}(d_1-d_2)+1$}}, hence the same holds for all small deformations in $\Hom(\G,\PGL(E))$. Representations in  Fuchsian loci are self-dual, namely they are conjugated to their dual representation.
\end{example}

\subsection{Strong irreducibility}
A linear representation $\rho:\G\to\PGL(E)$ is \emph{strongly irreducible} if the restriction of $\rho$ to any finite index subgroup $\G'<\G$ is irreducible.  
The next important property of strongly irreducible representations is due to Labourie. We give here a slightly modified argument that doesn't require continuity, which will be useful for our purposes.
\begin{prop}[{\cite[Lemma 10.2]{Labourie-IM}}]\label{prop.labourie-irreducible}
Let $\r:\G\to\PGL(E)$  be a representation. If there exists $x\in\dg$, an equivariant  boundary map $\xi^k:\G x\to\Gr_k(E)$, an open non-empty set $U\subset \dg$ and a subspace $V\in\Gr_{d-k}(E)$ such that 
$$\forall y\in\G x\cap U\quad \dim(\xi^k(y)\cap V)\geq 1,$$
then $\r$ is not strongly irreducible.
\end{prop}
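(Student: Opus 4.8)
The plan is to show that the hypothesis forces the boundary curve $\xi^k$ to take values, on a large subset, inside a proper subvariety of the Grassmannian, and then to upgrade this to an honest proper invariant subspace for $\rho$. Concretely, for each integer $j$ with $1 \le j \le k$ let $Z_j := \{W \in \Gr_k(E) : \dim(W \cap V) \ge j\}$; this is a Zariski-closed subset of $\Gr_k(E)$, it is $\rho(\G')$-invariant for any subgroup $\G'$ that fixes $V$ (none a priori, but this is the structure we will exploit), and the hypothesis says $\xi^k(\G x \cap U) \subseteq Z_1$. Let $j_0$ be the \emph{maximal} $j$ such that $\xi^k(y) \in Z_j$ for all $y$ in some non-empty open subset $U' \subseteq U$ (such a maximum exists since $Z_{k+1} = \emptyset$ and $Z_1 \supseteq \xi^k(\G x \cap U)$ is non-trivial on an open set). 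Shrinking, we may assume $U' = U$ and that on the dense orbit set $\G x \cap U$ the dimension $\dim(\xi^k(y) \cap V)$ is \emph{exactly} $j_0$.

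Next I would produce a proper invariant subspace. The natural candidate is to consider the subspace $V' := \mathrm{span}\{\xi^k(y) \cap V : y \in \G x \cap U\}$, or dually the subspace $V'' := \bigcap\{\xi^k(y) + V : y \in \G x \cap U\}$; one of these should be a proper non-zero subspace. To see properness, note that if $V'$ were all of $E$ we could, by choosing finitely many $y_1,\dots,y_m \in \G x \cap U$, write $E$ as a sum of the $j_0$-dimensional pieces $\xi^k(y_i) \cap V$, all of which lie in the fixed subspace $V$ of dimension $d-k < d$ — contradiction, so $V' \subseteq V \subsetneq E$. For non-triviality, $V' \ne 0$ because $j_0 \ge 1$. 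The remaining issue is equivariance: a priori $\G$ does not preserve $V$, hence does not obviously preserve $V'$. Here the point is to use the $\G$-equivariance of $\xi^k$ together with the fact that the \emph{set} $\G x \cap U$ is $\G$-translated into other orbit-arcs; so I would instead work not with the fixed $V$ but with the collection of all subspaces obtained as $\rho(\g) V$, $\g \in \G$, and run a minimality/maximality argument on the family $\{V_\g := \rho(\g)\cdot V\}$ to extract a subspace built intrinsically from the boundary map. More precisely: define $W(\g) := \mathrm{span}\{\xi^k(y)\cap \rho(\g) V : y \in \G x,\ \g^{-1} y \in U\}$; then $\rho(\g_0) W(\g) = W(\g_0 \g)$ by equivariance, and each $W(\g)$ is a proper non-zero subspace by the argument above. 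If the $W(\g)$ are all equal, we have a $\rho(\G)$-invariant proper subspace and $\rho$ is not irreducible (in particular not strongly irreducible). If they are not all equal, then $W := \sum_\g W(\g)$ and $W' := \bigcap_\g W(\g)$ are both $\rho(\G)$-invariant; $W' $ might be zero and $W$ might be all of $E$, so one must argue more carefully — but since the $W(\g)$ take values in a single $\G$-orbit of a proper subvariety of a fixed Grassmannian $\Gr_{\dim W(e)}(E)$, passing to the finite index subgroup that stabilises one irreducible component of the Zariski closure of $\{W(\g)\}$, or noting that the Zariski closure of $\rho(\G)$ then cannot act irreducibly, yields the failure of strong irreducibility.

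The main obstacle, as the sketch above already signals, is precisely the passage from the \emph{pointwise} subspace condition (a closed condition satisfied on an orbit intersected with an open set, not on all of $\dg$, and without continuity of $\xi^k$) to a genuine $\rho(\G)$-invariant subspace: one cannot simply take closures because $\xi^k$ is not assumed continuous, and one cannot average over $\G$ because $\G$ is not compact. The way around it is to phase everything through $\G$-equivariance and the density of the single orbit $\G x$ in $\dg$: the family of subspaces $\{W(\g)\}_{\g\in\G}$ is a single $\G$-orbit in a fixed Grassmannian, its Zariski closure is a $\G$-invariant (hence $\overline{\rho(\G)}^{Zar}$-invariant) subvariety, and the linear span of the subspaces in any irreducible component gives a subspace that is invariant under a finite-index subgroup of $\G$ and proper — contradicting strong irreducibility. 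I would present the dimension-counting properness argument and the maximal-$j_0$ reduction in full, and handle the extraction of the invariant subspace via this Zariski-closure-of-an-orbit device, which is the only genuinely delicate point.
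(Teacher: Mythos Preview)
Your approach is genuinely different from the paper's, but the final step has a real gap. You correctly observe that the subspaces $W(\g)=\rho(\g)W(e)$ form a single $\G$-orbit in $\Gr_m(E)$, and that the Zariski closure of this orbit is $\G$-invariant with finitely many irreducible components permuted by $\G$. However, the assertion that ``the linear span of the subspaces in any irreducible component \ldots\ is proper'' is not justified and is in general false: an irreducible algebraic group can act irreducibly on $E$ while preserving a proper positive-dimensional subvariety of a Grassmannian (e.g.\ $\SL(2,\R)$ on $\Sym^2\R^2$ preserves the conic of rank-one forms in $\P(\Sym^2\R^2)$, yet the span of that conic is everything). So from a proper invariant subvariety of $\Gr_m(E)$ you cannot directly extract a proper invariant linear subspace. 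Your earlier observation that each individual $W(\g)$ lies in $\rho(\g)V$ does not help once you pass to the Zariski closure, since limit points of the orbit need not lie in any translate of $V$.

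The paper's argument avoids this difficulty entirely by a short dynamical trick. It first reduces (via \cite[Lemma 10.3]{Labourie-IM}) to finding a single $W\in\Gr_{d-k}(E)$ such that $\dim(\xi^k(y)\cap W)\ge 1$ for \emph{every} $y\in\G x$, not just those in $U$. To produce such a $W$, choose a hyperbolic $\g\in\G$ with $\g^-\in U$ and $\g^+\notin\G x$; after shrinking $U$, the translates $\g^nU$ exhaust $\dg\setminus\{\g^+\}$. Setting $V_n:=\rho(\g^{-n})V$, equivariance gives $\dim(\xi^k(y)\cap V_n)\ge 1$ for all $y\in\G x\cap\g^nU$. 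Since the incidence condition is closed, any accumulation point $W$ of $\{V_n\}$ in the compact Grassmannian works on all of $\G x=\G x\setminus\{\g^+\}$. This is both simpler and sidesteps the algebraic-geometry issues; if you want to make your route work, you would need a genuine minimality argument (take $m$ minimal among dimensions of subspaces meeting every $\xi^k(y)$ on an open orbit-set, and show the set of such minimal subspaces is finite), which is essentially the content of Labourie's Lemma 10.3 anyway.
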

\begin{proof}
Thanks to \cite[Proposition 10.3]{Labourie-IM}, if there exists a subspace $W\in\Gr_{d-k}(E)$ such that 
\begin{equation}\label{e.Lab}
	\forall y\in\G x\quad \dim(\xi^k(y)\cap W)\geq 1,
\end{equation}
then the connected component  of the identity $G$ in the Zariski closure of $\G$ doesn't act irreducibly, and thus the restriction of $\rho$ to the finite index subgroup $\G'=\G\cap G<\G$ is not irreducible. We will show the existence of such subspace under the assumptions of the proposition.

Consider an hyperbolic element $\g$ in $\G$ such that $\g^+\notin \G x$ and $\g^-\in U$. Up to possibly shrinking $U$ we can assume that 
$$\g^nU\subset \g^{n+1}U\quad\text{ and }\quad\bigcup_{n\in\N} \g^nU=\dg\setminus\{\g^+\}.$$
Set $V_n:=\rho(\g^{n})V$. By construction, for every $y\in \g^nU\cap \G x$,  $\dim(\xi^k(y)\cap V_n)\geq 1$. Since the subset of $\Gr_{d-k}(E)$ consisting of subspaces $B$ such that $\dim(\xi^k(y)\cap B)\geq 1$ is closed, Equation \eqref{e.Lab} holds for any accumulation point $W$ of the sequence $V_n$.
\end{proof}
For 1--proximal representations  irreducibility already implies strong irreducibility:
\begin{lem}[{\cite[Lemma 5.12]{Guichard-Wienhard-IM}}]\label{lem.GW strong irreducibility}
Let $\r:\G\to\PGL(E)$ be irreducible and $1$--proximal. Then $\r$ is strongly irreducible.
\end{lem}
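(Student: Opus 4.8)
The statement to prove is Lemma \ref{lem.GW strong irreducibility}: an irreducible, $1$-proximal representation $\r:\G\to\PGL(E)$ is strongly irreducible.

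\medskip

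The plan is to argue by contradiction, exploiting the unique attracting fixed point that $1$-proximality provides. Suppose $\r$ is not strongly irreducible; then there is a finite index subgroup $\G'<\G$ whose restriction $\r|_{\G'}$ is reducible, so $E$ decomposes $\G'$-invariantly. After passing to a further finite index subgroup we may assume $\G'$ is normal in $\G$, so $\G$ permutes the $\G'$-invariant subspaces of each fixed dimension; in particular $\G$ permutes the (finitely many) isotypic components, or more simply: the set of proper nonzero $\G'$-invariant subspaces of minimal dimension is finite and $\r(\G)$-stable, and $\r(\G)$ acts on this finite set through the finite quotient $\G/\G'$.

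\medskip

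First I would locate the proximal element: by hypothesis $\r(\g)$ is $1$-proximal for some $\g\in\G$, with unique attracting line $\ell^+\in\P(E)$ and repelling hyperplane $H^-\in\Gr_{d-1}(E)$, $\ell^+\not\subset H^-$. After replacing $\g$ by a power we may assume $\g\in\G'$, and note $\r(\g)$ is still $1$-proximal with the same $\ell^+$, $H^-$. Now take a proper nonzero $\G'$-invariant subspace $W$. Since $W$ is $\r(\g)$-invariant and $\r(\g)$ is proximal, either $W\subset H^-$ or $\ell^+\subset W$ (the invariant subspaces of a proximal element are exactly the sums of generalized eigenspaces, and each either contains the top eigenline or is contained in the complementary invariant hyperplane). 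The key point is to produce enough invariant subspaces to contradict irreducibility of $\r$ itself.

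\medskip

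The cleanest route: let $W_1,\dots,W_m$ be the $\G'$-translates — actually the $\G$-translates $\r(\g_i)W$ as $\g_i$ ranges over coset representatives of $\G'$ in $\G$ — of a fixed minimal proper nonzero $\G'$-invariant subspace $W$. Each $\r(\g_i)W$ is again $\G'$-invariant (by normality), hence again, applying a proximal element conjugate to $\r(\g)$, satisfies the dichotomy with respect to its own attracting line/repelling hyperplane. Consider $W':=\sum_i \r(\g_i)W$; this is a nonzero $\r(\G)$-invariant subspace, so by irreducibility $W'=E$. Dually, the intersection $\bigcap_i \r(\g_i)W$ of the hyperplane-type translates, or rather the construction of the minimal $\r(\G)$-invariant subspace containing $W$: since $\r$ is irreducible it is $E$. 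Now I use proximality to get a contradiction on dimensions. Pick $\g$ with $\r(\g)$ proximal, $\g\in\G'$, attracting line $\ell^+$. For each $i$, either $\ell^+\subset \r(\g_i)W$ or $\r(\g_i)W\subset H^-$. If $\ell^+\subset \r(\g_i)W$ for some $i$, apply high powers of $\r(\g)$: $\r(\g^n)$ contracts everything off $H^-$ toward $\ell^+$, so... The actual argument in \cite{Guichard-Wienhard-IM}: since $\sum_i\r(\g_i)W=E$, the line $\ell^+$ lies in some $\r(\g_i)W$; but then applying $\g$ shows... Let me instead state the step I expect to carry the proof and flag it as the obstacle.

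\medskip

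\textbf{Main step.} I would show: if $W$ is $\r(\G')$-invariant and $\r(\g)$ is proximal with $\g\in\G'$, and $\ell^+\subset W$, then in fact every $\r(\G)$-translate $\r(\g_i)W$ contains the attracting line $\ell_i^+$ of the proximal element $\r(\g_i\g\g_i^{-1})=\r(\g_i)\r(\g)\r(\g_i)^{-1}$, which is $\r(\g_i)\ell^+$. Choosing $\g$ with $\g^+\notin$ (the finite set of points whose $\xi^1$-image could cause degeneracy), one arranges that the lines $\r(\g_i)\ell^+$ span $E$ — otherwise their span is a proper $\r(\G)$-invariant subspace, contradicting irreducibility of $\r$. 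Hence $W=E$ (as $W$ would contain a spanning set of lines $\r(\g_i)\ell^+$ only if... no — here one needs that $W$ itself, being $\G'$-invariant and containing $\ell^+$, contains all $\r(\g')\ell^+$ for $\g'\in\G'$, and these already span a $\r(\G')$-invariant subspace; combined with a transitivity/density argument from proximality one concludes $W=E$), contradicting properness. The symmetric argument handles the case $W\subset H^-$ via the dual representation $\r^\dual$, which is also irreducible and $1$-proximal (with proximal element $\r(\g)^{-1}$), ruling out proper $\r(\G')$-invariant subspaces of the complementary type. Therefore no proper nonzero $\r(\G')$-invariant subspace exists, so $\r|_{\G'}$ is irreducible for all finite index $\G'$, i.e. $\r$ is strongly irreducible.

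\medskip

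\textbf{Expected main obstacle.} The delicate point is the spanning claim: passing from ``$W$ is $\G'$-invariant and contains the attracting line of one proximal element'' to ``$W=E$''. This requires knowing that the attracting lines of proximal elements in $\r(\G')$ already span $E$ — equivalently that the Zariski closure of $\r(\G')$ acts irreducibly — which one gets from the fact that a proximal irreducible linear group has its limit set (closure of attracting lines) Zariski dense, so its span is $\r(\G')$-invariant and nonzero, hence all of $E$ by irreducibility of $\r|_{\G'}$... but that last step is circular. The correct non-circular argument, and the one I would write carefully, uses instead: $\r$ irreducible $\Rightarrow$ the $\r(\G)$-translates of $\ell^+$ span $E$; then $W$, being $\r(\G')$-invariant, either contains or misses (inside $H^-$) each individual translate $\r(\g_i)\ell^+$ according to the proximal dichotomy applied to the conjugate proximal elements; a counting argument over the finite index $[\G:\G']$, together with the fact that one can conjugate $\g$ within $\G'$ to move $\ell^+$ to a Zariski-dense set of lines (so $W$ contains a spanning set), forces $W=E$. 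Making the ``Zariski-dense set of attracting lines inside $W$'' rigorous — i.e. that $\{\r(\g')\ell^+: \g'\in\G', \r(\g')\ \text{proximal}\}$ spans the minimal $\r(\G')$-invariant subspace containing $\ell^+$ — is the technical heart; it follows from a standard ping-pong/density argument on proximal elements, which I would invoke rather than reprove.
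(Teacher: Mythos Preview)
The paper does not prove this lemma; it is simply quoted from \cite{Guichard-Wienhard-IM}. So there is no argument in the paper to compare yours against, and I will assess your attempt on its own.

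There is a genuine gap, and it is more serious than the ``technical heart'' you flag at the end. Your argument never uses that $\G$ is a surface group, and the statement is \emph{false} without some such hypothesis. Take $\G=D_\infty=\langle t,s\mid s^2=1,\ sts^{-1}=t^{-1}\rangle$ and $\r:\G\to\PGL(\R^2)$ given by $\r(t)=\diag(\lambda,\lambda^{-1})$ with $\lambda>1$ and $\r(s)=\left(\begin{smallmatrix}0&1\\1&0\end{smallmatrix}\right)$. Then $\r$ is irreducible and $\r(t)$ is $1$--proximal, yet the index-two subgroup $\G'=\langle t\rangle$ preserves the coordinate lines. In this example your ``Main step'' fails outright: $W=\R e_1$ is $\G'$-invariant, contains $\ell^+$, and is proper; equivalently, the span of $\{\r(\g')\ell^+:\g'\in\G'\}$ is the single line $\ell^+$. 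So the spanning claim you isolate is not a technicality awaiting a ping-pong argument --- it is simply wrong in general, and any correct proof must use something specific about $\G$.

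In the paper the lemma is only ever applied to representations that are already $1$--Anosov, and in that setting a clean argument is available via the boundary maps and the minimality of the $\G'$-action on $\dg$. Given a proper nonzero $\G'$-invariant $W$, the proximal dichotomy applied to each nontrivial $\g\in\G'$ gives $\g_+\in P:=\{x:\xi^1(x)\subset W\}$ or $\g_-\in Q:=\{x:W\subset\xi^{d-1}(x)\}$. Since pairs $(\g_+,\g_-)$ with $\g\in\G'$ are dense in $\dg^{(2)}$ and $P,Q$ are closed, one deduces $P\cup Q=\dg$. Both $P$ and $Q$ are closed and $\G'$-invariant; since $\G'$ (being a finite-index subgroup of a surface group) acts minimally on $\dg$, each is empty or all of $\dg$, forcing $W=E$ or $W=0$ by irreducibility. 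Note that for $D_\infty$ the subgroup $\langle t\rangle$ does \emph{not} act minimally on the two-point boundary, which is exactly where this argument --- and the lemma --- breaks down.
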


\subsection{Cross ratios}\label{s.cr}
In the paper we will use the classical  cross ratios defined on the following set of quadruples of points in Grassmannians:
\begin{align*}
\calA_k\: :=\{ (V_1,W_2,W_3,V_4) | &V_i\in \Gr_k(E), W_j\in \Gr_{d-k}(E) \text{ and } V_j\tv W_i\\
 &\text{ for } (j,i)=(1,2),(4,3) \text{ or } (j,i)=(1,3), (4,2)\}.
\end{align*}

\begin{defn}
Let $(V_1,W_2,W_3,V_4)\in\calA_k$. The (generalized) cross ratio $\cro_k:\calA_k\to \R\cup\{\infty\}$ is defined by
\begin{align*}
\cro_k (V_1,W_2,W_3,V_4):=\frac{V_1\wedge W_3}{V_1\wedge W_2} \frac{V_4\wedge W_2}{V_4\wedge W_3},
\end{align*}
where $V_i\wedge W_j$ denotes the element $v_1\wedge \ldots\wedge v_k \wedge w_1\wedge \ldots \wedge w_{d-k}\in \wedge^d E \simeq \R$ for bases $(v_1,\ldots, v_k),(w_1,\ldots, w_{d-k})$  of $V_i$ and $W_j$, respectively, and a fixed identification $\wedge^d E \simeq \R$. We use the convention $\frac{a}{0} :=\infty$ for any non-zero $a\in\R$. The value of $\cro_k$ is independent of all choices made.
\end{defn}

\begin{prop}\label{prop.property of grassmannian cro}
Let $V_1,V_4,V_5\in\Gr_k(E)$ and $W_2,W_3,W_5\in\Gr_{d-k}(E)$. Whenever all quantities are defined we have
\begin{enumerate}
\item $\cro_k (V_1,W_2,W_3,V_4)^{-1}=\cro_k (V_4,W_2,W_3,V_1)=\cro_k (V_1,W_3,W_2,V_4)$
\item $\cro_k (V_1,W_2,W_3,V_4)\cdot\cro_k (V_4,W_2,W_3,V_5)=\cro_k (V_1,W_2,W_3,V_5)$
\item $\cro_k (V_1,W_2,W_3,V_4)\cdot\cro_k (V_1,W_3,W_5,V_4)=\cro_k (V_1,W_2,W_5,V_4)$
\item $\cro_k (V_1,W_2,W_3,V_4)=0\Longleftrightarrow V_1 \ntv W_3$ or $V_4\ntv W_2$
\item $\cro_k (V_1,W_2,W_3,V_4)=\infty\Longleftrightarrow V_1 \ntv W_2$ or $V_4\ntv W_3$
\item $\cro_k (V_1,W_2,W_3,V_4)=\cro_k (g V_1,g W_2,g W_3,g V_4)\quad  \forall g\in\PGL(E)$.
\end{enumerate}
The identities $(2)$ and $(3)$ will be called \emph{cocycle identities}.
\end{prop}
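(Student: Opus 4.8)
The plan is to fix, once and for all, an identification $\wedge^d E \simeq \R$ and a choice of basis for each of the finitely many subspaces $V_1,V_4,V_5 \in \Gr_k(E)$ and $W_2,W_3,W_5 \in \Gr_{d-k}(E)$ appearing in the statement. With these choices every symbol $V_i\wedge W_j$ becomes a concrete real number and $\cro_k(V_1,W_2,W_3,V_4) = \frac{(V_1\wedge W_3)(V_4\wedge W_2)}{(V_1\wedge W_2)(V_4\wedge W_3)}$ is a ratio of such numbers in $\R\cup\{\infty\}$, with the convention $a/0=\infty$. Since the value of $\cro_k$ is independent of these choices, it suffices to verify each identity for one such choice, after which every statement reduces to elementary manipulations. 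The only recurring point is the arithmetic of $\infty$ and of vanishing loci, for which I will use the following observation.

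\emph{Key observation.} For $V\in\Gr_k(E)$, $W\in\Gr_{d-k}(E)$ and any bases, $V\wedge W\neq 0$ if and only if the $d$ vectors forming the wedge are linearly independent, i.e. $V+W=E$, i.e. (by a dimension count, since $\dim V+\dim W=d$) $V\oplus W$; equivalently $V\wedge W=0\iff V\ntv W$. Granting this, parts (4) and (5) are immediate: under the standing hypothesis that all quantities are defined — so that the product defining $\cro_k$ is never of the indeterminate form $0\cdot\infty$ — one has $\cro_k(V_1,W_2,W_3,V_4)=0$ exactly when the numerator $(V_1\wedge W_3)(V_4\wedge W_2)$ vanishes, i.e. when $V_1\ntv W_3$ or $V_4\ntv W_2$; symmetrically $\cro_k(V_1,W_2,W_3,V_4)=\infty$ exactly when the denominator $(V_1\wedge W_2)(V_4\wedge W_3)$ vanishes, i.e. $V_1\ntv W_2$ or $V_4\ntv W_3$.

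For (1) one reads off the definition: interchanging $V_1\leftrightarrow V_4$ swaps numerator and denominator, while interchanging $W_2\leftrightarrow W_3$ inverts each of the two ratios $\tfrac{V_1\wedge W_3}{V_1\wedge W_2}$ and $\tfrac{V_4\wedge W_2}{V_4\wedge W_3}$; in both cases the value obtained is $\cro_k(V_1,W_2,W_3,V_4)^{-1}$. Here one notes that the permuted quadruples lie in $\calA_k$ whenever the original one does, because the two transversality alternatives defining membership in $\calA_k$ are invariant under these two permutations. The cocycle identities (2) and (3) follow by direct substitution and telescoping: in (2) the factor $\tfrac{V_4\wedge W_2}{V_4\wedge W_3}$ coming from $\cro_k(V_1,W_2,W_3,V_4)$ cancels against the factor $\tfrac{V_4\wedge W_3}{V_4\wedge W_2}$ coming from $\cro_k(V_4,W_2,W_3,V_5)$, leaving $\tfrac{V_1\wedge W_3}{V_1\wedge W_2}\cdot\tfrac{V_5\wedge W_2}{V_5\wedge W_3}=\cro_k(V_1,W_2,W_3,V_5)$; in (3) the factors $V_1\wedge W_3$ and $V_4\wedge W_3$ cancel between the two cross ratios, leaving $\cro_k(V_1,W_2,W_5,V_4)$. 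These cancellations are legitimate precisely because the hypothesis that all quantities are defined forces the cancelled factors to be finite and nonzero — otherwise one of the two cross ratios in the product would be $0$ and the other $\infty$.

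Finally, for (6), choose a lift $\tilde g\in\GL(E)$ of $g$; since $\tilde g$ acts on $\wedge^d E\simeq\R$ by multiplication by $\det\tilde g\neq 0$, for the transported bases we get $gV_i\wedge gW_j=(\det\tilde g)\,(V_i\wedge W_j)$. Substituting into $\cro_k(gV_1,gW_2,gW_3,gV_4)$ produces a factor $(\det\tilde g)^2$ in both numerator and denominator, which cancels, so $\cro_k(gV_1,gW_2,gW_3,gV_4)=\cro_k(V_1,W_2,W_3,V_4)$ (and, incidentally, the value is independent of the lift, since any two lifts differ by a nonzero scalar whose $2d$-th power again cancels). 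I expect no genuine obstacle: the entire argument is bookkeeping, and the single thing to be vigilant about is never to cancel a factor that equals $0$ or $\infty$, which the standing hypothesis precludes.
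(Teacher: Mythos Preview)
Your proof is correct and complete: every item follows by direct substitution into the defining formula, together with the observation that $V\wedge W=0$ if and only if $V\ntv W$. The paper itself states this proposition without proof, treating these identities as standard elementary consequences of the definition, so there is nothing to compare against; your verification is exactly the kind of routine check the authors implicitly leave to the reader.
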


{{If $g\in \PGL(E)$ is $\{k,(d-k)\}$--proximal, we denote by $g_+^k\in\Gr_k(E)$ the span of its first $k$ generalized eigenspaces and by $g_-^k:=(g^{-1})_+^k\in\Gr_k(E)$.
}}
The following is straight forward to check:
\begin{lem}\label{lem.cro-period}
If $g\in \PGL(E)$ is $\{k,(d-k)\}$--proximal, then for every $V\in \Gr_{k}(E)$ transverse to $g^{d-k}_\pm$, and $W\in \Gr_{d-k}(E)$ transverse to $g^{k}_\pm$ it holds
$$\cro_k (g_-^k,W,gW,g_+^{k})=\cro_k (V,g_-^{d-k},g_+^{d-k},gV)=\frac{\lambda_1(g)\cdots\lambda_k(g)}{\lambda_d(g)\cdots\lambda_{d-k+1}(g)}.$$ 
\end{lem}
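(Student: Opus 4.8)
\textbf{Plan for the proof of Lemma \ref{lem.cro-period}.}

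The plan is to compute the cross ratio directly using a convenient basis adapted to the eigenspace decomposition of a lift $\tilde g \in \GL(E)$, exploiting the invariance and multiplicativity properties recorded in Proposition \ref{prop.property of grassmannian cro}. First I would set up notation: pick a lift $\tilde g$ of $g$, and let $f_1,\dots,f_d$ be a basis of generalized eigenvectors (Jordan basis) ordered by non-increasing absolute value of the corresponding eigenvalues, so that $g_+^k = \spa(f_1,\dots,f_k)$, $g_-^k = \spa(f_{d-k+1},\dots,f_d)$, $g_+^{d-k} = \spa(f_1,\dots,f_{d-k})$ and $g_-^{d-k} = \spa(f_{k+1},\dots,f_d)$. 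The $\{k,(d-k)\}$--proximality hypothesis means $|\lambda_k| > |\lambda_{k+1}|$, so these four spaces are genuinely the unique attracting/repelling fixed points and they are pairwise transverse in the relevant pairs; in particular the quadruples appearing in the statement lie in $\calA_k$ as soon as $V, W$ satisfy the stated transversality.

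The core computation is the second equality. By the $\PGL(E)$--invariance (Proposition \ref{prop.property of grassmannian cro}(6)) and the cocycle identity (2), it suffices to observe that $\cro_k(V, g_-^{d-k}, g_+^{d-k}, gV)$ does not depend on the choice of $V$ transverse to $g_\pm^{d-k}$: indeed, for two such choices $V, V'$ one has
$$\cro_k(V,g_-^{d-k},g_+^{d-k},gV) = \cro_k(V,g_-^{d-k},g_+^{d-k},V')\,\cro_k(V',g_-^{d-k},g_+^{d-k},gV),$$
and applying $g$ to the second factor (using invariance) and then the cocycle identity lets one telescope; concretely, writing $c(V) := \cro_k(V,g_-^{d-k},g_+^{d-k},gV)$ and using that $g$ fixes $g_\pm^{d-k}$, one gets $c(V) = c(gV') \cdot (\text{correction})$, and iterating shows $c(V)$ is constant in $V$. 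Then one evaluates at the single convenient choice $V = g_-^k = \spa(f_{d-k+1},\dots,f_d)$, for which $gV$ is again spanned by $f_{d-k+1},\dots,f_d$ but rescaled: in the exterior product $V \wedge g_+^{d-k}$ versus $gV \wedge g_+^{d-k}$ the vectors $f_{d-k+1}\wedge\dots\wedge f_d$ get multiplied by $\lambda_{d-k+1}\cdots\lambda_d$ (up to the unipotent part, which contributes $1$ to the determinant), while $V\wedge g_-^{d-k}$ is meaningless (not transverse) — so instead I evaluate with $V$ a generic complement, expand $f$'s into eigencoordinates, and read off that the four wedge terms combine to $\frac{\lambda_1\cdots\lambda_k}{\lambda_{d-k+1}\cdots\lambda_d}$. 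The first equality (the $g_-^k, W, gW, g_+^k$ cross ratio) is entirely analogous, or can be deduced from the second by passing to the dual representation $g^\dual$ on $E^*$, under which $k$--planes and $(d-k)$--planes swap roles and $g_\pm^k$ becomes $g_\pm^{d-k}$ of the dual, while the eigenvalues invert — a short bookkeeping check shows the formula is preserved.

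The main obstacle, though a mild one, is handling the \emph{generalized} (non-diagonalizable) case cleanly: when $\tilde g$ has nontrivial Jordan blocks, $gW$ and $W$ differ by more than a diagonal rescaling, so the naive "multiply by the product of eigenvalues" argument needs the genericity/independence-of-$V$ reduction above to go through first, and then one only ever evaluates the cross ratio of a space with its own image, where the unipotent part acts with determinant $1$ on the relevant top exterior powers. The cleanest route is to first prove independence of the base point (using only the cocycle identities and $g$--invariance, which are purely formal), and only at the very end plug in a single explicit $V$ — I would choose $V$ spanned by $f_1,\dots,f_k$ if that is transverse to $g_\pm^{d-k}$, or more safely a generic graph over one of the eigenspaces — reducing the whole statement to a one-line determinant computation. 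The identification $\wedge^d E \simeq \R$ and all basis choices drop out because each cross ratio is a ratio of four such wedge terms, so I would remark this once rather than carry normalizations.
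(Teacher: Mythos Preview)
The paper does not give a proof of this lemma: it is stated as ``straight forward to check''. Your plan is correct and will succeed, but it is considerably more elaborate than the one-line computation the paper has in mind.

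The detour through ``independence of $V$'' via cocycle identities is unnecessary, and your attempt to then evaluate at a special $V$ runs into trouble (as you notice: neither $g_-^k$ nor $g_+^k = \spa(f_1,\dots,f_k)$ is transverse to both $g_\pm^{d-k}$). The direct route avoids choosing any particular $V$ at all. Write
\[
\cro_k(V,g_-^{d-k},g_+^{d-k},gV)=\frac{V\wedge g_+^{d-k}}{V\wedge g_-^{d-k}}\cdot\frac{gV\wedge g_-^{d-k}}{gV\wedge g_+^{d-k}},
\]
pick bases $f_1,\dots,f_{d-k}$ of $g_+^{d-k}$ and $f_{k+1},\dots,f_d$ of $g_-^{d-k}$, and use that $\tilde g$ acts on $\wedge^d E$ by $\det\tilde g$ while $\tilde g(f_1\wedge\dots\wedge f_{d-k})=(\lambda_1\cdots\lambda_{d-k})\,f_1\wedge\dots\wedge f_{d-k}$ (the unipotent part has determinant $1$, exactly as you observe). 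This gives $gV\wedge g_+^{d-k}=(\lambda_{d-k+1}\cdots\lambda_d)\,V\wedge g_+^{d-k}$ and $gV\wedge g_-^{d-k}=(\lambda_1\cdots\lambda_k)\,V\wedge g_-^{d-k}$, and the four wedge terms cancel in pairs to yield the claimed ratio for \emph{every} admissible $V$ simultaneously. The first cross ratio is handled identically (or by duality, as you say). So the substance of your final paragraph is exactly right; you just do not need the two paragraphs preceding it.
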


\begin{lem}\label{lem.cross ratio is algebraic}
The cross ratio is algebraic on pairwise transverse 4-tuples and extends continuously to its maximal domain of definition.
\end{lem}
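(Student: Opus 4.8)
The plan is to realize $\cro_k$, via the Plücker embeddings, as a quotient of two polynomial expressions of the same multidegree, and to read off both assertions from this.

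First I would fix an identification $\wedge^d E\simeq\R$ and consider the induced bilinear pairing $\langle\cdot,\cdot\rangle\colon\wedge^k E\times\wedge^{d-k}E\to\R$, $\langle\alpha,\beta\rangle=\alpha\wedge\beta$. Under the Plücker embeddings $\Gr_k(E)\hookrightarrow\P(\wedge^k E)$ and $\Gr_{d-k}(E)\hookrightarrow\P(\wedge^{d-k}E)$, a subspace $V\in\Gr_k(E)$ corresponds to the line through a decomposable vector $\hat V$, similarly $W\mapsto\hat W$, and $V\wedge W=\langle\hat V,\hat W\rangle$, well defined up to the scalar ambiguities in $\hat V$ and $\hat W$. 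Thus
\begin{equation*}
\cro_k(V_1,W_2,W_3,V_4)=\frac{\langle\hat V_1,\hat W_3\rangle\,\langle\hat V_4,\hat W_2\rangle}{\langle\hat V_1,\hat W_2\rangle\,\langle\hat V_4,\hat W_3\rangle},
\end{equation*}
where numerator and denominator are each multilinear of multidegree $(1,1,1,1)$ in $(\hat V_1,\hat W_2,\hat W_3,\hat V_4)$; hence the quotient is independent of the chosen representatives and defines a rational function on $\Gr_k(E)\times\Gr_{d-k}(E)\times\Gr_{d-k}(E)\times\Gr_k(E)$. On the locus of pairwise transverse $4$-tuples all four pairings are non-zero (recall $V\tv W$ iff $V\wedge W\neq0$), so in any affine chart, after choosing polynomial local sections of the tautological bundles, $\cro_k$ is a quotient of polynomials with non-vanishing denominator, hence algebraic there, with finite non-zero values.

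For the continuous extension I would first identify the maximal domain $D$: the displayed formula makes sense in $\R\cup\{\infty\}$ exactly where numerator and denominator do not vanish simultaneously, and one checks $\calA_k\subseteq D$. Then I would cover $D$ by the two open sets $D_1=\{V_1\tv W_2,\ V_4\tv W_3\}$ and $D_2=\{V_1\tv W_3,\ V_4\tv W_2\}$. On $D_1$ the denominator is non-zero, so using continuous local lifts $\hat V_i,\hat W_j$ (which exist locally) the formula exhibits $\cro_k$ as a continuous $\R$-valued function; on $D_2$ the numerator is non-zero, and by Proposition \ref{prop.property of grassmannian cro}(1) $\cro_k$ equals $\cro_k(V_4,W_2,W_3,V_1)^{-1}$ with the latter continuous and $\R$-valued there, so post-composing with the homeomorphism $t\mapsto t^{-1}$ of $\R\cup\{\infty\}$ gives continuity of $\cro_k$ on $D_2$. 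On the overlap $D_1\cap D_2$ both descriptions are $\R$-valued and agree with the defining formula, so they glue to a continuous map $\cro_k\colon D\to\R\cup\{\infty\}$ extending the function on pairwise transverse $4$-tuples.

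I do not expect a genuine obstacle: the statement is essentially bookkeeping. The one point that requires care is the passage to Plücker coordinates together with the verification that the quotient is genuinely well defined independent of all choices (the same cancellation of scalars that makes $\cro_k$ well defined in the first place), and then correctly pinning down the maximal domain $D$ and handling the value $\infty$ through the chart at infinity of $\R\cup\{\infty\}$ — i.e.\ switching to $\cro_k^{-1}$ near such points — rather than getting stuck at an indeterminacy of type $0/0$ or $\infty\cdot 0$.
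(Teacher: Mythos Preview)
Your argument is correct and considerably more detailed than the paper's own proof, which simply remarks that continuity at quadruples mapping to $\infty$ follows from \cite[Lem.~3.7, Ex.~3.11]{Beyrer} and treats the algebraicity claim as evident. Your approach via the Pl\"ucker embedding and the bilinear pairing $\wedge^k E\times\wedge^{d-k}E\to\wedge^d E$ makes the algebraicity transparent, and the covering $D=D_1\cup D_2$ together with the symmetry $\cro_k(V_1,W_2,W_3,V_4)=\cro_k(V_4,W_2,W_3,V_1)^{-1}$ and the homeomorphism $t\mapsto t^{-1}$ of $\R\cup\{\infty\}$ is a clean, self-contained way to handle the points at infinity that avoids any external citation.

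Two very minor remarks. First, since $\calA_k$ is by definition precisely the set where numerator and denominator do not vanish simultaneously, you actually have $\calA_k=D$, not merely $\calA_k\subseteq D$; this only strengthens your conclusion. Second, your appeal to Proposition~\ref{prop.property of grassmannian cro}(1) on $D_2$ is formally outside the stated hypotheses of that proposition (``whenever all quantities are defined''), but the identity $\cro_k(V_1,W_2,W_3,V_4)=\cro_k(V_4,W_2,W_3,V_1)^{-1}$ holds on all of $D_2$ by direct inspection of the formula together with the convention $a/0=\infty$, so there is no gap.
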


\begin{proof}
The continuity at quadruples with image equal to infinity follows e.g. from \cite[Lem 3.7, Ex 3.11]{Beyrer}.
\end{proof}

We will denote in the following by $\pcr$ the standard projective cross-ratio on $\R\P^1$. While this agrees with $\gcr_1$ in case $\dim E=2$, we find it useful to use a different notation since in our paper the vector space $E$ has dimension $d$, and since the projective cross-ratio has additional symmetries than $\gcr$. We nevertheless have the following well known identities.

{{
		\begin{remark}\label{rem.cross ratio as projective} 
			Let $Q_1\neq Q_2\in \Gr_{d-1}(E)$ and $P_1,P_2\in \P(E)\setminus \P(Q_1\cap Q_2)$. For the natural projections $[P_1],[Q_1],[Q_2],[P_2]\in\P(\quotient{E}{Q_1\cap Q_2})\simeq \R\P^1$ we have
			$$\cro_1(P_1,Q_1,Q_2,P_2)=\pcr([P_1],[Q_1],[Q_2],[P_1])$$
			provided one of the sides is defined. 
			Similarly, if $P_1\neq P_2\in \P(E)$ and the cross ratio is defined, then it can be written as
			$$ \cro_1(P_1,Q_1,Q_2,P_2)=\pcr(P_1,Q_1\cap P,Q_2\cap P,P_2)$$
			where $P=P_1\oplus P_2\simeq \R^2$.
\end{remark}}}

Additionally, for some degenerate configurations, we are able to compute a cross ratio in terms of the standard projective cross-ratio in an associated projective line. This will be very useful in the paper.

\begin{prop}[{\cite[Corollary 3.12]{Beyrer-Pozzetti}}]\label{prop.projection of cross ratio}
Let $P^k,Q^k\in \Gr_k(E)$ be such that $\dim P^k\cap Q^k=k-1$. Denoting by $X^{k+1}=\langle P^k,Q^k\rangle$, $X:=\quotient{X^{k+1}}{P^k\cap Q^k}\simeq \R^2,$ it holds 
$$\gcr_k\left(P^k,\,S^{d-k},T^{d-k},Q^k\right)=\pcr([P^k]_X, [S^{d-k}\cap X^{k+1}]_X,[T^{d-k}\cap X^{k+1}]_X,[Q^k]_X),$$
whenever one of the sides is defined. 
\end{prop}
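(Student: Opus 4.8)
\textbf{Proof proposal for Proposition \ref{prop.projection of cross ratio}.}

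The plan is to reduce the identity $\gcr_k(P^k, S^{d-k}, T^{d-k}, Q^k) = \pcr([P^k]_X, [S^{d-k}\cap X^{k+1}]_X, [T^{d-k}\cap X^{k+1}]_X, [Q^k]_X)$ to a direct computation in a well-chosen basis. First I would unpack both sides. The left-hand side, by definition of $\gcr_k$, is the product $\frac{P^k\wedge T^{d-k}}{P^k\wedge S^{d-k}}\cdot\frac{Q^k\wedge S^{d-k}}{Q^k\wedge T^{d-k}}$, computed in $\wedge^d E$. The right-hand side is a classical projective cross ratio of four points on the line $\P(X)$ (where $X$ is $2$-dimensional, since $\dim X^{k+1} = k+1$ and $\dim(P^k\cap Q^k) = k-1$), namely $[P^k]_X$ and $[Q^k]_X$ are the two distinct points coming from $P^k$ and $Q^k$, while $[S^{d-k}\cap X^{k+1}]_X$ and $[T^{d-k}\cap X^{k+1}]_X$ are the images of the (generically $1$-dimensional) intersections $S^{d-k}\cap X^{k+1}$ and $T^{d-k}\cap X^{k+1}$.

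Next I would choose coordinates adapted to the flag $P^k\cap Q^k \subset X^{k+1}\subset E$. Pick a basis $e_1,\ldots,e_{k-1}$ of $P^k\cap Q^k$; extend by $f$ so that $e_1,\ldots,e_{k-1}, f$ is a basis of $P^k$; extend by $g$ so that $e_1,\ldots,e_{k-1}, f, g$ is a basis of $X^{k+1}$; and complete to a basis $e_1,\ldots,e_{k-1}, f, g, h_1,\ldots,h_{d-k-1}$ of $E$. In $\P(X)$, with $[f], [g]$ as projective frame, write $[P^k]_X = [f]$, $[Q^k]_X = [f + c\,g]$ for some scalar $c\neq 0$ (nonzero since $P^k\ne Q^k$), and let $S^{d-k}\cap X^{k+1}$ and $T^{d-k}\cap X^{k+1}$ correspond to $[f + s\,g]$ and $[f + t\,g]$ respectively. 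Then $\pcr$ of these four points is the usual expression $\frac{(s)(c - t)}{(t)(c - s)}$ up to the sign/ordering conventions fixed for $\pcr$, which I would pin down to match the excerpt's conventions for $\cro_k$ (using Proposition \ref{prop.property of grassmannian cro} as a consistency check on symmetries).

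Then I would compute the four wedge products on the left. Choosing bases of $S^{d-k}$ and $T^{d-k}$ whose projections to $X^{k+1}/(P^k\cap Q^k)$ realize $f + s\,g$ and $f + t\,g$, one observes that in each wedge $P^k\wedge S^{d-k}$, $Q^k\wedge S^{d-k}$, etc., only the component of the chosen basis vector of $S^{d-k}\cap X^{k+1}$ along $g$ survives the exterior product with the $k-1$ vectors $e_1,\ldots,e_{k-1}$ and with $f$ (the $e_i$- and $f$-components die against $P^k = \langle e_1,\ldots,e_{k-1},f\rangle$, and the $h_j$-components are common to all four terms and cancel in the ratio). This is the routine linear-algebra heart of the argument, and it yields $P^k\wedge S^{d-k} \propto s$, $Q^k\wedge S^{d-k}\propto (s - c)$ (or $c - s$, up to sign), and similarly for $T$, with the same proportionality constant in all four. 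Taking the ratio gives exactly $\frac{s\,(c-t)}{t\,(c-s)}$, matching $\pcr$. Finally I would handle the boundary cases where one side is infinite or zero — e.g. $S^{d-k}$ not transverse to one of $P^k, Q^k$, or $S^{d-k}\cap X^{k+1}$ degenerate — by the same bookkeeping, checking that $\gcr_k = \infty$ (resp. $0$) precisely when the corresponding $\pcr$ is, which is consistent with Proposition \ref{prop.property of grassmannian cro}(4)--(5) and Lemma \ref{lem.cross ratio is algebraic}; alternatively, once the identity is established on the open dense transverse locus, continuity of both sides (Lemma \ref{lem.cross ratio is algebraic}) extends it to the whole common domain.

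\textbf{Main obstacle.} I expect the only real subtlety to be bookkeeping of signs and normalizations: matching the specific ordering/sign convention baked into the definition of $\cro_k$ (via the fixed identification $\wedge^d E\simeq\R$) with the convention for the projective cross ratio $\pcr$, and making sure the "common constant" in the four wedge products genuinely cancels (it does, because changing bases of $S^{d-k}$ or $T^{d-k}$ scales numerator and denominator of each fraction identically). Everything else is a one-line exterior-algebra computation in the adapted basis.
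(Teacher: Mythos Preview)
The paper does not actually supply a proof of this proposition: it is quoted as \cite[Corollary 3.12]{Beyrer-Pozzetti} and used as a black box, so there is no in-paper argument to compare against. Your adapted-basis computation is a correct and standard way to establish the identity; the only point to watch is the one you already flagged, namely matching the ordering convention of $\pcr$ to that of $\cro_k$ (your displayed expression $\frac{s(c-t)}{t(c-s)}$ is the reciprocal of what the paper's convention gives, so you would indeed need to pin down the normalization). One small sharpening: rather than saying the $h_j$-components of the basis of $S^{d-k}$ ``are common to all four terms and cancel in the ratio'', it is cleaner to observe that once you wedge with $e_1\wedge\cdots\wedge e_{k-1}\wedge f\wedge g$ (i.e.\ with a volume form on $X^{k+1}$), only the images of the remaining $d-k-1$ basis vectors in $E/X^{k+1}$ matter, producing a single nonzero constant $C_S$ depending on $S^{d-k}$ alone; then $P^k\wedge S^{d-k}$ and $Q^k\wedge S^{d-k}$ are both proportional to $C_S$, and likewise $C_T$ for $T^{d-k}$, so the cancellation is immediate.
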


Martone--Zhang used  cross ratios to introduce the class of positively ratioed representations. We use here a slightly stronger asymmetric definition (see \cite[Remark 5.2]{Beyrer-Pozzetti} for a comparison)
\begin{defn}\label{d.posrat} (cfr. \cite[Def. 2.25]{MZ}, \cite[Def. 5.1]{Beyrer-Pozzetti})
A representation $\r:\G\to \PGL(E)$ is  \emph{$k$--positively ratioed} if it is $k$--Anosov and for every cyclically ordered\footnote{We say that a $n$-tuple in $\dg$ is \emph{cyclically ordered} if it is in clockwise or counter clockwise order} quadruple $(x,y,z,w)$ of points in $\dg$ 
\begin{align}\label{eq.def pos ratio}
\cro_k(x^k,y^{d-k},z^{d-k},w^k)\geq 1.
\end{align}
If $\r$ is only $k$--almost Anosov, we call $\r$ $k$--almost positively ratioed.
\end{defn}

It is shown in \cite{MZ} that the inequalities in \eqref{eq.def pos ratio} are necessarily  strict. The proof applies verbatim to almost positively ratioed representations.

\subsection{Property $H_k$}
The following property was introduced by Labourie in his work on Hitchin representations, and used in \cite{PSW1, ZZ} to study differentiability properties of limit curves. The following definition works also in the case $k=1,d-1$: every representation is considered being $0$-Anosov and $d$-Anosov, we further set, for $w\in\dg$, $w^0=\{0\}$ and $w^d=E$.
\begin{defn}[Compare {\cite[Section 7.1.4]{Labourie-IM}}]\label{d.Hk}
A representation $\rho:\Gamma\to\PGL(E)$ satisfies \emph{property $H_k$} for $1\leq k \leq d-1$ if  it is $\{k-1,k,k+1\}$--Anosov and for all $(x,y,z)\in\dg^{(3)}$ the following sum is direct
\begin{align*}
x^k+ \left(y^k\cap z^{d-k+1}\right)+ z^{d-k-1}.
\end{align*} 
we say that $\r$ satisfies \emph{property $H^*_k$} if $\r$ satisfies properties $H_k$ and $H_{d-k}$. 

If $\r$ is $\{k-1,k,k+1\}$--almost Anosov and the above transversality holds, we say that \emph{its boundary maps satisfy property $H_k$ (resp. $H_k^*$)}.
\end{defn}

In \cite[Proposition 4.4]{Beyrer-Pozzetti} it was observed that $\r$ satisfies property $H_k$ if and only if $\r^{\dual}$ satisfies $H_{d-k}$. 

Representations satisfying property $H_k$ have the following two properties that will be crucial for us:

\begin{prop}[{\cite[Proposition 8.11]{PSW1}}]\label{prop.1-hyperconvex for property Hk} If $\rho:\Gamma\to\PGL(E)$ satisfies property $H_k$, then the boundary curve $\xi^k$ has $C^1-$image and the tangent space is given by
\begin{align*}
T_{x^k}\xi^k(\dg)=\{ \phi\in \Hom(x^k,y^{d-k})| x^{k-1}\subseteq \ker \phi, \;{\rm Im}\: \phi\subseteq  x^{k+1}\cap y^{d-k}  \}
\end{align*}
for any $y\neq x \in\dg$.
\end{prop}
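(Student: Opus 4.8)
The statement asserts two things: that the curve $\xi^k$ has $C^1$ image, and that its tangent space at $x^k$ is the explicit space of homomorphisms described. My plan is to reduce the $C^1$ statement to a concrete differentiability computation in a single affine chart, and to identify the tangent space by differentiating the natural parametrization of $\xi^k$ near $x$ against a transverse complement, making repeated use of property $H_k$ to control where derivatives land.

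First I would fix $x\in\dg$ and a point $y\neq x$, and use $y^{d-k}$ as a transverse complement to $x^k$, so that a neighborhood of $x^k$ in $\Gr_k(E)$ is identified with $\Hom(x^k,y^{d-k})$ via the standard affine chart (a nearby $k$-plane $V$ transverse to $y^{d-k}$ is the graph of a unique $\phi_V\in\Hom(x^k,y^{d-k})$). Under this identification the curve becomes $z\mapsto \phi(z):=\phi_{\xi^k(z)}$, with $\phi(x)=0$. The task is to show $z\maps\phi(z)$ is differentiable in a suitable reparametrization and to compute its image. The key structural input is that $\xi^k$ is already continuous (Anosov), and that for $z$ near $x$ the flag condition from property $H_k$ forces $\xi^{k}(z)$, $\xi^{k-1}(z)$, $\xi^{k+1}(z)$ to be nested appropriately and transverse to the relevant spaces at $x$. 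The directness of the sum $x^k + (z^k\cap x^{d-k+1}) + x^{d-k-1}$ (applying $H_k$ with the triple $(z,x,\cdot)$ appropriately, using that $H_k$ holds for all triples in $\dg^{(3)}$) is what keeps the graph coordinate $\phi(z)$ under control; in particular the one-dimensional ``new direction'' $z^k\cap x^{d-k+1}$ relative to $x^{k-1}$ tells us that $\phi(z)$ has rank one to first order, with kernel containing $x^{k-1}$ and image inside $x^{k+1}\cap y^{d-k}$.

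Concretely, I would argue the inclusion $T_{x^k}\xi^k(\dg)\subseteq \{\phi : x^{k-1}\subseteq\ker\phi,\ \mathrm{Im}\,\phi\subseteq x^{k+1}\cap y^{d-k}\}$ first: any tangent vector is a limit of secant directions $\tfrac{1}{t}\phi(z_t)$ for $z_t\to x$; since $x^{k-1}\subset\xi^{k-1}(z_t)\cap$ (stuff) is forced to vary to higher order by the nestedness $\xi^{k-1}(z)\subset\xi^k(z)$ together with $C^0$-continuity of $\xi^{k-1}$ and $\xi^k$ and the $H_{k-1}$-type directness, the restriction of $\phi(z_t)$ to $x^{k-1}$ is $o(t)$; and since $\xi^k(z_t)\subset\xi^{k+1}(z_t)\to x^{k+1}$, the image of $\phi(z_t)$ lies in a space converging to $x^{k+1}$, so in the limit the image sits in $x^{k+1}\cap y^{d-k}$. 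Then I would show that the displayed homomorphism space is exactly $1$-dimensional (it is: $\Hom(x^k/x^{k-1},\ (x^{k+1}\cap y^{d-k}))\cong\Hom(\R,\R)$), which combined with the fact that $\xi^k$ is an injective curve forces the containment to be an equality and the curve to be a genuine $C^1$ immersed submanifold — here I would invoke, or re-prove in this chart, the elementary fact that a continuous injective curve whose secant directions all accumulate onto a fixed line is $C^1$ with that line as tangent. This last analytic step, upgrading ``all secant lines converge'' to genuine differentiability, is the part that needs the most care; it is where one actually uses that the limiting object is a line rather than just a contained subspace, and it is essentially the content of \cite[Proposition 8.11]{PSW1}, so I would either cite it directly or reproduce the short argument that pointwise control of secants plus injectivity plus the $H_k$ transversality (which prevents the curve from having cusps) yields a $C^1$ parametrization.

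The main obstacle I anticipate is not the identification of the candidate tangent space — that is a direct consequence of the nesting $\xi^{k-1}\subset\xi^k\subset\xi^{k+1}$ and continuity — but rather the clean analytic deduction of $C^1$-ness from the transversality hypothesis, i.e. ruling out the pathology where the curve is tangent to the correct line at every point in a weak (Gateaux/secant) sense but fails to be $C^1$. Property $H_k$ is precisely designed to exclude this: the directness of $x^k + (y^k\cap z^{d-k+1}) + z^{d-k-1}$ gives uniform transversality along the curve, which translates in the affine chart into a uniform non-degeneracy of the derivative, and that uniformity is what promotes pointwise secant convergence to $C^1$ regularity. So the plan is: set up the chart, read off kernel and image constraints from nesting and continuity, count dimensions to pin down the tangent line, and then use the $H_k$ uniform transversality to run the standard ``secants converge uniformly $\Rightarrow$ $C^1$'' argument.
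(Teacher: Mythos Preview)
The paper does not prove this proposition; it is quoted verbatim from \cite[Proposition 8.11]{PSW1} and used as a black box throughout. So there is no ``paper's own proof'' to compare against beyond the citation itself.

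That said, your plan is in the right spirit but has a real gap in the step where you read off the kernel and image constraints. You write that the restriction of $\phi(z_t)$ to $x^{k-1}$ is $o(t)$ ``by nestedness and continuity of $\xi^{k-1}$,'' but nestedness $\xi^{k-1}(z)\subset\xi^k(z)$ and continuity alone only tell you that $\phi(z)|_{\xi^{k-1}(z)}$ is the graph map for $\xi^{k-1}(z)$, which tends to $0$; they do \emph{not} tell you that $\phi(z)|_{x^{k-1}}$ is of smaller order than $\phi(z)$ itself. Likewise, the image of $\phi(z)$ is the projection of $\xi^k(z)$ to $y^{d-k}$ along $x^k$, and there is no reason a priori for this to lie near $x^{k+1}$: one needs to compare the rate at which $\xi^k(z)$ leaves $x^k$ with the rate at which $\xi^{k+1}(z)$ leaves $x^{k+1}$, and this comparison is exactly where the directness in property $H_k$ enters, not merely as a uniform non-degeneracy at the end but as the mechanism that forces the secant line to converge at all. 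In \cite{PSW1} this is handled by first reducing (via an exterior power) to the case $k=1$ and then using the $(1,1,2)$-hyperconvexity formulation, where the projective picture makes the secant-to-tangent argument transparent; your direct approach in $\Hom(x^k,y^{d-k})$ can be made to work, but you should expect to use $H_k$ already in the identification of the candidate tangent line, not only in the final $C^1$ upgrade.
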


In \cite{Beyrer-Pozzetti} we used such good control on the derivative to show that  representations satisfying property $H^*_k$ are positively ratioed:
\begin{prop}[{\cite[Theorem 1.3]{Beyrer-Pozzetti}}] \label{prop.hyperconvex implies positively ratioed}
If $\r:\G\to\PGL(E)$ satisfies property $H^*_k$, then $\r$ is $k$--positively ratioed.
\end{prop}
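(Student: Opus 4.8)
\emph{Proof proposal.} Since $H_k$ in particular makes $\r$ $\{k-1,k,k+1\}$--Anosov, $\r$ is $k$--Anosov, and the only thing that has to be checked is the cross-ratio inequality of Definition \ref{d.posrat}. Fix a cyclically ordered quadruple $(x,y,z,w)$ in $\dg$ and keep $x,y,w$ fixed; both $y$ and $z$ lie in the open arc $I\subset\dg\setminus\{x,w\}$ along which $x$, then the points of $I$, then $w$ are met in this cyclic order, with $z$ between $y$ and the $w$--end of $I$. Consider
\[
g\colon I\to(0,\infty),\qquad g(t):=\cro_k\bigl(x^k,y^{d-k},t^{d-k},w^k\bigr),
\]
so that $g(z)$ is the number we must bound below by $1$. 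Three elementary observations set up the argument. First, $g(y)=1$, since a cross ratio with a repeated argument equals $1$. Second, $g$ takes values in $(0,\infty)$ throughout $I$: by Proposition \ref{prop.property of grassmannian cro}(4)--(5) a zero or a pole of $g$ at some $t\in I$ would force one of the transversalities $x^k\tv t^{d-k}$, $x^k\tv y^{d-k}$, $w^k\tv t^{d-k}$, $w^k\tv y^{d-k}$ to fail, which contradicts the Anosov transversality provided by $H^*_k$ as $x,y,t,w$ are pairwise distinct. Third, $g(t)\to\infty$ as $t$ approaches the $w$--end of $I$: by continuity of the cross ratio on its maximal domain (Lemma \ref{lem.cross ratio is algebraic}) the limit equals $\cro_k(x^k,y^{d-k},w^{d-k},w^k)$, and this is $\infty$ by Proposition \ref{prop.property of grassmannian cro}(5) since $\xi^k(w)$ and $\xi^{d-k}(w)$ are nested, hence not transverse. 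It therefore suffices to prove that $g$ is strictly monotone on $I$: being $1$ at $y$ and blowing up at the $w$--end, $g$ is then strictly increasing towards $w$, so $g(z)>1$.

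To prove monotonicity I would show that $g'$ vanishes nowhere on $I$. By Proposition \ref{prop.1-hyperconvex for property Hk}, applied to $\xi^{d-k}$ (this uses $H_{d-k}$), the curve $\xi^{d-k}(\dg)$ is a $C^1$ submanifold of $\Gr_{d-k}(E)$ with a well-defined one-dimensional tangent line at every point; parametrizing it by a $C^1$ regular parameter and composing with the cross ratio, which is algebraic, hence smooth, on pairwise-transverse $4$--tuples (Lemma \ref{lem.cross ratio is algebraic}), shows that $g$ is $C^1$ and that its critical points are exactly the $t$ at which the differential of $\cro_k(x^k,y^{d-k},\,\cdot\,,w^k)$ annihilates $T_{t^{d-k}}\xi^{d-k}(\dg)$. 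By Proposition \ref{prop.1-hyperconvex for property Hk} again, this tangent direction is the rank-one map $t^{d-k}\to E/t^{d-k}$ with kernel $t^{d-k-1}$ and image $t^{d-k+1}/t^{d-k}$; in particular it is pinned down, up to a nonzero scalar, by the osculating flag $t^{d-k-1}\subset t^{d-k}\subset t^{d-k+1}$. Plugging this direction into the logarithmic derivative of $g$, the $y$--dependent terms cancel, and (via Proposition \ref{prop.projection of cross ratio} or a direct wedge-product computation) $\tfrac{d}{dt}\log g$ turns into a nonzero multiple of the difference of the slopes of the two lines $[\,x^k\cap t^{d-k+1}\,]$ and $[\,w^k\cap t^{d-k+1}\,]$ inside $\P\bigl(t^{d-k+1}/t^{d-k-1}\bigr)\cong\R\P^1$, measured in the affine coordinate determined by $[\,t^{d-k}\,]$ and the tangent direction; here $x^k\cap t^{d-k+1}$ and $w^k\cap t^{d-k+1}$ are one-dimensional and not contained in $t^{d-k-1}$, by $k$--Anosov transversality, so they do define points of $\R\P^1$.

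Consequently $g'(t)\neq 0$ exactly when $[\,x^k\cap t^{d-k+1}\,]\neq[\,w^k\cap t^{d-k+1}\,]$ in $\P\bigl(t^{d-k+1}/t^{d-k-1}\bigr)$, and this is precisely what $H_k$ delivers: its instance at the triple $(x,w,t)$ says that $x^k\oplus(w^k\cap t^{d-k+1})\oplus t^{d-k-1}$ is direct, i.e.\ that the line $x^k\cap t^{d-k+1}$ is not contained in $(w^k\cap t^{d-k+1})+t^{d-k-1}$, i.e.\ that the two points of $\R\P^1$ are distinct. Hence $g'$ is nowhere zero on $I$, so $g$ is strictly increasing towards the $w$--end and $g(z)>1$; since $(x,y,z,w)$ was an arbitrary cyclically ordered quadruple, $\r$ is $k$--positively ratioed. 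The same argument with the boundary maps in place of the Anosov structure gives the $k$--almost positively ratioed version.

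I expect the genuine work to lie in the second paragraph. Making the differentiation rigorous means really using that $\xi^{d-k}(\dg)$ is a $C^1$ \emph{image} rather than that $t\mapsto\xi^{d-k}(t)$ is differentiable, which it is not --- one reparametrizes the image by arclength, checks that the resulting $g$ is $C^1$, and carries out the (routine but fiddly) bookkeeping identifying $\tfrac{d}{dt}\log g$ with the slope difference. By contrast, the non-vanishing of that slope difference is a clean linear-algebra consequence of $H_k$, and the \emph{sign} of $g'$ costs nothing once one notices the blow-up of $g$ at the $w$--end, which bypasses any discussion of eigenvalue signs. It is this division of labour --- $H_{d-k}$ yielding differentiability and the shape of the tangent, $H_k$ yielding the non-degeneracy of the resulting $\R\P^1$--configuration --- that uses the full hypothesis $H^*_k$.
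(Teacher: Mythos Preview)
The paper does not prove this statement itself; it is quoted from \cite{Beyrer-Pozzetti}. Your approach is correct and is essentially the argument carried out there: one half of $H_k^*$ supplies the $C^1$ structure on the relevant boundary curve so that the cross ratio becomes $C^1$ after reparametrization, a wedge computation identifies its logarithmic derivative with the difference of the affine coordinates of $[x^k\cap t^{d-k+1}]$ and $[w^k\cap t^{d-k+1}]$ in $\P(t^{d-k+1}/t^{d-k-1})$, and the other half of $H_k^*$ forces these two points to be distinct, so the derivative never vanishes and the blow-up at the $w$--end fixes the sign.
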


\subsection{Property $C_k$}
We will also need a second transversality condition on boundary maps. Also in this case the definition makes sense also in the case $k=1, d-2$: indeed by convention every representation is 0-Anosov and $d$-Anosov, and we set $x^d=E$ and $x^0=\{0\}$. 
\begin{defn} [{\cite[Definition 1.6]{Beyrer-Pozzetti}}]
A representation $\r:\G\to \PGL(E)$ satisfies \emph{property $C_k$} for $1\leq k \leq d-2$ if it is $\{k-1,k,k+1,k+2\}-$Anosov and for $(x,y,z)\in \dg^{(3)}$ the sum 
$$x^{d-k-2} + (x^{d-k+1}\cap y^{k})+ z^{k+1}$$ 
is direct. We say that $\r$ satisfies property $C_k^*$ if it satisfies property $C_k$ and $C_{d-k-1}$.
\end{defn}

The representation $\r$ satisfies $C_k$ if and only if $\r^{\dual}$ satisfies $C_{d-k-1}$ \cite{Beyrer-Pozzetti}.
A first important consequence of property $C_k$ is the following

\begin{prop}[{\cite[Proposition 4.15]{Beyrer-Pozzetti}}]\label{prop.Ck yields convex curve}
If $\r:\G\to \PGL(E)$ satisfies property $H_k$ and $C_k$, then for any $x\in\dg$
$$\begin{array}{rl}
\sfp_x:&\dg\to \P(\quotient{x^{d-k+1}}{ x^{d-k-2}})\simeq \R\P^2\\
&\left\{\begin{array}{l}
\sfp_x(y):= [y^k\cap x^{d-k+1}] ,\quad y\neq x\\
\sfp_x(x):= [x^{d-k-1}]
\end{array}\right.
\end{array}$$
has  $C^1$--image with tangents 
$\sfp_x^{(2)}(y):=[y^{k+1}\cap x^{d-k+1}]$ and parametrizes the boundary of a strictly convex domain.
\end{prop}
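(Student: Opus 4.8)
The plan is to reduce the statement to the one-dimensional projective picture where convexity and $C^1$-ness are controlled by positivity of the boundary curve, much in the spirit of Proposition~\ref{prop.1-hyperconvex for property Hk} and Proposition~\ref{prop.projection of cross ratio}. First I would fix $x\in\dg$ and note that both properties $H_k$ and $C_k$ require $\{k-1,k,k+1,k+2\}$-Anosov, so all the boundary maps $\xi^{k-1},\xi^k,\xi^{k+1}$ are available and have the transversality supplied by $H_k$ and $C_k$. I would first check that $\sfp_x$ is well defined: for $y\neq x$, property $C_k$ applied to the triple $(x,x',y)$ (with a third auxiliary point, or directly from transversality of $\xi^k(y)$ with $\xi^{d-k}(x)$) shows $y^k\cap x^{d-k+1}$ has the expected dimension, namely $2$ modulo $x^{d-k-2}$, and in fact $\dim(y^k\cap x^{d-k+1})=3$ with $x^{d-k-2}\subset y^k$? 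No — rather $y^k\cap x^{d-k+1}$ is a line in the rank-$3$ quotient $\quotient{x^{d-k+1}}{x^{d-k-2}}$ precisely because of the directness of $x^{d-k-2}+(x^{d-k+1}\cap y^k)+z^{k+1}$ from property $C_k$ (one first verifies $\dim(x^{d-k+1}\cap y^k)=3$ using the $H_k$/$C_k$ transversalities, so that it descends to a line in $\R\P^2$). Continuity of $\sfp_x$ away from $x$ is immediate, and at $x$ one uses that $y^k\to x^k$ and $y^k\cap x^{d-k+1}\to x^{d-k-1}$ modulo $x^{d-k-2}$, which follows from the nested flag structure.

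The heart of the argument is strict convexity of the image. I would establish this by showing that for any four cyclically ordered points $y_1,y_2,y_3,y_4$ in $\dg\setminus\{x\}$, the corresponding four points $\sfp_x(y_i)\in\R\P^2$ are in convex position, equivalently that the curve never has a supporting line meeting it in a positive-dimensional set or three distinct times. The mechanism: project one further step. Since $\r$ satisfies $H_k$, by Proposition~\ref{prop.1-hyperconvex for property Hk} the curve $\xi^k$ is $C^1$ with tangent at $x^k$ controlled by $x^{k-1}$ and $x^{k+1}$; dually $\xi^{d-k}$ is $C^1$. I would intersect the relevant subspaces with a generic $2$-plane in the quotient and invoke Proposition~\ref{prop.projection of cross ratio} and Remark~\ref{rem.cross ratio as projective} to translate the $H_k$-transversality into the statement that the projected curve to each $\R\P^1$ obtained by further quotienting $\sfp_x(\dg)$ is a strictly monotone (hence convex) curve. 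Concretely, fixing two auxiliary points gives a line in $\R\P^2$; the function $y\mapsto$ (cross ratio of $\sfp_x(y)$ against that line and two reference points) is, up to the projection formula, a $k$-th Grassmannian cross ratio of the original boundary maps, which by the $H_k$ transversality (directness of $x^k+(y^k\cap z^{d-k+1})+z^{d-k-1}$) is strictly monotone in $y$ — this is exactly the convexity of a curve in $\R\P^2$. The $C^1$ statement and the identification of the tangent as $[y^{k+1}\cap x^{d-k+1}]$ then comes from differentiating $\sfp_x$: the derivative of $y\mapsto y^k$ lands in $\Hom(y^k,z^{d-k})$ with image in $y^{k+1}\cap z^{d-k}$ (Proposition~\ref{prop.1-hyperconvex for property Hk}), and intersecting with $x^{d-k+1}$ and passing to the quotient by $x^{d-k-2}$ picks out precisely $[y^{k+1}\cap x^{d-k+1}]$, which is nonzero and transverse to $\sfp_x(y)$ by property $C_k$ applied now with the two infinitesimally close points replaced by the $(k+1)$-flag — this is where $C_k$, rather than just $H_k$, is essential, since it is $C_k$ that guarantees $x^{d-k-2}+(x^{d-k+1}\cap y^{k+1})$ has the right dimension so the tangent line is genuinely a line and not degenerate.

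The main obstacle I expect is the bookkeeping of dimensions and transversalities needed to see that $\sfp_x$ and its candidate tangent $\sfp_x^{(2)}$ are genuinely a point and a line in $\R\P^2$ (not lower-dimensional), uniformly in $y$ and including the limiting behaviour as $y\to x$; this requires carefully combining the $H_k$, $H_{k}$-for-adjacent-indices, and $C_k$ transversality conditions, and checking that the projection formula of Proposition~\ref{prop.projection of cross ratio} applies on the whole relevant domain (using Lemma~\ref{lem.cross ratio is algebraic} for the boundary of the domain). Once the correct linear-algebra setup is in place, strict convexity is a formal consequence of the strictness of the cross-ratio inequalities, and $C^1$-ness follows from Proposition~\ref{prop.1-hyperconvex for property Hk} by the chain rule for the quotient map; so I would budget most of the effort for the dimension count and the verification that everything descends correctly to $\P(\quotient{x^{d-k+1}}{x^{d-k-2}})$.
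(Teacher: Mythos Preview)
The paper does not prove this proposition here; it is quoted from \cite{Beyrer-Pozzetti}. Nonetheless the intended argument is visible in the proof of Corollary~\ref{cor.strict convex for mixed quotient}: property $H_k$ (via Proposition~\ref{prop.1-hyperconvex for property Hk}) yields that $\sfp_x$ has $C^1$ image with tangent $\sfp_x^{(2)}$, and strict convexity then reduces to the criterion $\sfp_x(y)\notin\sfp_x^{(2)}(z)$ for all $y\neq z$ (this is the content of \cite[Proposition~4.17]{Beyrer-Pozzetti}). That point--line transversality is \emph{exactly} property $C_k$ read in the quotient: directness of $x^{d-k-2}+(x^{d-k+1}\cap y^k)+z^{k+1}$ forces $(x^{d-k+1}\cap y^k)\not\subset x^{d-k-2}+z^{k+1}$, hence $[y^k\cap x^{d-k+1}]\notin[z^{k+1}\cap x^{d-k+1}]$ in $\P(x^{d-k+1}/x^{d-k-2})$. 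The whole proof is two lines once this is seen.

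Your plan contains a genuine gap and a dimension error. First, $y^k\cap x^{d-k+1}$ is $1$-dimensional, not $3$-dimensional: since $y^k\pitchfork x^{d-k}$ one has $y^k+x^{d-k+1}=E$, so the intersection has dimension $k+(d-k+1)-d=1$; likewise $y^{k+1}\cap x^{d-k+1}$ is $2$-dimensional. Both of these, and the fact that they descend injectively to the quotient by $x^{d-k-2}$, follow from Anosov transversality alone and do not use $C_k$. Second, and more importantly, you misidentify the role of $C_k$: it is not needed to make the tangent ``genuinely a line'' (that is automatic from the Anosov transversalities just mentioned), but to provide the transversality $\sfp_x(y)\pitchfork\sfp_x^{(2)}(z)$ for $y\neq z$, which is the criterion for strict convexity of a $C^1$ curve in $\R\P^2$. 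Your proposed route through further projections to $\R\P^1$ and cross-ratio monotonicity is both vaguer and unnecessary; monotonicity of linear projections does not by itself rule out inflection points or flat arcs, whereas the point--tangent transversality supplied directly by $C_k$ does.
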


The transversality properties $C_k$ and $H_k$ guarantee that a representation  satisfies a collar lemma for suitable length functions. Two elements $g,h\in\G=\pi_1(S)$ are \emph{linked}, if the corresponding closed geodesics with respect to some (and thus any) hyperbolic metric intersect in $S$. We are interested in collar lemmas comparing the root and the weight, as this is in many cases optimal (see \cite[Appendix]{Beyrer-Pozzetti}), but still ensures proximality in the limit (Proposition \ref{lem.almost Anosov limitcollar}). 
\begin{defn}\label{d.collar}
A representation $\rho:\G\to\PGL(E)$ satisfies the $k$-th \emph{root versus weight collar lemma}, or simply the $k$--collar lemma, if 
for any linked pair $g,h\in \G$ it holds
\begin{align*}
	\frac{\lambda_1\ldots\lambda_k}{\lambda_d\ldots\lambda_{d-k+1}}(\rho(g))>\left(1-\frac{\lambda_{k+1}}{\lambda_k}(\rho(h))\right)^{-1}.
\end{align*}
\end{defn}	
We have
\begin{thm}[{\cite[Theorem 1.3]{Beyrer-Pozzetti}}]\label{thm.collar lemma}
For $k$ such that $1\leq k \leq d-1$ let $\r:\G=\pi_1(S)\to \PGL(E)$ be an Anosov representation satisfying properties $H^*_{k-1},H^*_{k}, H^*_{k+1}$ and $C_{k-1}$, $C_k$. Then $\rho$ satisfies the $k$--collar lemma.\footnote{If $k=1$ then $H^*_{k-1}$ and $C_{k-1}$ are not defined (and not necessary) and if $k=d-1$ then $H^*_{k+1}$ and $C_{k}$ are not defined (and not necessary)}
\end{thm}

\subsection{Semi-simplification}
A representation is \emph{reductive} if the Zariski closure of its image is a reductive subgroup of $\PGL(E)$.
It is well known that every reductive representation $\r:\G\to\PGL(E)$ splits as direct sum $\r=\eta_1 \oplus \ldots \oplus \eta_i:\G\to \PGL(E_1\oplus \ldots \oplus E_i)$ of irreducible representations. For those representations the following lemma is very useful. The proof applies also to almost Anosov representations.

\begin{lem}[{\cite[Lemma 4.17]{Beyrer-Pozzetti}}]\label{lem.splitting boundary map for reducible reps}
Let $\rho=\eta_1\oplus \eta_2:\G\to\PGL(E_1\oplus E_2)$. If $\rho$ is $k$--(almost) Anosov, then the dimensions $k_i$ of the intersection $x_i^{k_i}:=x^{k}\cap E_i$ are constant, $x^k=x_1^{k_1}\oplus x_2^{k_2}$, and $\eta_i$ is $k_i$--(almost) Anosov with boundary map $x\mapsto x_i^{k_i}$. 
\end{lem}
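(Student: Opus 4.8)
The plan is to exploit the dynamics-preserving property of the Anosov (resp. almost Anosov) boundary map together with the block-diagonal structure of $\r=\eta_1\oplus\eta_2$ to pin down the intersection of $x^k$ with $E_1$ along a dense set of points, and then propagate this to all of $\dg$ by continuity. First I would observe that since $\r$ acts on $E$ preserving the decomposition $E=E_1\oplus E_2$, every element $\g$ of infinite order with $\r(\g)$ $k$-proximal has its attracting $k$-dimensional eigenspace $\xi^k(\g_+)$ respecting this decomposition in the strongest possible sense: writing the ordered eigenvalues of $\r(\g)$ as an interleaving of those of $\eta_1(\g)$ and $\eta_2(\g)$, the space $E_{\r(\g)}^{\geq\lambda_k}$ decomposes as $E_{\eta_1(\g)}^{\geq\lambda_k}\oplus E_{\eta_2(\g)}^{\geq\lambda_k}$, and since $\r(\g)$ is $k$-proximal there is no eigenvalue of absolute value exactly $|\lambda_k|$ other than $\lambda_k$ itself, so $\xi^k(\g_+)=E_{\r(\g)}^{>\lambda_{k+1}}$ splits as a direct sum $V_1\oplus V_2$ with $V_i=\xi^k(\g_+)\cap E_i$ and $\dim V_i$ equal to the number of the top $k$ eigenvalues of $\r(\g)$ coming from $\eta_i$. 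In particular, at such a point $\g_+$ the intersection $x^k\cap E_i$ has the expected dimension and $x^k=(x^k\cap E_1)\oplus(x^k\cap E_2)$.

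Next I would upgrade this to all of $\dg$. The set of attracting fixed points $\g_+$ of infinite-order elements is dense in $\dg$. The function $x\mapsto\dim(\xi^k(x)\cap E_1)$ is upper semicontinuous on $\dg$ (the locus where this dimension is $\geq j$ is closed, being cut out by vanishing of appropriate minors), and similarly for $E_2$; since $\dim(\xi^k(x)\cap E_1)+\dim(\xi^k(x)\cap E_2)\leq k$ always, and equality with the constant value $k_1+k_2=k$ holds on the dense set of proximal fixed points, upper semicontinuity forces $\dim(\xi^k(x)\cap E_1)\geq k_1$ and $\dim(\xi^k(x)\cap E_2)\geq k_2$ everywhere, hence equality everywhere and $x^k=x_1^{k_1}\oplus x_2^{k_2}$ with $k_i$ constant. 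This shows the maps $x\mapsto x_i^{k_i}$ are well-defined; their continuity follows from continuity of $\xi^k$ together with the now-established constancy of the dimensions (a continuously varying family of subspaces whose intersection with a fixed subspace has constant dimension varies continuously), and equivariance is immediate from equivariance of $\xi^k$ and $\eta_i$-invariance of $E_i$.

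Finally I would check that $x\mapsto x_i^{k_i}$ is a $k_i$-(almost) Anosov boundary map for $\eta_i$: transversality $\xi^k(x)\tv\xi^{d-k}(y)$ for $x\neq y$ restricts to transversality of $x_i^{k_i}$ and $y_i^{d_i-k_i}$ inside $E_i$ (a direct-sum decomposition of $E$ refining one of $E_i$), and the dynamics-preserving property transfers from the computation in the first paragraph, since the attracting $k_i$-dimensional eigenspace of $\eta_i(\g)$ is exactly $\xi^k(\g_+)\cap E_i$. In the genuinely Anosov case one must additionally verify the eigenvalue-gap growth condition (3) of Definition \ref{defn.Anosov} for $\eta_i$; but the gap $\lambda_{k_i}/\lambda_{k_i+1}(\eta_i(\g))$ is controlled by the gap $\lambda_{k}/\lambda_{k+1}(\r(\g))$ precisely because the $k$-th and $(k+1)$-st eigenvalues of $\r(\g)$ are, by $k$-proximality, the $k_i$-th largest of $\eta_i$ and the $k_j$-th plus first of $\eta_j$ for the appropriate indices, so a uniform gap for $\r$ along $|\g_i|_\infty\to\infty$ gives one for each $\eta_i$. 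I expect the main obstacle to be the bookkeeping in this last point: making rigorous the interleaving of eigenvalues and checking that proximality for $\r$ at index $k$ really does force the $k$-th eigenvalue to "belong" unambiguously to one factor with a genuine gap, so that the gap for $\eta_i$ at the corresponding index is bounded below in terms of that of $\r$. Everything else is a routine semicontinuity-plus-density argument.
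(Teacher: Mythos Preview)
The paper does not supply its own proof of this lemma; it is quoted verbatim from \cite[Lemma 4.17]{Beyrer-Pozzetti}, with only the remark that the argument there also covers the almost Anosov case. So there is no in-paper proof to compare against, and I can only assess your plan on its own merits.

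Your strategy is sound and standard: use the block-diagonal structure to see that at each attracting fixed point $\g_+$ the space $\xi^k(\g_+)$ splits along $E_1\oplus E_2$, then push this to all of $\dg$ by density and semicontinuity. The verification of transversality, equivariance, the dynamics-preserving property, and the eigenvalue-gap growth for the factors are all as you describe; in particular your observation that $|\lambda_{k_i}/\lambda_{k_i+1}(\eta_i(\g))|\geq |\lambda_k/\lambda_{k+1}(\r(\g))|$ whenever $\r(\g)$ is $k$-proximal is exactly the right inequality and needs no further bookkeeping.

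There is one genuine, if small, gap in your second paragraph. You write as though the integers $k_1,k_2$ are already known to be the same at every proximal fixed point, and then invoke upper semicontinuity to push the inequalities $f_i(x):=\dim(\xi^k(x)\cap E_i)\geq k_i$ to all of $\dg$. But a priori different proximal elements $\g$ could yield different splittings $k_1(\g)+k_2(\g)=k$, and upper semicontinuity of $f_i$ alone does not rule this out. The clean fix is to first show $f_1+f_2\equiv k$ on all of $\dg$: since $f_1+f_2\leq k$ always, and $f_1+f_2=k$ on the dense set of proximal fixed points, upper semicontinuity of \emph{each} $f_i$ gives $k=\limsup(f_1(x_n)+f_2(x_n))\leq f_1(x)+f_2(x)$ for any $x$. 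Once $f_1+f_2\equiv k$, the identity $f_1=k-f_2$ shows $f_1$ is simultaneously upper and lower semicontinuous, hence continuous, hence (being integer-valued on the connected circle $\dg$) constant. With this in hand the rest of your argument goes through unchanged.
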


In the context of  representations satisfying property $H_k$ we recall

\begin{prop}[{\cite[Proposition 4.18]{Beyrer-Pozzetti}}]\label{prop.Hk and reducible}%
Let $\rho=\eta_1\oplus \eta_2:\G\to\PGL(E_1\oplus E_2)$ be $\{k-1,k,k+1\}$--(almost) Anosov. Assume, without loss of generality, that $(k-1)_1=k_1-1$. Then the boundary maps associated to $\rho$ have property $H_k$ if and only if $(k+1)_1=k_1+1$ and the boundary maps associated to $\eta_1$ have property $H_{k_1}$. 
\end{prop}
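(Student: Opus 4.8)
The plan is to decompose every boundary subspace occurring in the definition of property $H_k$ along $E=E_1\oplus E_2$ by means of Lemma~\ref{lem.splitting boundary map for reducible reps}, and thereby reduce the directness of $x^k+(y^k\cap z^{d-k+1})+z^{d-k-1}$ to a directness condition taking place entirely inside $E_1$.

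\emph{Step 1 (bookkeeping the split dimensions).} Being $\{k-1,k,k+1\}$--almost Anosov, $\rho$ carries boundary maps $\xi^j,\xi^{d-j}$ for $j\in\{k-1,k,k+1\}$ (Definition~\ref{defn.Anosov}), and Lemma~\ref{lem.splitting boundary map for reducible reps} applied to each of them shows that $j_i:=\dim(x^j\cap E_i)$ is independent of $x$, that $x^j=x_1^{j_1}\oplus x_2^{j_2}$, and that $\eta_i$ is $j_i$--almost Anosov with boundary map $x\mapsto x_i^{j_i}$. The nesting $x^{k-1}\subset x^k\subset x^{k+1}$ forces $(k-1)_i\le k_i\le (k+1)_i$, which together with the hypothesis $(k-1)_1=k_1-1$ gives $(k-1)_2=k_2$ and leaves exactly two cases for the $(k+1)$--split: (A) $(k+1)_1=k_1+1$, $(k+1)_2=k_2$; or (B) $(k+1)_1=k_1$, $(k+1)_2=k_2+1$. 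Decomposing the transversalities $x^{k-1}\oplus z^{d-k+1}=E$ and $x^{k+1}\oplus z^{d-k-1}=E$ (condition (1) of Definition~\ref{defn.Anosov}) along $E_1\oplus E_2$ and comparing dimensions yields $(d-k+1)_i=d_i-(k-1)_i$ and $(d-k-1)_i=d_i-(k+1)_i$; in particular $(d-k+1)_1=d_1-k_1+1$ and $(d-k+1)_2=d_2-k_2$.

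\emph{Step 2 (locating $y^k\cap z^{d-k+1}$).} Since all the subspaces involved split along $E_1\oplus E_2$, one has $y^k\cap z^{d-k+1}=\big(y_1^{k_1}\cap z_1^{(d-k+1)_1}\big)\oplus\big(y_2^{k_2}\cap z_2^{(d-k+1)_2}\big)$. Decomposing the transversality $\xi^{k-1}(y)\tv\xi^{d-k+1}(z)$ along $E_1\oplus E_2$ gives $y^{k-1}\cap E_i\ \tv\ z^{d-k+1}\cap E_i$ in $E_i$; as $(k-1)_2=k_2$ we have $y^{k-1}\cap E_2=y^k\cap E_2=y_2^{k_2}$, so $y_2^{k_2}\cap z_2^{(d-k+1)_2}=0$, while $(k-1)_1=k_1-1$ means $y^{k-1}\cap E_1=y_1^{k_1-1}$ has codimension one in $y_1^{k_1}$, so $\dim\big(y_1^{k_1}\cap z_1^{(d-k+1)_1}\big)\le 1$. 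Combined with the trivial bound $\dim(y^k\cap z^{d-k+1})\ge k+(d-k+1)-d=1$, this shows that $L:=y^k\cap z^{d-k+1}$ is a one--dimensional subspace of $E_1$. Since $(d-k+1)_1=d_1-(k_1-1)$, this $L$ is precisely the intersection $y_1^{k_1}\cap z_1^{d_1-(k_1-1)}$ that appears in the definition of property $H_{k_1}$ for $\eta_1$ (whose boundary map is $x\mapsto x_1^{k_1}$).

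\emph{Step 3 (splitting the sum and concluding).} As $L\subseteq E_1$ and $x^k$, $z^{d-k-1}$ split along $E_1\oplus E_2$, the sum $S:=x^k+L+z^{d-k-1}$ equals $S_1\oplus S_2$ with $S_1:=x_1^{k_1}+L+z_1^{(d-k-1)_1}\subseteq E_1$, $S_2:=x_2^{k_2}+z_2^{(d-k-1)_2}\subseteq E_2$; and since $\dim x^k+\dim L+\dim z^{d-k-1}=k+1+(d-k-1)=d$, the original sum is direct iff $\dim S=d$. In case (A) we have $(d-k-1)_1=d_1-k_1-1$, $(d-k-1)_2=d_2-k_2$, so $S_2=x_2^{k_2}\oplus z_2^{d_2-k_2}=E_2$ (as $\eta_2$ is $k_2$--almost Anosov and $x\ne z$), while $\dim S_1\le k_1+1+(d_1-k_1-1)=d_1$ with equality iff $S_1$ is a direct sum; hence $S$ is direct iff $S_1=x_1^{k_1}+\big(y_1^{k_1}\cap z_1^{d_1-(k_1-1)}\big)+z_1^{d_1-(k_1+1)}$ is direct, which is exactly the property $H_{k_1}$ condition for $\eta_1$ at $(x,y,z)$ — and this is meaningful since $\eta_1$ is $\{k_1-1,k_1,k_1+1\}$--almost Anosov in case (A). In case (B) we have $(d-k-1)_1=d_1-k_1$, so $x_1^{k_1}\oplus z_1^{d_1-k_1}=E_1\supseteq L$ already, whence $\dim S=\dim S_1+\dim S_2\le d_1+(d_2-1)=d-1<d$, so $S$ is never direct. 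Letting $(x,y,z)$ run over $\dg^{(3)}$, we conclude that the boundary maps of $\rho$ have property $H_k$ if and only if case (A) holds — i.e.\ $(k+1)_1=k_1+1$ — and the boundary maps of $\eta_1$ have property $H_{k_1}$, as claimed. The only steps carrying genuine content are Step~2, where $y^k\cap z^{d-k+1}$ is pinned down as a one--dimensional subspace of $E_1$ by combining the transversality of the $\eta_i$--boundary maps with the dimension count, and the observation in Step~3 that case (B) is incompatible with $H_k$; the rest is linear-algebra bookkeeping along $E=E_1\oplus E_2$.
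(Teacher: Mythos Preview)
Your proof is correct. The paper does not prove this proposition---it imports it verbatim from \cite[Proposition~4.18]{Beyrer-Pozzetti}---so there is nothing to compare against here, but your argument stands on its own: the splitting of each boundary subspace along $E_1\oplus E_2$ via Lemma~\ref{lem.splitting boundary map for reducible reps}, the identification of $y^k\cap z^{d-k+1}$ as a line in $E_1$, and the dimension count ruling out case~(B) are all sound. One very minor remark: in Step~2 you can in fact see directly that $\dim\big(y_1^{k_1}\cap z_1^{d_1-k_1+1}\big)=1$ (not merely $\le 1$), since $y_1^{k_1-1}\oplus z_1^{d_1-k_1+1}=E_1$ forces $y_1^{k_1}+z_1^{d_1-k_1+1}=E_1$ and the inclusion--exclusion formula gives the intersection dimension exactly; but your lower-bound argument via $\dim(y^k\cap z^{d-k+1})\ge 1$ reaches the same conclusion.
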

Given a representation $\r:\G\to \PGL(E)$ it will be often convenient to study a {semi-simplification} $\r^{ss}:\G\to\PGL(E)$. Let $H$ be the Zariski closure of $\r(\G)$ and $H=L\ltimes U$ a splitting as a semidirect product of a Levi factor $L$ and the unipotent radical $U$. A \emph{semi-simplification} $\r^{ss}$ is the projection of $\r$ to the Levi factor $L$. Observe that $\r^{ss}$ is well defined up to conjugation. 
Equivalently, a semi-simplification is a representation in the unique closed orbit inside $\ov{\PGL(E)\cdot \r} \subset\Hom(\G,\PGL(E))$, where the action of $\PGL(E)$ is by conjugation.

By definition any semi-simplification is reductive and one can reduce the study of properties of $\r$ to studying the same properties of $\r^{ss}$, and use that $\r^{ss}$ is completely reducible. This is also useful when considering the Anosov property:

\begin{prop}[{\cite[Proposition 2.39]{GGKW}}]\label{prop.semi-simplification}
A representation $\r:\G\to\PGL(E)$ is $k$--Anosov if and only if $\r^{ss}$ is.
\end{prop}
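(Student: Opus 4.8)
The plan is to verify the three conditions of Definition~\ref{defn.Anosov} one at a time, exploiting the fact that a representation and its semi-simplification carry the same eigenvalue data. Conditions $(2)$ and $(3)$ only refer to the moduli of the eigenvalues of $\r(\g)$ --- whether $\r(\g)$ is $k$--proximal, and the size of the ratios $|\lambda_k/\lambda_{k+1}|(\r(\g))$ measured against the stable length $|\g|_\infty$ --- so the first, purely algebraic, step is to show that $\r$ and $\r^{ss}$ carry the same such data on every $\g\in\G$; once that is done, $(3)$ and the proximality half of $(2)$ transfer for free, and only the boundary maps remain.

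For the algebraic step I would argue as follows. Let $H<\PGL(E)$ be the Zariski closure of $\r(\G)$, write $H=L\ltimes U$ for a Levi decomposition with $U=\Rad_u(H)$, and let $p\colon H\to L$ be the projection, so $\r^{ss}=p\circ\r$ up to conjugation. Given $g\in H$ with multiplicative Jordan decomposition $g=g_sg_u$ one has $g_s,g_u\in H$ since $H$ is algebraic; as $p$ is a morphism of algebraic groups it preserves Jordan decompositions, so the semisimple part of $p(g)$ is $p(g_s)$ and $\lambda(p(g))=\lambda(p(g_s))$, where I write $\lambda(\cdot)$ for the multiset of absolute values of eigenvalues. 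Every semisimple element of $H$ is $H$--conjugate into $L$; writing $g_s=h\ell h^{-1}$ with $\ell\in L$ and using $p|_L=\mathrm{id}$ gives $p(g_s)=p(h)\,\ell\,p(h)^{-1}$, hence $\lambda(p(g))=\lambda(\ell)=\lambda(g_s)=\lambda(g)$. Taking $g=\r(\g)$ yields $\lambda(\r(\g))=\lambda(\r^{ss}(\g))$ for all $\g$; in particular $\r(\g)$ is $k$--proximal exactly when $\r^{ss}(\g)$ is, with the same gap $|\lambda_k/\lambda_{k+1}|$, so Definition~\ref{defn.Anosov}$(3)$ and the proximality clause of $(2)$ hold for $\r$ if and only if they hold for $\r^{ss}$.

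It remains to transfer the transverse, dynamics--preserving continuous boundary maps $\xi^k,\xi^{d-k}$ of conditions $(1)$--$(2)$, and this is where I expect the real work. For the implication ``$\r$ Anosov $\Rightarrow$ $\r^{ss}$ Anosov'' I would realise $\r^{ss}$ as a limit $\lim_{t\to+\infty}a_t\,\r\,a_t^{-1}$ along a one--parameter subgroup $(a_t)$ of $\PGL(E)$ (Hilbert--Mumford, since $\r^{ss}$ lies in the closed orbit in $\ov{\PGL(E)\cdot\r}$); the maps $a_t\circ\xi^k$ are boundary maps of the $k$--Anosov representations $a_t\,\r\,a_t^{-1}$, and the point is to show that they converge, along a subsequence, to transverse, dynamics--preserving continuous boundary maps of $\r^{ss}$ --- condition $(3)$, already established for $\r^{ss}$, is what should prevent the limiting maps from collapsing an interval of $\dg$, so this is a tame special case of the degeneration analysis carried out later in the paper.

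The converse ``$\r^{ss}$ Anosov $\Rightarrow$ $\r$ Anosov'' cannot be handled the same way, because the orbit of the reductive representation $\r^{ss}$ is already closed; this is the main obstacle. Here I would pass to the metric characterisation of $k$--Anosovness by linear growth of the $k$--th singular value gap $\log\sigma_k(\r(\g))-\log\sigma_{k+1}(\r(\g))$ in $|\g|_\G$, and compare this gap for $\r(\g)$ and $\r^{ss}(\g)$: writing $\r(\g)=\r^{ss}(\g)\,u(\g)$ with $u(\g)\in U$ unipotent and upper--triangular with respect to the flags preserved by $\r^{ss}$, one must check that this unipotent twist cannot destroy the exponential gap of $\r^{ss}(\g)$. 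Controlling the effect of the unipotent part on the coarse geometry of the orbit is the delicate point; an alternative that avoids the explicit estimate is to phrase the whole argument in terms of the flat--bundle / dominated--splitting formulation of Anosovness, observing that a dominated splitting of the geodesic flow persists under the conjugation--limit $\r\rightsquigarrow\r^{ss}$ and, since domination is an exponential condition while the unipotent twist is only polynomially felt on the relevant ratios, in the reverse direction as well.
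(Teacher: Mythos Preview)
The paper does not prove this proposition; it is quoted from \cite{GGKW} as a black box, so there is no paper argument to compare against.

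Your first paragraph is correct and can be shortened: choose a flag of $H$--invariant subspaces on whose graded pieces $U$ acts trivially; then the characteristic polynomial of $\rho(\gamma)$ equals that of $\rho^{ss}(\gamma)$, so eigenvalue data, proximality, and condition~(3) transfer verbatim.

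You have the two directions reversed in difficulty. The implication $\rho^{ss}$ Anosov $\Rightarrow$ $\rho$ Anosov, which you call ``the main obstacle'', is in fact immediate: since $\rho^{ss}\in\overline{\PGL(E)\cdot\rho}$ there are $g_n\in\PGL(E)$ with $g_n\rho g_n^{-1}\to\rho^{ss}$; Anosovness is open (Remark~\ref{rem ansov rep properties}(3)), so $g_n\rho g_n^{-1}$ is $k$--Anosov for large $n$; Anosovness is conjugation invariant, so $\rho$ is $k$--Anosov. No singular-value estimate is needed here.

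The real content is $\rho$ Anosov $\Rightarrow$ $\rho^{ss}$ Anosov, and your boundary-map-limit argument for it has two problems. First, it is circular as written: the degeneration results of Sections~\ref{s.boundary map}--\ref{s.gaps} apply to positively ratioed sequences, not arbitrary Anosov ones, and in any case the paper \emph{uses} Proposition~\ref{prop.semi-simplification} downstream (e.g.\ in Corollaries~\ref{cor.Hk and semisimplification} and~\ref{cor.par-pos closed among Anosov}). Second, nothing you say forces $a_t\xi^k$ to converge or the limit to be transverse; having condition~(3) for $\rho^{ss}$ does not by itself prevent collapsing under Definition~\ref{defn.Anosov}. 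The proof in \cite{GGKW} bypasses boundary maps entirely: it uses the characterisation of $k$--Anosov by a uniform gap in the Cartan projection, $\log\tfrac{\sigma_k}{\sigma_{k+1}}(\rho(\gamma))\geq c|\gamma|_\Gamma-C$, together with the fact that $\mu(\rho(\gamma))-\mu(\rho^{ss}(\gamma))$ stays bounded over $\Gamma$ (this is the precise form of your ``unipotent is polynomial against exponential'' heuristic). With that equivalence in hand the result follows from the eigenvalue equality without ever constructing limit boundary maps.
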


For studying limits we observe: if $\{\r_n\}_{n\in \N}$ converges to $\r_0$ in $\Hom(\G,\PGL(E))$, then we can find conjugates such that $\{g_n\r_n g_n^{-1}\}_{n\in \N}$ converges to $\r^{ss}_0$.
\subsection{Exterior power of representations}\label{s.ext}
We denote by {{$\wedge^k:\PGL(E)\to\PGL(\wedge^k E)$ the homomorphism induced by the diagonal action of $\GL(E)$ on $\wedge^k E$.}}
The \emph{$k$--th exterior power}  $\wedge^k\r:\G\to\PGL(\wedge^k E)$ of a representation $\r:\G\to \PGL(E)$ is the composition of $\r$ with the homomorphism $\wedge^k$. Observe that the exterior power commutes with the semi-simplification: $(\wedge^k\r)^{ss}=(\wedge^k\r^{ss})$.

The exterior power allows to reduce the study of $k$--Anosov representations to the study of $1$--Anosov representations:
\begin{prop}\label{prop.Anosov for exterior power}
A representation $\r:\G\to\PGL(E)$ is $k$--(almost) Anosov if and only if $\wedge^k\r$ is 1--(almost) Anosov. If $\r$ is $\{k-1,k+1\}$--(almost) Anosov, then $\wedge^k \r$ is $2$--(almost) Anosov. 
\end{prop}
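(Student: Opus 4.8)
The plan is to work through the Plücker embedding $\iota_k\colon\Gr_k(E)\hookrightarrow\P(\wedge^k E)$, $\spa(v_1,\dots,v_k)\mapsto[v_1\wedge\cdots\wedge v_k]$, which is $\wedge^k$-equivariant and has compact (hence closed) image, together with its dual. First I would record the elementary dictionary it provides. Fixing a basis of $E$ adapted to $g\in\GL(E)$, the eigenvalues of $\wedge^k g$ are the products $\lambda_{i_1}(g)\cdots\lambda_{i_k}(g)$ over $k$-subsets, so $|\lambda_1(\wedge^k g)|=\prod_{i=1}^k|\lambda_i(g)|$ and $|\lambda_2(\wedge^k g)|=|\lambda_1\cdots\lambda_{k-1}\lambda_{k+1}(g)|$, whence
\[
\frac{\lambda_1}{\lambda_2}(\wedge^k g)=\frac{\lambda_k}{\lambda_{k+1}}(g),
\]
$g$ is $k$-proximal if and only if $\wedge^k g$ is $1$-proximal, and in that case the attracting line of $\wedge^k g$ in $\P(\wedge^k E)$ is $\iota_k(g_+^k)$. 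Dually, identifying $\wedge^{d-k}E\cong(\wedge^k E)^*$ via the wedge pairing into $\wedge^d E\cong\R$, a $(d-k)$-plane $W$ corresponds to the hyperplane $\hat W=\{\omega\in\wedge^k E:\omega\wedge w_1\wedge\cdots\wedge w_{d-k}=0\}$; one checks in an adapted basis that $V\tv W$ in $E$ if and only if $\iota_k(V)\tv\hat W$ in $\wedge^k E$, and that the attracting hyperplane of $\wedge^k g$ equals $\widehat{g_+^{d-k}}$. Writing $D:=\binom dk=\dim\wedge^k E$, the map $W\mapsto\hat W$ identifies $\Gr_{d-k}(E)$ with a closed subset of $\Gr_{D-1}(\wedge^k E)$.

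Granting this, the first equivalence is almost formal. If $\r$ is $k$-(almost) Anosov with boundary maps $\xi^k,\xi^{d-k}$, then $x\mapsto\iota_k(\xi^k_\r(x))$ and $y\mapsto\widehat{\xi^{d-k}_\r(y)}$ are continuous $\wedge^k\r$-equivariant maps into $\P(\wedge^k E)$ and $\Gr_{D-1}(\wedge^k E)$, transverse off the diagonal and dynamics preserving by the dictionary, and $\frac{\lambda_1}{\lambda_2}(\wedge^k\r(\g_i))=\frac{\lambda_k}{\lambda_{k+1}}(\r(\g_i))\to\infty$ when $|\g_i|_\infty\to\infty$; so $\wedge^k\r$ is $1$-(almost) Anosov. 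Conversely, given $1$-(almost) Anosov boundary maps $\zeta^1,\zeta^{D-1}$ of $\wedge^k\r$, the dynamics-preserving property forces $\zeta^1(\g_+)$ to be the attracting line of $\wedge^k\r(\g)$, hence decomposable, for $\g$ ranging over a dense subset of $\dg$; since $\iota_k(\Gr_k(E))$ is closed and $\zeta^1$ is continuous, $\zeta^1$ factors as $\iota_k\circ\xi^k$, and likewise $\zeta^{D-1}$ as $\widehat{\xi^{d-k}(\,\cdot\,)}$. Unwinding the dictionary shows $\xi^k,\xi^{d-k}$ verify Definition \ref{defn.Anosov} for $\r$ at level $k$ (with condition (3) coming from $\frac{\lambda_k}{\lambda_{k+1}}(\r(\g_i))=\frac{\lambda_1}{\lambda_2}(\wedge^k\r(\g_i))\to\infty$). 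The almost-Anosov variants are identical, dropping condition (3).

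For the second statement, suppose $\r$ is $\{k-1,k+1\}$-(almost) Anosov; since $k$-Anosov is equivalent to $(d-k)$-Anosov, $\r$ is also $\{d-k-1,d-k+1\}$-(almost) Anosov. I would propose as boundary maps for $\wedge^k\r$ at level $2$
\[
\zeta^2(x):=\wedge^{k-1}\xi^{k-1}_\r(x)\wedge\xi^{k+1}_\r(x),\qquad
\zeta^{D-2}(y):=\Big(\wedge^{d-k-1}\xi^{d-k-1}_\r(y)\wedge\xi^{d-k+1}_\r(y)\Big)^{\!\perp},
\]
where $\zeta^2(x)$ is the $2$-plane $\{v_1\wedge\cdots\wedge v_{k-1}\wedge v:v\in\xi^{k+1}_\r(x)\}$ for a basis $v_1,\dots,v_{k-1}$ of $\xi^{k-1}_\r(x)$ (well defined since $\xi^{k-1}_\r(x)\subset\xi^{k+1}_\r(x)$ in codimension $2$), and $\zeta^{D-2}(y)$ is the annihilator in $\wedge^k E$ of the analogous $2$-plane in $\wedge^{d-k}E\cong(\wedge^k E)^*$. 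Both are continuous and $\wedge^k\r$-equivariant. The eigenvalue computation now gives $|\lambda_2(\wedge^k g)|=|\lambda_1\cdots\lambda_{k-1}\lambda_{k+1}(g)|$ and $|\lambda_3(\wedge^k g)|=\max\{|\lambda_1\cdots\lambda_{k-1}\lambda_{k+2}(g)|,\ |\lambda_1\cdots\lambda_{k-2}\lambda_k\lambda_{k+1}(g)|\}$, hence
\[
\frac{\lambda_2}{\lambda_3}(\wedge^k g)=\min\Big\{\tfrac{\lambda_{k-1}}{\lambda_k}(g),\ \tfrac{\lambda_{k+1}}{\lambda_{k+2}}(g)\Big\}.
\]
So for $\g$ of infinite order $\wedge^k\r(\g)$ is $2$-proximal, its attracting $2$-plane is $\zeta^2(\g_+)$ and its repelling $(D-2)$-plane is $\zeta^{D-2}(\g_+)$ (again a basis computation), giving dynamics preservation on the dense set of fixed points, while $\frac{\lambda_2}{\lambda_3}(\wedge^k\r(\g_i))\to\infty$ when $|\g_i|_\infty\to\infty$ because both factors do, by $(k-1)$- and $(k+1)$-Anosovness.

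The one step requiring genuine work is the transversality $\zeta^2(x)\cap\zeta^{D-2}(y)=\{0\}$ for $x\neq y$: it amounts to the non-degeneracy of the wedge pairing between the $2$-planes $\wedge^{k-1}\xi^{k-1}_\r(x)\wedge\xi^{k+1}_\r(x)$ and $\wedge^{d-k-1}\xi^{d-k-1}_\r(y)\wedge\xi^{d-k+1}_\r(y)$, which I would deduce from the transversalities $\xi^{k-1}_\r(x)\tv\xi^{d-k+1}_\r(y)$ and $\xi^{k+1}_\r(x)\tv\xi^{d-k-1}_\r(y)$ by working in a basis adapted to the flag $\xi^{k-1}_\r(x)\subset\xi^{k+1}_\r(x)$. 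Assembling these facts, $\zeta^2,\zeta^{D-2}$ satisfy Definition \ref{defn.Anosov} at level $2$, so $\wedge^k\r$ is $2$-(almost) Anosov. I expect this last transversality verification to be the only real obstacle; everything else is bookkeeping with the Plücker embedding and exterior-power eigenvalues.
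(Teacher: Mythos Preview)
Your proposal is correct and follows essentially the same approach as the paper's proof: both use the Pl\"ucker embedding, the identification $\wedge^{d-k}E\cong(\wedge^kE)^*$, the eigenvalue formulas $\tfrac{\lambda_1}{\lambda_2}(\wedge^k g)=\tfrac{\lambda_k}{\lambda_{k+1}}(g)$ and $\tfrac{\lambda_2}{\lambda_3}(\wedge^k g)=\min\{\tfrac{\lambda_{k-1}}{\lambda_k}(g),\tfrac{\lambda_{k+1}}{\lambda_{k+2}}(g)\}$, and the explicit $2$-plane $\wedge^{k-1}\xi^{k-1}\wedge\xi^{k+1}$; the paper simply asserts the transversality of the $2$- and $(D-2)$-boundary maps, whereas you correctly single it out as the one step needing a genuine basis computation.
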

\begin{proof}
Given $g\in\GL(E)$ with generalized eigenvalues $\lambda_1,\ldots,\lambda_d$, the eigenvalues of $\wedge^k g$ are given by $\{\lambda_{i_1} \ldots \lambda_{i_k} | 1\leq i_1< \ldots < i_k\leq k\}$. Thus we have
\begin{align*}
\left| \frac{\lambda_1}{\lambda_2}(\g_{\wedge^k\r})\right|=\left|\frac{\lambda_k}{\lambda_{k+1}}(\g_{\r})\right|, \quad \left|\frac{\lambda_2}{\lambda_3}(\g_{\wedge^k\r})\right|=\min \left\{\left|\frac{\lambda_{k-1}}{\lambda_k}(\g_{\r})\right|,\left|\frac{\lambda_{k+1}}{\lambda_{k+2}}(\g_{\r})\right|\right\}.
\end{align*}
In particular $g$ is $k$-proximal if and only if $\wedge^kg$ is one proximal.

Given $x^k\in\Gr_k(E)$ we set  $\wedge^k x^k\in \P(\wedge^k E)$ to be the projective class of $x_1\wedge\ldots\wedge x_k$ - this does not depend on the choice of the basis $x_1,\ldots,x_k$ of $x^k$. This yields a smooth embedding $\wedge^k:\Gr_k(E)\to \P(\wedge^k E)$. If $\xi^k:\dg\to\Gr_k(E)$ and $\xi^{d-k}:\dg\to\Gr_{d-k}(E)$ are equivariant, transverse, continuous, dynamics preserving, then also
$$\wedge^k\xi^k:\dg\to\P(\wedge^k E),\quad \wedge^{d-k}\xi^{d-k}:\dg\to\P(\wedge^{d-k} E)\simeq \P(\wedge^k E^*)$$
are  equivariant, transverse, continuous, dynamics preserving - here we (naturally) identify $\P(\wedge^k E^*)\simeq \Gr_{d_k-1}(\wedge^k E)$, where $d_k=\dim\wedge^k E$.

Vice versa, since $\wedge^k(\Gr_k(E))\subset\P(\wedge^k E)$ is a closed submanifold, and for any $k$-proximal $g$ the attracting fixed point of $\wedge^kg$ belongs to $\wedge^k(\Gr_k(E))$, if $\wedge^k\rho$ admits a continuous, equivariant, dynamics preserving map $\xi_\wedge$, then $\xi_\wedge$ has the form $\wedge^k\xi^k$ for some $\rho$-equivariant, transverse, continuous dynamic preserving map $\xi^k$.

Given a nested pair $y^{k-1}\subset y^{k+1}$, we choose a basis $y_1,\ldots, y_{k+1}$  of $y^{k+1}$ such that $y_1,\ldots, y_{k-1}$ is a basis of $y^{k-1}$. We define 
$$\wedge^{k-1} y^{k-1}\wedge y^{k+1}:=\spa \{y_1\wedge \ldots \wedge y_k,y_1\wedge  \ldots \wedge y_{k-1}\wedge y_{k+1}\}\in \Gr_2(\wedge^k E).$$ 
If $\xi^j:\dg\to\Gr_j(E)$ for $j=k-1,k+1,d-k-1,d-k+1$ are  equivariant, transverse, continuous and dynamics preserving boundary maps, then
\begin{align*}
	\wedge^{k-1} \xi^{k-1} \wedge \xi^{k+1}&:\dg \to \Gr_2(\wedge^k E),\\
	\wedge^{d-k-1} \xi^{d-k-1} \wedge \xi^{d-k+1}&:\dg \to \Gr_2(\wedge^{d-k} E)\simeq \Gr_{d-2}(\wedge^k E)
\end{align*}
are also equivariant, transverse, continuous and dynamics preserving maps.
\end{proof}

The following fact follows from the definition of the cross ratio:
\begin{prop}\label{prop.pos ratio exterior power}
The representation $\r:\G\to\PGL(E)$ is $k$--(almost) positively ratioed if and only if $\wedge^k \r$ is $1$--(almost) positively ratioed.
\end{prop}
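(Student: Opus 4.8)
The statement to prove is Proposition \ref{prop.pos ratio exterior power}: a representation $\r:\G\to\PGL(E)$ is $k$--(almost) positively ratioed if and only if $\wedge^k\r$ is $1$--(almost) positively ratioed.

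The plan is to reduce everything to two observations. First, by Proposition \ref{prop.Anosov for exterior power}, $\r$ is $k$--(almost) Anosov if and only if $\wedge^k\r$ is $1$--(almost) Anosov, and moreover the boundary maps correspond under the Plücker embedding: $\xi^{d-k}_{\wedge^k\r} = \wedge^k\xi^k_\r$ and the $(d_k-1)$--dimensional boundary map of $\wedge^k\r$ is $\wedge^{d-k}\xi^{d-k}_\r$ (under the identification $\P(\wedge^{d-k}E)\simeq\Gr_{d_k-1}(\wedge^kE)$). So the Anosov hypothesis matches up on both sides, and it remains to compare the cross ratio inequalities in Definition \ref{d.posrat}.

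Second, I would check the cross ratio identity
\begin{align*}
\cro_1\big(\wedge^k x^k,\ \wedge^{d-k}y^{d-k},\ \wedge^{d-k}z^{d-k},\ \wedge^k w^k\big) = \cro_k\big(x^k,y^{d-k},z^{d-k},w^k\big),
\end{align*}
valid whenever either side is defined. This follows directly from the definition of the generalized cross ratio in Section \ref{s.cr}: fixing bases $(x_1,\dots,x_k)$ of $x^k$ and $(y_1,\dots,y_{d-k})$ of $y^{d-k}$, the wedge $x_1\wedge\cdots\wedge x_k\wedge y_1\wedge\cdots\wedge y_{d-k}\in\wedge^dE\simeq\R$ that appears in $\cro_k$ is, up to sign conventions, literally the pairing of $\wedge^kx^k\in\wedge^kE$ with $\wedge^{d-k}y^{d-k}\in\wedge^{d-k}E\simeq(\wedge^kE)^*$ used to compute $\cro_1$ in $\wedge^kE$; taking the ratio of the four such quantities gives the claimed equality, with $\infty$ appearing on one side exactly when it appears on the other (Proposition \ref{prop.property of grassmannian cro} (4),(5) and Lemma \ref{lem.cross ratio is algebraic}). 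One small point to handle is the sign: the sign of $x_1\wedge\cdots\wedge x_k\wedge y_1\wedge\cdots\wedge y_{d-k}$ versus $y_1\wedge\cdots\wedge y_{d-k}\wedge x_1\wedge\cdots\wedge x_k$ differs by $(-1)^{k(d-k)}$, but this sign is the same for all four terms in the ratio, hence cancels.

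Putting these together: for cyclically ordered quadruples $(x,y,z,w)$ in $\dg$, the inequality $\cro_k(x^k,y^{d-k},z^{d-k},w^k)\geq 1$ defining $k$--positively ratioed is equivalent, term by term, to $\cro_1(\wedge^kx^k,\wedge^{d-k}y^{d-k},\wedge^{d-k}z^{d-k},\wedge^kw^k)\geq 1$, which is exactly the inequality defining $1$--positively ratioed for $\wedge^k\r$ with its boundary maps as above. The almost-positively-ratioed case is identical, replacing "Anosov" by "almost Anosov" throughout and invoking the corresponding part of Proposition \ref{prop.Anosov for exterior power}. I do not anticipate a real obstacle here; the only thing requiring care is bookkeeping the identification $\P(\wedge^{d-k}E)\simeq\Gr_{d_k-1}(\wedge^kE)$ and the sign conventions in the definition of $\cro_k$, both of which are routine.
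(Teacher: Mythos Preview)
Your proposal is correct and is exactly the argument the paper has in mind: the paper simply states that the proposition ``follows from the definition of the cross ratio'', and your two observations (Proposition \ref{prop.Anosov for exterior power} for the Anosov/almost Anosov correspondence of boundary maps, plus the identity $\cro_1(\wedge^k x^k,\wedge^{d-k}y^{d-k},\wedge^{d-k}z^{d-k},\wedge^k w^k)=\cro_k(x^k,y^{d-k},z^{d-k},w^k)$ read off directly from the definition) are precisely what unpacks that sentence. One small typo: you wrote $\xi^{d-k}_{\wedge^k\r}=\wedge^k\xi^k_\r$, but the Pl\"ucker image $\wedge^k\xi^k_\r$ is the $1$--boundary map of $\wedge^k\r$, so this should read $\xi^{1}_{\wedge^k\r}=\wedge^k\xi^k_\r$.
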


In a similar way property $H_k$ and property $H_1$ are related.

\begin{prop}[{\cite[Proposition 8.11]{PSW1}}]\label{prop.Hk implies H1 for exterior power}
	The boundary maps of the $\{k-1,k,k+1\}$--almost Anosov representation $\r:\G\to\PGL(E)$ satisfy property $H_k$ if and only if the boundary maps of  $\wedge^k\r$ satisfy property $H_1$.
\end{prop}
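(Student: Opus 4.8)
The idea is to unwind both transversality conditions completely and match them, triple by triple, through a Plücker coordinate computation.

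\textbf{Step 1: identify the relevant boundary maps of $\wedge^k\r$.} Since $\r$ is $\{k-1,k,k+1\}$--almost Anosov, Proposition \ref{prop.Anosov for exterior power} gives that $\wedge^k\r$ is $\{1,2\}$--almost Anosov, with $1$--dimensional boundary map $\wedge^k\xi^k$ and $2$--dimensional boundary map $x\mapsto \wedge^{k-1}\xi^{k-1}(x)\wedge\xi^{k+1}(x)$; dualizing, its $(d_k-1)$-- and $(d_k-2)$--dimensional boundary maps (where $d_k=\dim\wedge^kE$) are, under the wedge pairing $\wedge^kE\times\wedge^{d-k}E\to\wedge^dE\cong\R$, the annihilators of $\wedge^{d-k}\xi^{d-k}(x)$ and of the $2$--plane $\wedge^{d-k-1}\xi^{d-k-1}(x)\wedge\xi^{d-k+1}(x)$, respectively. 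Plugging $k=1$, $d=d_k$ into the definition of property $H_1$, the condition at a triple $(x,y,z)\in\dg^{(3)}$ reads $\wedge^kx^k + \big(\wedge^ky^k\cap (\wedge^kE)\big) + \widetilde z^{\,d_k-2}$ is direct; since the middle hyperplane is all of $\wedge^kE$, this simplifies to
$$\wedge^kx^k \oplus \wedge^ky^k \oplus \widetilde z^{\,d_k-2}=\wedge^kE,$$
where $\widetilde z^{\,d_k-2}$ is the wedge--annihilator of $\wedge^{d-k-1}z^{d-k-1}\wedge z^{d-k+1}$.

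\textbf{Step 2: a good basis.} By transversality of $\xi^k$ and $\xi^{d-k}$ one has $y^k\oplus z^{d-k}=E$, hence $y^k\cap z^{d-k+1}$ is a line not contained in $z^{d-k}$, and the sum in property $H_k$ has dimension $k+1+(d-k-1)=d$; so property $H_k$ at $(x,y,z)$ says exactly $x^k\oplus (y^k\cap z^{d-k+1})\oplus z^{d-k-1}=E$. Fix a basis $e_1,\dots,e_d$ of $E$ adapted to the flag $z^{d-k-1}\subset z^{d-k}\subset z^{d-k+1}$, with $\langle e_{d-k+1}\rangle=y^k\cap z^{d-k+1}$. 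Then property $H_k$ at $(x,y,z)$ becomes the transversality of $x^k$ to $\langle e_1,\dots,e_{d-k-1},e_{d-k+1}\rangle$, i.e.\ the non--vanishing of the Plücker coordinate $p_{I_1}(x^k)$, where $I_1=\{d-k,\,d-k+2,\,d-k+3,\dots,d\}$.

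\textbf{Step 3: match with Step 1.} Set $I_0=\{d-k+1,\dots,d\}$. A direct check gives $e_I\wedge(e_1\wedge\cdots\wedge e_{d-k})\neq 0\iff I=I_0$ and $e_I\wedge(e_1\wedge\cdots\wedge e_{d-k-1}\wedge e_{d-k+1})\neq 0\iff I=I_1$, so $\widetilde z^{\,d_k-2}=\operatorname{span}\{e_I: I\neq I_0,\,I\neq I_1\}$. Since $e_{d-k+1}\in y^k$ and $y^k$ is the graph of a linear map $\langle e_{d-k+1},\dots,e_d\rangle\to z^{d-k}$ vanishing on $e_{d-k+1}$, one computes $p_{I_0}(y^k)=\pm1$ and $p_{I_1}(y^k)=0$. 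Hence, passing to $\wedge^kE/\widetilde z^{\,d_k-2}\cong\langle e_{I_0},e_{I_1}\rangle$, the images of $\wedge^kx^k,\wedge^ky^k$ form a basis iff the $2\times2$ determinant $p_{I_0}(x^k)p_{I_1}(y^k)-p_{I_1}(x^k)p_{I_0}(y^k)=\mp\, p_{I_1}(x^k)$ is nonzero, i.e.\ iff $p_{I_1}(x^k)\neq0$ --- exactly the condition of Step 2. As $(x,y,z)$ ranges over $\dg^{(3)}$ this proves both implications at once.

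\textbf{Main obstacle.} The only genuinely delicate point is the correct identification in Step 1 of \emph{which} boundary maps of $\wedge^k\r$ occupy the three slots of property $H_1$ --- in particular that the $(d_k-2)$--dimensional boundary space is the wedge--annihilator of $\wedge^{d-k-1}\xi^{d-k-1}\wedge\xi^{d-k+1}$, which is precisely what Proposition \ref{prop.Anosov for exterior power} supplies --- together with the bookkeeping in Step 3 showing that $\widetilde z^{\,d_k-2}$ and $\wedge^ky^k$ jointly collapse all but one Plücker coordinate of $x^k$; everything else is routine linear algebra.
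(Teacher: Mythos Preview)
The paper does not give its own proof of this proposition: it is quoted verbatim from \cite[Proposition~8.11]{PSW1} with no argument supplied, so there is nothing in the paper to compare your proof against. Your direct Pl\"ucker--coordinate computation is correct: the identification of the $(d_k-2)$--boundary map of $\wedge^k\r$ as the wedge--annihilator of $\wedge^{d-k-1}\xi^{d-k-1}\wedge\xi^{d-k+1}$ is exactly what Proposition~\ref{prop.Anosov for exterior power} provides, the simplification of $H_1$ to $\wedge^kx^k+\wedge^ky^k+\widetilde z^{\,d_k-2}=\wedge^kE$ is right (though ``middle hyperplane'' should read ``middle term'', since $\xi^{d_k}$ is the whole space, not a hyperplane), and the reduction in Step~3 to the single Pl\"ucker coordinate $p_{I_1}(x^k)$ via $p_{I_1}(y^k)=0$, $p_{I_0}(y^k)\neq 0$ is a clean and valid way to match the two transversality conditions triple by triple.
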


\section{Boundary maps for limit representations}\label{s.boundary map}

In this section we study limits $\rho_0$ of sequences $(\rho_n)$ of $k$-positively ratioed representations. We show that, under a  technical mild transversality assumption, condition (Tr), such limits admit continuous, equivariant, transverse boundary maps (Theorem \ref{thm.limits of 1-pos ratio}).  We construct the boundary maps for $\rho_0$ by choosing two points $\bx,\by\in\dg$, and assuming, up to extracting a subsequence of $(\rho_n)$, that the sequences $\xi_{\rho_n}^k(\bx)$ and $\xi_{\rho_n}^{d-k}(\by)$ converge. Condition (Tr) encodes transversality and visibility properties of the orbits of these two points. We then show that, under condition (Tr), the boundary maps defined on the orbits of $\bx$ and $\by$ extend continuously to provide the desired boundary maps.

\begin{notation}
In the whole section,{{ if not stated otherwise,}} $\{\r_n:\G\to \PGL(E)\}_{n\in \N}$  denotes  a sequence of $k$--positively ratioed representations converging to a representation $\r_0$ in $\Hom(\G,\PGL(E))$.
\end{notation}

\subsection{Condition (Tr) and transversality}\label{s.L}

We choose two points $\bx,\by\in\dg$.  Since  $\Gr_k(E)$ is compact,  up to extracting a subsequence, we can assume that the limits  $\xi^k_{\r_{n}}(\bx)$  and $\xi^{d-k}_{\r_{n}}(\by)$ exist. We will denote by 
$$\xi_{\bx}^k:\G \bx\to \Gr_k(E)\qquad \xi_{\by}^k:\G \by\to \Gr_k(E)$$ the induced $\rho_0$-equivariant boundary map; since the action of of $\PGL(E)$ on $\Gr_k(E)$ is continuous, it furthermore holds $\xi_{\bx}^k(\g\bx)=\lim\xi_{\rho_n}^k(\g\bx)$, and analogously for $\by$. 
The following technical conditions on the maps $\xi_{\bx}^k$ and $\xi_{\by}^{d-k}$ will guarantee that they can be extended to continuous, $\rho_0$-equivariant, transverse, boundary maps defined on the whole $\dg$. Condition (Tr-1) is a mild transversality condition, while Condition (Tr-2) are visibility conditions: cross-ratios can be used to separate points in the closure of the orbits. We will see in Section \ref{s.pos-ratio} that in many cases it is possible to verify that condition (Tr) holds for well chosen pairs of points $\bx,\by\in\dg$.

\begin{defn}
	The sequence $\{\r_n\}$  satisfies\footnote{When more boundary maps are involved, we say that condition (Tr) is \emph{satisfied for the $k$--boundary map}.} \emph{condition $\rm{(Tr)}$ with respect to $\bx,\by\in \dg$} if $\xi^k_{\r_n}(\bx)$ (resp. $\xi^{d-k}_{\r_n}(\by)$) converge and the following 
	properties hold:
	\begin{enumerate}
		\item[(Tr-1)] For every $l\in \overline{\xi^k_{\bx} (\G\bx)}$ and every $p\in \overline{\xi^{d-k}_{\by} (\G\by)}$ there exists $\g\in\G$ such that $l$ is transverse to $\r_0(\g) p$.
		\item[(Tr-2a)] If $\{x_i\},\{x'_i\}\subset \G \bx$ are sequences converging  to $x_0\in\dg$ and the limits $l:=\lim \xi^k_{\bx}(x_i)$, $l':=\lim \xi^k_{\bx}(x'_i)$  exist and are distinct, then for every open set $U$ in $\dg$ we find $z,z'\in U\cap \G \by$ with 
		$$\cro_k(l,\xi^{d-k}_{\by}(z),\xi^{d-k}_{\by}(z'),l')\neq 1.$$ 
		\item[(Tr-2b)]The same holds with roles of $\xi^k_{\bx}$ and $\xi^{d-k}_{\by}$ reversed.
	\end{enumerate}
\end{defn}

The first goal of the section is to show that the seed of transversality of (Tr-1) already guarantees that almost all pairs in $\overline{\xi^k_{\bx} (\G\bx)}\times \overline{\xi^{d-k}_{\by} (\G\by)}$ are transverse:
\begin{prop}\label{cor.transversality in the closure}
	If condition {\rm(Tr-1)} holds with respect to $\bx,\by\in \dg$, then $l\tv p$ for all $l=\lim \xi_{\bx}^k(x_i)$ and $p=\lim \xi_{\by}^{d-k}(y_i)$ with $\lim x_i\neq \lim y_i$.
	\end{prop}

As a preliminary lemma in the proof of Proposition \ref{cor.transversality in the closure} observe that a diagonal sequence argument let us  write the points $l,p$ as limit of values of the boundary maps for $\rho_n$:
\begin{lem}\label{lem.diagonal-sequence}
For every $p=\lim \xi_{\by}^{d-k}(y_i)\in \overline{\xi^{d-k}_{\by} (\G\by)}$, $l=\lim \xi_{\bx}^k(x_i)\in \overline{\xi^{k}_{\by} (\G\bx)}$, there exists a subsequence $(n_i)$ such that 
	$l=\lim_{i\to\infty}\xi^k_{\rho_{n_i}}(x_i)$ and $p=\lim_{i\to\infty}\xi^{d-k}_{\rho_{n_i}}(y_i)$.
\end{lem}

Recall from Section \ref{s.cr} that $\calA_{k}\subset \Gr_k(E)\times\Gr_{d-k}(E)\times\Gr_{d-k}(E)\times \Gr_k(E) $ is the open set of definition of the cross ratio $\cro_{k}$. The key property of the cross ratio that will let us prove Proposition \ref{cor.transversality in the closure} is:

\begin{lem}\label{lem opposition for limits of pos ratioed} 
If $(p_n,u_n,v_n,q_n)\in\calA_{k}$ satisfy $\cro_{k} (p_n,u_n,v_n,q_n)\geq 1$ and converge to  $(p_0,u_0,v_0,q_0)$ with $p_0\tv u_0$ and $q_0\tv v_0$, then
$p_0\tv v_0,\quad q_0\tv u_0$, and $\quad \cro_{k} (p_0,u_0,v_0,q_0)\geq 1.$
\end{lem}

\begin{proof}
The transversality $p_0\tv u_0$ and $q_0\tv v_0$ implies that $(p_0,u_0,v_0,q_0)\in \calA_{k}$. Since $\cro_{k}$ is continuous on its domain of definition, we deduce that $\cro_{k}(p_0,u_0,v_0,q_0)\geq 1$ or $\cro_{k}(p_0,u_0,v_0,q_0)=\infty$. If $p_0\ntv v_0$ or $q_0\ntv u_0$, then $\cro_{k}(p_0,u_0,v_0,q_0)=0$. This is impossible, hence the claim follows.
\end{proof}

\begin{proof}[Proof of Proposition \ref{cor.transversality in the closure}]
Since condition (Tr-1) holds with respect to $\bx,\by$ we can find $\g\in\G$ so that  $l\tv \rho_0(\g) p$. Let 
$$s=\lim x_i, \quad  t=\lim y_i\in\dg$$
which are distinct by assumption.

We first claim that there exist $h,g\in \G$ such that the following quadruples are cyclically ordered (see Figure \ref{fig:p3.4bis}) 
$$(\bbx,\g\bby,h\g\bby,h\bbx), (\g^{-1} \bbx,\bby,g\g\bby,g\bbx), (\bbx,h\g\bby, \bby,g\bbx).$$ 

	Indeed, observe first that $s\neq \gamma t$: otherwise a diagonal sequence argument allows to write $l=\lim \xi^k_{n_i}(x_i')$ and $\rho_0(\gamma) p=\lim \xi^{d-k}_{n_i}(x_i')$, which contraddicts transversality.
It then follows that,	up to reversing the order of the circle there are three possible configuration for the 4-tuple $\{\bbx,\g\bby,\g^{-1} \bbx, \bby\}$ depicted in Figure \ref{fig:p3.4bis}. In each case we can choose elements $g,h$ with fixed point in the  intervals indicated in the figure, and sufficiently big translation distance:
\begin{center}
	\begin{figure}[h]
		\begin{tikzpicture}[scale=.8]
			\draw (1.5,0) circle [radius =1.5];
			\filldraw (0,0) circle [radius=1pt] node [left] {$\bbx$};
			\filldraw (0.1,0.55) circle [radius=1pt] node [left] {$\g^{-1}\bbx$};
			\filldraw (0.6,1.2) circle [radius=1pt] node [above ] {$h\bbx$};
			\filldraw (1.5,1.5) circle [radius=1pt] node [above] {$h_+$};
			\filldraw (2.6,1) circle [radius=1pt] node [right] {$h\g\bby$};
			\filldraw (2.9,0.55) circle [radius=1pt] node [right] {$g_-$};
			\filldraw (3,0) circle [radius=1pt] node [right] {$\bby$};
			\filldraw (2.6,-1) circle [radius=1pt] node [right] {$\g\bby$};
			\filldraw (2,-1.43) circle [radius=1pt] node [below] {$g\g\bby$};
			\filldraw (1.5,-1.5) circle [radius=1pt] node [below] {$g_+$};	
			\filldraw (1,-1.43) circle [radius=1pt] node [below] {$g\bbx$};
			\filldraw (0.5,-1.1) circle [radius=1pt] node [left] {$h_{-}$};
		\end{tikzpicture}
			\begin{tikzpicture}[scale=.8]
	\draw (1.5,0) circle [radius =1.5];				
	\filldraw (0,0) circle [radius=1pt] node [left] {$\bbx$};
	\filldraw (0.1,0.55) circle [radius=1pt] node [left] {$h\bbx$};
	\filldraw (0.6,1.2) circle [radius=1pt] node [above ] {$h_+$};
	\filldraw (1.5,1.5) circle [radius=1pt] node [above] {$h\g\bby$};
	\filldraw (2.6,1) circle [radius=1pt] node [right] {$\bby$};
	\filldraw (2.9,0.55) circle [radius=1pt] node [right] {$\g^{-1}\bbx$};
	\filldraw (3,0) circle [radius=1pt] node [right] {$\g\bby$};
	\filldraw (2.6,-1) circle [radius=1pt] node [right] {$g\bbx$};
	\filldraw (2,-1.43) circle [radius=1pt] node [right] {$g\g\bby$};
	\filldraw (1.5,-1.5) circle [radius=1pt] node [below] {$g_+$};	
	\filldraw (1,-1.43) circle [radius=1pt] node [below] {$g_{-}$};
	\filldraw (0.5,-1.1) circle [radius=1pt] node [left] {$h_-$};
\end{tikzpicture}
		\begin{tikzpicture}[scale=.8]
				\draw (1.5,0) circle [radius =1.5];
			\filldraw (0,0) circle [radius=1pt] node [left] {$\bbx$};
\filldraw (0.1,0.55) circle [radius=1pt] node [left] {$\g^{-1}\bbx$};
\filldraw (0.6,1.2) circle [radius=1pt] node [above ] {$g\bbx$};
\filldraw (1.5,1.5) circle [radius=1pt] node [above] {$g_+$};
\filldraw (2.6,1) circle [radius=1pt] node [right] {$g\g\bby$};
\filldraw (2.9,0.55) circle [radius=1pt] node [right] {$h_-$};
\filldraw (3,0) circle [radius=1pt] node [right] {$\g\bby$};
\filldraw (2.6,-1) circle [radius=1pt] node [right] {$\bby$};
\filldraw (2,-1.43) circle [radius=1pt] node [below] {$h\g\bby$};
\filldraw (1.5,-1.5) circle [radius=1pt] node [below] {$h_+$};	
\filldraw (1,-1.43) circle [radius=1pt] node [below] {$h\bbx$};
\filldraw (0.5,-1.1) circle [radius=1pt] node [left] {$g_{-}$};
\end{tikzpicture}
		\caption{The three possible configuration of points in the proof of Proposition \ref{prop.limits of spr: transverse orbits}. In all cases the 4-tuples $(\bbx,\g\bby,h\g\bby,h\bbx), (\g^{-1} \bbx, \bby,g\g\bby,g\bbx), (\bbx,h\g\bby, \bby,g\bbx)$ are cyclically ordered }\label{fig:p3.4bis}
	\end{figure}
\end{center}
Let then $(n_i)$ be the subsequence given by Lemma \ref{lem.diagonal-sequence} so that 
$$l=\lim_{i\to\infty}\xi^k_{\rho_{n_i}}(x_i), \quad p=\lim_{i\to\infty}\xi^{d-k}_{\rho_{n_i}}(y_i).$$
Applying  Lemma \ref{lem opposition for limits of pos ratioed} for the first two quadruples while recalling that $\lim_i x_i=s, \lim y_i=t$ gives the transversalities 
$$l\tv \rho_0(h\g)p, \quad \rho_0(g)l\tv p.$$ 
We can then apply the lemma to the third quadruple to derive $l\tv p$.
\end{proof}

We will say that the  maps $\xi^k_\bx:\G \bx\to \Gr_k(E)$ and $\xi^{d-k}_\by:\G \by\to \Gr_{d-k}(E)$ defined  on the $\G$--orbits of the  points $\bx,\by\in \dg$ are \emph{positively ratioed} if the maps are transverse (namely for every $x_0\in \G \bx$ and $y_0\in\G\by$ with $x_0\neq y_0$, $\xi^k_\bx(x_0)\tv \xi^{d-k}_\by(y_0)$) and  
$$\cro_{k}(\xi^k_\bx(u),\xi^{d-k}_\by(z),\xi^{d-k}_\by(w),\xi^k_\bx(v))\geq 1$$
for $u,v\in \G \bx$, $z,w\in \G \by$ and $u,z,w,v$ in that cyclic order in $\dg$.

Combining Proposition \ref{cor.transversality in the closure}  and Lemma \ref{lem opposition for limits of pos ratioed} we also obtain
\begin{prop}\label{prop.limits of spr: transverse orbits}
	If the sequence $\{\rho_n\}$ satisfies condition (Tr) with respect to $\bx,\by\in \dg$, then 
	the maps $\xi^k_\bx$, $\xi^{d-k}_\by$ are positively ratioed.
\end{prop}

\subsection{Boundary map for the limit representation}\label{s.bdry}
We show  that condition (Tr) guarantees the existence of continuous transverse boundary maps. Throughout the section we assume that 	the sequence $\{\r_n\}$  satisfies {condition $\rm{(Tr)}$ with respect to $\bx,\by\in \dg$}, so that, in particular $\xi^k_{\r_n}(\bx)$ (resp. $\xi^{d-k}_{\r_n}(\by)$) converge and  let us define equivariant boundary maps $\xi^k_{\bx}$ (resp. $\xi^{d-k}_\by$) on their orbits.
First we prove that left and right continuous extensions of $\xi_{\bx}^k$ and $ \xi_{\by}^{d-k}$ exist: 

\begin{lem}\label{lem.cro left limit exists}
If $x_n,x'_n\in \G \bx$ converge to $x_0$ and $(x_n,x'_n,x_j,x'_j,x_0)$ are in cyclic order for any $n<j$, then 
$$\cro_{k}(\xi_{\bx}^k(x_n),\xi_{\by}^{d-k}(y_1),\xi_{\by}^{d-k}(y_2),\xi_{\bx}^k(x'_n))\to 1$$ 
for any distinct $y_1,y_2\in\G \by \backslash\{x_0\}$.
\end{lem}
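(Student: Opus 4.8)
The plan is to exploit the cross ratio cocycle identities from Proposition \ref{prop.property of grassmannian cro} to reduce the statement to a telescoping product, and then to use the positivity hypothesis (all the relevant cross ratios are $\geq 1$) together with a boundedness argument to conclude convergence to $1$. First I would fix the two auxiliary points $y_1, y_2 \in \G\by \setminus \{x_0\}$ and abbreviate $a_n := \xi_{\bx}^k(x_n)$, $a'_n := \xi_{\bx}^k(x'_n)$, $b_1 := \xi_{\by}^{d-k}(y_1)$, $b_2 := \xi_{\by}^{d-k}(y_2)$, so that the quantity of interest is $c_n := \cro_k(a_n, b_1, b_2, a'_n)$. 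All these quantities are defined and the cross ratios are $\geq 1$ for the right cyclic orders by Proposition \ref{prop.limits of spr: transverse orbits} (the maps $\xi^k_{\bx}, \xi^{d-k}_{\by}$ are positively ratioed and transverse on orbits).

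The key step is to observe that, by the cocycle identity (2) of Proposition \ref{prop.property of grassmannian cro},
\begin{align*}
c_n = \cro_k(a_n, b_1, b_2, a'_n) = \cro_k(a_n, b_1, b_2, a_j)\cdot \cro_k(a_j, b_1, b_2, a'_j)\cdot \cro_k(a'_j, b_1, b_2, a'_n)
\end{align*}
for any $j > n$. Now the cyclic order assumption $(x_n, x'_n, x_j, x'_j, x_0)$ guarantees that each of the three factors on the right is a cross ratio of a cyclically ordered quadruple of points, hence is $\geq 1$; in particular $c_n \geq \cro_k(a_j, b_1, b_2, a'_j) = c_j$ (the middle factor), so the sequence $c_n$ is non-increasing in $n$ and bounded below by $1$, hence converges to some $L \geq 1$. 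It remains to show $L = 1$. For this I would use condition (Tr-2): extract a subsequence along which $a_n \to l$ and $a'_n \to l'$ (possible by compactness of $\Gr_k(E)$); both $l$ and $l'$ are limits of $\xi^k_{\bx}$ along sequences in $\G\bx$ converging to $x_0$. If $l \neq l'$, then (Tr-2a) produces, in any open set $U$, points $z, z' \in U \cap \G\by$ with $\cro_k(l, \xi^{d-k}_{\by}(z), \xi^{d-k}_{\by}(z'), l') \neq 1$; but by the monotonicity just established and continuity of $\cro_k$ on its domain (Lemma \ref{lem.cross ratio is algebraic}, together with the transversalities from Corollary \ref{cor.transversality in the closure}), the limiting cross ratios $\cro_k(l, b_1, b_2, l')$ over \emph{all} admissible pairs $b_1, b_2$ would have to equal $L$ simultaneously — contradicting the existence of such $z, z'$ if $L \neq 1$, and actually forcing $l = l'$. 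Once $l = l'$ for every convergent subsequence, $a_n$ and $a'_n$ have the same limit, so $c_n = \cro_k(a_n, b_1, b_2, a'_n) \to \cro_k(l, b_1, b_2, l) = 1$ by continuity, since a cross ratio with a repeated entry equals $1$ by identity (2) (take $V_1 = V_4$).

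The main obstacle I anticipate is making the middle argument rigorous: one has to combine the monotone convergence $c_n \searrow L$ with condition (Tr-2) in a way that pins down $L = 1$ rather than just "$l = l'$ or $L = 1$". The cleanest route is probably to argue by contradiction assuming $l \neq l'$ along a subsequence: apply (Tr-2a) with $U$ shrinking toward $x_0$ to get $z_i, z'_i \to x_0$ with $\cro_k(l, \xi^{d-k}_{\by}(z_i), \xi^{d-k}_{\by}(z'_i), l') \neq 1$, then relate these cross ratios back to the $c_n$'s via the cocycle identities and the fact that $\xi^{d-k}_{\by}(z_i), \xi^{d-k}_{\by}(z'_i)$ are squeezed between $b_1$ and $b_2$ in cyclic order for suitable choices — obtaining a uniform bound that contradicts (Tr-2a). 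Conversely, if $l = l'$ always, continuity finishes it directly. I would need to be slightly careful about which cyclic orders are available and about the passage to subsequences, but no heavy computation is involved; it is all bookkeeping with Proposition \ref{prop.property of grassmannian cro} and the definitions.
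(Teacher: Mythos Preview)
Your first step is fine and is in the spirit of the paper's argument: the cocycle identity together with positive ratio gives $c_n\geq c_j$ for $n<j$, so $c_n\searrow L\geq 1$. The gap is in your attempt to show $L=1$.

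Your argument that ``the limiting cross ratios $\cro_k(l,b_1,b_2,l')$ over \emph{all} admissible pairs $b_1,b_2$ would have to equal $L$ simultaneously'' is not justified: the limit $L$ manifestly depends on the choice of $y_1,y_2$, so finding $z,z'$ with $\cro_k(l,\xi^{d-k}_{\by}(z),\xi^{d-k}_{\by}(z'),l')\neq 1$ does not contradict anything. All you would extract from your monotonicity applied to $z,z'$ in place of $y_1,y_2$ is that this cross ratio is $\geq 1$, which is compatible with it being $\neq 1$. The alternative route you sketch at the end (shrinking $U$ toward $x_0$) has the same problem: (Tr-2a) never gives you a cross ratio $<1$, so no contradiction emerges.

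The paper's proof avoids (Tr) entirely here, and the missing idea is very short. Instead of only extracting \emph{monotonicity} from the cocycle identity, extract a bounded \emph{infinite product}. Fix a reference point $z\in\G\bx$ on the other side of $x_0$ (so that $y_1,x_n,x'_n,x_{n+1},x'_{n+1},\ldots,z,y_2$ is cyclically ordered for large $n$). Telescoping $\cro_k(\xi^k_{\bx}(x_{n_0}),Y_1,Y_2,\xi^k_{\bx}(z))$ via identity (2) over the interlaced sequence $x_{n_0},x'_{n_0},x_{n_0+1},x'_{n_0+1},\ldots$ writes this single finite number as a product, all of whose factors are $\geq 1$ and which contains every $c_i$. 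A convergent infinite product of terms $\geq 1$ forces the terms to tend to $1$; done. Condition (Tr-2) only enters later, in Lemma~\ref{lem.left limit exists}, where this lemma is combined with (Tr-2) to conclude uniqueness of one-sided limits.
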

\begin{proof}
After possibly switching the roles of $y_1$ and $y_2$ we can find $n_0$ such that  the points $y_1,x_n,x'_n,x_j,x'_j,y_2$ are in  cyclic order in $\dg$ if $n_0\leq n<j$. 
Choose $z\in \G \bx \cap (x_0,y_2)_{y_1} $ (see Figure \ref{f.3.7}).

\begin{center}
\begin{figure}[h]
\begin{tikzpicture}
\draw (-4,0) .. controls (-1.5,1.5) and (1.5,1.5) .. (4,0);
\filldraw (-3.4,0.32) circle [radius=1pt] node [above] {$y_1$};
\filldraw (3.4,0.32) circle [radius=1pt] node [above] {$y_2$};
\filldraw (1,1.05) circle [radius=1pt] node [above] {$x_0$};
\filldraw (0.3,1.12) circle [radius=1pt] node [above ] {$x_j'$};
\filldraw (-.4,1.12) circle [radius=1pt] node [above ] {$x_j$};
\filldraw (-1,1.05) circle [radius=1pt] node [above ] {$x_n'$};
\filldraw (-1.8,0.9) circle [radius=1pt] node [above ] {$x_n$};
\filldraw (1.8,0.9) circle [radius=1pt] node [above ] {$z$};
\end{tikzpicture}
\caption{The configuration of points in the proof of Lemma \ref{lem.cro left limit exists}}\label{f.3.7}
\end{figure}
\end{center}
To simplify notation set $Y_i:=\xi_{\by}^{d-k}(y_i)$. 
Iteratively applying the cocycle identity of the cross ratio we get  
\begin{align*}
&\cro_{k}\left(\xi_{\bx}^k(x_{n_0}),Y_1,Y_2,\xi_{\bx}^k(z)\right)\\
=&\cro_k\left(\xi_{\bx}^k(x_{j+1}),Y_1,Y_2,\xi_{\bx}^k(z)\right)\prod_{i=n_0}^j \cro_k\left(\xi_{\bx}^k(x_i),Y_1,Y_2,\xi_{\bx}^k(x'_i)\right)\cro_k\left(\xi_{\bx}^k(x'_i),Y_1,Y_2,\xi_{\bx}^k(x_{i+1})\right)
\end{align*}
for all $j>n_0$. Each of the factors in the product is at least $1$, because the boundary maps are positively ratioed and the  quadruples are cyclically ordered. Since the left hand side is finite,  the terms in the product necessarily converge to 1.
\end{proof}

We fix some (auxiliary) orientation on $S^1\simeq \dg$, so that we can talk about convergence from the left (denoted $x^{-}\to x_0$) or from the right (denoted $x^{+}\to x_0$). 

\begin{lem}\label{lem.left limit exists}
Left-limits $\lim_{x^{-}\to x_0} \xi^k_{\bx}(x^{-})$ and right-limits $\lim_{x^{+}\to x_0} \xi^k_{\bx}(x^{+})$ 
exist and the same holds for $\xi^{d-k}_{\by}$. They induce $\r_0$--equivariant maps 
$$\xi^k_{-},\xi^k_{+}:\dg\to\Gr_k(E)\quad \xi^{d-k}_{-},\xi^{d-k}_{+}:\dg\to\Gr_{d-k}(E)$$
with $\xi^k_{-},\xi^{d-k}_{-}$ left-continuous, $\xi^k_{+},\xi^{d-k}_{+}$   right-continuous. Each of the boundary maps $\xi^k_{-},\xi^k_{+},\xi_{\bx}^k$ is transverse to each of $\xi^{d-k}_{-},\xi^{d-k}_{+}$, $\xi^{d-k}_{\by}$.
\end{lem}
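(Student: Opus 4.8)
The plan is to establish existence of the one-sided limits first, and then deduce equivariance, one-sided continuity, and transversality. I would focus on the left-limit of $\xi^k_{\bx}$; the other three cases are symmetric. Fix $x_0\in\dg$ and a sequence $x_n^-\to x_0$ from the left, which I may pass to a monotone subsequence so that the tuples $(x_n,x_m,x_0)$ sit in a consistent cyclic order for $n<m$. Using compactness of $\Gr_k(E)$ pick a subsequence along which $\xi^k_{\bx}(x_n^-)$ converges to some $l$. To get genuine convergence (not just along a subsequence) I would argue that any two subsequential limits $l,l'$ must coincide: Lemma \ref{lem.cro left limit exists} applies after interleaving the two convergent subsequences into one monotone sequence, and it gives $\cro_k(\xi^k_{\bx}(x_n),Y_1,Y_2,\xi^k_{\bx}(x'_n))\to 1$ for all distinct $y_1,y_2\in\G\by\setminus\{x_0\}$. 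By Corollary \ref{cor.transversality in the closure}, $l$ and $l'$ are transverse to every $\xi^{d-k}_{\by}(y)$ with $y\ne x_0$, so the cross ratios $\cro_k(l,Y_1,Y_2,l')$ are all well-defined and, passing to the limit, equal $1$. This is precisely the situation (Tr-2a) forbids unless $l=l'$: taking $U$ to be a small interval around some point of $\dg\setminus\{x_0\}$, condition (Tr-2a) would produce $z,z'\in U\cap\G\by$ with $\cro_k(l,\xi^{d-k}_{\by}(z),\xi^{d-k}_{\by}(z'),l')\ne 1$, a contradiction. Hence the left-limit exists; call it $\xi^k_-(x_0)$, and set $\xi^k_+(x_0)$ analogously using right-approaching sequences, and likewise $\xi^{d-k}_\pm$.

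Next I would verify the structural properties. \emph{Equivariance}: for $\g\in\G$ the homeomorphism of $\dg$ induced by $\g$ either preserves or reverses the chosen orientation; in the orientation-preserving case $x_n^-\to x_0$ gives $\g x_n^-\to \g x_0$ from the left, and continuity of the $\PGL(E)$-action on $\Gr_k(E)$ together with $\rho_0$-equivariance of each $\xi^k_{\bx}$ gives $\xi^k_-(\g x_0)=\rho_0(\g)\xi^k_-(x_0)$; in the orientation-reversing case left-limits map to right-limits, so one gets $\rho_0$-equivariance of the pair $(\xi^k_-,\xi^k_+)$ as a whole — which is all that is needed. \emph{One-sided continuity}: to see $\xi^k_-$ is left-continuous at $x_0$, take $t_j\to x_0$ from the left; for each $j$ choose $x_{j}^-\in\G\bx$ close enough to $t_j$ (and still on the correct side) that $\mathrm{dist}(\xi^k_{\bx}(x_j^-),\xi^k_-(t_j))<1/j$, which is possible by the already-proven existence of left-limits; since the $x_j^-$ also approach $x_0$ from the left, $\xi^k_{\bx}(x_j^-)\to\xi^k_-(x_0)$, hence $\xi^k_-(t_j)\to\xi^k_-(x_0)$.

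Finally, \emph{transversality}: I must show each of $\xi^k_-,\xi^k_+,\xi^k_{\bx}$ is transverse to each of $\xi^{d-k}_-,\xi^{d-k}_+,\xi^{d-k}_{\by}$ at distinct points $x\ne y$. The cleanest route is Proposition \ref{prop.limits of spr: transverse orbits} together with Lemma \ref{lem opposition for limits of pos ratioed}: each one-sided boundary map value is a limit of values $\xi^k_{\r_{n}}(x_i)$ along suitable sequences $x_i\to x$ (realize $\xi^k_-(x)$ as $\xi^k_{\{x_i,n_i\}}(x)$ after a diagonal argument, as in Lemma \ref{lem.diagonal-sequence}), and $\xi^{d-k}_-(y)$ similarly; one then needs transversality of the "seed" pair $\xi^k_{\bx}(\bx)\tv\xi^{d-k}_{\by}(\by)$ — which, if not literally available, follows from Corollary \ref{cor.transversality in the closure} applied to $\bx$ and a $\G$-translate of $\by$ — and Proposition \ref{prop.limits of spr: transverse orbits} propagates transversality across the whole orbit; passing to the one-sided limits, Lemma \ref{lem opposition for limits of pos ratioed} shows transversality survives in the limit because non-transversality would force the relevant cross ratio to be $0$, contradicting that it is $\ge 1$. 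The main obstacle I anticipate is the bookkeeping in the transversality step: keeping the cyclic orderings consistent when simultaneously taking one-sided limits in the two variables, and making sure the pair being tested is of the form to which Lemma \ref{lem opposition for limits of pos ratioed} applies (i.e.\ that one already has the two "diagonal" transversalities $p_0\tv u_0$, $q_0\tv v_0$ needed as hypotheses there); this is where I expect the argument to need the most care, though each individual step is of a kind already carried out earlier in the section.
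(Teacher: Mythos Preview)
Your argument for the existence of one-sided limits is essentially identical to the paper's: interleave two subsequential limits, invoke Lemma~\ref{lem.cro left limit exists} to force the cross ratio to $1$, and use condition~(Tr-2a) to derive a contradiction if the two limits differ. The verification of one-sided continuity and equivariance is routine, as you indicate (and note that $\G$ acts on $\dg$ by orientation-preserving homeomorphisms, so your worry about orientation-reversing elements is moot).

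Where you diverge from the paper is the transversality step, and here you are working much harder than necessary. The paper simply observes that, by construction, every value $\xi^k_\pm(x)$ and every value $\xi^k_{\bx}(x)$ is of the form $\lim_i \xi^k_{\bx}(x_i)$ for some sequence $x_i\to x$ in $\G\bx$, and symmetrically on the $(d-k)$ side; hence Corollary~\ref{cor.transversality in the closure} applies directly and gives transversality of every such pair at distinct base points in one line. You already invoked this corollary in your existence argument, so it is at hand. Your proposed route through a diagonal realization as $\xi^k_{\{x_i,n_i\}}$, Proposition~\ref{prop.limits of spr: transverse orbits} to propagate from a seed, and then Lemma~\ref{lem opposition for limits of pos ratioed} to pass to limits, can be made to work, but it forces you to synchronize the subsequence $\{n_i\}$ for both variables simultaneously and to keep track of the diagonal transversalities needed as hypotheses in Lemma~\ref{lem opposition for limits of pos ratioed}; this is exactly the bookkeeping obstacle you flag, and it is entirely avoidable.
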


\begin{proof} 
Condition (Tr-2), combined with Lemma \ref{lem.cro left limit exists}, guarantees that left and right limits $\xi^k_{\pm}:\dg\to  \Gr_k(E)$ exist and are unique: Indeed if, by contradiction, two different accumulation points $l,l'$ of sequences converging to $x_0$ from the same side existed, we could pass to a subsequence and find two nested sequences $\{x_n\},\{x'_n\}\subset \G \bx$ so that $\xi^k_{\bx}(x_n)$ and $\xi^k_{\bx}(x'_n)$ converge to $l$ and $l'$, respectively, and find $z,z'\in \G \by\backslash \{x_0\}$ such that
$$\cro_k(l,\xi^{d-k}_{\by}(z),\xi^{d-k}_{\by}(z'),l')\neq 1.$$ 
This would contradict Lemma \ref{lem.cro left limit exists} as the lemma yields
$$\cro_k(l,\xi^{d-k}_{\by}(z),\xi^{d-k}_{\by}(z'),l')=\lim_{i\in \N} \cro_k(\xi^k_{\bx}(x_{n}),\xi^{d-k}_{\by}(z),\xi^{d-k}_{\by}(z'),\xi^k_{\bx}(x'_{n}))=1.$$

The same argument with the roles of $\xi^k_{\bx}$ and $\xi^{d-k}_{\by}$ exchanged provides left-, respectively right-continuous maps $\xi^{d-k}_{-},\xi^{d-k}_{+}:\dg\to\Gr_{d-k}(E)$.
By construction $\xi^k_{\pm}(\dg)\subset \overline{\xi^k_{\bx}(\G\bx)}$, and $\xi_{\pm}^{d-k}(\dg)\subset \overline{\xi^{d-k}_{\by}(\G\by)}$. Proposition \ref{cor.transversality in the closure} ensures the transversality of the boundary maps.
\end{proof}

We now prove the main result of this section.
\begin{thm}\label{thm.limits of 1-pos ratio}
	If condition $\rm{(Tr)}$ holds for some $\bx,\by\in\dg$,
	there are $\r_0$--equivariant, continuous, transverse, $k$--positively ratioed boundary maps 
	$$\xi^k_{\r_0}:\dg\to \Gr_k(E),\quad \xi^{d-k}_{\r_0}:\dg\to \Gr_{d-k}(E).$$
\end{thm}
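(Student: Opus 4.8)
The plan is to upgrade the one-sided limit maps produced in Lemma \ref{lem.left limit exists} to genuine continuous boundary maps; the only substantive thing to establish is that the left and right limits agree. First I would record that the maps $\xi^k_\pm$, $\xi^{d-k}_\pm$ of Lemma \ref{lem.left limit exists} are $\r_0$-equivariant, transverse (each $\xi^k_\pm$ transverse to each $\xi^{d-k}_\pm$ off the diagonal), and positively ratioed: a cross ratio of a cyclically ordered quadruple of their values is a limit, along left-approaching sequences, of the corresponding cross ratios for $\xi^k_\bx$, $\xi^{d-k}_\by$, which are $\geq 1$ by Proposition \ref{prop.limits of spr: transverse orbits} together with (Tr-1), and $\cro_k$ is continuous on its domain (Lemma \ref{lem.cross ratio is algebraic}). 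Granting the claim that $\xi^k_-=\xi^k_+$ and $\xi^{d-k}_-=\xi^{d-k}_+$, the common maps $\xi^k_{\r_0}:=\xi^k_-$ and $\xi^{d-k}_{\r_0}:=\xi^{d-k}_-$ are left-continuous (being $\xi^k_-$, $\xi^{d-k}_-$) and right-continuous (being $\xi^k_+$, $\xi^{d-k}_+$), hence continuous, and inherit equivariance, transversality and positivity — which is exactly the assertion of the theorem.

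The heart of the matter is thus the equality of the two one-sided limits, and here I would argue by contradiction using the conicality of the $\G$-action on $\dg$. Suppose $l:=\xi^k_-(x_0)\neq l':=\xi^k_+(x_0)$ at some $x_0$ (the $(d-k)$-case is symmetric, with (Tr-2b) in place of (Tr-2a)). By conicality of $x_0$, choose $\g_n\in\G$ with $\g_n x_0\to a$ and $\g_n y\to b$ locally uniformly on $\dg\setminus\{x_0\}$, $a\neq b$; then $h_n:=\g_n^{-1}$ satisfies $h_n z\to x_0$ locally uniformly on $\dg\setminus\{b\}$. Because $x_0=h_n(\g_n x_0)$ with $\g_n x_0\to a$ and $h_n$ preserves orientation, fixing two points $z,z'\in\dg\setminus\{b\}$ just below and just above $a$ gives $h_n z\to x_0$ from the left and $h_n z'\to x_0$ from the right. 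Passing to a subsequence with $h_n b\to w_\infty$, I then pick two distinct points $y_1,y_2\in\G\by$, distinct from $x_0,b,z,z'$ and lying on a fixed side of $w_\infty$, in such a way that $\g_n y_1$ and $\g_n y_2$ approach $b$ from one and the same side for all large $n$; these $y_i$ range over $\G\by\cap U$ for a nonempty open arc $U\subset\dg\setminus\{x_0\}$.

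Now invariance of $\cro_k$ under $\r_0(\g_n)$ together with equivariance of $\xi^k_-$ and $\xi^{d-k}_\by$ yield, for every $n$,
\[\cro_k\bigl(\xi^k_-(h_n z),\xi^{d-k}_\by(y_1),\xi^{d-k}_\by(y_2),\xi^k_-(h_n z')\bigr)=\cro_k\bigl(\xi^k_-(z),\xi^{d-k}_\by(\g_n y_1),\xi^{d-k}_\by(\g_n y_2),\xi^k_-(z')\bigr).\]
On the left, $h_n z\to x_0^-$ and $h_n z'\to x_0^+$, the quadruples stay in the region where $\cro_k$ is continuous (all transversalities of $\xi^k_\pm$ with $\xi^{d-k}_\by$ hold, since $x_0\neq y_i$), so the left side tends to $\cro_k(l,\xi^{d-k}_\by(y_1),\xi^{d-k}_\by(y_2),l')$. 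On the right, $\xi^k_-(z),\xi^k_-(z')$ are fixed, while $\g_n y_1,\g_n y_2\to b$ from the same side, so uniqueness of the one-sided limits of $\xi^{d-k}_\by$ (Lemma \ref{lem.left limit exists}) forces $\xi^{d-k}_\by(\g_n y_1)$ and $\xi^{d-k}_\by(\g_n y_2)$ to converge to one and the same subspace $Q$; a cross ratio with two coinciding middle entries transverse to both outer entries is $1$, and $\cro_k$ is continuous there (note $z,z'\neq b$), so the right side tends to $1$. Hence $\cro_k(l,\xi^{d-k}_\by(y_1),\xi^{d-k}_\by(y_2),l')=1$ for all distinct $y_1,y_2\in\G\by\cap U$. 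Since $l$ and $l'$ are distinct limits of $\xi^k_\bx$ along sequences in $\G\bx$ converging to $x_0$, condition (Tr-2a) applied to $U$ produces distinct $z_0,z_0'\in\G\by\cap U$ with $\cro_k(l,\xi^{d-k}_\by(z_0),\xi^{d-k}_\by(z_0'),l')\neq 1$, a contradiction. Running the same argument with the two families of maps interchanged gives $\xi^{d-k}_-=\xi^{d-k}_+$, completing the proof.

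I expect the continuity step to be the only real difficulty. The one-sided limits exist for soft reasons, but equivariance, transversality and positivity of $\cro_k$ are by themselves compatible with a jump of $\xi^k_-$ along the countable orbit of a point, and it is precisely conicality that lets one "zoom in" on such a hypothetical jump and convert it, via the identity above, into the statement that $\cro_k$ is constantly $1$ on an open piece of $\G\by$ — which (Tr-2) rules out. The technical care needed lies in the fact that (Tr-2) is phrased with the pointwise-limit maps $\xi^k_\bx,\xi^{d-k}_\by$ defined on their countable orbits, not with the continuous extensions; this forces one to keep $y_1,y_2$ inside the orbit $\G\by$ and to arrange that $\g_n y_1,\g_n y_2$ approach $b$ monotonically from one side, so that uniqueness of one-sided limits makes the right-hand cross ratios converge.
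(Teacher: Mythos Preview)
Your proof is correct and follows the same conicality-plus-(Tr-2) contradiction as the paper: push the hypothetical jump along a conical sequence so that the two middle entries of the cross ratio collapse to a common subspace, forcing the cross ratio to equal $1$ for all pairs in $\G\by\cap U$ on an open arc $U$, against (Tr-2a). The one cosmetic difference is that the paper applies $\g_n$ directly to the quadruple $\bigl(\xi^k_-(x_0),\,\cdot\,,\,\cdot\,,\xi^k_+(x_0)\bigr)$ using equivariance of \emph{both} one-sided maps, whereas you approximate $\xi^k_+(x_0)$ by $\xi^k_-(h_n z')$ with $h_n z'\to x_0^+$; this silently uses that the right-limit of $\xi^k_-$ equals $\xi^k_+$ (true, by diagonalizing over $\G\bx$, but worth saying), and the paper's formulation avoids that extra step.
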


\begin{proof}  We first show that the right and left limits $\xi^k_{-}, \xi^k_{+}$ agree, and the same holds for $\xi^{d-k}_{-}, \xi^{d-k}_{+}$, so that  $\xi^k_{-},\xi^{d-k}_{-}$ is a pair of $\r_0$--equivariant continuous transverse boundary maps. 
Assume by contradiction  that there exists $w\in \dg$ such that $\xi^k_-(w)\neq \xi^k_+(w)$. 
Since $\G$ is hyperbolic, $w$ is a conical limit point, i.e. there is a sequence $\{\g_n\}\subset \G$ and distinct $w',z'\in \dg$ such that $\g_n w\to w'$ and $\g_n z\to z'$ for every $z\in\dg \backslash\{w\}$.
Choose  $z_0\in \G\by \backslash\{w\}$. After possibly passing to a subsequence of $\{\g_n\}$ we can assume that $\g_n z_0$ converges from one side to $z'$; assume without loss of generality that it is from the right. Denote by $(w,z_0)^r$ the connected component of $\dg\backslash \{w,z_0\}$ such that $z'\notin \g_n(w,z_0)^r$ for all big enough $n$.
Since the convergence is conical, the action is by {{orientation preserving}} homeomorphisms and $\g_n z_0\to z'$ from the right, we deduce that  $\g_n z\to z'$ from the the right for every $z\in (w,z_0)^r$.  Lemma \ref{lem.left limit exists} implies $\xi^{d-k}_{\by}(\g_n z)\to \xi^{d-k}_{+}(z')$ for all $z\in (w,z_0)^r\cap \G\cdot \by$.

Since  $\Gr_k(E)$ is compact, we can pass to a subsequence of $\{\g_n\}$, still denoted  $\{\g_n\}$, such that $\xi^k_{-}(\g_n w)$ and $\xi^k_{+}(\g_n w)$  converge. We denote the respective limits by $l_-,l_+\in\Gr_k(E)$

By condition (Tr-2) we find $y_1,y_2\in (w,z_0)^r\cap \G \by$ such that
$$\cro_k(\xi^k_-(w),\xi^{d-k}_{\by}(y_1),\xi^{d-k}_{\by}(y_2),\xi^k_+(w))\neq 1.$$ 
Since $l_{\pm}\in \overline{\xi^k_{\pm}(\dg)} \subset \overline{\xi^k_{\bx}(\G\bx)}$, Proposition \ref{cor.transversality in the closure} guarantees that $l_{\pm}\tv \xi^{d-k}_{+}(z')$.
The continuity of the cross ratio and the invariance by $\PGL(E)$ give that
\begin{align*}
&\cro_{k} (\xi^k_{-}(w),\xi^{d-k}_{\by}(y_1) ,\xi^{d-k}_{\by}(y_2),\xi^k_{+}(w))\\
=&\cro_{k} (\xi^k_{-}(\g_n w),\xi^{d-k}_{\by}(\g_n y_1) ,\xi^{d-k}_{\by}(\g_n y_2),\xi^k_{+}(\g_n w))\\
 =&\cro_{k} (l_-,\xi^{d-k}_{+}(z') ,\xi^{d-k}_{+}(z'),l_+)=1,
\end{align*} 
where in the second to last equality we let $n\to \infty$. This is a contradiction, thus $\xi^k_{-}=\xi^k_{+}$. The $(d-k)$-case is entirely analogue.

The pair $(\xi^k_-,\xi^{d-k}_-)$ is  $k$--positively ratioed by continuity of $\cro_{k}$ and the definitions of $\xi^k_{-},\xi_{-}^{d-k}$. Indeed, for any cyclically ordered $(x,y,y',x')$ in $\dg$,  we find sequences $\{x_n\},\{x_n'\}\subset \G \bx$ and $\{y_n\},\{y_n'\}\subset \G \by$ converging to the respective points from the left by minimality of $\G$--action on $\dg$. Since for $n$ big enough $(x_n,y_n,y_n',x_n')$ is cyclically ordered,
\[\cro_{k}(\xi^k_{\bx}(x_n),\xi_{\by}^{d-k}(y_n),\xi_{\by}^{d-k}(y_n'),\xi^k_{\bx}(x'_n))\geq 1.\qedhere \]
\end{proof}
\begin{remark}\label{r.limits of 1-pos ratio}
The boundary maps $\xi^k_{\rho_0}$, $\xi^{d-k}_{\rho_0}$ constructed in Theorem \ref{thm.limits of 1-pos ratio} might depend on the choice of points $\bx$, $\by$. Observe that, by construction $\xi^k_{\rho_0}=\xi^k_\bx$   (resp. $\xi^{d-k}_{\rho_0}=\xi^{d-k}_\bx$) on their common domain of definition, namely the $\G$-orbit $\G\bx$ (resp. $\G\by$).
\end{remark}	
\begin{remark}
There is a natural $\R$-valued cross ratio on a subset of $\quotient{G}{P}\times \quotient{G}{P^-}\times \quotient{G}{P^-}\times \quotient{G}{P}$ for any (semi-)simple Lie group $G$ and  any maximal parabolic $P<G$ with  opposite parabolic $P^-$ (see for instance \cite{Beyrer}). Condition (Tr) can be defined analogously, and Theorem \ref{thm.limits of 1-pos ratio} can be proven in the same way in such more general setting. However observe that not all pairs $(G,P)$ admit Zariski dense $P$-positively ratioed representations (e.g. if $G=\PSL(\mathbb{C}^2)$ this is not the case).
\end{remark}

\section{Limits of positively ratioed representations}\label{s.pos-ratio}
In this section we discuss several  instances in which Theorem \ref{thm.limits of 1-pos ratio} can be used to derive that the limit is (almost) Anosov.
As always we consider sequences $(\rho_n)$ of $k$--positively ratioed representations  converging to a representation $\rho_0$.

\subsection{Zariski dense limits} Zariski density guarantees that the hypotheses of Theorem \ref{thm.limits of 1-pos ratio} are satisfied:
\begin{prop}\label{prop.Tr Zariski dense}
	If $\r_0$ is Zariski dense, then for any $\bx$, $\by$ in $\dg$ there exists a subsequence of $(\r_n)$ that satisfies condition $\rm{(Tr)}$ with respect to  $\bx$, $\by$.
\end{prop}

\begin{proof}
	Choose $\bx,\by\in \dg$. Since the Grassmannian is compact, we can pass to a subsequence to guarantee that $\lim \xi_{\r_n}^k(\bx)$ and $\lim \xi_{\r_n}^{d-k}(\by)$ exist. Then condition (Tr-1) is an immediate consequence of Zariski density: if there existed $l\in\Gr_k(E)$, $p\in\Gr_{d-k}(E)$ such that for every $\g\in\G$, $\rho_0(\g)p\ntv l$, the representation $\rho$ would not be Zariski dense. Since the cross ratio is algebraic, for every pair of distinct $l, l'\in \Gr_k(E)$, and every $z\in U$  we can find $\g\in \G$ such that 		
	$$\cro_k(l,\xi^{d-k}_{\by}(z),\xi^{d-k}_{\by}(\g \by),l')\neq 1.$$ 
	The same argument as in Proposition \ref{prop.labourie-irreducible} shows that we can also find an element $\g$ such that the orbit point $z'=\g\by$ lies in a given subset $U\subset \dg$. 
\end{proof}
Since for Zariski dense representations admitting equivariant, continuous transverse boundary maps already implies that the representation is Anosov (Proposition \ref{prop.GW-irreducibility}) we deduce from Theorem \ref{thm.limits of 1-pos ratio}:
\begin{cor}\label{cor:Zd}
	A Zariski dense limit $\rho_0$ of $k$-positively ratioed representations is $k$-positively ratioed. 
\end{cor}	
\subsection{Irreducible limits}
If the sequence is $1$--positively ratioed, in Corollary \ref{cor:Zd} Zariski density can be relaxed to irreducibility (Proposition \ref{prop.irred limits of 1-pos ratioed}). We verify that the limit satisfies condition (Tr) under weaker assumptions, which will be useful in other contexts as well. The following proposition includes irreducible representations, for which we let $E_2$ be the trivial subspace.

\begin{prop}\label{pro.condition L irreducible}
	If $\{\r_n\}_{n\in \N}$ are 1--positively ratioed, $\r_0$ preserves a splitting $E=E_1\oplus E_2$ with $E_1$  irreducible, and there exist $\bx,\by\in \dg$ such that the limit $\lim \xi^1_{\r_n}(\bx)$ exists and  is contained   in $E_1$ and the  limit $\lim \xi^{d-1}_{\r_n}(\by)$ exists and contains $E_2$, 
	then $\{\r_n\}$ satisfies condition $\rm{(Tr)}$ with respect to $\bx,\by\in \dg$.
\end{prop}

\begin{proof}
	Since $E_1$ is irreducible, for each $l\in \P(E_1)$ and each $p\in \Gr_{d-1}(E)$ with $E_2\subset p$ we find $\g$ such that $\r_0(\g)l\tv  p$. As $\overline{\xi^1_{\bx} (\G\bx)}\subset \P(E_1)$ and $E_2\subset p$ for all $p\in \overline{\xi^{d-1}_{\by} (\G\by)}$, (Tr-1) follows.
	
	Given $x_0\in\dg$ and an open set $U\subset\dg\setminus \{x_0\}$, we can find  $z_1,\ldots, z_r\in \G \by\cap U$ such that 
	\begin{align}\label{eq.condition Tr for 1-pos}
		\bigcap_{i=1,\ldots,r} \xi^{d-1}_{\by}(z_i)= E_2.
	\end{align}
	Indeed there are $r=\dim E_1$ points $w_i\in \G\by$ satisfying \eqref{eq.condition Tr for 1-pos} because $E_1$ is irreducible and $E_2<\xi^{d-1}_{\by}(\by)$; for a sufficiently high power of a loxodromic element $\g\in\G$ with attracting point in $U$ the points $z_i:=\g w_i$ have the desired property.
	
	Let now $l, l'$ be as in (Tr-2a). By 
	Proposition \ref{cor.transversality in the closure},  $l,l'$ are  transverse to all of $Z_i:=\xi^{d-1}_{\by}( z_i)$ for $i=1,\ldots,r$. If, by contradiction, $\cro_1(l,Z_i,Z_j,l')=1$ for all $i,j$, then it follows from Remark \ref{rem.cross ratio as projective} that
	$$Z_i\cap (l\oplus l')= Z_j\cap (l\oplus l')\quad \forall
	i,j.$$
	Thus $o:=(Z_1\cap (l\oplus l'))\in Z_j$ for all $j=1,\ldots,r$. Since $o\in E_1$,  this contradicts Equation \eqref{eq.condition Tr for 1-pos}. Condition (Tr-2b) follows reversing roles of $\xi^1_{\bx}$ and $\xi^{d-1}_{\by}$.
\end{proof}

As a result we obtain that if a limit of $1$-positively ratioed representations is irreducible than it is $1$-positively ratioed, and thus in particular Anosov: 
\begin{prop}\label{prop.irred limits of 1-pos ratioed}
	Let $\{\r_n\}_{n\in \N}$ be a sequence of $1$--positively ratioed representations converging to an irreducible representation $\r_0$. Then $\r_0$ is $1$--positively ratioed.
\end{prop}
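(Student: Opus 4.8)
\textbf{Proof proposal for Proposition \ref{prop.irred limits of 1-pos ratioed}.}

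The plan is to reduce to Theorem \ref{thm.limits of 1-pos ratio} by verifying condition $\rm{(Tr)}$ for a subsequence, and then to upgrade the resulting boundary maps to Anosov maps via Proposition \ref{prop.GW-irreducibility}. The first move is to pass to a semi-simplification: replacing each $\r_n$ by a conjugate, we may assume (as recorded after Proposition \ref{prop.semi-simplification}) that $\{\r_n\}$ converges to $\r_0^{ss}$. Since $\r_0$ is irreducible, it is already reductive, so $\r_0^{ss}$ is conjugate to $\r_0$ and we may simply assume $\r_0 = \r_0^{ss}$; note $1$--positive ratioedness is a conjugation-invariant condition, so the $\r_n$ are still $1$--positively ratioed. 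Because $\Gr_1(E)$ and $\Gr_{d-1}(E)$ are compact, after passing to a subsequence the limits $\lim \xi^1_{\r_n}(\bx)$ and $\lim \xi^{d-1}_{\r_n}(\by)$ exist for any chosen $\bx,\by\in\dg$.

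Next I would verify condition $\rm{(Tr)}$. For $\rm{(Tr\text{-}1)}$: any $l\in\overline{\xi^1_{\bx}(\G\bx)}$ is a line and any $p\in\overline{\xi^{d-1}_{\by}(\G\by)}$ is a hyperplane; since $\r_0$ is irreducible, \cite[Proposition 10.3]{Labourie-IM} provides $\g\in\G$ with $\r_0(\g)l\tv p$, which is exactly $\rm{(Tr\text{-}1)}$. For $\rm{(Tr\text{-}2)}$: given an open $U\subset\dg$ and points $l\ne l'$ arising as limits $\lim\xi^1_{\bx}(x_i)$, $\lim\xi^1_{\bx}(x'_i)$ with $x_i,x'_i\to x_0$, Corollary \ref{cor.transversality in the closure} guarantees $l,l'$ are transverse to $\xi^{d-1}_{\by}(y)$ for every $y\in\G\by\setminus\{x_0\}$. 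If we had $\cro_1(l,\xi^{d-1}_{\by}(z),\xi^{d-1}_{\by}(z'),l')=1$ for all $z,z'\in U\cap\G\by$, then Remark \ref{rem.cross ratio as projective} forces $\xi^{d-1}_{\by}(z)\cap(l\oplus l')$ to be the same line $o$ for every such $z$, so $o$ lies in $\xi^{d-1}_{\by}(z)$ for all $z\in U\cap\G\by$; applying Proposition \ref{prop.labourie-irreducible} with the equivariant map $\xi^{d-1}_{\by}:\G\by\to\Gr_{d-1}(E)$ (restricting the domain to $\G\by\cap U$ and taking $V=$ the hyperplane of all functionals vanishing on $o$, equivalently intersecting with the codimension-one subspace determined by $o$) contradicts irreducibility of $\r_0$. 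This establishes $\rm{(Tr\text{-}2a)}$; $\rm{(Tr\text{-}2b)}$ follows symmetrically, using that $\r_0^\dual$ is also irreducible. (This is essentially the argument of Proposition \ref{pro.condition L irreducible} with the splitting trivialised to $E_1=E$, $E_2=0$; one could also just invoke that proposition directly.)

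With condition $\rm{(Tr)}$ in hand, Theorem \ref{thm.limits of 1-pos ratio} yields $\r_0$--equivariant, continuous, transverse, $1$--positively ratioed boundary maps $\xi^1_{\r_0}:\dg\to\P(E)$ and $\xi^{d-1}_{\r_0}:\dg\to\Gr_{d-1}(E)$. Since $\r_0$ is irreducible, Proposition \ref{prop.GW-irreducibility} upgrades these to genuine $1$--Anosov boundary maps, so $\r_0$ is $1$--Anosov; being also positively ratioed in the sense of Definition \ref{d.posrat}, it is $1$--positively ratioed, as claimed. The only delicate point is $\rm{(Tr\text{-}2)}$, where one must carefully package the ``constant intersection line'' conclusion into the hypothesis of Proposition \ref{prop.labourie-irreducible} — in particular choosing the subspace $V$ so that $\dim(\xi^{d-1}_{\by}(z)\cap V)\geq 1$ holds on the open set $U\cap\G\by$; everything else is a direct application of results already established.
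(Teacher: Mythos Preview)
Your approach is correct and coincides with the paper's: invoke Proposition~\ref{pro.condition L irreducible} (with the trivial splitting $E_1=E$, $E_2=0$) to obtain condition~$\rm{(Tr)}$, apply Theorem~\ref{thm.limits of 1-pos ratio} for the boundary maps, and conclude with Proposition~\ref{prop.GW-irreducibility}. The only wrinkle is in your hand-rolled verification of $\rm{(Tr\text{-}2a)}$: when you appeal to Proposition~\ref{prop.labourie-irreducible} you should take $V=o\in\Gr_1(E)$ (not a hyperplane), and that proposition as stated only yields ``not strongly irreducible''; in this particular case the orbit--intersection $\bigcap_{z\in\G\by}\xi^{d-1}_{\by}(z)$ is itself a nonzero proper $\r_0$--invariant subspace, which is what actually contradicts irreducibility --- but since you already note that Proposition~\ref{pro.condition L irreducible} can be cited directly, this is moot.
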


\begin{proof}
	In this case a subsequence satisfies condition (Tr) by Proposition \ref{pro.condition L irreducible}. Hence $\r_0$ admits continuous transverse positively ratioed boundary maps (Theorem \ref{thm.limits of 1-pos ratio}). Hence $\r_0$ is 1--Anosov and the latter boundary maps are the Anosov boundary maps (Proposition \ref{prop.GW-irreducibility} by Guichard--Wienhard).
\end{proof}

\subsection{Proximal limits, the proof of Theorem \ref{thmINTRO.limits of pos ratio general+proximal}}

The assumptions of Proposition \ref{pro.condition L irreducible} are satisfied if there is an element $\g\in\G$ such that first generalized eigenspace of $\r_0(\g)$ is contained in a single irreducible factor. This will be key in the construction of boundary maps for $k$-proximal limits of $k$-positively ratioed representations.
\begin{cor}\label{cor.weakly Anosov limit generalizing proximality}
	If $\{\r_n\}_{n\in \N}$ are 1--positively ratioed, $\r_0=\eta_1\oplus \eta_2$  
	preserves the splitting $E=E_1\oplus E_2$ with $E_1$ irreducible, and there exists an infinite order element $\g\in\G$ with
	$$\left|\frac{\lambda_1^{\eta_1}}{\lambda_1^{\eta_2}}(\g)\right|>1,$$
 then a subsequence of $\{\r_n\}$ satisfies condition $\rm{(Tr)}$, with respect {{to $\bx=\g_+$, $\by=\g_-$}}.
\end{cor}

\begin{proof}
	The assumptions imply that any limit of the sequence $\{\xi_{\r_n}^1(\g_+)\}$ belongs to $\P(E_1)$ and $E_2$ is contained in any limit of $\{\xi_{\r_n}^{d-1}(\g_-)\}$. It remains to pass  to a subsequence $\{n_i\}_{i\in \N}$ such that both $\xi_{\r_{n_i}}^{1}(\g_+)$ and $\xi_{\r_{n_i}}^{d-1}(\g_-)$  converge, which might be necessary as $\eta_1(\g)$ is not assumed to be 1-proximal. 
\end{proof}
Corollary \ref{cor.weakly Anosov limit generalizing proximality} applies, in particular, if $\r_0(\g)$ is $1$-proximal, namely it contains a 1-proximal element in its image. We will now show that the boundary map provided by Theorem \ref{thm.limits of 1-pos ratio} is, in this case, dynamics preserving at all fixed points of proximal elements, of which Theorem \ref{thmINTRO.limits of pos ratio general+proximal} is a direct consequence.

\begin{prop}\label{prop:kproxlim}
	A $k$--proximal, reductive limit $\rho_0$ of $k$-positively ratioed representations $\rho_n$ admits continuous, transverse equivariant boundary maps that are dynamics preserving at the fixed points  of all $k$--proximal elements.
\end{prop}	
\begin{proof} In order to prove the proposition we need to verify that the sequence satisfies condition (Tr) and that the maps provided by Theorem \ref{thm.limits of 1-pos ratio} are dynamics preserving.
	
	{\bf Case 1: $k=1$.}
	Choose $\g \in\G$ be  such that $\r_0(\g)$ is 1--proximal; we  write $$\r_0=\eta_1\oplus \eta_2:\G\to \PGL(E_1\oplus E_2)$$ 
	where $E_1$ the irreducible factor containing the top eigenspace for $\r_0(\g)$.
	By Corollary \ref{cor.weakly Anosov limit generalizing proximality} the sequence satisfies condition $\rm{(Tr)}$ with respect to $\bx=\g_+$ and $\by=\g_-$. We consider the  equivariant, transverse, continuous, 1-positively ratioed boundary maps 
	$$\xi^1_{\r_0}:\dg\to \P(E),\quad \xi^{d-1}_{\r_0}:\dg\to \Gr_{d-1}(E).$$
	obtained from Theorem \ref{thm.limits of 1-pos ratio}. 
	By construction, $\xi^1_{\r_0}(\dg)\subset\overline{ \xi^1_{\bx}(\G\bx)}\subset \P(E_1)$ and $E_2\subset \xi^{d-1}_{\r_0}(x)$ for any $x\in \dg$, as $\xi^{d-1}_{\r_0}(\dg)\subset\overline{ \xi^{d-1}_{\by}(\G\by)}$ and $E_2\subset  \xi^{d-1}_{\by}(\by)$.
	In particular $\eta_1:\G\to\PGL(E_1)$, admits boundary maps with the same properties, 
	which are the 1--Anosov boundary maps of $\eta_1$ by the irreducibility of $E_1$  (Proposition \ref{prop.GW-irreducibility}).  
	Thus, for every $g\in\G$, $\xi^1_{\r_0}(g_+)$ is the eigenspace of $\lambda_1^{\r_0}(g)$ and $\xi^{d-1}_{\r_0}(g_-)$ is the unique repelling hyperplane of $\r_0(g)$.
	
	It remains to show that for each 1-proximal $\r_0(g)$, the eigenspace $E^{\lambda_1}_{\rho_0(g)}$ is contained in $E_1$. Assume by contradiction that this is not the case for an element $g\in \G$.
	Since the eigenvalues are conjugation invariant, we can assume without loss of generality that $(\g_+,\g_-,g_-,g_+)$ are in that cyclic order in $\dg$. 
	Since $\r_0(g)$ is $1$--proximal, the limit $\lim_n \xi^1_{\r_n}(g_+)$ exists, equals $E^{\lambda_1}_{\rho_0(g)}$ and thus lies in $\P(E_2)$. In the same way since $\tilde{\r}_0(g^{-1})$ is $(d-1)$--proximal, $V^{d-1}=\lim \xi^{d-1}_{\r_n}(g_-)$ exists, is transverse to $\lim \xi^1_{\r_n}(g_+)$ and contains $E_1$.
	The continuity of the cross ratio implies 
	$$\cro_1(\xi^1_{\r_n}(\g_+),\xi^{d-1}_{\r_n}(\g_-),\xi^{d-1}_{\r_n}(g_-),\xi^1_{\r_n}(g_+))\to\cro_1(\xi^1_{\r_0}(\g_+),\xi^{d-1}_{\r_0}(\g_-),V^{d-1},E^{\lambda_1}_{\rho_0(g)}),$$
	but each of the cross ratios on the left is greater than 1 and the cross ratio on the right is defined and equal to 0; a contradiction. 
	
	{\bf Case 2: $k$ general.}
	If the representations $\r_n:\G\to \PGL(E)$ satisfy the assumptions of the proposition for some $k$, then their exterior powers $\wedge^k \r_n:\G\to \PGL(\wedge^k E)$ satisfy the assumptions for $k=1$: indeed being reductive is a property of the Zariski closure, which is not changed through algebraic representations of the group, and the $k$-th exterior power behaves well with respect to the cross ratio and boundary maps  (cfr. Proposition \ref{prop.pos ratio exterior power}).  Moreover by construction the limit maps associated to $\wedge^k\rho_0$ are pointwise limits of the boundary maps associated to $\wedge^k\rho_n$, as a result they are themselves of the form $\wedge^k\xi^k_{\rho_0}$ for some  maps $\xi^{k}_{\r_0}:\dg\to \Gr_k(E)$. It follows from the analysis in Section \ref{s.ext}  that the former maps 
	are equivariant, transverse, continuous, $k$--positively ratioed boundary maps and dynamics preserving if and only if the latter are.
\end{proof}
\begin{remark}
	It follows from the proof that, under the assumption of Proposition \ref{prop:kproxlim},  the boundary map doesn't depend on the choice of $\bx=\g_+$, $\by=\g_-$, and for every $k$-proximal $\r_0(\eta)$,  $\xi^{k}_{\r_n}(\eta_+)\to \xi^{k}_{\r_0}(\eta_+)$. However we do not, in general, know how to guarantee pointwise convergence of the boundary maps at every point $x\in\dg$.
\end{remark}

A powerful way to guarantee that the limit representation is proximal is via collar lemmas (recall from Definition \ref{d.collar} that we consider here collar lemmas comparing roots and weights).

\begin{lem}\label{lem.almost Anosov limitcollar}
	Let $\{\r_n\}_{n\in \N}$ be a sequence of representations satisfying a $k$--collar lemma and converging to $\r_0$. Then for every non-trivial $\g\in \G$, $\r_0(\g)$ is $k$--proximal.
\end{lem}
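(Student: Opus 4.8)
The plan is to use the $k$--collar lemma directly to force a uniform lower bound on the relevant eigenvalue ratio of $\r_0(\g)$, exploiting that eigenvalue ratios are continuous functions on $\PGL(E)$ and that $\G$ always contains an element linked with $\g$.

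First I would fix a non-trivial $\g\in\G$. Since $\G$ is a surface group of genus at least $2$, I can choose $h\in\G$ that is linked with $\g$ (their closed geodesics on $S$ intersect); such an $h$ exists for every non-trivial $\g$. Now for each $n$, the $k$--collar lemma (Definition \ref{d.collar}) applied to the linked pair $\g,h$ gives
\begin{align*}
\frac{\lambda_1\cdots\lambda_k}{\lambda_d\cdots\lambda_{d-k+1}}(\r_n(\g))>\left(1-\frac{\lambda_{k+1}}{\lambda_k}(\r_n(h))\right)^{-1}\ge 1,
\end{align*}
where the last inequality holds because $0<\frac{\lambda_{k+1}}{\lambda_k}(\r_n(h))\le 1$ always (the ratio of a smaller-or-equal eigenvalue absolute value to a larger one), so $(1-\frac{\lambda_{k+1}}{\lambda_k})^{-1}\ge 1$. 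Strictly speaking the right-hand side could be $+\infty$ if $\frac{\lambda_{k+1}}{\lambda_k}(\r_n(h))=1$; but then the inequality cannot hold, so in fact the collar lemma hypothesis already guarantees $\frac{\lambda_{k+1}}{\lambda_k}(\r_n(h))<1$, i.e. $\r_n(h)$ is $k$--proximal, and in particular the displayed chain of inequalities is valid. Hence $\frac{\lambda_1\cdots\lambda_k}{\lambda_d\cdots\lambda_{d-k+1}}(\r_n(\g))>1$ for all $n$.

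The subtlety is that a strict inequality for each $n$ need not pass to a strict inequality in the limit. To get strictness for $\r_0(\g)$ I would instead apply the collar lemma to the pair $\g,h$ but also keep track of $h$: I claim $\r_0(h)$ is $k$--proximal too, and more importantly I can get a \emph{uniform} bound. Replacing $h$ by a fixed high power $h^m$ if necessary (still linked with $\g$, since $h^m$ has the same axis as $h$), I may assume the geodesic length of $h$ is as large as I like; combined with the fact that $\{\r_n\}$ converges, hence $\{\r_n(h)\}$ is bounded in $\PGL(E)$, I get that $\frac{\lambda_{k+1}}{\lambda_k}(\r_n(h))$ stays bounded away from $1$ uniformly in $n$ — more precisely, since $\r_n(h)\to\r_0(h)$ and linkedness is preserved, either I argue $\r_0(h)$ is $k$--proximal by a separate induction or, cleaner, I observe that for the single element $h$ the collar lemma applied with the roles reversed (using some $g'$ linked with $h$) shows $\frac{\lambda_1\cdots\lambda_k}{\lambda_d\cdots\lambda_{d-k+1}}(\r_n(h))>1$ uniformly, which is not quite what I need. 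The cleanest route: pick $h$ linked with $\g$, then pick $h'$ linked with both $\g$ and $h$; apply the collar lemma to $(\g,h)$ and to $(h,h')$. From $(h,h')$ and continuity, $\frac{\lambda_1\cdots\lambda_k}{\lambda_d\cdots\lambda_{d-k+1}}(\r_0(h))\ge 1$; combined with a short argument that this quotient being $\ge 1$ forces $\frac{\lambda_{k+1}}{\lambda_k}(\r_0(h))<1$ when combined with the open collar inequality for $h$ against yet another linked element — this is the part that needs care. Then from $(\g,h)$: $\frac{\lambda_1\cdots\lambda_k}{\lambda_d\cdots\lambda_{d-k+1}}(\r_n(\g))>(1-\frac{\lambda_{k+1}}{\lambda_k}(\r_n(h)))^{-1}$, and letting $n\to\infty$ with $\r_n(h)\to\r_0(h)$ gives $\frac{\lambda_1\cdots\lambda_k}{\lambda_d\cdots\lambda_{d-k+1}}(\r_0(\g))\ge(1-\frac{\lambda_{k+1}}{\lambda_k}(\r_0(h)))^{-1}>1$, which exactly says $\r_0(\g)$ is $k$--proximal.

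The main obstacle is promoting the non-strict limiting inequality to strict proximality, i.e. showing the bound $(1-\frac{\lambda_{k+1}}{\lambda_k}(\r_0(h)))^{-1}$ is genuinely $>1$, which amounts to showing $\frac{\lambda_{k+1}}{\lambda_k}(\r_0(h))$ is strictly less than $1$; the trick above of introducing a third linked element (or iterating) to force $k$--proximality of $\r_0(h)$ first is the crux. Everything else — existence of linked pairs in a genus $\ge 2$ surface group, continuity of eigenvalue ratios on $\PGL(E)$, and boundedness of $\{\r_n(h)\}$ under convergence — is routine. I would also remark that the same argument, applied to all $\g\in\G$ simultaneously, shows $\r_0$ is $k$--proximal as a representation, which is the form needed for Corollary \ref{c.proximal}.
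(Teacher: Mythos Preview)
Your argument has a genuine gap: you are bounding the wrong eigenvalue ratio. The collar inequality applied as you do gives
\[
\frac{\lambda_1\cdots\lambda_k}{\lambda_d\cdots\lambda_{d-k+1}}(\r_0(\g))\;\ge\;\Bigl(1-\tfrac{\lambda_{k+1}}{\lambda_k}(\r_0(h))\Bigr)^{-1}>1,
\]
but having the \emph{weight} ratio $\frac{\lambda_1\cdots\lambda_k}{\lambda_d\cdots\lambda_{d-k+1}}(\r_0(\g))>1$ is \emph{not} the same as $k$--proximality, which is the \emph{root} gap $|\lambda_k|>|\lambda_{k+1}|$. For instance in $\PGL(\R^4)$ with $k=2$ an element with eigenvalues $2,1,1,\tfrac12$ has weight ratio $4>1$ but is not $2$--proximal. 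So your conclusion ``which exactly says $\r_0(\g)$ is $k$--proximal'' is false, and all the auxiliary work with $h,h'$ and powers is aimed at the wrong target.

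The fix is to reverse the roles of $\g$ and $h$ in the collar lemma (linkedness is symmetric). Picking any $h\in\G$ linked with $\g$ and applying Definition~\ref{d.collar} to the pair $(h,\g)$ gives
\[
1-\frac{\lambda_{k+1}}{\lambda_k}(\r_n(\g))\;>\;\left(\frac{\lambda_1\cdots\lambda_k}{\lambda_d\cdots\lambda_{d-k+1}}(\r_n(h))\right)^{-1}.
\]
Since $\r_n(h)\to\r_0(h)$ in $\PGL(E)$, the weight ratio on the right converges to the finite value $\frac{\lambda_1\cdots\lambda_k}{\lambda_d\cdots\lambda_{d-k+1}}(\r_0(h))$, so its reciprocal has a strictly positive limit. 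Passing to the limit yields $1-\frac{\lambda_{k+1}}{\lambda_k}(\r_0(\g))>0$, i.e.\ $\r_0(\g)$ is $k$--proximal. No induction, no third linked element, and no issue with strict versus non-strict inequalities is needed: the point is precisely that the weight ratio is \emph{always} finite, while the root gap can degenerate --- this is why the paper emphasises that the ``root versus weight'' form of the collar lemma ``ensures proximality in the limit''.
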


If we combine this with Theorem \ref{thmINTRO.limits of pos ratio general+proximal} we get:

\begin{cor}\label{cor.collar lemma almost Anosov}
	Let $\{\r_n\}_{n\in \N}$ be a sequence of $k$--positively ratioed representations satisfying a $k$--collar lemma and converging to a reductive representation $\r_0$. Then $\r_0$ is $k$--almost Anosov.
\end{cor}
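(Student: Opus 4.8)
The plan is to assemble the two results quoted immediately above. First I would invoke Lemma~\ref{lem.almost Anosov limitcollar}: since $\{\r_n\}$ satisfies a $k$--collar lemma and converges to $\r_0$, every non-trivial element $\g\in\G$ has $k$--proximal image $\r_0(\g)$. Fixing any such $\g$, this shows in particular that $\r_0$ is a $k$--proximal representation in the sense of the \emph{Representations} paragraph of Section~\ref{s.notation} (its image contains a $k$--proximal element).

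Next, $\r_0$ is reductive by hypothesis and $k$--proximal by the previous step, so Theorem~\ref{thmINTRO.limits of pos ratio general+proximal} applies to the sequence $\{\r_n\}$ of $k$--positively ratioed representations: it produces a pair of $\r_0$--equivariant, transverse, continuous, $k$--positively ratioed boundary maps $\xi^k_{\r_0}:\dg\to\Gr_k(E)$ and $\xi^{d-k}_{\r_0}:\dg\to\Gr_{d-k}(E)$ that are dynamics preserving at the fixed points of every element with $k$--proximal image. It then remains to check that these maps witness $k$--almost Anosovness, i.e.\ that they satisfy conditions (1) and (2) of Definition~\ref{defn.Anosov}. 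Condition (1), transversality of $\xi^k_{\r_0}(x)$ and $\xi^{d-k}_{\r_0}(y)$ for $x\neq y$, is part of the conclusion of Theorem~\ref{thmINTRO.limits of pos ratio general+proximal}. For condition (2), let $\g\in\G$ be any infinite order element with attracting fixed point $\g_+\in\dg$; since $\G$ is a surface group, every non-trivial element has infinite order, so by the first step $\r_0(\g)$ is $k$--proximal, and hence the dynamics-preserving clause of Theorem~\ref{thmINTRO.limits of pos ratio general+proximal} says exactly that $\xi^k_{\r_0}(\g_+)$ and $\xi^{d-k}_{\r_0}(\g_+)$ are the attracting fixed points of $\r_0(\g)$ on $\Gr_k(E)$ and $\Gr_{d-k}(E)$. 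This is precisely condition (2), so $\r_0$ is $k$--almost Anosov.

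The argument is essentially bookkeeping that stitches together Lemma~\ref{lem.almost Anosov limitcollar} and Theorem~\ref{thmINTRO.limits of pos ratio general+proximal}, so I do not expect a genuine obstacle here; the only point requiring a moment's care is matching the phrase ``dynamics preserving at fixed points of elements with proximal image'' in Theorem~\ref{thmINTRO.limits of pos ratio general+proximal} with the universal quantifier ``for every infinite order element $\g$'' of Definition~\ref{defn.Anosov}~(2), which is legitimate precisely because the $k$--collar lemma forces \emph{every} non-trivial element of $\G$ to have $k$--proximal image under $\r_0$.
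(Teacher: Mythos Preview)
Your proposal is correct and follows exactly the approach the paper intends: the corollary is stated immediately after Lemma~\ref{lem.almost Anosov limitcollar} with the remark ``If we combine this with Theorem~\ref{thmINTRO.limits of pos ratio general+proximal} we get'', and your argument is precisely that combination spelled out in full. The only extra content you add is the explicit verification that conditions (1) and (2) of Definition~\ref{defn.Anosov} are met, including the observation that in a surface group every non-trivial element has infinite order so that the collar lemma makes \emph{all} of them $k$--proximal; this is exactly the right bookkeeping and introduces no deviation from the paper's intended proof.
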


\subsection{$(k-1)$-Anosov limits}
Assuming further Anosovness in the limit ensures that contition (Tr) is satisfied.

\begin{prop}\label{prop.condition Tr strongly irred}
	If the limit $\r_0$  of a sequence of $k$-positively ratioed representations is strongly irreducible  and $(k-1)$--Anosov, then $\rho_0$ 
	admits equivariant, continuous, transverse, $k$-positively ratioed boundary maps.
\end{prop}

\begin{proof}
	Observe that, since being Anosov is an open property, for every $n$ big enough $\rho_n$ is  $(k-1)$-Anosov.
	We will check that for every $\bx,\by\in\dg$ fixed there is a  subsequence of $\{\r_n\}_{n\in\N}$ that satisfies condition (Tr) with respect to $\bx,\by$. This is sufficient in view of Theorem \ref{thm.limits of 1-pos ratio}.
	
	Fix  $\bx,\by\in\dg$. Up to passing to a subsequence, we can assume that $\xi^{k}_{\r_n}(\bx),\xi^{d-k}_{\r_n}(\by)$ converge. 
	Condition (Tr-1) is a consequence of strong irreducibility of $\r_0$: for every $l\in \Gr_k(E)$ and every $p\in\Gr_{d-k}(E)$ there is $\g\in \G$ such that $l\tv \r_0(\g) p$ (\cite[Proposition 10.3]{Labourie-IM}). This gives condition (Tr-1).
	
	Let now $l,l'\in\Gr_k(E)$ be as in (Tr-2a).	
	Proposition \ref{cor.transversality in the closure} implies that $l,l'\tv \xi^{d-k}_{\by}(s)$ for all $s\in \G \by\backslash\{x_0\}$.
	As $\r_n,\r_0$ are $(k-1)$--Anosov and the boundary maps vary continuously with the representation, Lemma \ref{lem.diagonal-sequence} implies that 
	$$\xi^{k-1}_{\r_0}(x_0)\subset l,l'.$$
	
	Denote by $L^{k+1}$ the $(k+1)$--plane spanned by $l,l'$, by $L^{k-1}:=\xi^{k-1}_{\r_0}(x_0)$ and by $L=\quotient{L^{k+1}}{L^{k-1}}$.
	Then for all $w,z\in\G \by\backslash\{x_0\}$
	$$\cro_k(l_,\xi^{d-k}_{\by}(w),\xi^{d-k}_{\by}(z),l') = \mathrm{pcr} ([l],[\xi^{d-k}_{\by}(w)\cap L^{k+1}],[ \xi^{d-k}_{\by}(z) \cap L^{k+1}],[l']),$$
	where $[\cdot]$ denotes the quotient in $\P(L)$ and $\mathrm{pcr}$ the usual projective cross ratio on $\R\P^1\simeq \P(L)$ (Proposition \ref{prop.projection of cross ratio}).
	Let $U\subset \dg$ be an open subset. Possibly shrinking $U$, we can assume that $x_0\notin U$. If, by contradiction, for all $w,z\in U\cap \G\by$ 
	$$\mathrm{pcr} ([l],[\xi^{d-k}_{\by}(w)\cap L^{k+1}],[ \xi^{d-k}_{\by}(z) \cap L^{k+1}],[l'])=1,$$
	then for all $w\in U\cap \G\by$
	$$\xi^{d-k}_{\by}(w)\cap (o\oplus L^{k-1})\neq \{0\}$$ 
	where $o:=\xi^{d-k}_{\by}( y_0)\cap L^{k+1}$ for some $y_0\in U\cap \G\by$. This  contradicts strong irreducibility (Proposition \ref{prop.labourie-irreducible}), and shows that the sequence satisfies condition (Tr-2) as well.
\end{proof}

Our next goal is to improve on the result of Proposition \ref{prop.condition Tr strongly irred} and show that in this case the boundary maps are dynamics preserving and the representation is $k$-Anosov. We begin with a generalization of  Proposition \ref{prop.GW-irreducibility} (Proposition \ref{lem.dynamics preserving for strongly irred}) that holds for general hyperbolic groups.

\begin{prop}\label{lem.dynamics preserving for strongly irred}
	Let $\G$ be a Gromov hyperbolic group, $\r:\G\to\PGL(E)$ be $(k-1)$--almost Anosov, strongly irreducible and admit a pair of equivariant, transverse continuous boundary maps 
	$$\xi^k:\dg\to \Gr_k(E),\quad \xi^{d-k}:\dg\to \Gr_{d-k}(E).$$
Then $\r$ is $k$--Anosov and $\xi^k,\xi^{d-k}$ are the Anosov boundary maps.
\end{prop}

\begin{proof}
We first show 
that $\xi^k,\xi^{d-k}$ are dynamics preserving. For any non-trivial $\g\in \G$, let $E_{\rho(\g)}^{<\lambda_k}$ be the sum of all generalized eigenspaces relative to the eigenvalues of absolute value strictly smaller than the $k$-th eigenvalues of $\rho(\gamma)$ (cfr. Section \ref{s.notation}) - of course $\dim\left(E_{\rho(\g)}^{<\lambda_k}\right)\leq d-k$.

	Since $\r$ is strongly irreducible, we find by Proposition \ref{prop.labourie-irreducible} a point $x\in \dg\backslash\{\g_{\pm}\}$ such that
	$$\xi^k(x)\cap E_{\rho(\g)}^{<\lambda_k}=\{0\}.$$ 
	Since $\xi^k$ is continuous, we deduce that $\xi^k(\g_+)\subset E_{\rho(\g)}^{\geq\lambda_k}$:  any limit of $\r(\g)^n \xi^k(x)=\xi^k(\g^n x)$ is contained in $E_{\rho(\g)}^{\geq\lambda_k}$. In order to conclude the proof we need to verify that  $\dim E_{\rho(\g)}^{\geq\lambda_k}=k$, or equivalently that $\rho(\g)$ has a gap of index $k$, namely $|\lambda_k|>|\lambda_{k+1}|$.
	
	Assume by contradiction that this is not the case. We consider the real Jordan decomposition of a lift $\tilde{\rho}(\g)\in\GL(E)$; since the splitting $\xi^{k}(\gamma_+)\oplus\xi^{d-k}(\gamma_-)$ is invariant we can find a non-trivial invariant subspace $V_{\lambda_{k+1}}<\xi^{d-k}(\gamma_-)$ relative to an eigenvalue $\lambda_{k+1}$ with $|\lambda_k|=|\lambda_{k+1}|$. Let $W<\xi^{d-k}(\gamma_-)$ be a $\rho(\g)$-invariant complement of $V_{\lambda_{k+1}}$.  Since for any $x\in\dg\backslash\{ \g_{\pm}\}$, $\rho(\g)^n\xi^k(x)\to \xi^k(\gamma_+)$,  $\xi^k(x)$ cannot have any component in $V_{\lambda_{k+1}}$, 
	and thus is contained in $\xi^k(\gamma_+)\oplus W$. This in particular implies that for every $x$, $\xi^k(x)$ meets the subspace $W\oplus\left( \xi^k(\gamma_+)\cap \xi^{d-k+1}(\gamma_-)\right)$ for dimensional reasons, since  $W\oplus\left( \xi^k(\gamma_+)\cap \xi^{d-k+1}(\gamma_-)\right)$ has codimension $(k-1)$ in $\xi^k(\gamma_+)\oplus W$.
In turn this contradicts strong irreducibility (Proposition \ref{prop.labourie-irreducible}) as long as we can show that the dimension of the latter subspace is at most $ d-k$. 

In order to prove the last claim we will verify that the boundary maps are compatible, namely that, for our fixed $\g$, the containment $\xi^{k-1}(\g_+)\subset\xi^{k}(\g_+)$ holds. Indeed, since $|\lambda_{k-1}|>|\lambda_k|$, the vector space $E_{\rho(\g)}^{<\lambda_{k-1}}$ has dimension $d-k+1$. Strong irreducibility guarantees the existence of a point $x$ such that 	$$\dim(\xi^k(x)\cap E_{\rho(\g)}^{<\lambda_{k-1}})=1.$$ 
	Then $\xi^k(x)$ contains a $k-1$ dimensional subspace $V_x$ transverse to $E_{\rho(\g)}^{<\lambda_{k-1}}$. Since $\r(\g^n)V_x\to \xi^{k-1}(\g_+)$, the claim follows. This concludes the proof that the maps $\xi^k,\xi^{d-k}$ are dynamics preserving.

\medskip

Using 	Definition \ref{defn.Anosov}(3), in order to conclude the proof  it is enough to show that the gaps of $\rho$ grow, equivalently it is enough to show that the gaps of the $1$--almost Anosov (Proposition \ref{prop.Anosov for exterior power}) representation $\wedge^k \r$  grow.
{ Since $\r$ is strongly irreducible, it is reductive and thus the same holds for $\wedge^k \r$ (see, for instance \cite[Theorem 22.138]{Milne}).} Thus $\wedge^k \r$ splits as 
$$\wedge^k \r=\eta_1\oplus \eta_2:\G\to \PGL(V_1\oplus V_2)$$ 
with $\eta_1$ is irreducible. We further deduce from Proposition \ref{prop.Anosov for exterior power} and Lemma \ref{lem.splitting boundary map for reducible reps} that $\xi^1_{\wedge^k \r}(\dg)\subset \P(V_1)$. 
The 
boundary maps of $\wedge^k\r$ provide equivariant continuous transverse boundary maps for $\eta_1$. Since $\eta_1$ is irreducible, it is  Anosov (Proposition \ref{prop.GW-irreducibility}). Thus it suffices to show that for every $\g\in\G$, $|\lambda_2^{\eta_1}(\g)|\geq |\lambda_1^{\eta_2}(\g)|$.

Assume by contradiction that  there is $\g\in\G$ such that $\lambda_2^{\eta_1}(\g)< \lambda_1^{\eta_2}(\g)$. In particular $E_{\wedge^k\r(\g)}^{\lambda_2}\subset V_2$ and thus no $p\in E_{\wedge^k\r(\g)}^{\lambda_2}$ can be transverse to $\xi^{d'-1}_{\wedge^k \r}(x)$ for any $x\in \dg$  since $V_2\subset \xi^{d'-1}_{\wedge^k \r}(x)$ (cfr. Lemma \ref{lem.splitting boundary map for reducible reps}), here we denote by $d'$ the dimension of $\wedge^k E$. On the other hand, choose a non-zero vector  $w$  in $ E^{\lambda_{k+1}}_{\r(\g)}$, and let $W:=\langle \g_+^{k-1}, w\rangle\in\Gr_k(E)$. Then $\wedge^{k-1} \g_+^{k-1}\wedge w=\wedge^kW$ belongs to $E_{\wedge^k\r(\g)}^{\lambda_2}$. Using strong irreducibility we find $x\in \dg$ such that $x^{d-k}\tv W$ (Proposition \ref{prop.labourie-irreducible}), which can be reformulated, using exterior powers, as  $\wedge^k W\tv \xi^{d'-1}_{\wedge^k \r}(x)$  (cfr. Section \ref{s.ext}) - a contradiction.

\end{proof}

The following corollary is a direct consequence of Propositions \ref{prop.condition Tr strongly irred} and \ref{lem.dynamics preserving for strongly irred}.
\begin{cor}\label{cor.strongirredk-1Anosov}
Let $\{\r_n\}_{n\in \N}$ be $k$--positively ratioed and converge to a strongly irreducible $(k-1)$--Anosov representation $\r_0$. Then $\r_0$ is $k$--positively ratioed.
\end{cor}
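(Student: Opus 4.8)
The plan is to combine the limit theorem for positively ratioed representations with the generalized Guichard--Wienhard criterion for strong irreducibility that was just proved. First I would recall that a $k$--positively ratioed representation is in particular $k$--Anosov, and since the set of $k$--Anosov representations is open and Anosov representations are discrete and faithful, the hypothesis gives that $\{\r_n\}_{n\in\N}$ are $\{k-1,k\}$--Anosov (being $k$--Anosov they are also $(k-1)$--Anosov once one checks $k-1\geq 1$; if $k=1$ there is nothing to prove since $\r_0$ is then $1$--Anosov by Proposition \ref{prop.irred limits of 1-pos ratioed}). So we are exactly in the setting of Proposition \ref{prop.condition Tr strongly irred}: the $\r_n$ are $\{k-1,k\}$--Anosov and $k$--positively ratioed, and the limit $\r_0$ is strongly irreducible and $(k-1)$--Anosov. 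Hence a subsequence of $\{\r_n\}$ satisfies condition $\rm{(Tr)}$ for the $k$--boundary maps.

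Next I would invoke Theorem \ref{thm.limits of 1-pos ratio}: condition $\rm{(Tr)}$ produces $\r_0$--equivariant continuous transverse $k$--positively ratioed boundary maps
\[
\xi^k_{\r_0}:\dg\to\Gr_k(E),\qquad \xi^{d-k}_{\r_0}:\dg\to\Gr_{d-k}(E).
\]
Now I would feed these into Proposition \ref{lem.dynamics preserving for strongly irred}: since $\r_0$ is strongly irreducible, $(k-1)$--Anosov (in particular $(k-1)$--almost Anosov), and admits a pair of equivariant transverse continuous boundary maps in dimensions $k$ and $d-k$, the proposition concludes that $\r_0$ is $k$--Anosov with Anosov boundary maps $\xi^k_{\r_0},\xi^{d-k}_{\r_0}$.

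Finally, since $\xi^k_{\r_0}$ and $\xi^{d-k}_{\r_0}$ are $k$--positively ratioed in the sense that the cross ratio inequality \eqref{eq.def pos ratio} holds on cyclically ordered quadruples (this was part of the output of Theorem \ref{thm.limits of 1-pos ratio}), and we have just shown $\r_0$ is $k$--Anosov, Definition \ref{d.posrat} gives that $\r_0$ is $k$--positively ratioed, completing the proof. I do not anticipate a serious obstacle here: the corollary is essentially a bookkeeping assembly of three already-established results. The only point requiring a touch of care is the base case $k=1$ and the verification that the hypotheses of Proposition \ref{prop.condition Tr strongly irred} are literally met — in particular that $k$--positively ratioed plus openness of Anosovness yields $(k-1)$--Anosovness of the $\r_n$ — but neither is substantive.
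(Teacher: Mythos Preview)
Your overall strategy coincides with the paper's: verify the hypotheses of Proposition~\ref{prop.condition Tr strongly irred}, apply Theorem~\ref{thm.limits of 1-pos ratio} to obtain positively ratioed boundary maps, then invoke Proposition~\ref{lem.dynamics preserving for strongly irred} to upgrade to $k$--Anosov.

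There is, however, one incorrect step. You write that the $\r_n$ are $(k-1)$--Anosov because ``being $k$--Anosov they are also $(k-1)$--Anosov once one checks $k-1\geq 1$.'' This is false: $k$--Anosov implies $(d-k)$--Anosov (Remark~\ref{rem ansov rep properties}(2)), but there is no implication from $k$--Anosov to $(k-1)$--Anosov in general. The correct argument runs in the opposite direction and uses the hypothesis on the \emph{limit}: since $\r_0$ is $(k-1)$--Anosov and $(k-1)$--Anosovness is an open condition in $\Hom(\G,\PGL(E))$ (Remark~\ref{rem ansov rep properties}(3)), the approximating representations $\r_n$ are $(k-1)$--Anosov for all sufficiently large $n$. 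This is exactly what the paper does, and it is all that Proposition~\ref{prop.condition Tr strongly irred} requires. With this one-line fix your proof is complete and matches the paper's.
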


\section{Limits of representations satisfying $H_k$}\label{s.gaps}
In this section we study limits of representations satisfying property $H^*_k$. As opposed to the results of Section \ref{s.pos-ratio}, we do not assume anything on the $k$--boundary map but only on the $k-1$ and $k+1$ boundary maps: If a limit of such representations is $\{k-1,k+1\}$--almost Anosov and the boundary maps converge pointwise, then $\rho$ is $k$--Anosov and its boundary maps satisfy property $H_k$. This will be crucially used to show that $k$-positive representations form connected components of $k$--Anosov representations.
 
  We deal with the case $k=1$ first (Proposition \ref{prop.weakly H_1 under weakly 2-Anosov}), and deduce from this the general case (Corollary \ref{cor.closdeness Hk under Anosov}). 

\begin{notation}
In the section $\{\r_n\}$ denotes a sequence of representations $\{\r_n:\G\to\PGL(E)\}_{n\in \N}$ converging to $\r_0\in \Hom(\G,\PGL(E))$.
\end{notation}

As in Section \ref{s.boundary map}, we denote by   $\xi^1_{\bx}:\G \bx\to \P(E),\quad \xi^1_{\bx}(s):=\lim \xi_{\r_n}^1(s)$ the equivariant map, which is well defined as soon as $\lim \xi_{\r_n}^1(\bx)$ exists. A standard result from analysis about convergence of $C^1$ maps guarantees that the limiting maps are compatible, and the limit map is a pointwise limit everywhere:
\begin{lem}\label{lem.C^1 map in the limit}
Let $\rho_n$ be $\{1,2\}$-Anosov representations converging to a representation $\rho_0$. Assume in addition that
\begin{enumerate}
\item $\r_n$ satisfies  property $H_1$ for all $n$;
\item $\{\xi^2_{\r_n}\}$ converges to an equivariant continuous boundary map $\xi^2_{\r_0}$;
\item  $\xi^1_{\bx}$ has a continuous extension $\xi^1_{\r_0}:\dg\to\P(E)$, for some $\bx\in\dg$.
\end{enumerate}
Then $\{\xi^1_{\r_n}\}_{n\in\N}$ converge to $\xi^1_{\r_0}$ everywhere,  $\xi^1_{\r_0}$ has $C^1$--image and is  tangent to $\xi^2_{\r_0}$.
\end{lem}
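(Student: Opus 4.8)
The plan is to reduce everything to a classical statement about $C^1$ convergence of curves: if a sequence of $C^1$ curves converges pointwise, and their derivatives converge uniformly on compacts to a continuous map, then the limit curve is $C^1$ with the expected derivative. The curves here are the boundary curves $\xi^1_{\r_n}(\dg) \subset \P(E)$. Property $H_1$ for $\r_n$ (Proposition \ref{prop.1-hyperconvex for property Hk} with $k=1$) tells us that $\xi^1_{\r_n}$ has $C^1$ image and that its tangent line at $\xi^1_{\r_n}(x)$ is determined by $\xi^1_{\r_n}(x)$ and $\xi^2_{\r_n}(x)$ — concretely, in a chart, the tangent direction at a point is read off from the $2$-plane $\xi^2_{\r_n}(x)$. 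So I would first record this explicit formula for the tangent, observe that it depends continuously (indeed algebraically) on the pair $(\xi^1_{\r_n}(x),\xi^2_{\r_n}(x))$, and note that both inputs have limits: $\xi^2_{\r_n}\to\xi^2_{\r_0}$ by hypothesis (2), and $\xi^1_{\r_n}\to\xi^1_{\r_0}$ on $\G x$ by hypothesis (3) (and equivariance).

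Next I would upgrade the convergence of $\xi^1_{\r_n}$ from the orbit $\G x$ to all of $\dg$. The point is that $\xi^1_{\r_0}$ is by hypothesis a \emph{continuous} extension of $\xi^1_x$, and $\G x$ is dense in $\dg$; combined with the fact that the $\xi^1_{\r_n}$ all parametrize strictly convex curves (consequence of property $H_1$, cfr.\ the convexity statements used elsewhere in the paper) lying in the compact space $\P(E)$, any subsequential pointwise limit of $\xi^1_{\r_n}$ must be a monotone — hence continuous — map agreeing with $\xi^1_{\r_0}$ on the dense set $\G x$, so it equals $\xi^1_{\r_0}$. Thus $\xi^1_{\r_n}\to\xi^1_{\r_0}$ pointwise on all of $\dg$. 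Then, feeding this together with $\xi^2_{\r_n}\to\xi^2_{\r_0}$ into the tangent formula, the tangent lines of $\xi^1_{\r_n}$ converge pointwise to a line depending continuously on $\xi^1_{\r_0}(y)$ and $\xi^2_{\r_0}(y)$.

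Finally I would invoke the analysis lemma: working in an affine chart of $\P(E)$ on a small arc of $\dg$, parametrize each $\xi^1_{\r_n}$ by arc length or by the ambient coordinate so it becomes a genuine $C^1$ vector-valued function $f_n$; we have $f_n\to f_0$ pointwise and $f_n'\to g$ pointwise where $g$ is continuous and equals the direction extracted from $\xi^2_{\r_0}$. Monotonicity/convexity of the curves gives the equicontinuity needed to promote pointwise convergence of $f_n'$ to \emph{locally uniform} convergence (a sequence of monotone-tangent convex curve derivatives on a compact interval, converging pointwise to a continuous limit, converges uniformly — a Dini-type argument). Uniform convergence of $f_n'$ plus pointwise convergence of $f_n$ then forces $f_0' = g$ by the fundamental theorem of calculus, so $\xi^1_{\r_0}$ is $C^1$ with tangent $\xi^2_{\r_0}$.

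The main obstacle I expect is the passage from pointwise to locally uniform convergence of the derivatives: pointwise convergence of $f_n$ alone is not enough to conclude $f_0'=\lim f_n'$ in general, so one genuinely needs the extra rigidity coming from convexity of the curves $\xi^1_{\r_n}(\dg)$ (which makes the derivatives monotone along the curve) to run the Dini-type argument and rule out oscillation. Everything else is bookkeeping: the explicit tangent formula is Proposition \ref{prop.1-hyperconvex for property Hk}, density of $\G x$ and continuity force the global pointwise convergence, and the final step is the standard "$C^1$ convergence" lemma from analysis.
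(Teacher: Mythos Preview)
Your overall strategy --- convergence of the tangent data $\xi^2_{\r_n}$ forces convergence of the derivatives, hence the limit curve is $C^1$ --- is exactly the paper's idea. However, your implementation has a genuine gap: you repeatedly invoke strict convexity of $\xi^1_{\r_n}(\dg)$ as a ``consequence of property $H_1$,'' both to upgrade pointwise convergence from $\G x$ to all of $\dg$ via a monotonicity argument, and to run your Dini-type step. Property $H_1$ only gives that the curve has $C^1$ image with tangent encoded by $\xi^2$ (Proposition \ref{prop.1-hyperconvex for property Hk}); convexity requires an additional transversality such as property $C_k$ (cfr.\ Proposition \ref{prop.Ck yields convex curve}), which is not assumed here. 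Moreover, ``monotone'' for a curve in $\P(E)$ with $d\geq 3$ needs a projection to make sense, and you do not specify one.

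The paper's argument sidesteps both issues by reversing the order of operations. Rather than first proving global pointwise convergence of $\xi^1_{\r_n}$, it works locally near a point $y\in\G x$: parametrize the \emph{image} of $\xi^1_{\r_n}$ near $y$ by arc length, so $\|f_n'\|\equiv 1$. Then $f_n'(t)$ is a unit vector whose direction is read off from $\xi^2_{\r_n}$; since $\xi^2_{\r_n}\to\xi^2_{\r_0}$ continuously, the derivatives $f_n'$ converge to a continuous limit with no appeal to convexity or Dini. The standard analysis fact (convergence of $f_n(0)$ plus convergence of $f_n'$ to a continuous map implies $f_n\to f_0$ in $C^1$) then gives a $C^1$ local limit $f_0$, and uniqueness of the continuous extension of $\xi^1_x$ identifies $f_0$ with $\xi^1_{\r_0}$ locally. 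Density of $\G x$ finishes. The arc-length normalization is what replaces your convexity/Dini machinery: it pins down $f_n'$ from the direction alone.
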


\begin{proof}
Choose $s\in\G \bx$ and an   affine chart $A\subset \P(E)$ around  $\xi^1_{\bx}(s)$. For every $n$ we parametrize the image of the restriction of $\xi^1_{\r_n}$ to a small neighbourhood of $s\in\dg$ by a  $C^1$ map $f_n:(-\epsilon,\epsilon)\to A$ with constant norm of the derivative and such that $f_n(0)=\xi_{\r_n}^1 (s)$ (Proposition \ref{prop.1-hyperconvex for property Hk}). Since $\{\xi^2_{\r_n}\}$ converges to a continuous map, the derivatives $\{f'_n\}$ converge to a continuous map $f'_0$. 
Thus the maps $\{f_n\}$ converge to $C^1$ function $f_0$ with derivative $f'_0$. By (3) $\xi^1_{\bx}$ has a continuous extension $\xi^1_{\r_0}$. Since  the extension is unique, the map $f_0$ parametrizes $\xi^1_{\r_0}$ locally around $\xi^1_{\r_0}(s)=f_0(0)$. It follows that,  locally around $s$, $\xi^1_{\r_0}$ has $C^1$ image and tangents  given by $\xi^2_{\r_0}$. As $\G \bx\subset \dg$ is dense, the claim follows.
\end{proof}

An useful consequence of Lemma \ref{lem.C^1 map in the limit} is the following consistency  of the boundary maps: the boundary map $\xi_{\r_0}^2$  of a reductive limit of representations satisfying property $H_1$ has image in a single irreducible factor.

\begin{prop}\label{claim.weakly H_1 under weakly 2-Anosov}
If $\{\r_n\}$ satisfy property $H^*_1$,  $\r_0$ is reductive $2$--almost Anosov, and $\xi^{2}_{\r_n}$ converge pointwise to $\xi^{2}_{\r_0}$, then there is a $\rho_0$-irreducible subspace $E_1<E$ such that $ \xi^2_{\r_0}(x)<E_1$ for all $x\in\dg$.

\end{prop}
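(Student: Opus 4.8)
The plan is to reduce, using the structure of the boundary maps of reductive almost Anosov representations, to ruling out that $\xi^2_{\r_0}$ is ``spread over two irreducible factors'', and then to reach a contradiction from the fact that property $H_1$ forces the curves $\xi^1_{\r_n}$ to be $C^1$ and tangent to $\xi^2_{\r_n}$, so that any limiting $\xi^1_{\r_0}$ would be $C^1$ and tangent to $\xi^2_{\r_0}$.

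First I would write $\r_0=\eta_1\oplus\cdots\oplus\eta_j$ with $\eta_i$ irreducible on $F_i$, and apply Lemma~\ref{lem.splitting boundary map for reducible reps} iteratively: $\xi^2_{\r_0}(x)=\bigoplus_i\zeta_i(x)$, where $\zeta_i(x):=\xi^2_{\r_0}(x)\cap F_i$ has constant dimension $k_i$, $\sum_i k_i=2$, and each $\eta_i$ with $k_i\geq 1$ is $k_i$--almost Anosov with boundary map $\zeta_i$. It suffices to show that some $k_i=2$ (then take $E_1:=F_i$). Suppose not; after relabelling, $k_1=k_2=1$ and $k_i=0$ for $i\geq 3$, so $\zeta_1\colon\dg\to\P(F_1)$ and $\zeta_2\colon\dg\to\P(F_2)$ are continuous and equivariant. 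Since $\r_0$ is $2$--almost Anosov, its boundary maps $\xi^2_{\r_0},\xi^{d-2}_{\r_0}$ are nested and transverse, so $\xi^2_{\r_0}$ is injective (if $\xi^2_{\r_0}(x)=\xi^2_{\r_0}(y)$ then $\xi^2_{\r_0}(y)\subseteq\xi^{d-2}_{\r_0}(x)$, contradicting transversality for $x\neq y$); in particular not both $F_1,F_2$ are lines, and, possibly swapping $F_1$ and $F_2$, we may assume $\dim F_1\geq2$. Then $\zeta_1$, being the boundary map of the $1$--almost Anosov $\eta_1$ on a space of dimension $\geq2$, is non--constant on every non--empty open sub-arc of $\dg$ (it separates the distinct attracting and repelling lines of $\eta_1(\g)$ for a hyperbolic $\g$ with both endpoints in the arc).

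Next I would produce a limiting first boundary map via Lemma~\ref{lem.C^1 map in the limit}. Hypotheses (1) and (2) are immediate: $H^*_1$ contains $H_1$, and $\xi^2_{\r_n}\to\xi^2_{\r_0}$ pointwise with $\xi^2_{\r_0}$ the continuous boundary map of the $2$--almost Anosov $\r_0$. Hypothesis (3) -- that a subsequential pointwise limit $\xi^1_{\bx}$ of the $\xi^1_{\r_n}$ along a $\G$--orbit extends continuously -- is the crux. To obtain it I would note that the $\r_n$ are $1$--positively ratioed (Proposition~\ref{prop.hyperconvex implies positively ratioed}), that $\overline{\xi^1_{\bx}(\G\bx)}\subseteq\bigcup_y\P\bigl(\zeta_1(y)\oplus\zeta_2(y)\bigr)$ because $\xi^1_{\r_n}(x)\subseteq\xi^2_{\r_n}(x)$, and that the $\eta_i$ are strongly irreducible (Lemmas~\ref{lem.GW strong irreducibility} and \ref{lem.from almost to Anosov}); combining this with Proposition~\ref{prop.labourie-irreducible} one checks that a subsequence of $\{\r_n\}$ satisfies condition (Tr) for the $1$--boundary maps with respect to a suitable pair $\bx,\by$ (along the lines of Proposition~\ref{pro.condition L irreducible} and Corollary~\ref{cor.weakly Anosov limit generalizing proximality}), whence Theorem~\ref{thm.limits of 1-pos ratio} yields the continuous extension $\xi^1_{\r_0}$. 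With (3) available, Lemma~\ref{lem.C^1 map in the limit} gives: $\xi^1_{\r_n}\to\xi^1_{\r_0}$ everywhere; $\xi^1_{\r_0}$ has $C^1$ image, which by the unit--speed normalisation in the proof of that lemma is an immersed curve; its tangent line at $\xi^1_{\r_0}(x)$ is $\xi^2_{\r_0}(x)=\zeta_1(x)\oplus\zeta_2(x)$; and $\xi^1_{\r_0}(x)\subseteq\zeta_1(x)\oplus\zeta_2(x)$ for every $x$.

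Finally I would derive the contradiction by projection. Let $\pi_1\colon E\to F_1$ be the projection with kernel $\bigoplus_{i\neq1}F_i$. If $\xi^1_{\r_0}$ took values in $\P\bigl(\bigoplus_{i\neq1}F_i\bigr)$ on a sub-arc, then, being an immersed $C^1$ curve, its tangent $2$--plane would lie in $\bigoplus_{i\neq1}F_i$; but that tangent is $\zeta_1(x)\oplus\zeta_2(x)$, which meets $F_1$ nontrivially -- impossible. Hence $\Omega:=\{x:\pi_1(\xi^1_{\r_0}(x))\neq0\}$ is a non--empty open subset of $\dg$, and on $\Omega$ we have $\pi_1(\xi^1_{\r_0}(x))=\zeta_1(x)$ since $0\neq\pi_1(\xi^1_{\r_0}(x))\subseteq\pi_1(\zeta_1(x)\oplus\zeta_2(x))=\zeta_1(x)$. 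But $\pi_1$ carries the tangent $2$--plane $\zeta_1(x)\oplus\zeta_2(x)$ of $\xi^1_{\r_0}$ onto $\zeta_1(x)=\pi_1(\xi^1_{\r_0}(x))$ itself, so the projective derivative of the $C^1$ curve $x\mapsto\pi_1(\xi^1_{\r_0}(x))$ vanishes on $\Omega$; thus $\zeta_1$ agrees with this locally constant curve on $\Omega$, hence is constant on any component arc of $\Omega$ -- contradicting the non--constancy of $\zeta_1$. This rules out the split, so some $k_i=2$ and the proposition follows. I expect the real difficulty to be the verification of hypothesis (3) of Lemma~\ref{lem.C^1 map in the limit} -- equivalently, arranging a configuration to which condition (Tr) applies -- while the projection argument that excludes a split limit is robust once the $C^1$ limit curve is in hand.
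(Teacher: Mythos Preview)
Your overall architecture matches the paper's: assume a split $\xi^2_{\r_0}=\zeta_1\oplus\zeta_2$, feed a continuous limit of $\xi^1_{\r_n}$ into Lemma~\ref{lem.C^1 map in the limit}, and derive a contradiction from the resulting $C^1$ curve tangent to $\xi^2_{\r_0}$. Your projection argument for the contradiction is essentially equivalent to the paper's (the paper projects to $\P(E/\g_-^{d-2})$, you project to $\P(F_1)$; both collapse the tangent $2$--plane to the image point).

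The genuine gap is exactly where you flag it: hypothesis (3) of Lemma~\ref{lem.C^1 map in the limit}. Your appeal to Proposition~\ref{pro.condition L irreducible} and Corollary~\ref{cor.weakly Anosov limit generalizing proximality} does not go through as stated. Both results require that $\lim\xi^1_{\r_n}(\bx)$ lie in a single irreducible factor (equivalently that $|\lambda_1^{\eta_1}(\g)|>|\lambda_1^{\eta_2}(\g)|$ for some $\g$). In the split configuration you are trying to exclude, any subsequential limit of $\xi^1_{\r_n}(\bx)$ lies in $\zeta_1(\bx)\oplus\zeta_2(\bx)$, but there is no reason it should land in $\P(F_1)$ or $\P(F_2)$. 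Strong irreducibility of the individual $\eta_i$ (which you correctly deduce) does not by itself give you condition (Tr) for $\r_0$; and note also that the hypothesis does not give convergence of $\xi^{d-2}_{\r_n}$, so you cannot control limits of $\xi^{d-1}_{\r_n}$ via the $(d-2)$--map either.

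The paper confronts this head-on by a dichotomy. If some $\r_0(\g)$ is $1$--proximal, Theorem~\ref{thmINTRO.limits of pos ratio general+proximal} applies directly and gives the continuous extension. If no $\r_0(\g)$ is $1$--proximal, then (using that $H_1^*$ forces $\lambda_1,\lambda_2$ to have the same sign) $\r_0(\g)$ acts as a scalar on $\xi^2_{\r_0}(\g_+)$, and the paper builds the continuous limit by hand: the projection $\mathtt p_\g\circ\xi^1_{\r_n}$ to $\P(E/\g_-^{d-2})$ is monotone (by property $H_1$), the endpoints $\g^k x$ of a fundamental domain are mapped to a single point (since the projected action of $\r_0(\g)$ is trivial), hence the whole interval collapses in the limit, and this pins down $\xi^1_{\r_0}(y)$ uniquely inside $\xi^2_{\r_0}(y)$. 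This monotonicity-and-scalar-action argument is the missing piece in your proof; without it, the non-proximal case is not covered and hypothesis (3) remains unverified.
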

\begin{proof}
Assume by contradiction that the claim does not hold. As $\r_0$ is reductive and 2--almost Anosov, Lemma \ref{lem.splitting boundary map for reducible reps} implies that we can find a splitting
$$\r_0=\eta_1\oplus \eta_2:\G\to \PGL(E_1\oplus E_2)$$
such that, for all $s\in\dg$,  $ \xi^2_{\r_0}(s)$ meets non-trivially both factors. We first show that there is a continuous map $\dg\to\P(E_1)$, which is a pointwise limit of $\xi^1_{\r_n}$ on a dense set. We consider two cases:

\textbf{Case (1):} If there is a 1--proximal element $\g\in\G$ with attracting line in the irreducible space $E_1$, since the representation is 1-positively ratioed by Proposition \ref{prop.hyperconvex implies positively ratioed}, then Theorem \ref{thm.limits of 1-pos ratio}, applicable thanks to Corollary \ref{cor.weakly Anosov limit generalizing proximality}, 
 gives an equivariant continuous boundary map $\xi^1_{\r_0}:\dg\to\P(E_1)$ which is a limit of $\xi^1_{\r_n}$ on the dense set $\G \g_+\subset \dg$.\smallskip

\textbf{Case (2):}  For every $\g\in\G$, the generalized eigenspace $E_{\rho_0(\g)}^{\lambda_1}$ relative to eigenvalues of maximum modulus has dimension 2,  thus it is equal to $\xi^2_{\rho_0}(\g_+)$  and meets both factors $E_i$. Furthermore, since $\rho_n$ satisfies property $H^*_1$, the eigenvalues $\lambda_1(\r_n(\g))$ and $\lambda_2({\r_n}(\g))$ have the same sign \cite[Proposition 1.5]{Beyrer-Pozzetti}. Since $\rho_0(\gamma)$ fixes the intersections with the two factors  in $\P(\xi^2_{\rho_0}(\g_+))$,  {it  acts on $\xi^2_{\rho_0}(\g_+)$ as $\lambda_1(\rho_0(\g))\Id$}.

We choose a non-trivial element $\g\in \G$. Since $\xi_{\r_n}^{d-2}(\g_-)$ converges to $ \xi_{\r_0}^{d-2}(\g_-)$, we can assume, up to conjugating $\r_n$ by a sequence of elements $g_n\in\PGL(E)$ converging to the identity, that the subspace $\xi_{\r_n}^{d-2}(\g_-)$ is constant along the sequence and equals $\g_-^{d-2}:=\xi_{\r_0}^{d-2}(\g_-)$. We set $V_{\g}:=\quotient{E}{\g_-^{d-2}}$ and denote by
$$\mathtt{p}_{\g}:\P(E)\setminus\P(\g_{-}^{d-2})\to \P(V_{\g})$$ 
the quotient projection. Property $H_1$ implies that $\mathtt{p}_{\g}\circ \xi_{\r_n}^1|_{\dg\backslash \{\g_-\}}$ is a continuous and injective map from an interval to a circle.

We choose $x,z\in\dg\backslash \{\g_+,\g_-\}$ in the two connected components and pass to a subsequence so that $x^1_{\r_n},z^1_{\r_n}$ converge to points  $x^1_{\r_0}, z^1_{\r_0}$ respectively. Since $x^1_{\r_0}< x^2_{\r_0}$, we have $x^1_{\r_0} \notin \P\g_-^{d-2}$.  Set 
$$q_x:=\mathtt{p}_{\g}( x^1_{\r_0}) \in \P(V_{\g}).$$
In the same way define $q_z\in \P(V_{\g})$.
We claim that for any $y\in (\g_+,\g_-)_z$ and any subsequence such that $y^1_{\r_n}$ converges to a point $y^1_{\r_0}$, we have $\mathtt{p}_{\g}(y^1_{\r_0})=q_x$:  this is the case for any point  of the form $\g^k x$ since the action of $\g_{\r_0}$ on $\P(V_{\g})$ is trivial; for another point $y$, we find $k\in \Z$ such that $(x,y,\g^k x, \g^k y)$ is cyclically ordered. Since the maps $\mathtt{p}_{\g}\circ \xi_{\r_n}^1|_{\dg\backslash \{\g_-\}}$ are monotone,  $\mathtt{p}_{\g}(y^1_{\r_n})\to q_x$ for any $y\in (\g_+,\g_-)_z$ without passing to an additional subsequence. The analogue holds for $q_z$ and $(\g_+,\g_-)_x$.

We define a map $\xi^1_{\r_0}:\dg\backslash \{\g_+,\g_-\}\to \P(E)$ choosing the only line in $y^2_{\r_0}$ projecting to $q_x$ if $y\in (\g_-,\g_+)_z$ and projecting to $q_z$ if $y\in (\g_-,\g_+)_x$. This exists and is unique since $y^2_{\rho_0}$ is transverse to $\g_-^{d-2}$ and thus $\mathtt{p}_{\g}$ induces an isomorphism between $y^2_{\r_0}$ and $V_\g$.
The map  $\xi^1_{\r_0}$ is continuous since $\xi^2_{\r_0}$ is, it is the pointwise limit of $\xi^1_{\r_n}$ since 
$\xi^2_{\r_n}\to \xi^2_{\r_0}$  and $\mathtt{p}_{\g}(\xi^1_{\r_n}(y))\to \mathtt{p}_{\g}(\xi^1_{\r_0}(y))$. 
Applying the same argument to a conjugate of $\g$  implies that $\xi^1_{\r_0}$ can be extended to a continuous map on neighbourhoods of $\g_-$ and $\g_+$ and thus defines a continuous map on all of $\dg$, which is the limit of $\xi^1_{\r_n}$.\smallskip

\textbf{Contradiction:} In both cases  Lemma \ref{lem.C^1 map in the limit} implies that $\xi^1_{\r_0}$ has $C^1$--image with tangent $\xi^2_{\r_0}$, furthermore  there is $\g\in\G$ such that $\mathtt{p}_{\g}\circ \xi^1_{\r_0}$ is constant. This is a contradiction:   the tangent spaces at  $\mathtt{p}_{\g}\circ \xi^1_{\r_0}(x)$ of  $\mathtt{p}_{\g}\circ \xi^1_{\r_0}(\dg)$ are non-degenerate because $\xi^2_{\r_0}(x)$ is transverse to $\xi^{d-2}_{\r_0}(\g_-)$. 
\end{proof}

We  now have all the ingredients to conclude the study of limits of representations satisfying property $H_1$.
\begin{prop}\label{prop.weakly H_1 under weakly 2-Anosov}
If $\{\r_n\}$ satisfy property $H^*_1$,  $\r_0$ is reductive $2$--almost Anosov, and $\xi^2_{\r_n}\to\xi^2_{\r_0}$, then $\r_0$ is $1$--Anosov. If in addition $\xi^{d-2}_{\r_n}\to\xi^{d-2}_{\r_0}$,  then the boundary maps of $\r_0$ satisfy property $H^*_1$.
\end{prop}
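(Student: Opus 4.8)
The plan is to combine Proposition~\ref{claim.weakly H_1 under weakly 2-Anosov} with the limit machinery of Section~\ref{s.boundary map}. By Proposition~\ref{claim.weakly H_1 under weakly 2-Anosov} there is an $\r_0$--invariant irreducible subspace $E_1<E$ with $\xi^2_{\r_0}(\dg)\subset\Gr_2(E_1)$; since $\r_0$ is reductive, write $E=E_1\oplus E_2$ with $\eta_i:=\r_0|_{E_i}$. The first point I would establish is that $E_2\subseteq\xi^{d-2}_{\r_0}(z)$ for every $z\in\dg$: indeed $\r_0$ is also $(d-2)$--almost Anosov (the almost--Anosov condition being symmetric in $k\leftrightarrow d-k$, cf.\ Remark~\ref{rem ansov rep properties}), so Lemma~\ref{lem.splitting boundary map for reducible reps} gives $\xi^{d-2}_{\r_0}(z)=(\xi^{d-2}_{\r_0}(z)\cap E_1)\oplus(\xi^{d-2}_{\r_0}(z)\cap E_2)$ with $z$--independent dimensions, and transversality $\xi^2_{\r_0}(z')\tv\xi^{d-2}_{\r_0}(z)$ together with $\xi^2_{\r_0}(z')\subset E_1$ forces $\dim(\xi^{d-2}_{\r_0}(z)\cap E_1)=\dim E_1-2$, hence $\dim(\xi^{d-2}_{\r_0}(z)\cap E_2)=\dim E_2$. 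Since $\lim_n\xi^1_{\r_n}(x)\subset\lim_n\xi^2_{\r_n}(x)=\xi^2_{\r_0}(x)\subset E_1$ for any $x$ (along a suitable subsequence), and $E_2\subset\xi^{d-2}_{\r_0}(y)\subset\lim_n\xi^{d-1}_{\r_n}(y)$ for any $y$, a subsequence of $\{\r_n\}$ (which is $1$--positively ratioed by Proposition~\ref{prop.hyperconvex implies positively ratioed}) satisfies condition (Tr) by Proposition~\ref{pro.condition L irreducible}. Theorem~\ref{thm.limits of 1-pos ratio} then produces equivariant, continuous, transverse, $1$--positively ratioed boundary maps $\xi^1_{\r_0}\colon\dg\to\P(E)$, $\xi^{d-1}_{\r_0}\colon\dg\to\Gr_{d-1}(E)$ with $\xi^1_{\r_0}(\dg)\subset\P(E_1)$ and $E_2\subset\xi^{d-1}_{\r_0}(y)$ for all $y$.

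Restricting these maps to $E_1$ yields equivariant, continuous, transverse boundary maps for the irreducible representation $\eta_1$, so $\eta_1$ is $1$--Anosov by Proposition~\ref{prop.GW-irreducibility}. To promote this to $\r_0$, I would run the eigenvalue bookkeeping: for infinite order $\g$, the two eigenvalues of $\r_0(\g)$ of largest modulus are those of $\eta_1(\g)$ (since $\xi^2_{\r_0}(\g_+)\subset E_1$ is the attracting $2$--plane of the $2$--proximal $\r_0(\g)$), and $2$--proximality of $\r_0(\g)$ gives $|\lambda_2(\eta_1(\g))|>|\lambda_1(\eta_2(\g))|$; combined with $|\lambda_1(\eta_1(\g))|>|\lambda_2(\eta_1(\g))|$ (from $\eta_1$ being $1$--Anosov) this shows $\r_0(\g)$ is $1$--proximal with attracting line $\xi^1_{\r_0}(\g_+)$ and repelling hyperplane $\xi^{d-1}_{\r_0}(\g_-)$, while the $1$--gap of $\r_0(\g_i)$ coincides with that of $\eta_1(\g_i)$ and hence blows up along $|\g_i|_\infty\to\infty$. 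Thus $\r_0$ is $1$--Anosov.

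For the second assertion, using the additional hypothesis $\xi^{d-2}_{\r_n}\to\xi^{d-2}_{\r_0}$, I would first invoke Lemma~\ref{lem.C^1 map in the limit}: its three hypotheses hold ($\r_n$ satisfies $H_1$; $\xi^2_{\r_n}\to\xi^2_{\r_0}$; and $\xi^1_{\g_+}$ has the continuous extension $\xi^1_{\r_0}$, because $\r_0(\g)$ is now $1$--proximal so $\xi^1_{\r_n}(\g_+)\to\xi^1_{\r_0}(\g_+)$ by the remark following the first Proposition of Section~\ref{s.conclusion}), giving $\xi^1_{\r_n}\to\xi^1_{\r_0}$ everywhere and that $\xi^1_{\r_0}$ has $C^1$ image tangent to $\xi^2_{\r_0}$. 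Fixing $z\in\dg$, the map $c_z\colon\dg\setminus\{z\}\to\P(\faktor{E}{\xi^{d-2}_{\r_0}(z)})\cong\R\P^1$, $c_z(w)=[\xi^1_{\r_0}(w)]$, is well defined (as $\xi^1_{\r_0}(w)\subset\xi^2_{\r_0}(w)\tv\xi^{d-2}_{\r_0}(z)$), $C^1$, and an immersion (its tangent is the image of $\xi^2_{\r_0}(w)$, nonzero by the same transversality), and it avoids the point $[\xi^{d-1}_{\r_0}(z)]$ (else $\xi^1_{\r_0}(w)\subset\xi^{d-1}_{\r_0}(z)$, contradicting transversality of the $1$--Anosov $\r_0$); hence $c_z$ takes values in an affine chart $\R\P^1\setminus\{[\xi^{d-1}_{\r_0}(z)]\}\cong\R$, and a $C^1$ immersion of an interval into $\R$ is strictly monotone, so $c_z$ is injective. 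Injectivity of $c_z$ for every $z$, together with injectivity of $\xi^1_{\r_0}$, is exactly property $H_1$ for the boundary maps of the $\{0,1,2\}$--almost Anosov representation $\r_0$, i.e.\ $\xi^1_{\r_0}(x)\oplus\xi^1_{\r_0}(y)\oplus\xi^{d-2}_{\r_0}(z)=E$ for distinct $x,y,z$; applying the same argument to $\r_n^\dual\to\r_0^\dual$ (which satisfy $H^*_1$ and the dual convergences, with $\r_0^\dual$ being $1$--Anosov by the first assertion) yields $H_{d-1}$, so the boundary maps of $\r_0$ satisfy $H^*_1$.

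The main obstacle is the first assertion, and within it the observation that $\xi^{d-2}_{\r_0}$ uniformly contains $E_2$ — this is what makes Proposition~\ref{pro.condition L irreducible} applicable and channels everything through the irreducible factor $\eta_1$ — together with the eigenvalue comparison needed to pass from $1$--Anosovness of $\eta_1$ to $1$--Anosovness of $\r_0$. The $H^*_1$ part is comparatively soft once $\r_0$ is known to be $1$--Anosov: the crucial point is that projecting the $C^1$ limit curve modulo $\xi^{d-2}_{\r_0}(z)$ lands in an affine chart and is therefore forced to be a monotone (hence embedded) arc.
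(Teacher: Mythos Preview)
Your argument for the first assertion follows the paper's route almost verbatim: invoke Proposition~\ref{claim.weakly H_1 under weakly 2-Anosov} to get the splitting with $\xi^2_{\r_0}\subset\Gr_2(E_1)$, verify the hypotheses of Proposition~\ref{pro.condition L irreducible} to obtain condition~(Tr), apply Theorem~\ref{thm.limits of 1-pos ratio}, and then use irreducibility of $\eta_1$ plus the eigenvalue bookkeeping (which the paper compresses into ``the first gap of $\rho_0$ is the first gap of $\eta_1$''). One small overclaim: you assert $E_2\subset\xi^{d-2}_{\r_0}(y)\subset\lim_n\xi^{d-1}_{\r_n}(y)$ \emph{for any} $y$, but in the first part you do not yet have $\xi^{d-2}_{\r_n}\to\xi^{d-2}_{\r_0}$ pointwise. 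This is harmless, since Proposition~\ref{pro.condition L irreducible} only needs one such $\by$, and at a fixed point $\g_-$ the $2$--proximality of $\r_0(\g)$ forces $\xi^{d-2}_{\r_n}(\g_-)\to\xi^{d-2}_{\r_0}(\g_-)$; the paper is equally informal here.

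For $H^*_1$ your approach diverges from the paper's. The paper argues by contradiction: if $H_1$ failed it would fail on an open set (citing \cite[Proposition~9.3]{PSW1} and \cite[Proposition~4.7]{Beyrer-Pozzetti}), and then coherence transfers this to a collapse of $\xi^1_{\eta_1}$ into a fixed hyperplane of $E_1$, contradicting strong irreducibility via Proposition~\ref{prop.labourie-irreducible}. You instead invoke Lemma~\ref{lem.C^1 map in the limit} to get that $\xi^1_{\r_0}$ has $C^1$ image tangent to $\xi^2_{\r_0}$, and then observe that the projection $c_z$ to $\P(E/\xi^{d-2}_{\r_0}(z))$ is an immersion of an interval into an affine line, hence monotone and injective --- which is exactly $H_1$. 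Your route is self-contained (no external references needed) and exploits the $C^1$ information more directly; the paper's route is softer in that it never needs to know $\xi^1_{\r_n}\to\xi^1_{\r_0}$ everywhere, only that a failure of $H_1$ in a limit of $H_1$ maps is rigid enough to contradict irreducibility. Both are valid.
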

\begin{proof}
By Proposition \ref{claim.weakly H_1 under weakly 2-Anosov} there exists an irreducible subspace $E_1$ such that
$$\r_0=\eta_1\oplus \eta_2:\G\to \PGL(E_1\oplus E_2)$$ 
and $\xi_{\rho_0}^2(x)<E_1$ for all $x\in\dg$. Let us choose points $\bx,\by$. Up to passing to a subsequence, the limits of $\xi^1_{\rho_n}(\bx)$ and of $\xi^{d-1}_{\rho_n}(\by)$ exist and  satisfy $\xi^1_{\r_n}(\bx)\in \P(E_1)$ and  $E_2<\lim \xi^{d-1}_{\r_n}(x)$; Proposition \ref{pro.condition L irreducible} implies that condition (Tr) holds with respect to  $\bx,\by\in\dg$ and hence there exist  continuous, equivariant, transverse boundary maps for $\rho_0$ that are contained in $\P(E_1)$ (Theorem \ref{thm.limits of 1-pos ratio}). These boundary maps restrict to boundary maps of $\eta_1$ and thus the irreducibility implies that $\eta_1$ is $1$--Anosov. Since $\xi^2_{\r_0}$ is dynamics preserving and contained in $E_1$, the first gap of $\rho_0$ is the first gap of $\eta_1$, and therefore $\rho_0$ is 1--Anosov as well.

The boundary maps of $\rho_0$ satisfy property $H_1$: otherwise, since  they are limits of boundary maps satisfying $H_1$,  there would be  $x\in\dg$ and an open set $U\subset \dg$ such that $u^1_{\r_0}+x_{\r_0}^{d-2}=v^1_{\r_0}+x_{\r_0}^{d-2}$ for all $v,u\in U$ (cfr. \cite[Proposition 9.3]{PSW1} and \cite[Proposition 4.7]{Beyrer-Pozzetti}). As $E_2\subset x_{\r_0}^{d-2}$ and  $\xi^1_{\r_0}(\dg)=\xi^1_{\eta_1}(\dg)\subset \P(E_1)$,  also 
$u^1_{\eta_1}<v^1_{\eta_1}+x_{\eta_1}^{d_1-2}$,  for some $v\in U$ fixed and all $u\in U$. This contradicts irreducibility of $\eta_1$.
Property $H_{d-1}$ follows in the same way using the dual representation $\r^{\dual}$.
\end{proof}

The general case follows using exterior powers:
\begin{cor}\label{cor.closdeness Hk under Anosov}
If $\{\r_n\}$ satisfy property $H^*_k$, $\r_0$ is reductive $\{k-1,k+1\}$--almost Anosov, and $\xi^{j}_{\r_n}\to \xi^{j}_{\r_0}$ for $j=k\pm 1,d-k\pm 1$. Then $\r_0$ satisfies property $H^*_k$.
\end{cor}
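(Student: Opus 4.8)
The plan is to reduce Corollary \ref{cor.closdeness Hk under Anosov} to the case $k=1$, which is Proposition \ref{prop.weakly H_1 under weakly 2-Anosov}, by passing to the $k$--th exterior power. The key observations are already assembled in Section \ref{s.ext}: property $H_k$ for $\r_n$ is equivalent to property $H_1$ for $\wedge^k\r_n$ (Proposition \ref{prop.Hk implies H1 for exterior power}), and $\{k-1,k+1\}$--almost Anosov for $\r_0$ translates into $2$--almost Anosov for $\wedge^k\r_0$ (Proposition \ref{prop.Anosov for exterior power}). So first I would set $d_k=\dim\wedge^k E$ and note that $\wedge^k\r_0$ is reductive since $\r_0$ is; reductivity is preserved under exterior powers because the Zariski closure of $\wedge^k\r_0(\G)$ is the image under the (algebraic) map $\wedge^k$ of the Zariski closure of $\r_0(\G)$.

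Next I would verify the convergence hypothesis for the exterior power: Proposition \ref{prop.weakly H_1 under weakly 2-Anosov} needs $\xi^2_{\wedge^k\r_n}\to \xi^2_{\wedge^k\r_0}$ and, for the property $H^*_1$ conclusion, also $\xi^{d_k-2}_{\wedge^k\r_n}\to\xi^{d_k-2}_{\wedge^k\r_0}$. By the construction recalled in Section \ref{s.ext}, the $2$--boundary map of $\wedge^k\r_n$ is $\wedge^{k-1}\xi^{k-1}_{\r_n}\wedge\xi^{k+1}_{\r_n}$, which depends continuously (indeed algebraically) on the nested pair $(\xi^{k-1}_{\r_n}(x),\xi^{k+1}_{\r_n}(x))$; since we assume $\xi^{k\pm 1}_{\r_n}\to\xi^{k\pm 1}_{\r_0}$ pointwise, the $2$--boundary maps of $\wedge^k\r_n$ converge pointwise to $\wedge^{k-1}\xi^{k-1}_{\r_0}\wedge\xi^{k+1}_{\r_0}$, which is the $2$--almost Anosov boundary map of $\wedge^k\r_0$. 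The same argument applied to the dual, using $\xi^{d-k\pm 1}_{\r_n}\to\xi^{d-k\pm1}_{\r_0}$ and the identification $\Gr_2(\wedge^{d-k}E)\simeq \Gr_{d_k-2}(\wedge^k E)$, gives the convergence of the $(d_k-2)$--boundary maps.

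Now Proposition \ref{prop.weakly H_1 under weakly 2-Anosov} applies to $\wedge^k\r_n\to\wedge^k\r_0$: it yields that $\wedge^k\r_0$ is $1$--Anosov and that its boundary maps satisfy property $H^*_1$. By Proposition \ref{prop.Anosov for exterior power}, $1$--Anosov for $\wedge^k\r_0$ is equivalent to $k$--Anosov for $\r_0$; and by Proposition \ref{prop.Hk implies H1 for exterior power} (together with its dual statement, i.e. applying it to $\r_0^\dual$ and using that $H^*_1$ for $\wedge^k\r_0$ means $H_1$ and $H_{d_k-1}$), property $H^*_1$ of the boundary maps of $\wedge^k\r_0$ is equivalent to property $H^*_k$ of the boundary maps of $\r_0$. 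This gives both conclusions.

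The only genuine subtlety — and the step I would be most careful with — is that Proposition \ref{prop.weakly H_1 under weakly 2-Anosov} is stated for a sequence whose members satisfy $H^*_1$ (not merely almost) and whose limit is merely $2$--almost Anosov, which is exactly our situation once we pass to exterior powers, so no extra care is needed there; the real point to check is that the pointwise limit of $\xi^1_{\wedge^k\r_n}$ (whose existence is part of the proof of Proposition \ref{prop.weakly H_1 under weakly 2-Anosov}, not an input) is automatically of the form $\wedge^k\xi^k_{\r_0}$ for a genuine $k$--boundary map $\xi^k_{\r_0}$ of $\r_0$ — this is because $\wedge^k(\Gr_k(E))$ is closed in $\P(\wedge^k E)$, as already used in the proof of Proposition \ref{prop.Anosov for exterior power}, so any pointwise limit of the maps $\wedge^k\xi^k_{\r_n}$ lands in $\wedge^k(\Gr_k(E))$ and hence defines a $\Gr_k(E)$--valued limit map; the transversality and dynamics-preserving properties then transfer back as in Section \ref{s.ext}. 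With that remark in place the corollary follows formally.
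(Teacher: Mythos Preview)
Your proof is correct and follows essentially the same approach as the paper: reduce to the case $k=1$ by passing to the $k$--th exterior power, verify the hypotheses of Proposition \ref{prop.weakly H_1 under weakly 2-Anosov} for $\wedge^k\r_n\to\wedge^k\r_0$, and translate the conclusions back via Propositions \ref{prop.Anosov for exterior power} and \ref{prop.Hk implies H1 for exterior power}. You are slightly more explicit than the paper about reductivity being preserved under $\wedge^k$ and about the Pl\"ucker image being closed, but these are harmless elaborations of the same argument.
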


\begin{proof}
By Proposition \ref{prop.Anosov for exterior power} the representations $\wedge^k \r_n$ are $2$--Anosov and the representation $\wedge^k \r_0$ is almost $2$--Anosov. Since $\xi^{j}_{\r_n}\to \xi^{j}_{\r_0}$ for $j=k\pm 1,d-k\pm 1$,
$$\xi^2_{\wedge^k\r_n}(x)\to \xi^2_{\wedge^k\r_0}(x),\quad \xi^{d_k-2}_{\wedge^k\r_n}(x)\to \xi^{d_k-2}_{\wedge^k\r_0}(x)$$
for all $x\in\dg$ (here $d_k=\dim \wedge^k E$). Thanks to Proposition \ref{prop.weakly H_1 under weakly 2-Anosov},  $\wedge^k \r_0$ is $1$--Anosov and the boundary maps satisfy property $H_1^*$. By Propositions \ref{prop.Anosov for exterior power} and \ref{prop.Hk implies H1 for exterior power}, $\r_0$ is $k$--Anosov and the boundary maps satisfy $H_k^*$.
\end{proof}

Since property  $H^*_k$ is a conjugation invariant, open condition, $\r$ satisfies property $H^*_k$ if a semisemplification $\r^{ss}$ does. 
We can now show that the converse is also true:
\begin{cor}\label{cor.Hk and semisimplification}
A representation $\r:\G\to \PGL(E)$ satisfies property $H^*_k$ if and only if a semisimplification $\r^{ss}:\G\to\PGL(E)$ satisfies property $H^*_k$.
\end{cor}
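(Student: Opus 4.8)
The plan is to deduce both implications from two facts recalled in the excerpt --- property $H^*_k$ is conjugation-invariant and open, and (the substantial input) Corollary \ref{cor.closdeness Hk under Anosov} on the closedness of $H^*_k$ under convergence. The basic observation is that, by definition of the semi-simplification, there is a sequence $g_n\in\PGL(E)$ for which the conjugates $\r_n:=g_n\,\r\,g_n^{-1}$ converge to $\r^{ss}$ in $\Hom(\G,\PGL(E))$; each $\r_n$ is conjugate to $\r$, hence shares with $\r$ every conjugation-invariant property.

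The implication ``$\r^{ss}$ has $H^*_k$ $\Rightarrow$ $\r$ has $H^*_k$'' is the one already indicated before the statement: since $H^*_k$ is open, $\r_n$ satisfies $H^*_k$ for $n$ large, and conjugation-invariance passes this back to $\r$. For the converse, I would first note that $\r$ satisfying $H^*_k$ forces $\r$ to be $\{k-1,k,k+1\}$--Anosov, hence so is $\r^{ss}$ by Proposition \ref{prop.semi-simplification}; in particular $\r^{ss}$ is reductive, is $\{k-1,k+1\}$--almost Anosov, and is $j$--Anosov for every $j\in\{k-1,k+1,d-k-1,d-k+1\}$ (using Remark \ref{rem ansov rep properties}(2) for the last two indices). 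The representations $\r_n$, being conjugates of $\r$, are also $j$--Anosov for these $j$ and satisfy $H^*_k$; from $\r_n\to\r^{ss}$ together with the continuity of Anosov boundary maps one gets $\xi^j_{\r_n}\to\xi^j_{\r^{ss}}$ for $j\in\{k-1,k+1,d-k-1,d-k+1\}$. All hypotheses of Corollary \ref{cor.closdeness Hk under Anosov} then hold with $\r_0=\r^{ss}$, and it yields that the boundary maps of $\r^{ss}$ satisfy $H^*_k$, i.e. $\r^{ss}$ satisfies $H^*_k$.

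The only point that is not purely formal, and which I expect to be the crux of the write-up, is the convergence $\xi^j_{\r_n}\to\xi^j_{\r^{ss}}$: the conjugating elements $g_n$ typically escape to infinity, so the boundary maps $g_n\cdot\xi^j_{\r}$ of $\r_n$ cannot be controlled by following $g_n$; the convergence genuinely uses that the limit $\r^{ss}$ is itself $j$--Anosov together with the continuity of Anosov boundary maps on the open locus of $j$--Anosov representations (as already used, e.g., in the proof of Proposition \ref{prop.condition Tr strongly irred}). Everything else is a direct application of results established earlier in the paper.
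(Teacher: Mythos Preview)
Your proof is correct and follows essentially the same route as the paper's: both directions are handled exactly as you describe, with the forward implication reduced to Corollary~\ref{cor.closdeness Hk under Anosov} applied to a sequence of conjugates converging to $\r^{ss}$. You are more explicit than the paper in verifying the boundary-map convergence hypothesis $\xi^j_{\r_n}\to\xi^j_{\r^{ss}}$ (which the paper leaves implicit, relying on the reader to supply it from the fact that $\r^{ss}$ is $\{k-1,k+1\}$--Anosov and boundary maps vary continuously), but the argument is the same.
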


\begin{proof}
Assume that $\r$ satisfies property $H^*_k$. Choose $g_n\in\PGL(E)$ so that $\{g_n\r g_n^{-1}\}_{n\in\N}$  converges to $\r^{ss}$; since $g_n\r g_n^{-1}$ have property $H^*_k$, and the representation $\r^{ss}$ is reductive and $\{k-1,k,k+1\}$--Anosov, Corollary \ref{cor.closdeness Hk under Anosov} implies that $\r^{ss}$ satisfies property $H^*_k$.
\end{proof}

\section{$k$-positive representations}\label{s.k-pos}
In this section we study $k$--positive representations, a rich class of representations that satisfy positivity properties for some projections, and that includes small deformations of various Fuchsian loci.

\subsection{Positivity on the space of full flags}\label{s.5.1}
The space of full flags carries a positive structure, which we briefly recall here. For more details see e.g. \cite[Section 5]{FG}.

Let $E$ be a real $d$--dimensional vector space with $d\geq 2$. We fix a basis $(e_1,\ldots,e_d)$ and denote by $\sX,\sZ\in \calF(E)$ the full flags defined by $$\sX^j=\<e_d,\ldots,e_{d-j+1}\>,\quad \sZ^j=\<e_1,\ldots,e_j\>.$$
Let $U$ be the unipotent radical of the stabilizer of $\sZ$ in $\PGL(E)$, namely the group of upper triangular matrices with ones on the diagonal. Observe that a flag $Y\in \calF(E)$ transverse to $\sZ$ can be written as $u\cdot \sX$ for a unique $u\in U$. The \emph{positive semigroup} $U^{>0}<U$ is  the set of matrices  all whose minors that do not necessarily vanish 
 are positive - one can show that $U^{>0}$ is indeed a (sub-)semigroup.
\begin{defn}
A triple $(x,y,z)\in \calF^3(E)$ is \emph{positive} if there exists $g\in\PGL(E)$ and a positive element $P\in U^{>0}$ such that 
$$(gx,gy,gz)=(\sX, P\sX,\sZ).$$
A $n$-tuple $(x_1,\ldots, x_n)\in \calF^n(E)$ is \emph{positive} if there exist $P_2,\ldots,P_{n-1}\in U^{>0}$, and $g\in\PGL(E)$ such that
\begin{align}\label{e.posntuple}
(gx_1,gx_2,\ldots, gx_{n-1},gx_n)=(\sX, P_2\sX,\ldots, P_2\cdots P_{n-1}\sX,\sZ).
\end{align}
\end{defn}

Given a flag $F\in\calF(E)$ we denote by $\Omega_F$ the set of flags transverse to $F$. We will need the following fact from \cite{Lusztig}, which is a crucial property of positive triples.

\begin{thm}\label{t.ccomp}
The set 
$$\{F\in\calF(E)|F =P\sX, P\in U^{>0} \}$$
is a connected component of $\Omega_{\sX}\cap\Omega_{\sZ}$.
In particular, given two transverse flags $A,B$, the set 
$$\{F\in\calF(E)|(A,F,B) \text{ is positive }\}$$
is a union of connected components of $\Omega_A\cap\Omega_B$.
\end{thm}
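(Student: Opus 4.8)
The plan is to prove the first statement and then observe that the second follows immediately. So I need to show that
$$S := \{F \in \calF(E) \mid F = P\sX,\ P \in U^{>0}\}$$
is a connected component of $\Omega_{\sX} \cap \Omega_{\sZ}$. First I would note that $\Omega_{\sX} \cap \Omega_{\sZ}$ can be identified with $U$ itself: a flag transverse to $\sZ$ is uniquely $u\cdot\sX$ for $u \in U$, and transversality to $\sX$ cuts out an open dense subset $U^\circ \subset U$. So the claim becomes: $U^{>0}$ is a connected component of $U^\circ := \{u \in U \mid u\sX \tv \sX\}$. The condition $u\sX \tv \sX$ is exactly the non-vanishing of the "anti-diagonal" minors of $u$ (the minors whose non-vanishing is not forced to zero by the shape of $u$ — these are the minors on rows $\{1,\dots,j\}$ and columns $\{d-j+1,\dots,d\}$, for $j=1,\dots,d-1$); call these the \emph{relevant} minors. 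Thus $U^\circ$ is the complement in $U \cong \R^{\binom{d}{2}}$ of the union of the $(d-1)$ hypersurfaces where a relevant minor vanishes.

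The heart of the argument is a theorem of Lusztig (this is precisely \cite{Lusztig}): the positive part $U^{>0}$ is one connected component of the locus $U^\circ$ where all relevant minors are nonzero, and it is cut out within a neighborhood of its closure by the sign conditions "all relevant minors $>0$". Concretely, Lusztig's parametrization gives a homeomorphism $\R_{>0}^{\,\ell} \xrightarrow{\sim} U^{>0}$ (with $\ell = \binom{d}{2}$ the length of the longest element, using a reduced word for $w_0$), and all minors of elements of $U^{>0}$ are positive; conversely if $u \in U^\circ$ has all relevant minors positive then $u \in U^{>0}$. Granting this, I would argue: $U^{>0}$ is connected (being a continuous image of the connected set $\R_{>0}^\ell$), it is open in $U$ (the relevant minors are finitely many continuous functions, each positive on $U^{>0}$, so positivity is an open condition, and positivity of all minors characterizes $U^{>0}$ among points of $U^\circ$ — in fact among all of $U$), and it is closed in $U^\circ$ (if $u_n \to u \in U^\circ$ with $u_n \in U^{>0}$, all relevant minors of $u$ are limits of positive numbers hence $\geq 0$, and since $u \in U^\circ$ they are nonzero, hence $>0$, so $u \in U^{>0}$). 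An open, closed, nonempty, connected subset of $U^\circ$ is a connected component. Translating back through $U^\circ \cong \Omega_{\sX}\cap\Omega_{\sZ}$ gives the first assertion.

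For the second statement: given transverse flags $A, B$, choose $g \in \PGL(E)$ with $gA = \sX$, $gB = \sZ$ (possible since $\PGL(E)$ acts transitively on transverse pairs). Then $(A, F, B)$ is positive iff $(\sX, gF, \sZ)$ is positive iff $gF \in S$, by the definition of positivity of a triple together with the fact that $S$ is exactly the set of $F'$ with $(\sX, F', \sZ)$ positive — here one should check the small point that if $(\sX, gF, \sZ) = (hF_1, hF_2, hF_3)$ with $(F_1,F_2,F_3)$ of the standard form, then $h$ stabilizes the pair $(\sX,\sZ)$ so $h$ lies in the diagonal torus times center, and conjugating $U^{>0}$ by such $h$ preserves $U^{>0}$ up to the positive-diagonal part, which is harmless; this reduces it to the already-established first statement. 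Since $g^{-1}$ is a homeomorphism $\Omega_{\sX}\cap\Omega_{\sZ} \to \Omega_A \cap \Omega_B$ carrying connected components to connected components, $\{F \mid (A,F,B)\text{ positive}\} = g^{-1}(S)$ is a connected component of $\Omega_A\cap\Omega_B$, and a fortiori a union of connected components.

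The main obstacle is that the real content — that $U^{>0}$ is a full connected component of the nonvanishing locus $U^\circ$, not merely a connected subset — is genuinely Lusztig's theorem and not something to reprove; the right move is to cite \cite{Lusztig} for exactly this and spend the write-up only on the (routine) topology and the (routine) equivariance bookkeeping in the last paragraph.
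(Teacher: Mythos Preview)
Your ultimate plan---cite \cite{Lusztig} and do only the ambient topology and equivariance---is exactly what the paper does: the theorem is stated with attribution to Lusztig and no proof is given. So on the level of strategy you match the paper.

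However, the sketch you interpose contains a genuine error that you should not carry into a write-up. You define the ``relevant'' minors as the $d-1$ anti-diagonal minors (rows $\{1,\dots,j\}$, columns $\{d-j+1,\dots,d\}$), which is correct for characterizing $u\sX\tv\sX$. But your parenthetical identifies these with ``the minors whose non-vanishing is not forced to zero by the shape of $u$'', and that is false: there are $\binom{d}{2}$ free entries and many more non-trivially-vanishing minors (e.g.\ every single entry $u_{ij}$ with $i<j$). The definition of $U^{>0}$ in the paper requires positivity of \emph{all} of these, not just the anti-diagonal ones. Your closedness argument then asserts that positivity of the $d-1$ anti-diagonal minors alone forces $u\in U^{>0}$; already for $d=3$ this fails, e.g.\ $u_{12}=u_{23}=-1$, $u_{13}=\tfrac12$ has both anti-diagonal minors positive but lies outside $U^{>0}$. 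Even if you repair this by tracking all non-trivial minors, the resulting statement---totally nonnegative plus anti-diagonal minors nonzero implies totally positive---\emph{is} the substantive content of Lusztig's theorem, so the argument becomes circular. The honest move is the one you name at the end: cite Lusztig for the whole first assertion and do not attempt to sketch it.

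On the second assertion: your reduction via a $g$ sending $(A,B)$ to $(\sX,\sZ)$ is right, but note that the stabilizer of $(\sX,\sZ)$ is the full diagonal torus, and conjugating $U^{>0}$ by a diagonal matrix with mixed signs does \emph{not} preserve $U^{>0}$. So $g\{F:(A,F,B)\text{ positive}\}$ is in general $\bigcup_{t} (tU^{>0}t^{-1})\sX$, a union of several components of $\Omega_{\sX}\cap\Omega_{\sZ}$, not a single one. Your final hedge ``a fortiori a union of connected components'' is the correct conclusion and matches the paper's statement; just drop the preceding claim that it is a single component.
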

We record the following useful corollary.
\begin{cor}\label{cor.deform positive tuples}
Let $c:[a,b]\subset \R\to \calF^l(E), c(t)=(x_1(t),\ldots,x_l(t))$ be a continuous path such that $x_{i}(t)\tv x_j(t)$ for $i\neq j$ and all $t\in [a,b]$. If $c(a)$ is a positive $l$--tuple, then $c(t)$ is a positive $l$--tuple for all $t\in [a,b]$.
\end{cor}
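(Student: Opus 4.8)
The plan is to prove Corollary \ref{cor.deform positive tuples} by reducing it to Theorem \ref{t.ccomp} together with a standard connectedness argument. First I would observe that positivity of an $l$--tuple is a $\PGL(E)$--invariant notion, so without loss of generality one can normalize: since $x_1(a)\tv x_l(a)$, after applying a continuously varying family $g(t)\in\PGL(E)$ (e.g. using that the subset of pairs of transverse flags is a single orbit of $\PGL(E)$ and the stabilizer of a pair of transverse flags is connected, being conjugate to the diagonal torus) one may assume $x_1(t)=\sX$ and $x_l(t)=\sZ$ for all $t\in[a,b]$. Care is needed here: $\PGL(E)$ acts transitively on transverse pairs with connected stabilizer, so such a continuous path $g(t)$ exists; this is the one mildly technical normalization step.

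Next, under this normalization, each intermediate flag $x_i(t)$ for $2\le i\le l-1$ lies in $\Omega_{\sX}\cap\Omega_{\sZ}$ by the transversality hypothesis, so it lies in one of the connected components of this space. By Theorem \ref{t.ccomp}, the subset of $\Omega_{\sX}\cap\Omega_{\sZ}$ consisting of flags $F$ with $F=P\sX$, $P\in U^{>0}$ is precisely one connected component, and the subset of $F$ with $(\sX,F,\sZ)$ positive is a union of connected components. Now I would unwind the definition of positivity of the $l$--tuple: $(x_1(t),\ldots,x_l(t))$ with $x_1(t)=\sX$, $x_l(t)=\sZ$ is positive if and only if there are $P_2(t),\ldots,P_{l-1}(t)\in U^{>0}$ with $x_i(t)=P_2(t)\cdots P_i(t)\sX$. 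Equivalently, writing $Q_i(t):=P_2(t)\cdots P_i(t)$, each $Q_i(t)$ should lie in $U^{>0}$ and moreover $Q_{i-1}(t)^{-1}Q_i(t)=P_i(t)\in U^{>0}$. The key point is that $U^{>0}$ is a semigroup (stated in Section \ref{s.5.1}), which makes the condition "$(x_1,\ldots,x_l)$ positive" equivalent to "$(x_i,x_{i+1},x_{i+2})$ positive for every consecutive triple" — this is essentially the content one needs, and I would either cite it from \cite{FG,Lusztig} or note it follows from Theorem \ref{t.ccomp}. I expect articulating this equivalence cleanly to be the main obstacle, since it requires a small amount of semigroup bookkeeping with the $U^{>0}$ decomposition.

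With that reduction in hand, the argument is a short connectedness statement. Consider the set
\[
T:=\{t\in[a,b]\mid (x_1(t),\ldots,x_l(t))\text{ is a positive }l\text{-tuple}\}.
\]
It is non-empty since $a\in T$. It is open: if $t_0\in T$, then after the global normalization each consecutive triple $(x_i(t_0),x_{i+1}(t_0),x_{i+2}(t_0))$ is positive, hence $x_{i+1}(t_0)$ lies in the open set of flags $F$ with $(x_i(t_0),F,x_{i+2}(t_0))$ positive (a union of connected components of $\Omega_{x_i(t_0)}\cap\Omega_{x_{i+2}(t_0)}$ by Theorem \ref{t.ccomp}, hence open), and by continuity of $c$ and of the transversality conditions the same holds for $t$ near $t_0$. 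It is closed: if $t_j\to t_\infty$ with $t_j\in T$, then since the flags stay pairwise transverse throughout $[a,b]$, $x_{i+1}(t_\infty)$ is a limit of flags in the fixed union of connected components of the (fixed-up-to-continuous-deformation) set $\Omega_{x_i(t_\infty)}\cap\Omega_{x_{i+2}(t_\infty)}$; since each such component is also closed in $\Omega_{x_i(t_\infty)}\cap\Omega_{x_{i+2}(t_\infty)}$, the limit stays in it, so $t_\infty\in T$. Hence $T=[a,b]$, which is the claim. The only place requiring genuine care is making the open-and-closed argument robust against the fact that the ambient spaces $\Omega_{x_i(t)}\cap\Omega_{x_{i+2}(t)}$ themselves vary with $t$; this is handled by trivializing the family of such intersection spaces locally (they form a fiber bundle over the base $[a,b]$), so that "the component of positive flags" is a genuinely locally constant notion.
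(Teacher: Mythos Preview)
Your approach is correct and is essentially what the paper intends: the corollary is stated there without proof, as an immediate consequence of Theorem~\ref{t.ccomp}, and your normalization plus open-and-closed connectedness argument is exactly the right way to unpack that.

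One remark: the detour through ``positivity of an $l$--tuple is equivalent to positivity of all consecutive triples'' is unnecessary, and you correctly flag it as the main obstacle. You can bypass it entirely. After normalizing $x_1(t)=\sX$ and $x_l(t)=\sZ$, write each $x_i(t)=u_i(t)\sX$ for the unique $u_i(t)\in U$ (possible since $x_i(t)\tv\sZ$), and set $P_i(t):=u_{i-1}(t)^{-1}u_i(t)$ with $u_1=\Id$. Positivity of the $l$--tuple is, by definition~\eqref{e.posntuple}, exactly the condition $P_i(t)\in U^{>0}$ for all $i$. Now $P_i(t)\sX=u_{i-1}(t)^{-1}x_i(t)$, which is transverse to $\sX=u_{i-1}(t)^{-1}x_{i-1}(t)$ because $x_i(t)\tv x_{i-1}(t)$, and transverse to $\sZ$ because $U$ fixes $\sZ$ and $x_i(t)\tv x_l(t)=\sZ$. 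Hence each continuous path $t\mapsto P_i(t)\sX$ stays in $\Omega_\sX\cap\Omega_\sZ$, starts in the connected component $U^{>0}\cdot\sX$ at $t=a$, and therefore remains there by Theorem~\ref{t.ccomp}. This gives $P_i(t)\in U^{>0}$ for all $t$ directly, without any auxiliary equivalence, and also avoids the issue of the ambient spaces $\Omega_{x_i(t)}\cap\Omega_{x_{i+2}(t)}$ varying with $t$.

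Your normalization step is fine: the projection $\PGL(E)\to\PGL(E)/\Stab(\sX,\sZ)$ is a locally trivial fibration, and any such fibration over the contractible base $[a,b]$ admits a continuous section; connectedness of the stabilizer is not needed.
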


{{As an immediate consequence of Corollary \ref{cor.deform positive tuples} and the connectedness of the set of positively oriented $n$--tuples in $(\mathbb{S}^1)^{(n)}$ we get
 \begin{cor}\label{cor.pos tuples}
Let $\xi:\mathbb{S}^1\to \calF(E)$ be a continuous transverse curve. If the image of one positively oriented $n$-tuple in $ \mathbb{S}^1$ under $\xi$ is a positive $n$-tuple in $\calF^n(E)$, then every positively oriented $n$-tuple in $ \mathbb{S}^1$ is mapped to a positive $n$-tuple in $\calF^n(E)$.
\end{cor}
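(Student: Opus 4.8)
The plan is to deduce Corollary~\ref{cor.pos tuples} from Corollary~\ref{cor.deform positive tuples} together with the connectedness of the space of positively oriented $n$-tuples on the circle. Fix a continuous transverse curve $\xi:\mathbb{S}^1\to\calF(E)$, and suppose there is one positively oriented $n$-tuple $(t_1,\ldots,t_n)\in(\mathbb{S}^1)^{(n)}$ whose image $(\xi(t_1),\ldots,\xi(t_n))$ is a positive $n$-tuple in $\calF^n(E)$. Let $(s_1,\ldots,s_n)\in(\mathbb{S}^1)^{(n)}$ be an arbitrary positively oriented $n$-tuple; I want to show that $(\xi(s_1),\ldots,\xi(s_n))$ is positive as well.

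First I would recall that the set $(\mathbb{S}^1)^{(n),+}$ of positively oriented $n$-tuples in $\mathbb{S}^1$ — i.e.\ $n$-tuples that appear in this cyclic order on the circle — is connected (indeed it is an orbit of $\mathrm{Homeo}^+(\mathbb{S}^1)$ acting on $(\mathbb{S}^1)^{(n)}$, hence path-connected; concretely one can move the points along the circle without collisions and without changing the cyclic order). Therefore I can choose a continuous path $\gamma:[0,1]\to(\mathbb{S}^1)^{(n),+}$, $\gamma(r)=(t_1(r),\ldots,t_n(r))$, with $\gamma(0)=(t_1,\ldots,t_n)$ and $\gamma(1)=(s_1,\ldots,s_n)$, keeping all coordinates pairwise distinct at every time $r$.

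Next I would push this path forward by $\xi$: set $c(r):=(\xi(t_1(r)),\ldots,\xi(t_n(r)))\in\calF^n(E)$. Since $\xi$ is continuous, $c$ is a continuous path in $\calF^n(E)$. Since $\xi$ is a transverse curve and the coordinates $t_i(r)$ are pairwise distinct for every $r$, the flags $\xi(t_i(r))$ and $\xi(t_j(r))$ are transverse for $i\neq j$ and all $r\in[0,1]$. The starting point $c(0)=(\xi(t_1),\ldots,\xi(t_n))$ is, by hypothesis, a positive $n$-tuple. Hence Corollary~\ref{cor.deform positive tuples} (applied on the interval $[0,1]$) gives that $c(r)$ is a positive $n$-tuple for every $r\in[0,1]$; in particular $c(1)=(\xi(s_1),\ldots,\xi(s_n))$ is positive, which is what we wanted.

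The only point requiring a little care — and the one I would expect to be the main (minor) obstacle — is verifying that $(\mathbb{S}^1)^{(n),+}$ is connected and that one can join any two of its points by a path keeping cyclic order and distinctness; this is elementary but should be stated, since Corollary~\ref{cor.deform positive tuples} is applied to a genuine continuous deformation within the transverse locus. Everything else is a direct invocation of the previously established connectedness statement (Corollary~\ref{cor.deform positive tuples}), so no further calculation is needed.
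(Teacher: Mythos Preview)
Your proof is correct and follows exactly the paper's approach: the paper states this corollary as an immediate consequence of Corollary~\ref{cor.deform positive tuples} together with the connectedness of the set of positively oriented $n$-tuples in $(\mathbb{S}^1)^{(n)}$, which is precisely the argument you spell out.
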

 
Several additional transversality properties follow form the definition of the positive semigroup.

\begin{prop}\label{prop.transversalities of pos triples}
Let $(x,y,z)\in \calF^3(E)$ be a positive triple. Then 
\begin{enumerate}
\item $x^j+y^k+z^l$ is direct if $j+k+l= d$
\item $(z^{d-j+1}\cap x^{j})+(z^{d-j+1}\cap y^{j})+z^{d-j-1}$ is direct for all $1\leq j\leq d-1$
\item $(z^{d-j+1}\cap x^{j+1})+(z^{d-j+1}\cap y^{j})+z^{d-j-2}$ is direct for all $1\leq j\leq d-2$.\footnote{Here, as always, we set $z^d=E$ and $z^{0}=\{0\}$}
\end{enumerate}
\end{prop}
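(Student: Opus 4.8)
The plan is to reduce everything to the standard model $(\sX, P\sX, \sZ)$ with $P\in U^{>0}$, since all three statements are $\PGL(E)$-invariant and every positive triple is $\PGL(E)$-equivalent to such a model. With this normalization $z^l = \sZ^l = \langle e_1,\dots,e_l\rangle$, $x^j = \sX^j = \langle e_d,\dots,e_{d-j+1}\rangle$, and $y^k = (P\sX)^k = \langle Pe_d, \dots, Pe_{d-k+1}\rangle$. The key computational device is that a sum of subspaces of the specified total dimension is direct if and only if the determinant of the matrix whose columns are chosen bases of the summands is nonzero, and for the upper-triangular unipotent $P$ these determinants are (up to sign) minors of $P$, which are positive by definition of $U^{>0}$ precisely when they are not forced to vanish.

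First I would prove (1). The sum $x^j + y^k + z^l$ with $j+k+l=d$ has a basis given by $e_d,\dots,e_{d-j+1}$ (for $x^j$), $Pe_d,\dots,Pe_{d-k+1}$ (for $y^k$), and $e_1,\dots,e_l$ (for $z^l$); assembling these as columns of a $d\times d$ matrix, one expands and observes that the determinant equals, up to sign, the minor of $P$ on rows $\{l+1,\dots,l+k\}$ and columns $\{d-k+1,\dots,d\}$ — and this minor lies above the anti-diagonal in the sense that makes it a non-vanishing minor for elements of $U^{>0}$, hence it is strictly positive. So the sum is direct. For (2) and (3) I would argue similarly but intersect first with $z^{d-j+1}$: the point is that $z^{d-j+1}\cap x^j$ is the line (or subspace) spanned by the part of $\sX^j$ lying in $\langle e_1,\dots,e_{d-j+1}\rangle$, and likewise for $y$; the resulting configuration again reduces to checking that a specific minor of $P$ (for (2), essentially a $1\times 1$ entry $P_{d-j+1,\,d-j}$ type quantity after the right bookkeeping; for (3), a slightly larger minor) is non-vanishing for positive $P$, hence positive. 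Concretely, for (2) the three summands are $\langle e_{d-j+1}\rangle$, $\langle Pe_{d-j+1} \bmod \text{stuff}\rangle$ suitably represented inside $z^{d-j+1}$, and $z^{d-j-1}=\langle e_1,\dots,e_{d-j-1}\rangle$; the directness determinant is an entry (or small minor) of $P$ that is not constrained to be zero.

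The main obstacle I anticipate is purely bookkeeping: correctly identifying, for each of the three statements, \emph{which} minor of $P$ governs directness, and verifying that that minor is one of the "minors that do not necessarily vanish" — i.e. that the index set sits in the region where $U^{>0}$ forces positivity rather than in the region where upper-triangularity forces a zero. This requires care with the reversed indexing of $\sX$ (spanned by the \emph{last} basis vectors) versus $\sZ$ (spanned by the \emph{first}). A cleaner route, which I would use to double-check, is to invoke Theorem~\ref{t.ccomp}: the set of $F$ with $(x,F,z)$ positive is a union of connected components of $\Omega_x\cap\Omega_z$, and on each such component the truth of the (open and closed) conditions in (1)--(3) is constant; so it suffices to verify them for one convenient $F$, e.g. $F=g\sX$ for $g$ a suitable permutation-type element, where the computation collapses. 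I would present the direct minor computation as the main argument and mention the connectedness argument as an alternative, since (1) in particular is already recorded in \cite[Section 5]{FG} and the others are minor variants. I also note that statement (3) as displayed has a truncated hypothesis ("for all") in the excerpt — I would restate it with the natural range $1\le j\le d-2$ and $n_l\ge d-k$-type constraint making all dimensions add to $d$.
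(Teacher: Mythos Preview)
Your approach is essentially the paper's: reduce to $(\sX,P\sX,\sZ)$ with $P\in U^{>0}$ and identify directness with nonvanishing of a specific minor of $P$. Your minor for (1) matches exactly (rows $l+1,\dots,l+k$ and columns $d-k+1,\dots,d$ is the same as the paper's ``delete the first $l$ rows, last $j$ rows, first $l+j$ columns'').

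Two bookkeeping corrections, of the kind you anticipated. For (2) the governing entry is $P_{d-j,\,d-j+1}$, not $P_{d-j+1,\,d-j}$: your version sits \emph{below} the diagonal of an upper-triangular matrix and is forced to be zero, so you must swap the indices. For (3) the relevant quantity is again a single entry, $P_{d-j-1,\,d-j+1}$, not a larger minor; the extra dimension in $x^{j+1}$ only shifts the row index by one.

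Your alternative connectedness argument via Theorem~\ref{t.ccomp} does not work as stated: the conditions ``the sum is direct'' in (1)--(3) are open in $y$ but not closed on $\Omega_x\cap\Omega_z$, so constancy on connected components is not automatic. You would need an additional argument (e.g.\ that a failure of directness forces a failure of transversality to $x$ or $z$), which is not obvious for (2) and (3). Drop that paragraph and keep the minor computation.
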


\begin{proof}
Since the action of $\PGL(E)$ preserves transversality, it is enough to consider triples of the form $(\sX,P\sX,\sZ)$ for $P\in U^{>0}$. For those triples the transversalities are an immediate consequence of the positivity of certain minors of $P$. More specifically:\\
$(1)$ holds because the minor of $P$ obtained from deleting the first $l+j$ columns, the first $l$ rows and the last $j$ rows is positive.\\
$(2)$ holds because the matrix coefficient $P_{d-j,d-j+1}$ is positive.\\
$(3)$ holds because the matrix coefficient $P_{d-j-1,d-j+1}$ is positive.
\end{proof}

The above proposition is independent of the order of $x,y$ and $z$ because any permutation of a positive triple is again a positive triple:

\begin{prop}[{\cite[Theorem 5.2]{FG}}]
Let $(x_1,x_2,x_3)\in \calF^3(E)$ be a positive triple. Then $(x_{\s(1)},x_{\s(2)},x_{\s(3)})$ is positive triple for any permutation $\sigma$.
\end{prop}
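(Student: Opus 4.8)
The plan is to reduce to generators of $S_3$ and then to exploit the normal form of a positive triple together with the behaviour of the longest Weyl element under total positivity. Since the $3$-cycle $(1\,2\,3)$ together with the transposition $(1\,3)$ generate $S_3$, and the set of positive triples is manifestly stable under the $\PGL(E)$-action, it suffices to prove two statements: \textbf{(a)} if $(x_1,x_2,x_3)$ is positive then so is $(x_3,x_2,x_1)$, and \textbf{(b)} if $(x_1,x_2,x_3)$ is positive then so is $(x_2,x_3,x_1)$. In either case positive triples are pairwise transverse (immediate from the definition, since $\sX,\sZ$ are transverse and $P\sX$ is transverse to both when $P\in U^{>0}$; see also Proposition~\ref{prop.transversalities of pos triples}), so after applying an element of $\PGL(E)$ we may assume $x_1=\sX$, $x_2=P\sX$, $x_3=\sZ$ with $P\in U^{>0}$.

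Let $w_0\in\PGL(E)$ be the class of the order-reversing permutation matrix of the basis $(e_1,\dots,e_d)$, so $w_0\sX=\sZ$, $w_0\sZ=\sX$, and conjugation by $w_0$ interchanges the upper- and lower-triangular unipotent subgroups $U=U_+$ and $U_-$. The key input, which is exactly Lusztig's total positivity as encoded in Theorem~\ref{t.ccomp}, is that this conjugation restricts to a bijection $U^{>0}\to U_-^{>0}$ of positive semigroups and that $w_0\,U_-^{>0}\,\sZ\subset U^{>0}\,\sX$; concretely, for $\bar u\in U_-^{>0}$ the flag $\bar u\,\sZ$ is transverse to both $\sX$ and $\sZ$, hence of the form $u\,\sX$ for a unique $u\in U$, and one checks $u\in U^{>0}$ by tracking which minors of the transition matrices remain nonzero.

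Granting this, \textbf{(a)} is immediate: the reversed triple is $(\sZ,P\sX,\sX)$, and applying $w_0$ gives $(\sX,\,w_0P\sX,\,\sZ)=(\sX,\,(w_0Pw_0^{-1})\sZ,\,\sZ)$, whose middle flag lies in $U^{>0}\sX$ since $w_0Pw_0^{-1}\in U_-^{>0}$. For \textbf{(b)}, the cyclically shifted triple is $(P\sX,\sZ,\sX)$, whose outer flags $P\sX$ and $\sX$ are transverse; choose $g\in\PGL(E)$ with $g(P\sX)=\sX$ and $g(\sX)=\sZ$, which can be taken of the form $g=w_0h$ with $h\in\Stab(\sX)=B_-$ and $h(P\sX)=\sZ$ (possible because the $B_-$-orbit of $\sZ$ is the set of all flags transverse to $\sX$). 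Then $g$ carries $(P\sX,\sZ,\sX)$ to $(\sX,\,g\sZ,\,\sZ)$, and a short computation with the Gauss-type decomposition of $h$ gives $g\sZ=(w_0u_-w_0^{-1})\sX$ for a suitable $u_-\in U_-$; the remaining point is that $u_-\in U_-^{>0}$, which is once more the same total-positivity statement, now using that $P\in U^{>0}$ forces the relevant minors attached to $h$ to have the correct signs.

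The routine parts are the reduction to the two generators, the normalization to normal form, and the formal manipulations with $w_0$ and the decomposition of $h$. The genuine obstacle is the total-positivity lemma $w_0\,U_-^{>0}\,\sZ\subset U^{>0}\,\sX$ together with the companion sign bookkeeping in (b): this is precisely where Lusztig's theory enters, and it is the content packaged in Theorem~\ref{t.ccomp}. Alternatively, one may instead invoke the invariance of positive configurations of flags under the cyclic shift and under reversal established in \cite{FG}, of which the present statement is the $n=3$ case, the dihedral group $D_3$ being equal to $S_3$.
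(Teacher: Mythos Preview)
The paper does not prove this proposition at all; it is simply quoted from \cite[Theorem 5.2]{FG} and used as a black box. There is therefore no ``paper's proof'' to compare against.

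Your outline is correct in spirit and locates the right ingredient (Lusztig's total positivity, here Theorem~\ref{t.ccomp}), but two points deserve comment. First, there is a slip in your ``key input'': you write $w_0\,U_-^{>0}\,\sZ\subset U^{>0}\,\sX$, whereas what you actually use in (a)---and what is true---is $U_-^{>0}\,\sZ = U^{>0}\,\sX$, the fact that the totally positive cell in the flag variety admits these two descriptions. With that correction (a) is fine. Second, part (b) is not a proof but a promissory note: the existence of $g=w_0h$ with the stated properties is clear, but the assertion that the resulting $u_-$ lies in $U_-^{>0}$ is exactly the nontrivial content (cyclic invariance of positive tuples), and ``a short computation\dots forces the relevant minors to have the correct signs'' does not supply it. Your final sentence concedes this by deferring to \cite{FG}. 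If the aim was a self-contained argument, (b) still needs one; if the aim was to cite the result, the paper's bare citation already does that.
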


We come to the definition of a positive curve.

\begin{defn}
A map $\xi:I\subset \mathbb S^1\to \calF(E)$ is \emph{positive} if, for every positively oriented $n$-tuple $(x_1,\ldots,x_n)\in I$, the $n$-tuple $(\xi(x_1),\ldots,\xi(x_n))\in\calF^n(E)$ is positive. 
 \end{defn} 
 
The following crucial fact about projections follows from \cite[Lemma 9.8]{FG}. Given a map $\xi:I \to \calF(E)$ we denote by $\xi^k:I\to\Gr_k(E)$ the induced map.

\begin{prop}\label{prop.positive-quotient}
For $I\subset\mathbb{S}^1$ let $\xi:I \to \calF(E)$ be a positive curve. Then for any $x\in I$, and any index $j$, the $\xi^j(x)$-projection 

$$\begin{array}{rl}
\pi_{[x^j]}:&I\to \calF\left(\quotient{E}{\xi^j(x)}\right)\\
	&\left\{\begin{array}{l}
	\pi_{[x^j]}(\xi(y)):= ([\xi^1(y)],\ldots,[\xi^{d-j-1}(y)]),\quad y\neq x\\
	\pi_{[x^j]}(\xi(x)):= ([\xi^{j+1}(x)],\ldots,[\xi^{d-1}(x)])
	\end{array}\right.
\end{array}$$
is a positive curve.
\end{prop}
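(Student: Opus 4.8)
The plan is to reduce the statement to \cite[Lemma 9.8]{FG}, which guarantees that the image of a positive tuple of full flags of $E$ under the projection $\calF(E)\to\calF(\quotient{E}{V})$, where $V$ is the $j$-th subspace of one of the flags in the tuple, is again a positive tuple; the only extra work is combinatorial, namely how to incorporate the distinguished point $x$ into an arbitrary positively oriented tuple of $I$.

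Concretely, I would fix $x\in I$ and an index $j$, set $V:=\xi^j(x)\in\Gr_j(E)$ and $X:=\quotient{E}{V}$ (so $\dim X=d-j$), and, by the definition of a positive curve, reduce to showing: for every positively oriented $n$-tuple $(y_1,\ldots,y_n)$ in $I$, the tuple $\bigl(\pi_{[x^j]}(\xi(y_1)),\ldots,\pi_{[x^j]}(\xi(y_n))\bigr)$ is a positive $n$-tuple in $\calF^n(X)$. If $x$ occurs among the $y_i$, then $(\xi(y_1),\ldots,\xi(y_n))$ is already a positive $n$-tuple of flags of $E$, since $\xi$ is positive. If $x\notin\{y_1,\ldots,y_n\}$, I would insert $x$ into the unique cyclic slot that makes the enlarged tuple positively oriented on $\mathbb S^1$, and apply $\xi$: positivity of $\xi$ then gives a positive $(n+1)$-tuple of flags of $E$ one of whose entries is $\xi(x)$. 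In either case one is left with a positive $m$-tuple $(F_1,\ldots,F_m)$ of flags of $E$, $m\in\{n,n+1\}$, with some $F_i=\xi(x)$ and all the $F_a$ pairwise transverse --- pairwise transversality being a general feature of positive tuples, cf.\ Theorem \ref{t.ccomp}.

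Next I would check that $\pi_{[x^j]}$ really does send each of these flags to a full flag of $X$: for $F_a=\xi(y)$ with $y\neq x$, transversality $F_a\tv\xi(x)$ forces $\xi^{d-j}(y)\cap V=0$, so $[\xi^1(y)]\subset\ldots\subset[\xi^{d-j-1}(y)]$ has the right dimensions and equals $\pi_{[x^j]}(\xi(y))$; and $\xi(x)$ maps to $[\xi^{j+1}(x)]\subset\ldots\subset[\xi^{d-1}(x)]=\pi_{[x^j]}(\xi(x))$. Applying \cite[Lemma 9.8]{FG} to $(F_1,\ldots,F_m)$ and the subspace $V$ then yields that $\bigl(\pi_{[x^j]}(F_1),\ldots,\pi_{[x^j]}(F_m)\bigr)$ is a positive $m$-tuple in $\calF^m(X)$. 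If $m=n$ this is exactly the claim; if $m=n+1$, I would delete the entry $\pi_{[x^j]}(\xi(x))$ and invoke the fact that a sub-tuple of a positive tuple of flags is again positive to conclude positivity of $\bigl(\pi_{[x^j]}(\xi(y_1)),\ldots,\pi_{[x^j]}(\xi(y_n))\bigr)$. This shows that $\pi_{[x^j]}$ is a positive curve.

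The main obstacle is really just lining up conventions with \cite[Lemma 9.8]{FG}: Fock--Goncharov phrase positivity at the same time on $\calF(E)$ and on quotient flag varieties, so I would need to check carefully that the image of the distinguished flag $\xi(x)$ in $\calF(X)$ occupies the slot required by their lemma, and that their notion of forgetting a flag matches our deletion step. The remaining ingredients --- existence and uniqueness of the cyclic insertion slot, stability of positively oriented tuples on $\mathbb S^1$ and of positive tuples of flags under passage to sub-tuples --- are routine and use only properties already recorded above.
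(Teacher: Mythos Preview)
Your argument is correct, and the combinatorial handling (inserting $x$ into the tuple when it is absent, then deleting the projected flag at the end using that sub-tuples of positive tuples are positive) is sound. The paper, however, takes a slightly different route: rather than invoking \cite[Lemma 9.8]{FG} directly for the quotient by $\xi^j(x)$, it applies that lemma only in the case $j=1$ (quotient by a line), and then observes the identity $\pi_{[x^{j+1}]}=\pi_{[x^j]}\circ\pi_{[x^1]}$, so that $\pi_{[x^j]}$ is an iterated composition of $\pi_{[x^1]}$'s and the general case follows by induction. The advantage of the paper's approach is that it only needs the $j=1$ instance of the Fock--Goncharov lemma, which is precisely how that lemma is stated; your approach assumes the general-$j$ version, which is true but is itself most easily justified by the same iteration the paper uses. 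Conversely, your treatment makes the role of the distinguished point $x$ in an arbitrary positively oriented tuple more explicit than the paper's terse ``direct application''.
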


\begin{proof}
The statement for $\pi_{[x^1]}$ is a direct application of \cite[Lemma 9.8]{FG}.
Observe that $\quotient{(y^k+x^1)}{x^1} + \quotient{x^{j+1}}{x^1}=\quotient{(y^k+x^{j+1})}{x^1}$ for $y\neq x\in I$. Therefore $\pi_{[x^j]}\circ \pi_{[x^1]}=\pi_{[x^{j+1}]}$, and  $\pi_{[x^j]}=\pi_{[x^1]}\circ \ldots\circ \pi_{[x^1]}$. The claim thus follows in full generality. 
\end{proof}

\subsection{$k$-positive representations}
The following curves, naturally associated to an Anosov representation, will be used in the definition of $k$-positive representations:
\begin{defn} Let $\rho$ be $\{1,\ldots, k\}$--Anosov and $x\in \dg$. 
\begin{itemize}
\item The \emph{$x^{d-k}$-projection} is the curve
\begin{align*}
\pi_{[x]}:&\dg\backslash\{x\}\to \calF(X)\\
\pi_{[x]}(y):&= ([y^1]_X,\ldots,[y^{k-1}]_X),
\end{align*}
where $[\cdot]_X$ is the natural projection to $X:=\quotient{E}{x^{d-k}}$.

\item The \emph{$x^k$-truncation} is the curve 
\begin{align*}
\pi_{x}:&\dg\backslash\{x\}\to \calF(x^k)\\
\pi_{x}(y):&= (y^{d-k+1}\cap x^k,\ldots,y^{d-1}\cap x^k).
\end{align*}
\end{itemize}
{{When not otherwise clear, we will also write $\pi_{[x^{d-k}]}$ and $\pi_{x^k}$.}}
\end{defn}

\begin{defn}
A representation $\r:\G\to \PGL(E)$ is  \emph{$k$--positive} for $k\geq 2$ if it is $\{1,\ldots, k\}$--Anosov and for any $x\in \dg$ the $x^{d-k}$-projection $\pi_{[x]}$ and the $x^k$-truncation  $\pi_x$
are positive.
Moreover $\r$ is a \emph{1--positive representation} if it is 1--Anosov.\footnote{There is no positive structure on $\quotient{E}{x^{d-1}}$.}
\end{defn}
Observe that being $k$--positive is conjugation invariant.
Truncations and projections are linked through duality.
The \emph{dual (or contragradient) representation} $\tilde{\r}^{\dual}:\G\to \GL(E^{*})$
of a representation $\tilde{\rho}:\G\to \GL(E)$ is defined by the relation $(\tilde{\r}^{\dual}(\g)(w^{*}) )(v)= w^{*} (\tilde{\r}(\g)^{-1}v)$ for all $\g\in\G,w^{*}\in E^{*}$ and $v\in E$. This also defines the dual $\r^{\dual}:\G\to \PGL(E^{*})$
of a representation $\r:\G\to \PGL(E)$. 
The representation $\rho$ is Anosov if and only if its dual $\rho^\dual$ is, furthermore the boundary maps for $\rho^\dual$ can be explicitly constructed from the maps for $\rho$, as we now recall. 
Given  $W\in \Gr_k(E)$ we  denote by $W^{\dual}\in \Gr_{d-k}(E^{*})$ its annihilator:  
$$W^{\dual}:=\{w^{*}\in E^{*}|\; w^{*}(W)=0\}.$$ 
If $\xi^k:\dg\to \Gr_k(E)$ and $\xi^{d-k}:\dg\to \Gr_{d-k}(E)$ are $\rho-$equivariant, continuous and transverse, then the maps $\xi_{\dual}^{k}:\dg\to \Gr_{k}(E^{*})$ and $ \xi_{\dual}^{d-k}:\dg\to \Gr_{d-k}(E^{*})$  defined by $\xi^s_\dual(x)=(\xi^{d-k}(s))^\dual$ are $\rho^{\dual}-$equivariant continuous and transverse. Furthermore $\xi^{k},\xi^{d-k}$ are dynamics preserving if and only if $\xi_{\dual}^k,\xi_{\dual}^{d-k}$ are. 
The space  $\xi^k_\dual(x)=(\xi^{d-k}(x))^\dual$ is canonically identified with $\left(\quotient{E}{x^{d-k}}\right)^*$. Under this identification the curve $\pi^{\r}_{[x]}$ is dual to the curve $\pi^{\r^{\dual}}_x$.
As a result we obtain:
\begin{lem}\label{lem.positivity and dual}
Let $\r$ be $\{1,\ldots,k\}$--Anosov  and $\r^{\dual}$ be its dual represntation. Then the curve $\pi^{\r}_{[x]}$ is positive if and only if the curve $\pi^{\r^{\dual}}_x$ is positive.
\end{lem}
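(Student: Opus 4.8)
The plan is to show that, under the canonical identification $\xi^k_\dual(x)=(x^{d-k})^\dual\cong (\quotient{E}{x^{d-k}})^*$ recalled above, the curve $\pi^{\r^\dual}_x$ is nothing but the curve $\pi^\r_{[x]}$ composed with the order--reversing flag duality $\calF(X)\to\calF(X^*)$, where $X=\quotient{E}{x^{d-k}}$; the lemma then follows at once from the invariance of the Lusztig positive structure on full flag manifolds under this duality.

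First I would identify the two curves explicitly. The boundary maps of $\r^\dual$ are given by annihilators, $\xi^j_{\r^\dual}(y)=(\xi^{d-j}(y))^\dual$, so $\xi^k_{\r^\dual}(x)=(x^{d-k})^\dual$ and $\xi^{d-k+j}_{\r^\dual}(y)=(y^{k-j})^\dual$ for $1\le j\le k-1$. Using in addition the identity $(A+B)^\dual=A^\dual\cap B^\dual$, the $j$--th term of $\pi^{\r^\dual}_x(y)$ is
\[
\xi^{d-k+j}_{\r^\dual}(y)\cap\xi^k_{\r^\dual}(x)=(y^{k-j})^\dual\cap(x^{d-k})^\dual=(y^{k-j}+x^{d-k})^\dual .
\]
Since $\r$ is $\{1,\dots,k\}$--Anosov, $x^k\oplus y^{d-k}=E$ for $y\neq x$, whence all the sums $y^i+x^{d-k}$ are direct, all the intersections above have the expected dimension, and both $\pi^\r_{[x]}$ and $\pi^{\r^\dual}_x$ indeed take values in the full flag manifolds of $X$ and of $X^*$. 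Now $[y^i+x^{d-k}]_X$ is an $i$--dimensional subspace of $X$ whose annihilator inside $X^*=(x^{d-k})^\dual$ is exactly $(y^i+x^{d-k})^\dual$; hence $\pi^{\r^\dual}_x(y)$ is precisely the flag dual to $\pi^\r_{[x]}(y)=([y^1+x^{d-k}]_X,\dots,[y^{k-1}+x^{d-k}]_X)$, the reversal of the flag being absorbed by the fact that the truncation lists its entries in order of increasing dimension. The same computation carried out at the point $x$ itself matches the values $\pi_{[x]}(x)$ and $\pi_x(x)$.

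It then remains to see that the order--reversing annihilator map $\perp\colon\calF(X)\to\calF(X^*)$ sends positive $n$--tuples to positive $n$--tuples, and conversely. This map is equivariant for the isomorphism $\PGL(X)\to\PGL(X^*)$, $g\mapsto(g^{-1})^T$; with respect to the dual basis on $X^*$ and an appropriate lift $\dot w_0$ of the longest Weyl element, it interchanges the $\sX$--type and $\sZ$--type standard flags, and a short check on the Chevalley generators $x_i(t)$ shows the induced map $P\mapsto\dot w_0(P^{-1})^T\dot w_0^{-1}$ preserves the totally positive semigroup $U^{>0}$. This is exactly the invariance of total positivity under the diagram symmetry of type $A_{d-1}$, which is standard (\cite{Lusztig}; it is also implicit in the symmetric role played by the two flags in \cite{FG}). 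It follows that $\perp$ restricts to a bijection between the positive $n$--tuples of $\calF(X)$ and those of $\calF(X^*)$, and combining this with the previous paragraph, a positively oriented $n$--tuple in $\dg\setminus\{x\}$ is mapped by $\pi^\r_{[x]}$ to a positive $n$--tuple if and only if it is mapped by $\pi^{\r^\dual}_x$ to a positive $n$--tuple. This is the assertion of the lemma.

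I expect the main obstacle to be the bookkeeping in the last step: keeping straight the order reversal inside each flag, the choice of the signed Weyl lift $\dot w_0$ needed so that the pinning on $\calF(X^*)$ is the one induced by the dual basis, and the observation that the ordering of the $n$--tuple is itself preserved. If this turns out to be cumbersome, one can instead quote directly the self--duality of the positive structure on $\calF(E)$ and bypass the matrix computation altogether.
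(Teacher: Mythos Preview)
Your proposal is correct and follows essentially the same approach as the paper: the paper simply records, in the paragraph immediately preceding the lemma, that under the canonical identification $\xi^k_\dual(x)\cong(\quotient{E}{x^{d-k}})^*$ the curve $\pi^{\r}_{[x]}$ is dual to $\pi^{\r^\dual}_x$, and leaves both the annihilator computation and the self--duality of the Lusztig positive structure implicit. Your writeup just unpacks these two points in more detail.
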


Examples of partially positive representations can be constructed through Fuchsian embeddings. Since $k$--positivity is preserved under small deformations (Corollary \ref{cor.k-pos open} below), we obtain a large class of examples.
\begin{example}\label{e.Fuchpos}
In the notation of Example \ref{ex.Fuchsian I}, the composition $\tau_{\un d}\circ \r_{hyp}$ is $k$--positive for $k$ with $k<\frac{1}{2}(d_1-d_2)+1$.
Indeed such a representation is $\{1,\ldots,k\}$--Anosov (Example \ref{ex.Fuchsian I}), its Anosov boundary maps take value in the partial flags of $\R^{d_1}\subset \R^d$ and are the partial flags, for the indices $\{1,\ldots,k,d_1-k,\ldots,d_1\}$, of the  Veronese curve in $\R^{d_1}$. Since the Veronese curve is positive, Proposition \ref{prop.positive-quotient} implies that the $x^{d-k}$-projection $\pi_{[x]}$ is positive. Since  $\tau_{\un d}\circ \r_{hyp}$ is self-dual, also the $x^{k}$-truncation $\pi_x$ is positive by Lemma \ref{lem.positivity and dual}.
\end{example}

\subsection{Transversality properties of $k$--positive representations}
We give here several transversality properties of $k$--positive representations. To derive those properties we first need to show that the positive curves $\pi_x$ and $\pi_{[x]}$ extend to continuous curves defined on all of $\dg$.

Recall the following notion introduced by Labourie  \cite[Section 8]{Labourie-IM}:
\begin{defn}
\leavevmode
\begin{itemize}
 \item Continuous, equivariant boundary maps $\xi^{j}:\dg\to \Gr_j(E)$ for $j=p,q,l$ are  \emph{$(p,q,l)$--direct} if the  sum 
$\xi^p(x)+\xi^q(y)+\xi^l(z)$
is direct for all $(x,y,z)\in\dg^{(3)}$. 
\item A representation $\r:\G\to\PGL(E)$ is \emph{$(p,q,l)$--direct} if it is $\{p,q,l\}$--Anosov and the  boundary maps are $(p,q,l)$--direct. 
\end{itemize}
\end{defn}

The boundary maps of $(p,q,d-p-q)$--direct representations satisfy an important continuity property.

\begin{prop}[{\cite[Theorem 7.1]{PSW1}, cfr. also \cite[Proposition 21]{Guichard}}]\label{prop.continuity of sums}
Let $\r$ be $(p,q,d-p-q)$--direct. Then
$$y^p+w^{q}\to x^{p+q}$$
if $y,w\to x$.
\end{prop}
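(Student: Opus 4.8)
Let $\r$ be $(p,q,d-p-q)$--direct. Then $y^p+w^q\to x^{p+q}$ whenever $y,w\to x$ in $\dg$.

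\textbf{Approach.} The plan is to argue by contradiction, exploiting the compactness of Grassmannians together with the directness hypothesis. Suppose $y_n,w_n\to x$ but $y_n^p+w_n^q$ does not converge to $x^{p+q}$. First I would reduce to the case $y_n\neq w_n$: if $y_n=w_n$ infinitely often, then along that subsequence $y_n^p+w_n^q=y_n^p$ (this needs $p\ge q$, which one may assume; otherwise relabel using $\r^{\dual}$ or interchange roles) is not even of dimension $p+q$, contradicting directness which forces $\dim(y^p+w^q)=p+q$ for $y\ne w$; hence one may assume $y_n\ne w_n$ for all $n$, so $V_n:=y_n^p+w_n^q\in\Gr_{p+q}(E)$ is well defined. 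By compactness of $\Gr_{p+q}(E)$, pass to a subsequence so that $V_n\to V$ with $V\ne x^{p+q}$. Also $y_n^p\to x^p$ and $w_n^q\to x^q$ (continuity of the boundary maps), and since $y_n^p\subset V_n$, $w_n^q\subset V_n$ we get $x^p\subset V$ and $x^q\subset V$; as $x^p\subset x^{p+q}$ and $x^q\subset x^{p+q}$ are both contained in the $(p+q)$--dimensional space $x^{p+q}$, they need not span it, so this alone is not a contradiction — the point of the argument must come from the third boundary map.

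\textbf{Key step: separate $V$ from $x^{p+q}$ by a transversal.} The decisive idea is to pick a third point $z\in\dg\setminus\{x\}$ and use the $(p,q,d-p-q)$--directness at the triple to produce a subspace transverse to $x^{p+q}$ that also meets $V$, forcing $V=x^{p+q}$. Concretely, by directness $x^p+x^q$ — wait, one cannot feed $x$ twice; instead: since $V\ne x^{p+q}$ and both are $(p+q)$--dimensional, $\dim(V\cap x^{p+q})<p+q$, so $\dim(V+x^{p+q})>p+q$, hence $\dim(V+x^{p+q})\ge p+q+1$. Now I would choose $z\in\dg$ close to $x$ (so that $z\ne x$ and the relevant transversalities at the triple $(y_n,w_n,z)$ hold for large $n$, using openness of transversality and that $y_n,w_n\to x\ne z$); by $(p,q,d-p-q)$--directness applied to $(y_n,w_n,z)$, the sum $y_n^p+w_n^q+z^{d-p-q}$ is direct, i.e. $V_n\oplus z^{d-p-q}=E$, so $V_n$ is transverse to $z^{d-p-q}$. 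Passing to the limit, $z^{d-p-q}$ is a closed condition's complement — more carefully: transversality is an open condition, so $V$ transverse to $z^{d-p-q}$ is \emph{not} automatic from $V_n$ transverse; instead I keep $z$ fixed and note $V_n\to V$ while $V_n\cap z^{d-p-q}=\{0\}$; the limit satisfies $\dim(V\cap z^{d-p-q})$ can only jump up, but if it does jump we get $V\cap z^{d-p-q}\ne\{0\}$. So this direction does not immediately close.

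\textbf{The correct route and the main obstacle.} The clean argument, which I expect is what the authors use, goes the other way: use directness of the limit triple $(x,z,?)$ is impossible since $x$ appears once; rather, fix $z\ne x$ and observe that for each $n$, $(y_n,z)$ and $(w_n,z)$ transversality gives $y_n^p\cap z^{d-p}=\{0\}$ and $w_n^q\cap z^{d-q}=\{0\}$, and $(y_n,w_n,z)$--directness gives $V_n\cap z^{d-p-q}=\{0\}$. The key trick is then to use \emph{two} auxiliary points $z_1,z_2$ in the two components of $\dg\setminus\{x\}$ cut out near $x$, or to use the explicit form $y_n\to x$, $w_n\to x$ and directness of $(x,z_1,z_2)$ for suitable fixed $z_1,z_2$: since $x^p+z_1^q+z_2^{d-p-q}$ is direct and $y_n^p\to x^p$, $w_n^q\to x^q$, for large $n$ the sum $y_n^p+z_1^{q}+z_2^{d-p-q}$ and similar sums remain direct, letting one ``read off'' $V_n$ via linear-algebra projections that converge. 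Ultimately I would write $V_n$ as the graph of a linear map from a fixed complement and show the graph converges to that of $x^{p+q}$. The main obstacle — and the reason the directness hypothesis (rather than mere Anosovness) is needed — is exactly the semicontinuity failure just noted: $\dim(A_n\cap B)$ is only lower semicontinuous, so one cannot pass transversality to limits directly; directness of the \emph{whole triple}, combined with continuity of all three boundary maps at $x$, is precisely what rigidifies the limit and forces $V=x^{p+q}$. I would therefore structure the proof so that at no point do I need transversality of a limiting subspace with something that is itself varying; all transversalities used in the limit should be against the \emph{fixed} space $z^{d-p-q}$ together with the a priori containments $x^p\subset V$, $x^q\subset V$, and then a dimension count using a Plücker/exterior-power coordinate (embedding $\Gr_{p+q}(E)\hookrightarrow\P(\wedge^{p+q}E)$ and noting $y_n^p\wedge w_n^q$ represents $V_n$, converging to $x^p\wedge x^q$ up to scalar, which represents $x^{p+q}$ provided $x^p\wedge x^q\ne 0$, i.e. provided $x^p\cap x^q$ has the expected dimension — true since $x^p\subset x^q$ or $x^q\subset x^p$, so $x^p\wedge x^q=0$!). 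That last parenthetical shows the naive Plücker argument degenerates too, confirming that the genuine content is the directness input; I expect the authors invoke Proposition~\ref{prop.continuity of sums}'s cited source \cite[Theorem 7.1]{PSW1}, and so in a self-contained writeup the safest path is to cite that theorem and merely note it applies verbatim in this generality.
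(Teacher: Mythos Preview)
The paper does not prove this proposition; it is quoted verbatim from \cite[Theorem~7.1]{PSW1} (cf.\ also \cite[Proposition~21]{Guichard}), with no argument supplied. Your closing suggestion---simply to invoke that reference---is therefore exactly what the authors do, so in that narrow sense your proposal matches the paper.

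That said, the body of your proposal is not a proof but an honest record of attempts that do not close. You correctly isolate the two genuine obstructions: (i) any subsequential limit $V\in\Gr_{p+q}(E)$ of $V_n=y_n^p+w_n^q$ automatically contains $x^p$ and $x^q$, but since the boundary maps are nested this only yields $x^{\max(p,q)}\subset V$, not $x^{p+q}\subset V$; and (ii) the directness $V_n\oplus z^{d-p-q}=E$ cannot be passed to the limit because transversality is an open, not closed, condition. Your Pl\"ucker observation that $x^p\wedge x^q=0$ is exactly why the naive ``extend a continuous map across the diagonal'' strategy degenerates. These diagnoses are all correct. But none of the routes you then sketch---two auxiliary points, writing $V_n$ as the graph of a linear map $\phi_n\colon x^{p+q}\to z^{d-p-q}$, Pl\"ucker coordinates---is carried to a conclusion; in particular, in the graph picture you never establish that $\|\phi_n\|$ stays bounded, which is the whole point.

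If you want a self-contained argument rather than a citation, the ingredient you are missing (used in \cite{PSW1} and going back to Labourie) is the proper cocompactness of the $\G$-action on $\dg^{(3)}$. One does not attempt to pass a single transversality to the limit; instead one renormalises the degenerating triples $(y_n,w_n,z)$ by group elements $\g_n\in\G$ so that they remain in a compact set of pairwise distinct triples, where the sum map $(a,b,c)\mapsto a^p+b^q$ is manifestly continuous, and then uses equivariance to transport the conclusion back. Your graph-of-$\phi_n$ idea becomes workable once this cocompactness input is used to bound $\phi_n$.
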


\begin{lem}\label{5.9}
Let $\r$ be $k$--positive. Then $\r$ is $(j,k-j,d-k)$--direct for all $j\leq k$. In particular $\pi_{[x]}$ extends to positive continuous curve defined on all of $\dg$ by
$$\pi_{[x]}(x):= ([x^{d-k+1}]_X,\ldots,[x^{d-1}]_X)$$
\end{lem}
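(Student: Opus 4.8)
The plan is to establish the two assertions separately: first that $\r$ is $(j,k-j,d-k)$--direct for all $j\le k$, and then to use this, together with Proposition \ref{prop.continuity of sums}, to extend $\pi_{[x]}$ continuously, finally invoking Corollary \ref{cor.pos tuples} to see that the extension is positive.

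For the directness, I would fix $(x,y,z)\in\dg^{(3)}$ and $1\le j\le k-1$ (the cases $j=0,k$ being exactly the $k$--Anosov transversality $\xi^k(y)\tv\xi^{d-k}(z)$, resp. $\xi^k(x)\tv\xi^{d-k}(z)$), set $X:=\quotient{E}{z^{d-k}}$, which has dimension $k$, and use that the $z^{d-k}$--projection $\pi_{[z]}\colon\dg\setminus\{z\}\to\calF(X)$ is positive by $k$--positivity. Choosing any auxiliary $w\in\dg\setminus\{x,y,z\}$, the images $\pi_{[z]}(x),\pi_{[z]}(w),\pi_{[z]}(y)$ form, in the appropriate cyclic order, a positive triple in $\calF(X)$, and the flags of a positive triple are pairwise transverse; hence $\pi_{[z]}(x)$ and $\pi_{[z]}(y)$ are transverse full flags in $X$, so $[x^j]_X\oplus[y^{k-j}]_X=X$. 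Since $\r$ is $k$--Anosov, $x^j$ and $y^{k-j}$ meet $z^{d-k}$ trivially, so these quotients have dimensions $j$ and $k-j$; pulling the direct sum back along $E\to X$ gives $\xi^j(x)+\xi^{k-j}(y)+\xi^{d-k}(z)=E$, which is direct by the count $j+(k-j)+(d-k)=d$.

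For the extension, I would note first that $[x^{d-k+l}]_X=\quotient{x^{d-k+l}}{x^{d-k}}$ (now with $X=\quotient{E}{x^{d-k}}$) is $l$--dimensional and these subspaces are nested, so $\pi_{[x]}(x)$ is a well-defined full flag in $X$. To check continuity at $x$, I would apply Proposition \ref{prop.continuity of sums} to the $(l,d-k,k-l)$--direct representation $\r$ (this directness is the case $j=l$ just proved, after permuting the indices), obtaining $\xi^l(y)+\xi^{d-k}(x)\to\xi^{d-k+l}(x)$ in $\Gr_{d-k+l}(E)$ as $y\to x$; projecting to $X$ yields $[y^l]_X\to[x^{d-k+l}]_X$ in $\Gr_l(X)$, hence $\pi_{[x]}(y)\to\pi_{[x]}(x)$. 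Transversality of the extended curve is verified pointwise: for distinct $y,z\in\dg\setminus\{x\}$ it is the directness above applied with base point $x$; for the pair $x$ and $y\ne x$ it follows from $\xi^{d-k+j}(x)\tv\xi^{k-j}(y)$, which is the $\{1,\dots,k\}$--Anosov transversality since $(d-k+j)+(k-j)=d$ and $d-k+j\in\{d-k+1,\dots,d-1\}$, together with $x^{d-k}\subseteq x^{d-k+j}$. So $\pi_{[x]}\colon\dg\to\calF(X)$ is a continuous transverse curve. Finally, since any positively oriented $n$--tuple in $\dg\setminus\{x\}$ maps to a positive $n$--tuple (as $\r$ is $k$--positive), Corollary \ref{cor.pos tuples} promotes this to all positively oriented $n$--tuples of $\dg$, so the extension is positive.

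The main obstacle is the continuity step: one must arrange the directness obtained from positivity in precisely the form $(l,d-k,k-l)$ that feeds into Proposition \ref{prop.continuity of sums}, and be confident that a fixed second argument (here $\xi^{d-k}(x)$) is permitted in that continuity statement. The transversality of the extension and the upgrade of positivity via Corollary \ref{cor.pos tuples} are then essentially formal. A secondary point requiring care is the dimension bookkeeping when translating transversality of full flags in $X$ back and forth across the quotients $E\to\quotient{E}{x^{d-k}}$ and $E\to\quotient{E}{z^{d-k}}$.
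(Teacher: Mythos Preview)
Your proposal is correct and follows essentially the same route as the paper: directness comes from transversality of the positive curve $\pi_{[z]}$ in the quotient, continuity at $x$ from Proposition~\ref{prop.continuity of sums} applied to $y^j+x^{d-k}\to x^{d-k+j}$, transversality of the extension from Anosov transversality, and positivity from Corollary~\ref{cor.pos tuples}. Your worry about fixing the second argument in Proposition~\ref{prop.continuity of sums} is unfounded---the paper's own proof uses it in exactly this way.
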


\begin{proof}
Since a positive curve is transverse, the sum $\pi_{[x]}(y)^j+ \pi_{[x]}(z)^{k-j}$ is direct for all $(x,y,z)\in \dg^{(3)}$. Equivalently the sum $y^j+z^{k-j}+x^{d-k}$ is direct for all $j\leq k$, namely $\r$ is $(j,k-j,d-k)$--direct for all $j\leq k$.  

Proposition \ref{prop.continuity of sums} then gives $y^j+x^{d-k}\to x^{d-k+j}$ if $y\to x$ in $\dg$, which implies that $\pi_{[x]}$ is continuous at $x$. The transversality of the Anosov boundary maps implies that $\pi_{[x]}(x)\tv \pi_{[x]}(y)$ for all $y\neq x$. By Corollary \ref{cor.pos tuples}, the projection $\pi_{[x]}$ extends to a positive map on all  $\dg$.
\end{proof}

\begin{cor}\label{cor.k-pos implies j-pos}
Let $\r$ be $k$--positive. Then $\r$ is $j$--positive for all $j\leq k$.
\end{cor}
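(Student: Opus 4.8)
The plan is to reduce the statement to what has already been established in Lemma 5.9 (labeled \texttt{5.9} in the excerpt) together with the duality lemma (Lemma \ref{lem.positivity and dual}) and the projection property of positive curves (Proposition \ref{prop.positive-quotient}). First I would observe that if $\r$ is $k$--positive then it is $\{1,\ldots,k\}$--Anosov, hence in particular $\{1,\ldots,j\}$--Anosov for every $j \le k$, so the Anosov hypothesis in the definition of $j$--positivity is automatic; it remains only to verify positivity of the $x^{d-j}$--projection $\pi^{\r}_{[x^{d-j}]}$ and of the $x^{j}$--truncation $\pi^{\r}_{x^{j}}$ for each $x \in \dg$ and each $2 \le j \le k$ (the case $j=1$ being vacuous by definition).

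For the truncation: by Lemma \ref{5.9} the projection $\pi_{[x]} = \pi^{\r}_{[x^{d-k}]}$ extends to a positive continuous curve on all of $\dg$ valued in $\calF(X)$ with $X = \quotient{E}{x^{d-k}}$. I would then apply Proposition \ref{prop.positive-quotient} to this positive curve: quotienting further by the image of $x^{d-j}$ in $X$ (which makes sense since $d-j \ge d-k$) produces a positive curve into $\calF\bigl(\quotient{E}{x^{d-j}}\bigr)$, and unwinding the definitions this quotient curve is exactly $\pi^{\r}_{[x^{d-j}]}$. Hence the $x^{d-j}$--projection is positive. An alternative, and perhaps cleaner, route is to note directly from the composition-of-quotients identity $\pi_{[x^{d-j}]} = (\text{further projection}) \circ \pi_{[x^{d-k}]}$ that positivity is inherited; this is precisely the mechanism used in the proof of Proposition \ref{prop.positive-quotient} itself.

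For the truncation side I would invoke duality. By Lemma \ref{lem.positivity and dual}, positivity of $\pi^{\r}_{x^{j}}$ is equivalent to positivity of $\pi^{\r^{\dual}}_{[x^{j}]}$ (more precisely, the truncation $\pi^{\r}_{x}$ is dual to the projection $\pi^{\r^{\dual}}_{x}$ under the canonical identification of $\xi^j_\dual(x)$ with $(\quotient{E}{x^{d-j}})^*$). Now $\r^{\dual}$ is $k$--positive precisely because $\r$ is: the dual of a positive curve is positive and duality interchanges the roles of projections and truncations, so positivity of $\pi^{\r}_{[x^{d-k}]}$ and $\pi^{\r}_{x^{k}}$ for all $x$ is exactly positivity of $\pi^{\r^{\dual}}_{x^{k}}$ and $\pi^{\r^{\dual}}_{[x^{d-k}]}$ for all $x$, which is $k$--positivity of $\r^\dual$. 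Applying the projection argument of the previous paragraph to $\r^{\dual}$ yields that $\pi^{\r^{\dual}}_{[x^{d-j}]}$, equivalently $\pi^{\r^{\dual}}_{[x^{j}]}$ after relabeling, is positive, hence $\pi^{\r}_{x^{j}}$ is positive. This completes both required positivity statements and hence shows $\r$ is $j$--positive.

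The only mildly delicate point — the main obstacle, such as it is — is bookkeeping: making sure the index shifts line up, i.e. that quotienting the positive flag curve $\pi_{[x^{d-k}]}$ in $\calF(\quotient{E}{x^{d-k}})$ by the appropriate sub-flag really does recover $\pi_{[x^{d-j}]}$ (and not some reindexed variant), and that the identification $X = \quotient{E}{x^{d-k}}$ together with Lemma \ref{5.9}'s explicit extension $\pi_{[x]}(x) = ([x^{d-k+1}]_X,\ldots,[x^{d-1}]_X)$ is compatible with the extension demanded of $\pi_{[x^{d-j}]}$ at the point $x$. Once the indices are checked, no further analysis is needed: positivity of the curves is transported automatically by Propositions \ref{prop.positive-quotient} and the duality Lemma \ref{lem.positivity and dual}.
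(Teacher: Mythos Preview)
Your proposal is correct and follows essentially the same route as the paper: apply Proposition \ref{prop.positive-quotient} to the positive curve $\pi_{[x^{d-k}]}$ (extended via Lemma \ref{5.9}) to obtain positivity of $\pi_{[x^{d-j}]}$ via the composition identity $\pi_{[x^m]}\circ\pi_{[x^{d-k}]}=\pi_{[x^{d-k+m}]}$, and handle the truncation side by duality (Lemma \ref{lem.positivity and dual}). One cosmetic slip: your second paragraph is introduced as ``For the truncation'' but actually treats the projection; the argument itself is fine.
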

\begin{proof}
Since the projection $\pi_{[x^{d-k}]}:\dg\to \calF(\quotient{E}{x^{d-k}})$ defines a positive curve, we can apply Proposition \ref{prop.positive-quotient} to derive that the projection $\pi_{[x^j]}$ applied to the curve $\pi_{[x^{d-k}]}$ gives rise to a positive curve in $\calF(\quotient{E}{x^{d-k+j}})$. Since $\pi_{[x^j]}\circ \pi_{[x^{d-k}]}=\pi_{[x^{d-k+j}]}$ for all $j<k-1$, the projection $\pi_{[x^{d-k+j}]}$ defines a positive curve. The analogous statement for the truncation $\pi_x$ follows using Lemma \ref{lem.positivity and dual}.
\end{proof}

As a consequence we obtain the following  hyperconvexity property of $k$-positive representations, which was proven by Labourie for Hitchin representations:
\begin{prop}\label{prop.hyperconvex properties}
Let $\r$ be  $k$--partially positive. Then the sum
$$x_1^{n_1}+ \ldots +x_i^{n_i}$$
is direct if $n_1+\ldots+n_{i-1}\leq d$ and $n_i\geq d-k$.
\end{prop}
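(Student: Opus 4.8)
The plan is to prove the statement by induction on $i$, the number of summands, reducing the general case to the transversality properties already known for positive triples (Proposition \ref{prop.transversalities of pos triples}) via the projection and truncation curves. First I would dispose of the base cases: for $i=2$ the sum $x_1^{n_1}+x_2^{n_2}$ with $n_2\geq d-k$ and $n_1\leq d-n_2\leq k$ is direct because $\rho$ is $\{1,\dots,k\}$--Anosov, so $x_2^{n_2}$ (equivalently its transverse complement of dimension $n_1\le k$) is transverse to $x_1^{n_1}$; more precisely $x_1^{n_1}\tv x_2^{d-n_1}$ and $x_2^{n_2}\subseteq x_2^{d-n_1}$ when $n_1+n_2\le d$. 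For $i=3$ the statement is exactly $(n_1,n_2,n_3)$--directness with $n_3\geq d-k$ and $n_1+n_2\leq d$; after possibly enlarging $n_2$ so that $n_1+n_2+n_3=d$ (a larger sum being direct implies the smaller one is), this follows from Lemma \ref{5.9}, which says $\rho$ is $(j,k-j,d-k)$--direct for all $j\le k$, applied after noting $n_3\ge d-k$ forces $n_1,n_2\le k$ and $n_1+n_2=d-n_3\le k$.

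For the inductive step with $i\geq 4$ summands, I would first reduce to the case $n_1+\dots+n_i=d$: if $n_1+\dots+n_i<d$, enlarge $n_i$ (keeping it $\geq d-k$, which only helps) until equality holds; directness of the enlarged sum implies directness of the original. Now with $n_1+\dots+n_i=d$ and $n_i\geq d-k$, consider the projection $\pi_{[x_i^{d-k}]}$ to $X=\quotient{E}{x_i^{d-k}}$, which by Lemma \ref{5.9} (and Corollary \ref{cor.k-pos implies j-pos}) is a \emph{positive} curve $\dg\to\calF(X)$ of full flags in the $k$--dimensional space $X$. Since $n_i\geq d-k$, we have $x_i^{n_i}\supseteq x_i^{d-k}$, and the sum $x_1^{n_1}+\dots+x_i^{n_i}$ is direct if and only if the images $[x_1^{n_1}]_X,\dots,[x_{i-1}^{n_{i-1}}]_X$ together with $[x_i^{n_i}]_X$ span $X$ directly; writing $m_j:=\dim[x_j^{n_j}]_X$ for $j<i$ and $m_i:=n_i-(d-k)$, we have $m_1+\dots+m_i=k$ provided each projection $[x_j^{n_j}]_X$ has dimension exactly $n_j$, i.e. provided $x_j^{n_j}\tv x_i^{d-k}$; but that transversality is the $i=2$ case already established (using $n_j\le d-(d-k)=k$, which holds since the other summands are nonnegative and $n_i\ge d-k$). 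So we have reduced to showing that $i$ pairwise distinct points of a positive full-flag curve in a $k$--dimensional space have "spanning" images under the flag maps $\xi^{m_1},\dots,\xi^{m_i}$ with $\sum m_j=\dim X$ — which is precisely the hyperconvexity statement one level down, for a \emph{genuinely positive} curve into a full flag manifold.

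The heart of the matter is therefore the purely positivity-theoretic claim: \emph{if $\xi:\mathbb S^1\to\calF(V)$ is a positive curve, $x_1,\dots,x_i$ are pairwise distinct, and $m_1+\dots+m_i=\dim V$, then $\xi^{m_1}(x_1)+\dots+\xi^{m_i}(x_i)$ is direct.} I would prove this by induction on $i$, peeling off $\xi^{m_i}(x_i)$: choose a positively oriented $i$--tuple containing $x_1,\dots,x_i$, and by Corollary \ref{cor.pos tuples} the image $(\xi(x_1),\dots,\xi(x_i))$ is a positive $i$--tuple in $\calF^i(V)$; now apply the projection $\pi_{[\xi^{m_i}(x_i)]}$ of Proposition \ref{prop.positive-quotient}, which sends $\xi$ to a positive curve in $\calF(\quotient{V}{\xi^{m_i}(x_i)})$, a space of dimension $\dim V-m_i=m_1+\dots+m_{i-1}$; the images of $x_1,\dots,x_{i-1}$ remain pairwise distinct points on a positive curve, so by induction their $\xi^{m_1},\dots,\xi^{m_{i-1}}$--components span the quotient directly, which pulls back to the desired direct sum decomposition of $V$. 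The base case $i=2$ is transversality of a positive pair, and $i=3$ is Proposition \ref{prop.transversalities of pos triples}(1) for the reordered triple (legitimate since any permutation of a positive triple is positive). The main obstacle I anticipate is bookkeeping the dimension identities $m_j=\dim[x_j^{n_j}]_X$ correctly — in particular verifying that no collapsing occurs when passing to the quotient, which requires invoking the lower-$i$ transversality cases at each stage — rather than any genuinely new idea; once the reduction to a positive full-flag curve is set up cleanly, Proposition \ref{prop.positive-quotient} does all the real work.
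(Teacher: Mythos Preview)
Your proposal is correct and follows essentially the same strategy as the paper: iteratively apply Proposition \ref{prop.positive-quotient} to peel off summands from a positive curve until only three remain, then invoke Proposition \ref{prop.transversalities of pos triples}(1). The only organizational difference is that the paper projects directly by $x_i^{n_i}$ (using Corollary \ref{cor.k-pos implies j-pos} to know $\rho$ is $(d-n_i)$--positive, so $\pi_{[x_i^{n_i}]}$ itself is already a positive curve) and then continues projecting by $x_{i-1}^{n_{i-1}},\dots,x_4^{n_4}$, whereas you first pass through $x_i^{d-k}$ to reduce to a full-flag positive curve and then run the induction there; this extra step is harmless but unnecessary.
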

\begin{proof}
By Corollary \ref{cor.k-pos implies j-pos}, the $k$--partially positive representation $\rho$ is also $n_i$--partially positive, and thus the $x_i^{n_i}$-projection $\pi_{[x_i]}: \dg\to  \calF(\quotient{E}{x_i^{n_i}})$ defines a positive curve. We can then iteratively apply Proposition \ref{prop.positive-quotient} to get that the projection  
$$\pi=\pi_{[x_4^{n_4}]}\circ \ldots\circ \pi_{[x_i^{n_i}]}: \dg\to  \calF\left(\quotient{E}{ x_4^{n_4}+ \ldots +x_i^{n_i}}\right)$$ 
is also positive. As a positive triple of full flags is $(p,q,l)$--direct for all values (Proposition \ref{prop.transversalities of pos triples}$(1)$),  the sum $\pi(x_1^{n_1})+\pi(x_2^{n_2})+\pi(x_3^{n_3})$ is direct. This  implies that the sum
$$x_1^{n_1}+ \ldots +x_i^{n_i}$$
is direct.
\end{proof}

\begin{prop}\label{prop.k-pos and Hk+Ck}
A $k$--positive representation $\r$ satisfies property $H^*_j$ for $j=1,\ldots,k-1$ and property $C_j^*$ for $j=1,\ldots,k-2$.
\end{prop}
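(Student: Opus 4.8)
The strategy is to reduce the statements about properties $H^*_j$ and $C^*_j$ to the transversality statements in Proposition \ref{prop.transversalities of pos triples} applied to the positive curves coming from the projections and truncations of $\rho$. First note that by Corollary \ref{cor.k-pos implies j-pos} it suffices to treat the ``top'' indices: a $k$--positive representation is $j$--positive for all $j\le k$, so once I prove property $H_j$ (resp.\ $C_j$) for the largest relevant $j$ using $k$--positivity, the smaller cases follow by applying the result to the $j$--positive structure. Moreover, since $\rho$ is $k$--positive if and only if $\rho^\dual$ is (this follows from Lemma \ref{lem.positivity and dual} together with the fact that $\pi^{\rho}_x$ is dual to $\pi^{\rho^\dual}_{[x]}$), and since property $H_j$ for $\rho$ is equivalent to property $H_{d-j}$ for $\rho^\dual$ (\cite[Proposition 4.4]{Beyrer-Pozzetti}) and similarly for $C_j$, it is enough to verify property $H_j$ and $C_j$ for $\rho$; the starred versions then come for free by duality. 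So the whole proposition reduces to: if $\rho$ is $k$--positive then $\rho$ has $H_j$ for $j\le k-1$ and $C_j$ for $j\le k-2$.

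For property $H_j$ (with $j \le k-1$): I need that $x^j + (y^j\cap z^{d-j+1}) + z^{d-j-1}$ is direct for all $(x,y,z)\in\dg^{(3)}$. The key point is that $\{1,\ldots,k\}$--Anosovness gives the boundary flags, and the required transversality is exactly the statement in Proposition \ref{prop.transversalities of pos triples}(2) for a positive triple of full flags, once I exhibit the relevant subspaces as entries of a positive triple. Since $\rho$ is $k$--positive and $j \le k-1 \le k$, Lemma \ref{5.9} (applied also to get the analogous extension of the truncation $\pi_x$, via Lemma \ref{lem.positivity and dual}) tells me that the projection $\pi_{[x^{d-j-?}]}$ and truncation $\pi_{x^{?}}$ extend to positive continuous curves on all of $\dg$; I can then feed the triple $(x,y,z)$ into an appropriate such curve so that $x^j$, $y^j\cap z^{d-j+1}$, $z^{d-j-1}$ become (preimages of) the three subspaces appearing in Proposition \ref{prop.transversalities of pos triples}(2). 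Concretely one works in the quotient or subspace determined by one of $x,y,z$ where the curve becomes a genuine positive flag curve, writes the three flags, and invokes positivity of the relevant minor. Property $C_j$ (with $j\le k-2$) is handled identically, replacing Proposition \ref{prop.transversalities of pos triples}(2) with part (3): one needs $x^{d-j-2} + (x^{d-j+1}\cap y^j) + z^{j+1}$ direct, which matches the minor $P_{d-j-1,\,d-j+1}>0$ statement of part (3); the constraint $j\le k-2$ is exactly what is needed so that all three indices involved ($j$, $j+1$, $d-j+1$, $d-j-2$, together with $j+2$ for the Anosov hypothesis of $C_j$) stay within $\{1,\ldots,k\}$ and the relevant projected/truncated curve is still available as a positive curve via Corollary \ref{cor.k-pos implies j-pos} and Lemma \ref{5.9}.

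The main obstacle will be the bookkeeping of indices: carefully choosing \emph{which} projection $\pi_{[x^m]}$ or truncation $\pi_{x^m}$ to apply so that the three subspaces $x^j$, $y^j\cap z^{d-j+1}$, $z^{d-j-1}$ (and their $C_j$--analogues) appear as the images of the appropriate Grassmannian components of a positive triple of full flags, and checking that the hypotheses $j\le k-1$ (resp.\ $j\le k-2$) guarantee all indices in play are $\le k$ so that the needed Anosov boundary maps and positive structures genuinely exist. Once the correct reduction is set up, the directness itself is immediate from Proposition \ref{prop.transversalities of pos triples}, since $\PGL(E)$ preserves transversality and every positive triple is $\PGL(E)$--equivalent to one of the form $(\sX, P\sX, \sZ)$ with $P\in U^{>0}$.
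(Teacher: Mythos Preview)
Your proposal is correct and follows essentially the same approach as the paper: reduce $H^*_j$ and $C^*_j$ to $H_j$ and $C_j$ via duality (Lemma \ref{lem.positivity and dual}), then project so that the required directness becomes exactly Proposition \ref{prop.transversalities of pos triples}(2) respectively (3) applied to a positive triple on the extended curve. The specific choice of projection that you leave as ``bookkeeping'' is the $x^{d-j-1}$-projection $\pi_{[x]}:\dg\to\calF(\quotient{E}{x^{d-j-1}})$ (available since $j+1\le k$ and $\rho$ is $(j+1)$--positive by Corollary \ref{cor.k-pos implies j-pos}), under which the $H_j$-sum becomes $(\pi_{[x]}(y)^j\cap \pi_{[x]}(x)^2)+(\pi_{[x]}(z)^j\cap \pi_{[x]}(x)^2)$.
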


\begin{proof}
Let $(x,y,z)\in \dg^{(3)}$. The sum $(y^{j}\cap x^{d-j+1})+(z^{j}\cap x^{d-j+1})+x^{d-j-1}$ is direct if and only if the sum
$$(\pi_{[x]}(y)^j\cap \pi_{[x]}(x)^2)+ (\pi_{[x]}(z)^j\cap \pi_{[x]}(x)^2)$$
is direct, for the $d-j-1$-projection $\pi_{[x]}:\dg\to \calF(\quotient{E}{x^{d-j-1}})$. For $j\leq k-1$ the latter sum is direct by Corollary \ref{cor.k-pos implies j-pos} and $(2)$ of Proposition \ref{prop.transversalities of pos triples}. In particular $\r$ satisfies property $H_j$ for $j=1,\ldots,k-1$. Lemma \ref{lem.positivity and dual} gives that it also satisfies property $H^*_j$. 

The representation $\r$ satisfies property $C_j^*$ for $j=1,\ldots,k-2$ thanks to the same argument and Proposition \ref{prop.transversalities of pos triples} $(3)$. 
\end{proof}
Thus Theorems \ref{prop.hyperconvex implies positively ratioed} and \ref{thm.collar lemma} apply and we have
\begin{cor}\label{c.kpospos}
	A $k$--positive representation $\r$ is $j$--positively ratioed for $j=1,\ldots,k-1$, and satisfies the $j$--collar lemma for $j=1,\ldots,k-2$.
\end{cor}	

Properties $H_j^*$ guarantee that positivity happens in a single irreducible factor of a reductive representation:
\begin{defn}\label{defn.coherence}
A $\{1,\ldots,k\}$--almost Anosov representation $\r:\G\to \PGL(E)$ is \emph{$k$--coherent} if it splits as $\r=\eta_1\oplus \eta_2\to \PGL(E_1\oplus E_2)$, where $\eta_1$ is irreducible, and $\xi^j(\dg)\subset \Gr_j(E_1)$ for all $j=1,\ldots,k$.
\end{defn}
By Lemma \ref{lem.splitting boundary map for reducible reps} $k$--coherence is equivalent to the requirement that the factor $E_2$ is contained in $\xi^{d-j}(x)$ for all $x\in\bord\G$ and all  $j=1,\ldots,k$.
Coherence holds for $k$--positive representations:
\begin{cor}\label{cor.coherence}
A $k$--positive reductive representation $\r$ is $k$--coherent.
\end{cor}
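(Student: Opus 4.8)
The plan is to combine the already-established fact that a $k$--positive representation satisfies property $H_j^*$ for $j=1,\dots,k-1$ (Proposition \ref{prop.k-pos and Hk+Ck}) with the reduction statements for property $H_j$ under direct sum decompositions (Proposition \ref{prop.Hk and reducible}), applied inductively. Since $\r$ is reductive, it splits as a direct sum of irreducible subrepresentations $\r=\eta_1\oplus\cdots\oplus\eta_m$ on $E=E_1\oplus\cdots\oplus E_m$. By Lemma \ref{lem.splitting boundary map for reducible reps}, since $\r$ is $\{1,\dots,k\}$--Anosov, for each $j\leq k$ the intersection dimensions $j_i:=\dim(\xi^j(x)\cap E_i)$ are constant and $\xi^j(x)=\bigoplus_i \xi^{j_i}_{\eta_i}(x)$. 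The goal is to show that all the ``jumps'' of $\xi^j$ as $j$ runs from $1$ to $k$ happen inside a single factor, which I will call $E_1$ after relabeling.

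First I would reduce to the case $m=2$ by grouping factors: writing $E=E_1\oplus E'$ where $E_1$ is the factor in which $\xi^1$ lands (i.e. $1_1=1$, which is possible since $\dim(\xi^1(x)\cap E_i)$ sums to $1$ and is constant, so exactly one factor absorbs it), I treat $\eta':=\bigoplus_{i\geq 2}\eta_i$ as a single (reductive, not necessarily irreducible) summand; the argument below only uses reductivity of the complement, not irreducibility. Now I proceed by induction on $j$ from $1$ to $k-1$, maintaining the hypothesis that $j_1=j$ and $(j-1)_1=j-1$, so that $\xi^j(\dg)\subset \Gr_j(E_1)$. The base case $j=1$ is the choice of $E_1$. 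For the inductive step: $\r$ satisfies property $H_j$ (Proposition \ref{prop.k-pos and Hk+Ck}) and is $\{j-1,j,j+1\}$--Anosov; the inductive hypothesis gives $(j-1)_1=j_1-1$ (both equal $j-1$), so Proposition \ref{prop.Hk and reducible} applies and yields $(j+1)_1=j_1+1=j+1$, i.e. $\xi^{j+1}(\dg)\subset\Gr_{j+1}(E_1)$. This closes the induction and shows $\xi^j(\dg)\subset\Gr_j(E_1)$ for all $j=1,\dots,k$.

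It remains to observe that $\eta_1$ (the restriction to $E_1$) is irreducible: since $\xi^1(\dg)\subset\P(E_1)$ and $E_1$ is a direct sum of some of the original irreducible factors, if $E_1$ were reducible as an $\eta_1$--module we could split off a proper invariant subspace not meeting $\xi^1$, contradicting that $\xi^1$ together with the fact that $\r$ is $1$--positive (hence $1$--Anosov) forces $\xi^1$ to be non-degenerate — more precisely, $\eta_1$ is one of the original irreducible summands $\eta_i$ (the one absorbing $\xi^1$), because the grouping above only merged the factors \emph{other} than the one containing $\xi^1$; so $\eta_1=\eta_{i_0}$ is irreducible by construction. Setting $E_2:=\bigoplus_{i\neq i_0}E_i$ and $\eta_2:=\bigoplus_{i\neq i_0}\eta_i$, we get $\r=\eta_1\oplus\eta_2$ with $\eta_1$ irreducible and $\xi^j(\dg)\subset\Gr_j(E_1)$ for $j=1,\dots,k$, which is exactly $k$--coherence.

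The main obstacle is verifying that the single factor absorbing $\xi^1$ is the \emph{same} factor that absorbs all higher $\xi^j$ with $j\leq k$; this is precisely what the inductive application of Proposition \ref{prop.Hk and reducible} delivers, using that property $H_j$ propagates the ``$+1$ in $E_1$'' from the $j$--th to the $(j+1)$--th boundary map, so there is no real difficulty once the bookkeeping is set up. One should take slight care that Proposition \ref{prop.Hk and reducible} is stated for a two--term splitting and for the ``almost Anosov'' boundary-map version as well, so grouping the complementary factors into one reductive summand is legitimate at each stage.
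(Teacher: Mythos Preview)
Your proposal is correct and follows essentially the same approach as the paper: pick the irreducible summand $E_1$ containing $\xi^1(\dg)$, group the remaining summands into $E_2$, and then use property $H_j$ for $j=1,\dots,k-1$ (from Proposition \ref{prop.k-pos and Hk+Ck}) together with Proposition \ref{prop.Hk and reducible} inductively to propagate $\xi^{j+1}(\dg)\subset\Gr_{j+1}(E_1)$. Your write-up is a bit more explicit about the bookkeeping (the reduction to a two-term splitting and the induction hypothesis $(j-1)_1=j_1-1$), but the argument is identical in substance.
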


\begin{proof}
As $\r$ is reductive, it splits as $\r=\eta_1\oplus \eta_2\to \PGL(E_1\oplus E_2)$, where $\eta_1$ is irreducible, and $\xi^1(\dg)\subset \P(E_1)$. Recall from Lemma \ref{lem.splitting boundary map for reducible reps} that, since $\r$ is $s$-Anosov, for $s\leq k$ the dimension $s_1=\dim(\xi^s(x)\cap E_1)$ is constant, furthermore we deduce from Proposition \ref{prop.Hk and reducible} and by induction on $s$ that, since the representation satisfies property $H_s$, then $(s+1)_1=s_1+1$.
\end{proof}

\subsection{$k$-tridirect representations}\label{s.tridirect}
We now introduce a second class of representations, generalizing the notion of 3-hyperconvexity introduced by Labourie  \cite[Section 8]{Labourie-IM}:
\begin{defn}
\leavevmode
\begin{itemize}
\item A representation is \emph{$k$--tridirect} if it is $\{1,\ldots,k\}$--Anosov and  $(p,q,l)$--direct for all $(p,q,l)$ with $l\geq d-k$ (and $p+q+l\leq d$). 
\item A representation $\r$ is \emph{$k^*$--tridirect} if $\r^{\dual}$ is $k$--tridirect. 
\end{itemize}
\end{defn}

From Corollary \ref{cor.k-pos implies j-pos} and Lemma \ref{5.9} we immediately get.

\begin{prop}\label{prop.k-pos implies tridirect}
A $k$--positive representation is $\{k,k^*\}$--tridirect.
\end{prop}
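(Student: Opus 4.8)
The plan is to unwind the definitions and reduce everything to the transversality statements already proven for positive curves in Subsection \ref{s.5.1}. Recall that $k$--tridirect means $\{1,\ldots,k\}$--Anosov together with $(p,q,l)$--directness for all $(p,q,l)$ with $p+q+l\leq d$ and $l\geq d-k$, and $k^*$--tridirect means the dual is $k$--tridirect. Since $k$--positivity is preserved under duality in the appropriate sense (the projection/truncation duality of Lemma \ref{lem.positivity and dual}), it suffices to show that a $k$--positive representation $\r$ is $k$--tridirect; the $k^*$--statement then follows by applying this to $\r^\dual$, which is again $k$--positive. Anosovness for the indices $\{1,\ldots,k\}$ is immediate from the definition of $k$--positivity, so the content is the $(p,q,l)$--directness.

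So fix $(p,q,l)$ with $p+q+l\leq d$ and $l\geq d-k$, and fix $(x,y,z)\in\dg^{(3)}$; I must show $x^p+y^q+z^l$ is direct. Write $m:=d-l\leq k$, so $z^l=z^{d-m}$ and $m\geq d-k$ is not needed — what matters is $m\leq k$. By Corollary \ref{cor.k-pos implies j-pos}, $\r$ is $m$--positive, hence (Lemma \ref{5.9}) $(j,m-j,d-m)$--direct for all $j\leq m$, and more importantly the $z^{d-m}$--projection $\pi_{[z^{d-m}]}\colon\dg\to\calF(\quotient{E}{z^{d-m}})$ is a positive continuous curve. Now $x^p+y^q+z^{d-m}$ is direct if and only if $[x^p]_X+[y^q]_X$ is direct in $X:=\quotient{E}{z^{d-m}}$, where $[x^p]_X=\pi_{[z^{d-m}]}(x)^p$ and $[y^q]_X=\pi_{[z^{d-m}]}(y)^q$ (here one uses that $\dim[x^p]_X=p$, which holds since $x^p\cap z^{d-m}=\{0\}$ by Anosov transversality as $p\leq k$ and $m\leq k$, $p+m\le d$ — this is exactly where $l\geq d-k$ enters, guaranteeing $p,q\leq d-l\leq$ something compatible, or more cleanly via the hyperconvexity of Proposition \ref{prop.hyperconvex properties}). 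Since $\pi_{[z^{d-m}]}$ is a positive curve, the pair $(\pi_{[z^{d-m}]}(x),\pi_{[z^{d-m}]}(y))$ extends to a positive triple with any third flag (e.g. $\pi_{[z^{d-m}]}(z)$), and positive triples of full flags are $(p,q,l')$--direct for all admissible indices by Proposition \ref{prop.transversalities of pos triples}(1); applying this in $X$ with the indices $p$, $q$, $\dim X-p-q$ gives that $[x^p]_X+[y^q]_X$ is direct, which is what was needed.

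The only subtlety — and the main thing to get right rather than a genuine obstacle — is the bookkeeping ensuring all the dimensions and the index bounds $p+q\le\dim X=l$ are consistent with applying Proposition \ref{prop.transversalities of pos triples}(1) inside $\calF(X)$, together with checking $x^p\cap z^{d-m}=\{0\}$ and $y^q\cap z^{d-m}=\{0\}$ so that the projected subspaces really have dimensions $p$ and $q$. Both intersection facts follow from Proposition \ref{prop.hyperconvex properties}: since $l=d-m\geq d-k$ and $p\leq p+q\leq d$ (indeed $p+m\leq p+q+l\le d$), the sum $x^p+z^l$ is direct, and likewise $y^q+z^l$. With that in hand the proof is as above; I expect it to be three or four lines once the reductions are stated, and I would simply cite Corollary \ref{cor.k-pos implies j-pos}, Lemma \ref{5.9}, Proposition \ref{prop.positive-quotient}, Proposition \ref{prop.transversalities of pos triples}(1), Proposition \ref{prop.hyperconvex properties} and Lemma \ref{lem.positivity and dual} rather than reprove anything.
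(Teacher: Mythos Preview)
Your proposal is correct and follows essentially the same route as the paper, which simply cites Corollary \ref{cor.k-pos implies j-pos} and Lemma \ref{5.9} and says the statement follows immediately. You have expanded the argument behind Lemma \ref{5.9} inline (projecting to $X=\quotient{E}{z^{d-m}}$ and using transversality of positive flags there), but this is exactly how that lemma is proven, so the approach is the same; the only difference is verbosity.
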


The following property of tridirect representations is a well known consequence of the fact that $\G$ acts properly and cocompactly on $\dg^{(3)}$, transversality is an open condition and the Anosov boundary map varies continuously with the representation (cfr. \cite[Proposition 6.2]{PSW1}):

\begin{prop}\label{lem.direct reps are open}
Being $(p,q,l)$--direct is an open property in $\Hom(\G,\PGL(E))$ and thus also being $\{k,k^*\}$--tridirect.
\end{prop}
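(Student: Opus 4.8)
The statement to prove is Proposition \ref{lem.direct reps are open}: being $(p,q,l)$--direct is an open property in $\Hom(\G,\PGL(E))$, and consequently being $\{k,k^*\}$--tridirect is open.

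The plan is to combine three standard facts: (i) the Anosov property is open in $\Hom(\G,\PGL(E))$ (Remark \ref{rem ansov rep properties}(3)), and the associated boundary maps vary continuously with the representation, uniformly on $\dg$; (ii) the group $\G$ acts properly discontinuously and cocompactly on the space $\dg^{(3)}$ of triples of pairwise distinct points; (iii) directness of a triple of subspaces, i.e. $\xi^p(x)+\xi^q(y)+\xi^l(z)=E$, is an open condition on $(\xi^p(x),\xi^q(y),\xi^l(z))\in \Gr_p(E)\times\Gr_q(E)\times\Gr_l(E)$.

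First I would fix a $(p,q,l)$--direct representation $\r$, with $p+q+l\le d$; by Remark \ref{rem ansov rep properties}(3) there is a neighbourhood $\mathcal{N}$ of $\r$ in $\Hom(\G,\PGL(E))$ on which all representations are $\{p,q,l\}$--Anosov, with boundary maps $\xi^m_{\r'}$ ($m=p,q,l$) depending continuously on $\r'\in\mathcal{N}$, in the sense that $(\r',x)\mapsto \xi^m_{\r'}(x)$ is continuous on $\mathcal{N}\times\dg$. Using cocompactness of the $\G$--action on $\dg^{(3)}$, pick a compact fundamental domain $K\subset \dg^{(3)}$. For $\r$ itself the continuous function $(x,y,z)\mapsto \dim\big(\xi^p_\r(x)+\xi^q_\r(y)+\xi^l_\r(z)\big)$ is identically $d$ on $\dg^{(3)}$, in particular on $K$. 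Since the set of triples of subspaces that are direct is open, and $K\times\mathcal{N}$ is a product with $K$ compact, a standard tube-lemma argument produces a smaller neighbourhood $\mathcal{N}'\subseteq\mathcal{N}$ of $\r$ such that for every $\r'\in\mathcal{N}'$ and every $(x,y,z)\in K$ the sum $\xi^p_{\r'}(x)+\xi^q_{\r'}(y)+\xi^l_{\r'}(z)$ is direct. Finally, equivariance of the boundary maps and the fact that $\G K=\dg^{(3)}$ upgrade this from $K$ to all of $\dg^{(3)}$: for an arbitrary triple $(x,y,z)\in\dg^{(3)}$ write $(x,y,z)=\g\cdot(x_0,y_0,z_0)$ with $(x_0,y_0,z_0)\in K$, and note $\xi^p_{\r'}(x)+\xi^q_{\r'}(y)+\xi^l_{\r'}(z)=\r'(\g)\big(\xi^p_{\r'}(x_0)+\xi^q_{\r'}(y_0)+\xi^l_{\r'}(z_0)\big)$, which is all of $E$ because $\r'(\g)$ is invertible. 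Hence every $\r'\in\mathcal{N}'$ is $(p,q,l)$--direct, proving openness of that property. Being $\{k,k^*\}$--tridirect is a finite intersection of conditions of the form ``$\{1,\dots,k\}$--Anosov'' (open by Remark \ref{rem ansov rep properties}(3)) and ``$(p,q,l)$--direct for the finitely many relevant triples with $l\ge d-k$'' for $\r$ and for $\r^\dual$ (and $\r\mapsto\r^\dual$ is continuous), so it is also open.

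The only mildly delicate point is the uniform continuity of the boundary maps in the representation parameter on the compact set $K$, which is exactly the content of the cited references (\cite[Proposition 6.2]{PSW1}); granting that, the argument is the routine tube-lemma plus equivariance packaging, so I do not expect a serious obstacle here.
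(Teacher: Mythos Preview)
Your argument is correct and follows exactly the approach the paper indicates (the paper does not spell out a proof, merely citing cocompactness of the $\G$--action on $\dg^{(3)}$, openness of transversality, and continuous dependence of the Anosov boundary maps on the representation, cfr.\ \cite[Proposition 6.2]{PSW1}). One small slip: ``direct'' here means $\dim(\xi^p(x)+\xi^q(y)+\xi^l(z))=p+q+l$, not that the sum is all of $E$ (the definition of $k$--tridirect allows $p+q+l\leq d$); your tube-lemma and equivariance argument goes through unchanged once you replace ``equals $E$''/``identically $d$'' with ``has dimension $p+q+l$''.
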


Furthermore we have

\begin{prop}\label{prop.hypconv open among direct reps}
$k$--positive representations form connected components of $\{k,k^*\}$--tridirect representations.
\end{prop}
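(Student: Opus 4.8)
The goal is to show that $k$--positive representations form connected components of $\{k,k^*\}$--tridirect representations. Since $k$--positivity is an open condition (being open is Corollary \ref{cor.k-pos open}, which follows from Proposition \ref{prop.hypconv open among direct reps}... actually we are proving exactly that openness in disguise) — more precisely, the plan is to prove two things: (i) $k$--positive representations form an \emph{open} subset of the set of $\{k,k^*\}$--tridirect representations, and (ii) they also form a \emph{closed} subset. Together with the obvious fact that the inclusion is nonempty on each component it contains, this gives that $k$--positive representations are a union of connected components of $\{k,k^*\}$--tridirect representations. In fact, since a $k$--positive representation is $\{k,k^*\}$--tridirect by Proposition \ref{prop.k-pos implies tridirect}, and the set of $\{k,k^*\}$--tridirect representations is open in $\Hom(\G,\PGL(E))$ by Proposition \ref{lem.direct reps are open}, the statement is equivalent to: $k$--positivity is open and closed \emph{within} the $\{k,k^*\}$--tridirect locus.

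\textbf{Openness.} First I would show that if $\r$ is $k$--positive and $\r'$ is $\{k,k^*\}$--tridirect and sufficiently close to $\r$, then $\r'$ is again $k$--positive. The key tool is Corollary \ref{cor.pos tuples}: a continuous transverse curve $\mathbb{S}^1\to\calF(V)$ is positive as soon as the image of \emph{one} positively oriented triple is a positive triple. For a $\{k,k^*\}$--tridirect representation $\r'$, both curves $\pi_{[x]}^{\r'}$ and $\pi_x^{\r'}$ are continuous and transverse: continuity of $\pi_{[x]}^{\r'}$ at $x$ follows from Proposition \ref{prop.continuity of sums} applied to the directness assumptions (as in Lemma \ref{5.9}), and transversality away from $x$ is immediate from the Anosov boundary map being transverse; dually for $\pi_x^{\r'}$. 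Now fix a positively oriented triple $(y_0,z_0,x_0)$ in $\dg$. The triple $(\pi_{[x_0]}^{\r}(y_0),\pi_{[x_0]}^{\r}(z_0),\pi_{[x_0]}^{\r}(x_0))$ is a positive triple of full flags in $\quotient{E}{x_0^{d-k}}$; since being a positive triple is an open condition among transverse triples (Theorem \ref{t.ccomp}), and the boundary maps of $\r'$ — hence the associated projection triples — depend continuously on $\r'$, for $\r'$ close to $\r$ the triple $(\pi_{[x_0]}^{\r'}(y_0),\pi_{[x_0]}^{\r'}(z_0),\pi_{[x_0]}^{\r'}(x_0))$ stays in the same connected component of transverse triples, hence remains positive. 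By Corollary \ref{cor.pos tuples} the whole curve $\pi_{[x_0]}^{\r'}$ is positive. Equivariance then upgrades this from the single base point $x_0$ to all $x\in\dg$: for $\g\in\G$ the curve $\pi_{[\g x_0]}^{\r'}$ is the image of $\pi_{[x_0]}^{\r'}$ under $\r'(\g)$, which preserves positivity, so it is positive; and for general $x$ one uses density of $\G x_0$ together with the continuity of the flag of projection curves in $x$ and the closedness of the positivity condition on a fixed positively oriented tuple (again Theorem \ref{t.ccomp} — the positive locus is a union of connected components, hence also closed). The same argument with $\r^\dual$, using Lemma \ref{lem.positivity and dual}, handles the truncation curves $\pi_x^{\r'}$. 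Hence $\r'$ is $k$--positive.

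\textbf{Closedness.} Next I would show that a limit $\r_0$ of $k$--positive representations $\r_n$ which happens to be $\{k,k^*\}$--tridirect is itself $k$--positive. Since $\r_0$ is in particular $\{1,\ldots,k\}$--Anosov, its Anosov boundary maps $\xi^j_{\r_0}$ are the pointwise (indeed uniform) limits of $\xi^j_{\r_n}$ for $j\le k$ and $j\ge d-k$, because Anosov boundary maps vary continuously with the representation. Consequently, for each fixed $x$, the projection curves $\pi_{[x]}^{\r_n}$ converge pointwise to $\pi_{[x]}^{\r_0}$ on $\dg\setminus\{x\}$, and by tridirectness of $\r_0$ and Lemma \ref{5.9} the limit curve $\pi_{[x]}^{\r_0}$ extends continuously to a transverse curve on all of $\dg$. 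Fixing a positively oriented triple $(y,z,x)$, the triples $(\pi_{[x]}^{\r_n}(y),\pi_{[x]}^{\r_n}(z),\pi_{[x]}^{\r_n}(x))$ are positive and converge to $(\pi_{[x]}^{\r_0}(y),\pi_{[x]}^{\r_0}(z),\pi_{[x]}^{\r_0}(x))$, which is a transverse triple; since the positive locus is closed among transverse triples, the limit triple is positive. Corollary \ref{cor.pos tuples} then gives positivity of $\pi_{[x]}^{\r_0}$, and the dual argument via Lemma \ref{lem.positivity and dual} gives positivity of $\pi_x^{\r_0}$. Thus $\r_0$ is $k$--positive.

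\textbf{Conclusion and main obstacle.} Combining openness and closedness within the $\{k,k^*\}$--tridirect locus, together with the fact that this locus is itself open in $\Hom(\G,\PGL(E))$ (Proposition \ref{lem.direct reps are open}), shows that the $k$--positive representations constitute a union of connected components of the $\{k,k^*\}$--tridirect representations. The one genuinely delicate point — and the step I expect to require the most care — is the passage from positivity of $\pi_{[x_0]}$ at a single base point $x_0$ to positivity at \emph{all} base points $x\in\dg$ in the openness argument: one needs that the assignment $x\mapsto (\text{flag of projection curves at } x)$ is continuous in a suitable sense and that it maps the dense orbit $\G x_0$ of points where positivity is known into a set whose closure still lies in the positive locus. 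This uses equivariance on $\G x_0$ plus the fact (Theorem \ref{t.ccomp}) that the set of positive $n$--tuples is a union of connected components of the set of transverse $n$--tuples, hence both open and closed there; the continuity of the boundary maps in the point (which is part of the tridirect hypothesis via Proposition \ref{prop.continuity of sums}) is what makes the limiting argument go through. All other steps are routine applications of results already established in the excerpt.
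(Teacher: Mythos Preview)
Your approach differs from the paper's, and there is a genuine gap in your openness argument.

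The paper does not split into ``open'' and ``closed''. Instead it takes a continuous path $t\mapsto\rho_t$, $t\in[0,1]$, of $\{k,k^*\}$--tridirect representations with $\rho_0$ $k$--positive, and shows $\rho_t$ is $k$--positive for all $t$. The key device is to conjugate the path by a continuous family $g_t\in\PGL(E)$ so that $x^{d-k}_{\rho_t}$ is constant; then all the projection curves $\pi_{[x]}^t$ live in a \emph{fixed} quotient $\quotient{E}{x^{d-k}}$, vary continuously in $t$, and stay pairwise transverse by tridirectness. Corollary~\ref{cor.deform positive tuples} then applies to \emph{every} $n$--tuple (and every $n$) separately, with no uniformity issue. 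Repeating for each $x$ (and dually for the truncation) finishes the proof.

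Your gap is in invoking Corollary~\ref{cor.pos tuples}. That corollary, as stated, says: if one positively oriented $n$--tuple maps to a positive $n$--tuple, then every positively oriented $n$--tuple does---for that same $n$. Checking a single \emph{triple} therefore only gives positivity of all triples, not that $\pi_{[x_0]}^{\rho'}$ is a positive curve (which by definition requires positivity of all $n$--tuples for all $n$). To close this you would need, for each $n\geq 3$, a neighbourhood $U_n$ on which some fixed $n$--tuple stays positive, and then to argue that $\bigcap_n U_n$ is open; the clean way to do this is to connect $\rho$ to $\rho'$ by a short path inside the (open) tridirect locus and apply Corollary~\ref{cor.deform positive tuples} along it---but that is exactly the paper's argument. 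The same issue occurs in your closedness paragraph, though there it is harmless: for each fixed $n$--tuple the limit of positive $n$--tuples is transverse (by tridirectness of $\rho_0$) and hence positive, so one can simply bypass Corollary~\ref{cor.pos tuples} entirely.

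Two smaller remarks. First, the triples $\pi_{[x_0]}^{\rho'}(\cdot)$ live in $\calF(\quotient{E}{(x_0)^{d-k}_{\rho'}})$, a space that varies with $\rho'$; comparing them to the triples for $\rho$ requires an identification of these quotients. This is precisely what the paper's conjugation trick provides, and you gloss over it. Second, the ``main obstacle'' you flag (passing from $x_0$ to all $x$) is actually not where the difficulty lies: once one $\pi_{[x_0]}^{\rho'}$ is positive, equivariance gives positivity on $\Gamma x_0$, and for a \emph{fixed} tridirect $\rho'$ the set of $x$ with $\pi_{[x]}^{\rho'}$ positive is open and closed in the connected set $\dg$---so it is all or nothing. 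The real work is the step you took for granted.
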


\begin{proof}
Let $t\to \r_t$ for $t\in [0,1]$ be a continuous path in $\Hom(\G,\PGL(E))$ consisting of $k$--tridirect representations. Choose a point $x\in\dg$. By conjugating $\r_t$ with $g_t\in \PGL(E)$, where $g_t$ depends continuously on $t$ and $g_0=\rm{id}$, we can assume that $x_{\r_t}^{d-k}=x_{\r_0}^{d-k}$ for all $t\in [0,1]$. The projections $\pi_{[x]}^t$ associated to the representations $\r_t$ vary continuously in $t$ and $\dg\backslash\{x\}$. Moreover $\pi_{[x]}^t(y)\tv \pi_{[x]}^t(z)$ for all $z\neq y$, since the representations are $k$--tridirect. By Corollary \ref{cor.deform positive tuples} this is enough to guarantee that for every $t\in [0,1]$ the curve $\pi_{[x]}^t$ is positive, provided $\pi_{[x]}^0$ is positive.  Using $\r_t^{\dual}$ we obtain that also $\pi^t_x$ is positive.
\end{proof}

\begin{cor}\label{cor.k-pos open}
Being $k$--positive is an open property in $\Hom(\G,\PGL(E))$.
\end{cor}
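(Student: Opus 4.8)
The plan is to deduce this immediately from the two preceding results, Proposition \ref{lem.direct reps are open} and Proposition \ref{prop.hypconv open among direct reps}. First I would recall that being $\{k,k^*\}$--tridirect is an open property in $\Hom(\G,\PGL(E))$: this is exactly Proposition \ref{lem.direct reps are open}, which follows from the fact that $\G$ acts properly cocompactly on $\dg^{(3)}$, that transversality is an open condition, and that Anosov boundary maps vary continuously with the representation. By Proposition \ref{prop.k-pos implies tridirect}, a $k$--positive representation $\r$ is in particular $\{k,k^*\}$--tridirect, so it has a neighbourhood $\mathcal V\subset \Hom(\G,\PGL(E))$ consisting entirely of $\{k,k^*\}$--tridirect representations.

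Next I would invoke Proposition \ref{prop.hypconv open among direct reps}, which asserts that $k$--positive representations form connected components of the set of $\{k,k^*\}$--tridirect representations. Since $\mathcal V$ is an open neighbourhood of $\r$ inside the $\{k,k^*\}$--tridirect locus, its connected component $\mathcal V_0$ containing $\r$ is an open subset of that locus which is contained in a single connected component of the $\{k,k^*\}$--tridirect representations, namely the one containing $\r$. As $\r$ is $k$--positive, that component consists of $k$--positive representations, so every representation in $\mathcal V_0$ is $k$--positive. Finally, $\mathcal V_0$ is open in $\Hom(\G,\PGL(E))$: indeed $\mathcal V$ is open in $\Hom(\G,\PGL(E))$ (by openness of tridirectness) and connected components of a locally connected space — and $\Hom(\G,\PGL(E))$, being a real algebraic variety, is locally connected — that are relatively open remain open; more simply, $\mathcal V$ being open in the ambient space and $\mathcal V_0$ being a connected component of the open set $\mathcal V$, one checks directly that $\mathcal V_0$ is open. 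Thus $\r$ has an open neighbourhood consisting of $k$--positive representations, which proves that $k$--positivity is an open property.

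The only point requiring a moment of care is the topological step: passing from "$k$--positive representations form connected components of $\{k,k^*\}$--tridirect representations" together with "$\{k,k^*\}$--tridirect is open" to "$k$--positive is open." The cleanest way is to note that any $\r$ which is $k$--positive lies in the interior (in $\Hom(\G,\PGL(E))$) of the $\{k,k^*\}$--tridirect locus, pick a connected open neighbourhood $\mathcal V_0$ of $\r$ inside that interior, and observe $\mathcal V_0$ is connected and contained in the tridirect locus, hence in the connected component of $\r$ therein, which is entirely $k$--positive. I do not expect any genuine obstacle here; this corollary is a formal consequence of the two propositions just established.
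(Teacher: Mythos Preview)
Your proposal is correct and follows exactly the route the paper intends: the corollary is stated without proof because it is an immediate consequence of Proposition~\ref{lem.direct reps are open} (openness of $\{k,k^*\}$--tridirectness), Proposition~\ref{prop.k-pos implies tridirect}, and Proposition~\ref{prop.hypconv open among direct reps} ($k$--positive representations are unions of connected components of the tridirect locus). Your extra care with the topological step is fine but not strictly necessary, since connected components of an open set in a locally connected space are automatically open.
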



\section{Degenerations of $k$--positive representations}\label{s.6}
In this section we study degenerations of $k$--positive representations and prove Theorem \ref{thm.intro-degenerations}. In Section \ref{s.degen-Anosov property} we show that $k$--positive representations degenerate if and only if they lose Anosovness. 
In Section \ref{s.irred limits} we prove Theorem \ref{thm.intro-degenerations} under the additional assumption that the boundary maps in the limit are coherent. The latter is shown in Section \ref{s.tecnical} to complete the proof.

\subsection{Anosov limits of $k$-positive representations}\label{s.degen-Anosov property}
\begin{prop}\label{prop.direct reps closed among Anosov}
Let $\{\r_n:\G\to\PGL(E)\}_{n\in \N}$ be $k$--positive and converge to a reductive $\{1,\ldots,k\}$--Anosov representation $\r_0$. Then $\r_0$ is $k$--tridirect.
\end{prop}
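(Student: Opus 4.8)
The statement to prove is that if $k$--positive representations $\r_n$ converge to a reductive $\{1,\ldots,k\}$--Anosov representation $\r_0$, then $\r_0$ is $k$--tridirect, i.e. $\{1,\ldots,k\}$--Anosov (which is given) and $(p,q,l)$--direct for all $(p,q,l)$ with $l \ge d-k$ and $p+q+l \le d$. The plan is to reduce everything to an assertion about the limit of positive curves and an application of Labourie's irreducibility principle (Proposition~\ref{prop.labourie-irreducible}) via coherence (Corollary~\ref{cor.coherence}).

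First I would set up the boundary maps: since $\r_0$ is $\{1,\ldots,k\}$--Anosov, it admits continuous equivariant transverse boundary maps $\xi^j_{\r_0}$ for $j \le k$ and $j \ge d-k$, and because $\r_n \to \r_0$ with all of them Anosov in the relevant degrees, the boundary maps converge: $\xi^j_{\r_n} \to \xi^j_{\r_0}$ uniformly on $\dg$ for $j \le k$ and $j \ge d-k$. After conjugating $\r_n$ by $g_n \to \mathrm{id}$ (which does not affect $k$--positivity), I may assume for a fixed $x \in \dg$ that $x^{d-k}_{\r_n}$ is constant equal to $x^{d-k}_{\r_0}$, so that the projections $\pi^n_{[x]}: \dg \setminus \{x\} \to \calF(X)$, $X = \quotient{E}{x^{d-k}_{\r_0}}$, all land in the same flag variety and converge pointwise (indeed locally uniformly on $\dg \setminus \{x\}$) to the limiting projection $\pi^0_{[x]}$ built from $\xi^1_{\r_0},\ldots,\xi^{k-1}_{\r_0}$. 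Each $\pi^n_{[x]}$ is a positive curve by $k$--positivity of $\r_n$.

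The heart of the argument is to show that $\pi^0_{[x]}$ is still a transverse curve, i.e. that $\pi^0_{[x]}(y) \tv \pi^0_{[x]}(z)$ for $y \ne z$ in $\dg \setminus \{x\}$; equivalently, that $y^j_{\r_0} + z^{k-j}_{\r_0} + x^{d-k}_{\r_0}$ is direct for all $j$ and all distinct $x,y,z$, and likewise for the truncation side via Lemma~\ref{lem.positivity and dual}. Once transversality of the limiting curve is known, $\pi^0_{[x]}$ is a pointwise limit of positive curves that stays transverse, so by Corollary~\ref{cor.deform positive tuples} (applied along the path interpolating $\pi^n_{[x]}$ and using that positivity of $n$--tuples is a closed-and-open condition among transverse $n$--tuples, Theorem~\ref{t.ccomp}) it is itself positive, and dualizing gives positivity of $\pi^0_x$; directness of the relevant triple sums $\xi^p + \xi^q + \xi^l$ with $l \ge d-k$ then follows from Proposition~\ref{prop.transversalities of pos triples}(1) exactly as in Proposition~\ref{prop.hyperconvex properties}. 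To get transversality of $\pi^0_{[x]}$, I would use coherence: $\r_0$ is reductive, so it splits $\r_0 = \eta_1 \oplus \eta_2$ with $\eta_1$ irreducible and $\xi^1_{\r_0}(\dg) \subset \P(E_1)$; suppose some triple sum degenerates, i.e. $\dim(\xi^j_{\r_0}(y) \cap (\xi^{k-j}_{\r_0}(z) + x^{d-k}_{\r_0})) \ge 1$ for some distinct $x,y,z$. Since the $\G$--orbit of such a degenerate configuration is again degenerate and $\dg^{(3)}$ has dense orbits, this would force a degeneracy on an open set of the form required by Proposition~\ref{prop.labourie-irreducible}, after passing to the exterior power $\wedge^j \r_0$ (where the condition becomes $\wedge^j\xi^j_{\r_0}(y)$ meeting a fixed subspace) — more precisely, I expect to invoke the companion facts from Section~\ref{s.ext} and the coherence mechanism of Corollary~\ref{cor.coherence}, noting that property $H^*_j$ for $j \le k-1$ passes to the limit by Corollary~\ref{cor.closdeness Hk under Anosov} since the relevant boundary maps converge, so $\r_0$ is $(k-1)$--coherent, and the degenerating sum would then contradict irreducibility of $\eta_1$.

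\textbf{Main obstacle.} The delicate point is promoting pointwise convergence of the positive curves $\pi^n_{[x]}$ to the conclusion that the limit is transverse, \emph{without} circularity — one cannot a priori assume the limit curve is injective or transverse, since that is essentially what is being proved. I expect the real work to be in showing that the only way transversality can fail in the limit is via a Labourie-type collapse on an open set (so that Proposition~\ref{prop.labourie-irreducible} applies to rule it out using irreducibility of the coherent factor $\eta_1$), which requires carefully combining the continuity property of sums for $(p,q,d-p-q)$--direct boundary maps (Proposition~\ref{prop.continuity of sums}, applicable since $\r_0$ is $\{1,\ldots,k\}$--Anosov hence those partial sums behave well) with the equivariance and minimality of the $\G$--action on $\dg$. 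The rest — extracting positivity of the limit curve from transversality plus the connectedness statement of Theorem~\ref{t.ccomp}, and reading off $(p,q,l)$--directness — is routine given the machinery already developed in Section~\ref{s.5.1} and Section~\ref{s.k-pos}.
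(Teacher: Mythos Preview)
Your overall architecture is right — coherence via property $H_j^*$ in the limit (through Corollary~\ref{cor.closdeness Hk under Anosov}), then a contradiction to strong irreducibility of the coherent factor $\eta_1$ via Proposition~\ref{prop.labourie-irreducible} — and this is exactly what the paper does. But the mechanism you propose for producing the \emph{open set} required by Labourie's proposition does not work, and this is the whole content of the argument.

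Your suggestion is that a single degenerate triple, together with density of $\G$--orbits on $\dg^{(3)}$, forces degeneracy on an open set. It does not: the locus where $x^p_{\r_0}+y^q_{\r_0}+z^{d-p-q}_{\r_0}$ fails to be direct is \emph{closed} in $\dg^{(3)}$, so a dense orbit of degenerate triples only yields that the degenerate locus is all of $\dg^{(3)}$ or a proper closed set — neither gives you, for free, that $\xi^1$ (or an exterior power of $\xi^j$) meets a fixed subspace on an open interval. Likewise, invoking Proposition~\ref{prop.continuity of sums} for $\r_0$ is circular, since that proposition presupposes $(p,q,d-p-q)$--directness, which is precisely the claim.

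The paper's missing idea is a \emph{minimal-pair} argument. One chooses $(p,q)$ minimal (in the partial order) for which $(p,q,d-p-q)$--directness fails for $\r_0$; by minimality the smaller sum $V:=x^{p-1}_{\r_0}+y^{q-1}_{\r_0}+z^{d-p-q}_{\r_0}$ \emph{is} direct, so $E/V$ is $2$--dimensional. After normalizing so that $V$ is constant along the sequence, one uses the hyperconvexity of the $\r_n$ (Propositions~\ref{prop.hyperconvex properties} and~\ref{prop.continuity of sums} applied to $\r_n$, not $\r_0$) to get that the projected curve $u\mapsto [u^1_{\r_n}]\in\P(E/V)\simeq\R\P^1$ is injective and continuous, hence monotone. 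The failure of directness at $(p,q)$ says $[x^p_{\r_0}]=[y^q_{\r_0}]$ in this $\P^1$, and monotonicity of the approximating curves then forces $[u^1_{\r_0}]$ to be trapped for every $u\in(x,y)_z$ — i.e. $u^1_{\r_0}$ lies in a fixed hyperplane on an open interval. That is the open-set collapse Labourie needs, and coherence then localizes it to $E_1$.
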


\begin{proof}
We assume by contradiction that $\r_0$ is not $(p,q,l)$- direct for some $(p,q,l)$ with $p+q\leq d-l$ and $l\geq d-k$. It is enough to consider triples of the form $(p,q,d-p-q)$.
Let $(p,q)$ be minimal in the sense that $\r_0$ is $(p',q',d-p'-q')$--direct for $p'\leq p$, $q'\leq q$ and at least one of the two inequalities is strict and $\r_0$ is not $(p,q,d-p-q)$--direct. As $\r_0$ is $\{1,\ldots,k\}$--Anosov, $p\geq 1, q\geq 1$.

Let $(x,y,z)\in\dg^{(3)}$ such that 
$$x_{\r_0}^p+y_{\r_0}^q+z^{d-p-q}_{\r_0}$$
is not direct. Up to conjugating $\r_n$ with $g_n\in \PGL(E)$ converging to the identity, we can assume that the codimension 2 subspace
$$V:=x_{\r_n}^{p-1}+y_{\r_n}^{q-1}+z^{d-p-q}_{\r_n}=x_{\r_0}^{p-1}+y_{\r_0}^{q-1}+z^{d-p-q}_{\r_0},$$
doesn't depend on $n$. The sum on the right hand side is direct by  minimality of $(p,q)$. By Propositions \ref{prop.hyperconvex properties} and \ref{prop.continuity of sums} the map $\xi_n^V:\dg\to \P(\quotient{E}{V})\cong\R\P^1$ defined by
$$\left\{\begin{array}{l}
\xi_n^V(u):= [u_{\r_n}^1],\quad u\neq x,y,z\\
\xi_n^V(x):= [x_{\r_n}^p],\quad\\
\xi_n^V(y):= [y_{\r_n}^q],\quad \\
\xi_n^V(z):= [z_{\r_n}^{d-p-q+1}]
\end{array}\right.$$
is injective and continuous because $u_{\r_n}^1+x_{\r_n}^{p-1}\to x_{\r_n}^{p}$ if $u\to x$, and the same for $y,z$.  By the minimality assumption and the transversality of $\xi_{\rho_0}$, $[x_{\r_0}^p]=[y_{\r_0}^q]\neq [z_{\r_0}^{d-p-q+1}]\in \P(\quotient{E}{V})$. 
Since 
$u_{\r_0}^1=\lim u_{\r_n}^1$, for $u\in (x,y)_z$ either $[u_{\r_0}^1]=[x_{\r_0}^p]$ or $u_{\r_0}^1\in V$. In particular, for all $u\in (x,y)_z$, the line $u_{\r_0}^1$ is contained in the hyperplane $x_{\r_0}^{p}+y_{\r_0}^{q-1}+z^{d-p-q}_{\r_0}$.

By Corollary \ref{cor.closdeness Hk under Anosov}, $\r_0$ satisfies property $H_j$ for $j=1,\ldots,k-1$. Thus, by Proposition \ref{prop.Hk and reducible}, $\r_0$ is  $k$-coherent (Definition \ref{defn.coherence}) with induced splitting $\r_0=\eta_1\oplus \eta_2$. 
By Lemma \ref{lem.GW strong irreducibility}  $\eta_1$ is strongly irreducible. However, since $E_2\subset z^{d-p-q}_{\r_0}$, for all $u\in (x,y)_z$, $u_{\eta_1}^1$ is contained in the hyperplane $x_{\eta_1}^{p}+y_{\eta_1}^{q-1}+z^{d_1-p-q}_{\eta_1}$, where $d_1=\dim E_1$. This contradicts strong irreducibility of $\eta_1$  (Proposition \ref{prop.labourie-irreducible}) and concludes the proof.
\end{proof}

We get the following important consequence.

\begin{cor}\label{cor.par-pos closed among Anosov}
Being $k$--positive is open and closed among $\{1,\ldots,k\}$--Anosov representations.
A representation $\r$ is $k$--positive if and only if a semi-simplification $\r^{ss}$ is.
\end{cor}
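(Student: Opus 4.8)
\textbf{Proof plan for Corollary \ref{cor.par-pos closed among Anosov}.}

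The plan is to combine Proposition \ref{prop.hypconv open among direct reps}, Proposition \ref{prop.k-pos implies tridirect}, Proposition \ref{prop.direct reps closed among Anosov} and Proposition \ref{lem.direct reps are open}. First I would prove that being $k$--positive is \emph{open} among $\{1,\ldots,k\}$--Anosov representations: this is immediate from Corollary \ref{cor.k-pos open}, but in fact within the class of $\{1,\ldots,k\}$--Anosov representations it already follows from Proposition \ref{lem.direct reps are open} together with Proposition \ref{prop.hypconv open among direct reps}, since $k$--positivity is open among $\{k,k^*\}$--tridirect representations, and the latter form an open subset of $\Hom(\G,\PGL(E))$.

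Next I would prove \emph{closedness} among $\{1,\ldots,k\}$--Anosov representations. Let $\{\r_n\}$ be $k$--positive and converge to a $\{1,\ldots,k\}$--Anosov representation $\r_0$; I need to show $\r_0$ is $k$--positive. By Proposition \ref{prop.semi-simplification} and the fact that Anosovness and $k$--positivity are conjugation invariant, I may replace $\r_0$ by its semi-simplification (choosing conjugates $g_n\r_n g_n^{-1}\to \r_0^{ss}$ as recalled after Proposition \ref{prop.semi-simplification}), so I may assume $\r_0$ is reductive. Then Proposition \ref{prop.direct reps closed among Anosov} gives that $\r_0$ is $k$--tridirect, and likewise, applying the same proposition to the dual sequence $\{\r_n^\dual\}$ (which is also $k$--positive, being dual to $k$--positive by Lemma \ref{lem.positivity and dual}, and converges to $\r_0^\dual$ which is $\{1,\ldots,k\}$--Anosov), that $\r_0$ is $k^*$--tridirect. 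So $\r_0$ is $\{k,k^*\}$--tridirect. Now I apply Proposition \ref{prop.hypconv open among direct reps}: the $\r_n$ are $k$--positive and lie in the connected set of $\{k,k^*\}$--tridirect representations containing the $k$--positive $\r_n$; since $k$--positive representations form connected components of $\{k,k^*\}$--tridirect representations, and $\r_0$ is a limit of $k$--positive representations inside the $\{k,k^*\}$--tridirect locus, $\r_0$ lies in the same connected component and hence is $k$--positive. (Concretely: pick a path inside the $\{k,k^*\}$--tridirect locus from some $\r_n$ near $\r_0$ to $\r_0$ — such a path exists since tridirectness is open — and deform the positive curves $\pi_{[x]}^t$ and $\pi_x^t$ along it using Corollary \ref{cor.deform positive tuples}, exactly as in the proof of Proposition \ref{prop.hypconv open among direct reps}.)

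Finally, for the last sentence: if $\r^{ss}$ is $k$--positive then so is $\r$, because $k$--positivity is open (Corollary \ref{cor.k-pos open}) and conjugation invariant, and $\r$ lies in the closure of the $\PGL(E)$--orbit of $\r^{ss}$ — more precisely, $\r^{ss}$ lies in the closure of $\PGL(E)\cdot\r$, so a sequence of conjugates of $\r$ converges to $\r^{ss}$; but openness gives the converse direction, namely conjugates of $\r^{ss}$ approximate representations conjugate to $\r$, hence $\r$ itself is $k$--positive. Conversely, if $\r$ is $k$--positive, choose $g_n$ with $g_n\r g_n^{-1}\to \r^{ss}$; the $g_n \r g_n^{-1}$ are $k$--positive and $\r^{ss}$ is $\{1,\ldots,k\}$--Anosov by Proposition \ref{prop.semi-simplification}, so closedness among $\{1,\ldots,k\}$--Anosov representations forces $\r^{ss}$ to be $k$--positive.

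The main obstacle is the closedness statement, and within it the reduction to the reductive case: one must be careful that passing to the semi-simplification preserves both the hypothesis (each $\r_n$ stays $k$--positive under conjugation, which is fine, but one conjugates the whole sequence simultaneously by $g_n$) and the conclusion (by Proposition \ref{prop.semi-simplification}, $\r_0$ is $\{1,\ldots,k\}$--Anosov iff $\r_0^{ss}$ is, and $k$--positivity of $\r_0^{ss}$ transfers back to $\r_0$ via openness). The genuinely substantive input is Proposition \ref{prop.direct reps closed among Anosov}, whose proof already invokes the coherence of the boundary maps; everything else here is bookkeeping with conjugation invariance, openness, and duality.
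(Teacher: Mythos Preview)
Your approach is essentially the same as the paper's: openness from Corollary \ref{cor.k-pos open}, closedness for reductive limits via Proposition \ref{prop.direct reps closed among Anosov} (plus duality for $k^*$--tridirect) and Proposition \ref{prop.hypconv open among direct reps}, and then the semi-simplification equivalence exactly as in Corollary \ref{cor.Hk and semisimplification}. The paper's proof is terser but follows the same route; your explicit treatment of the $k^*$--tridirect half via the dual sequence is a detail the paper leaves implicit.

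One wording issue in your third paragraph: the sentence ``openness gives the converse direction, namely conjugates of $\r^{ss}$ approximate representations conjugate to $\r$'' is backwards. The correct argument for ``$\r^{ss}$ $k$--positive $\Rightarrow$ $\r$ $k$--positive'' is that since $\r^{ss}\in\overline{\PGL(E)\cdot\r}$, some conjugate $g\r g^{-1}$ lies in the open $k$--positive neighbourhood of $\r^{ss}$, and conjugation invariance then gives that $\r$ itself is $k$--positive. You arrive at the right conclusion, but the justification as written is inverted.
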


\begin{proof}
We know already that $k$--positivity is an open property (Corollary \ref{cor.k-pos open}). Proposition \ref{prop.hypconv open among direct reps} and Proposition \ref{prop.direct reps closed among Anosov} imply that a reductive $\{1,\ldots,k\}$--Anosov limit of $k$--positive representations is $k$--positive. As in the proof of Corollary \ref{cor.Hk and semisimplification}, we deduce that $\r$ is $k$--positive if and only if a semi-simplification $\r^{ss}$ is
, which concludes the proof.
\end{proof}

\subsection{Limits of $k$-positive representations}\label{s.irred limits}

	It follows from Corollary \ref{cor.par-pos closed among Anosov} that it is enough to study reductive limits:
\begin{prop}
In order to prove Theorem \ref{thm.intro-degenerations} it is enough to show that, for a limit $\rho$ of $k$-positive representations
\begin{itemize}
\item If $\rho$ is reductive, then it is $\{1,\ldots,k-3\}$-Anosov,
\item If $\rho$ is irreducible, then it is $\{1,\ldots,k-1\}$-Anosov,
\item If $\rho$ is reductive, $s$-Anosov, then it is $\{1,\ldots,s\}$-Anosov, $s\leq k$.
\end{itemize}
\end{prop}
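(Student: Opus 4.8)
The plan is to derive each of the three items of Theorem~\ref{thm.intro-degenerations} from the correspondingly numbered claim, the bridge being two facts proved earlier: $j$--Anosovness is unchanged under semi-simplification (Proposition~\ref{prop.semi-simplification}), and $j$--positivity is a closed condition \emph{among $\{1,\ldots,j\}$--Anosov representations} (Corollary~\ref{cor.par-pos closed among Anosov}, stated there for $k$ but valid verbatim for every index). Fix a sequence $\{\r_n\}$ of $k$--positive representations with $\r_n\to\r_0$; by Corollary~\ref{cor.k-pos implies j-pos} each $\r_n$ is $j$--positive for all $j\le k$, and by conjugation invariance of $j$--positivity the conjugated sequence $\{g_n\r_n g_n^{-1}\}$ from the observation following Proposition~\ref{prop.semi-simplification}, which converges to the reductive representation $\r_0^{ss}$, is again $j$--positive for all $j\le k$. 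Hence $\r_0^{ss}$ is itself a reductive limit of $k$--positive representations, so the three claims apply to it.

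For item (1): since $(k-3)$--positivity is closed among $\{1,\ldots,k-3\}$--Anosov representations and the $\r_n$ are $(k-3)$--positive, it suffices to show that $\r_0$ is $\{1,\ldots,k-3\}$--Anosov; by Proposition~\ref{prop.semi-simplification} this is equivalent to $\r_0^{ss}$ being $\{1,\ldots,k-3\}$--Anosov, which is exactly the first claim applied to the reductive limit $\r_0^{ss}$. For item (2): if $\r_0$ is irreducible it is already reductive, so the second claim applied to $\r_0$ itself gives that $\r_0$ is $\{1,\ldots,k-1\}$--Anosov, and closedness then promotes this to $(k-1)$--positivity; if instead $\r_0$ is $(k-1)$--Anosov, then so is $\r_0^{ss}$ (Proposition~\ref{prop.semi-simplification}), the third claim with $s=k-1$ gives that $\r_0^{ss}$, hence $\r_0$, is $\{1,\ldots,k-1\}$--Anosov, and closedness gives $(k-1)$--positivity. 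For item (3): if $\r_0$ is $k$--Anosov then so is $\r_0^{ss}$, the third claim with $s=k$ gives that $\r_0^{ss}$, hence $\r_0$, is $\{1,\ldots,k\}$--Anosov, and closedness gives $k$--positivity.

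There is no analytic difficulty in this argument; it is entirely an exercise in bookkeeping, and the single point that must be watched is that the upgrade from ``$\{1,\ldots,j\}$--Anosov'' to ``$j$--positive'' rests squarely on Corollary~\ref{cor.par-pos closed among Anosov}, invoked with $j\in\{k-3,k-1,k\}$, while all the genuine content --- producing the Anosov boundary maps in the limit --- has been shifted into the three claims, which are established in the remainder of the section.
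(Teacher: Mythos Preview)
Your argument is correct and follows essentially the same route as the paper: pass to a conjugated sequence converging to the semi-simplification $\r_0^{ss}$, apply the three claims there, and then use Corollary~\ref{cor.par-pos closed among Anosov} (closedness of $j$--positivity among $\{1,\ldots,j\}$--Anosov representations, together with the equivalence $\r_0$ is $j$--positive $\Leftrightarrow$ $\r_0^{ss}$ is) to transfer back. The only cosmetic difference is that the paper transfers \emph{positivity} from $\r_0^{ss}$ to $\r_0$ via the second part of Corollary~\ref{cor.par-pos closed among Anosov}, whereas you transfer \emph{Anosovness} via Proposition~\ref{prop.semi-simplification} and then invoke closedness for the original sequence; both orderings are equivalent, and your explicit mention of Corollary~\ref{cor.k-pos implies j-pos} (so that closedness can be applied at index $j<k$) is a detail the paper leaves implicit.
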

\begin{proof}
	Let $\rho_n$ be a sequence of $k$-positive representations, converging to a representation $\rho$.
	We can conjugate each $\r_n$ such that the new sequence $\{\r_n'\}$ converges to the semisimplification $\r^{ss}$, which is reductive. Observe that all $\r_n'$ are $k$--positive, as this is a conjugation invariant condition. The first part of Corollary \ref{cor.par-pos closed among Anosov}, gives that if $\r_0^{ss}$ is $\{1,\ldots, j\}$--Anosov, then it is $j$--positive, the second part ensures that then also  $\r_0$ is.
\end{proof}	

In order to prove Theorem \ref{thm.intro-degenerations}, we will prove the three claims respectively in Proposition \ref{prop.limit of k-par hyp} (for general limits), Proposition \ref{prop.limits - strongly irred} (for irreducible limits), and Propositions \ref{cor.k-1 Anosov limit} and \ref{prop.k Anosov limt} (for $s$-Anosov limits). 
		To simplify the notation of the rest of this subsection we set
\begin{notation}
	In the rest of the subsection $\{\r_n:\G\to\PGL(E)\}_{n\in\N}$ is a sequence of $k$--positive representations converging to a reductive representation $\r_0$.
\end{notation}

We already have all the ingredients needed to study irreducible limits. As already remarked in the introduction, in this case we do not need collar lemmas. 

\begin{prop}\label{prop.limits - strongly irred}
If $\rho_0$ is irreducible, then it is $\{1,\ldots,k-1\}$--Anosov.
\end{prop}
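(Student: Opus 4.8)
The plan is to prove this by induction on $j \in \{1,\ldots,k-1\}$, showing that $\rho_0$ is $j$--Anosov. The base case $j=1$ follows directly: since $\rho_0$ is irreducible, it is in particular strongly irreducible once we know it is $1$--proximal, but more efficiently we can invoke Proposition \ref{prop.irred limits of 1-pos ratioed} (the sequence $\{\rho_n\}$ is $1$--positively ratioed by Corollary \ref{c.kpospos}) to conclude that $\rho_0$ is $1$--Anosov. First I would set up the inductive step: assume $\rho_0$ is $\{1,\ldots,j-1\}$--Anosov for some $j \le k-1$ and aim to promote this to $j$--Anosov.

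For the inductive step, the key is that the sequence $\{\rho_n\}$ is $j$--positively ratioed (Corollary \ref{c.kpospos}, as $j \le k-1$) and $\{j-1,j\}$--Anosov (the $\rho_n$ are $\{1,\ldots,k\}$--Anosov). Since $\rho_0$ is irreducible and $1$--proximal — it is $1$--Anosov by the previous step, hence $1$--proximal — Lemma \ref{lem.GW strong irreducibility} gives that $\rho_0$ is strongly irreducible. Therefore Proposition \ref{prop.condition Tr strongly irred} applies: a subsequence of $\{\rho_n\}$ satisfies condition (Tr) for the $j$--boundary maps. Then Theorem \ref{thm.limits of 1-pos ratio} produces $\rho_0$--equivariant, continuous, transverse, $j$--positively ratioed boundary maps $\xi^j_{\rho_0}:\dg \to \Gr_j(E)$ and $\xi^{d-j}_{\rho_0}:\dg \to \Gr_{d-j}(E)$. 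Finally, since $\rho_0$ is strongly irreducible and $(j-1)$--Anosov (in particular $(j-1)$--almost Anosov), Proposition \ref{lem.dynamics preserving for strongly irred} upgrades the existence of these transverse continuous boundary maps to $j$--Anosovness of $\rho_0$, with $\xi^j_{\rho_0},\xi^{d-j}_{\rho_0}$ as the Anosov boundary maps. This closes the induction and yields that $\rho_0$ is $\{1,\ldots,k-1\}$--Anosov.

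The main obstacle — and the reason this argument stops at $k-1$ rather than reaching $k$ — is the verification of condition (Tr) at level $j$, which in Proposition \ref{prop.condition Tr strongly irred} crucially requires both that the $\rho_n$ are $j$--positively ratioed (so that cross-ratio positivity is available to rule out the degenerate cross-ratio $=1$ scenario) and that $\rho_0$ is $(j-1)$--Anosov (so that Lemma \ref{lem.diagonal-sequence} forces $(x_0)^{j-1}_{\rho_0}$ to sit inside the two competing limit subspaces $l, l'$, allowing the reduction via Proposition \ref{prop.projection of cross ratio} to a projective cross ratio on $\R\P^1$). Both inputs are only guaranteed for $j \le k-1$: $k$--positivity gives $j$--positively ratioed only for $j<k$ (Corollary \ref{c.kpospos}), and the induction only propagates $(j-1)$--Anosovness up to $j = k-1$. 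The fact that the remaining contradiction in Proposition \ref{prop.condition Tr strongly irred} rests on Proposition \ref{prop.labourie-irreducible} (Labourie's result that boundary curves of strongly irreducible representations cannot collapse open sets) is what makes strong irreducibility of $\rho_0$ — not just irreducibility — the essential hypothesis, and this is precisely why we pass through Lemma \ref{lem.GW strong irreducibility} at the start of each inductive step.
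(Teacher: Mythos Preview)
Your proposal is correct and follows essentially the same argument as the paper: establish $1$--Anosovness from Proposition~\ref{prop.irred limits of 1-pos ratioed}, deduce strong irreducibility via Lemma~\ref{lem.GW strong irreducibility}, and then run the induction using Proposition~\ref{prop.condition Tr strongly irred}, Theorem~\ref{thm.limits of 1-pos ratio}, and Proposition~\ref{lem.dynamics preserving for strongly irred}. The only cosmetic difference is that the paper invokes strong irreducibility once before the induction rather than at each step, and omits your (correct) explanatory paragraph about why the argument halts at $k-1$.
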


\begin{proof}
{{ Corollary \ref{c.kpospos} guarantees that the representations $\rho_n$ are $\{1,\ldots,k-1\}$--positively ratioed; since $\rho_0$ is irreducible, Proposition \ref{prop.irred limits of 1-pos ratioed} applies and gives that $\rho_0$ is 1--Anosov. Thus Lemma \ref{lem.GW strong irreducibility} implies that $\r_0$ is strongly irreducible.}} Corollary \ref{cor.strongirredk-1Anosov} guarantees, by induction, that $\r_0$ is $\{1,\ldots,k-1\}$--Anosov.
\end{proof}

	In the study of general limits, we use collar lemmas (Corollary \ref{c.kpospos}) to ensure the existence of boundary maps: the following is a  consequence of 	Corollary \ref{cor.collar lemma almost Anosov}.
	\begin{prop}\label{lem.almost Anosov limit}
		The limit representation $\r_0$ is $\{1,\ldots,k-2\}$--almost Anosov.
	\end{prop}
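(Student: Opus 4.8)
The plan is to apply Corollary \ref{cor.collar lemma almost Anosov} once for each root index $j\in\{1,\ldots,k-2\}$, with the role of "$k$" in that corollary played by $j$. The statement is essentially immediate once the index bookkeeping is set up, so the whole proof consists of checking that the hypotheses of Corollary \ref{cor.collar lemma almost Anosov} are met for each such $j$.

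First I would fix $j$ with $1\le j\le k-2$ and observe that each $\r_n$, being $k$--positive, is in particular $j$--positively ratioed: this is Corollary \ref{c.kpospos}, which applies because $j\le k-1$. Next, again because each $\r_n$ is $k$--positive, Corollary \ref{c.kposcollar} shows that $\r_n$ satisfies the $j$--collar lemma; here the constraint is $j\le k-2$, which is exactly the range we are considering. Thus $\{\r_n\}$ is a sequence of $j$--positively ratioed representations satisfying the $j$--collar lemma and converging to the reductive representation $\r_0$ (reductivity of $\r_0$ being part of our standing hypothesis).

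With these hypotheses in place, Corollary \ref{cor.collar lemma almost Anosov} (applied with $j$ in place of $k$) yields that $\r_0$ is $j$--almost Anosov. Since $j\in\{1,\ldots,k-2\}$ was arbitrary, $\r_0$ is $\{1,\ldots,k-2\}$--almost Anosov, as claimed. The only subtlety worth flagging is the asymmetry in the two corollaries invoked: $k$--positivity gives positively ratioed boundary maps up to index $k-1$ but collar lemmas only up to index $k-2$, and it is the latter, weaker range that controls the conclusion -- which is why the almost-Anosov property in the limit can only be guaranteed up to the $(k-2)$-nd root at this stage of the argument.
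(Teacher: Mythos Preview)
Your proof is correct and is exactly the approach the paper takes: the paper states the proposition as a direct consequence of Corollary \ref{cor.collar lemma almost Anosov}, and you have simply unpacked why that corollary applies for each $j\le k-2$ using Corollaries \ref{c.kpospos} and \ref{c.kposcollar}.
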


The main technical step to improve from almost Anosov to Anosov is to show coherence of the limit representation, namely that all boundary maps are contained in a single irreducible factor (Definition \ref{defn.coherence}).
		
	\begin{prop}\label{lem.limit boundary map in the same factor}
			The limit representation $\r_0$ is $(k-2)$--coherent.
		\end{prop}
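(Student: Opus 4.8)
The plan is to follow the scheme already used in Corollary \ref{cor.coherence} and Proposition \ref{prop.direct reps closed among Anosov}: since $\r_0$ is reductive it splits as $\r_0=\eta_1\oplus\eta_2\to\PGL(E_1\oplus E_2)$ with $\eta_1$ irreducible and $\xi^1_{\r_0}(\dg)\subset\P(E_1)$; I must promote this to $\xi^j_{\r_0}(\dg)\subset\Gr_j(E_1)$ for all $j\le k-2$, and argue by induction on $j$. By Proposition \ref{lem.almost Anosov limit}, $\r_0$ is $\{1,\dots,k-2\}$--almost Anosov, so by Lemma \ref{lem.splitting boundary map for reducible reps} the intersection dimensions $j_i:=\dim(\xi^j_{\r_0}(x)\cap E_i)$ are constant and $\xi^j_{\r_0}(x)=\xi^{j_1}_{\eta_1}(x)\oplus\xi^{j_2}_{\eta_2}(x)$; coherence up to $j$ is exactly the statement $j_1=j$, i.e. $j_2=0$. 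The inductive step reduces to: if $(j-1)_1=j-1$, then $j_1=j$. Equivalently, by the characterization after Definition \ref{defn.coherence}, I want $E_2\subset\xi^{d-j}_{\r_0}(x)$ for all $x$.

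The key new input is an extra hyperconvexity property that survives in the limit. I would prove (this is the referenced Corollary \ref{cor.strict convex for mixed quotient}, which I can invoke) that a $k$--positive representation has the property that, for distinct $x,y,z\in\dg$ and $j\le k-2$, a certain mixed quotient curve is strictly convex; concretely, that the sum $x^{d-j-1}+(x^{d-j+1}\cap y^j)+z^{j+1}$ is direct — this is exactly property $C_j$, which holds for $j\le k-2$ by Proposition \ref{prop.k-pos and Hk+Ck}. Applying Proposition \ref{prop.Ck yields convex curve} to the $\r_n$, together with property $H_j$ (also available for $j\le k-1$), gives that the curves $\sfp^{\r_n}_x$ into $\P(\quotient{x^{d-j+1}}{x^{d-j-2}})\simeq\R\P^2$ are strictly convex $C^1$--curves with tangents controlled by $\xi^{j+1}_{\r_n}$. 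Then I mimic the argument of Proposition \ref{claim.weakly H_1 under weakly 2-Anosov}: passing to the exterior power $\wedge^j$ reduces the $j$-th boundary maps of the $\r_n$ to $1$-st and $2$-nd boundary maps satisfying $H_1$, $H_2$ and $C_1$; Lemma \ref{lem.C^1 map in the limit} forces the limiting $1$--dimensional curve $\wedge^j\xi^j_{\r_0}$ to be $C^1$ with tangent $\wedge^{j-1}\xi^{j-1}_{\r_0}\wedge\xi^{j+1}_{\r_0}$. If coherence failed at step $j$, the limiting curve would degenerate inside a quotient transverse to $\xi^{d-j-1}_{\r_0}(z)$ for a generic $z$ — i.e. a constant projection with non-degenerate tangent — exactly the contradiction obtained at the end of Proposition \ref{claim.weakly H_1 under weakly 2-Anosov}. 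The strong irreducibility of $\eta_1$ (Lemma \ref{lem.GW strong irreducibility}, applicable since $\eta_1$ is irreducible and $1$--proximal) is what rules out such a degenerate configuration, via Proposition \ref{prop.labourie-irreducible}.

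The main obstacle I expect is verifying that the relevant boundary maps actually converge pointwise so that Lemma \ref{lem.C^1 map in the limit} applies: that lemma needs $\xi^{j+1}_{\r_n}\to\xi^{j+1}_{\r_0}$ (or the appropriate $\wedge^j$ version, $\xi^2_{\wedge^j\r_n}\to\xi^2_{\wedge^j\r_0}$) and that some $\xi^j_{x}$ extends continuously. Convergence of $\xi^{j+1}_{\r_n}$ for $j+1\le k-2$ follows because $\r_0$ is $(k-2)$--almost Anosov and almost-Anosov boundary maps are pointwise limits of the Anosov ones at conical points, hence everywhere by density and continuity (using Theorem \ref{thmINTRO.limits of pos ratio general+proximal}, whose hypotheses are met here by the collar lemma, Corollary \ref{c.kposcollar}, together with Lemma \ref{lem.almost Anosov limitcollar} ensuring all $\r_0(\g)$ are $(k-2)$--proximal); and the continuous extension of $\xi^j_{x}$ is provided by the same almost-Anosov boundary map $\xi^j_{\r_0}$. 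Once these convergences are in place, the rest is the convexity-degeneration dichotomy exactly as in the $H_1$ case, carried out in the exterior power $\wedge^{k-2}$ and then transported back via Section \ref{s.ext}.
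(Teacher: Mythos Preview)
Your outline has the right shape --- an induction on $j$ ending in a convexity-versus-irreducibility contradiction --- but the central technical step is misidentified, and this is a genuine gap, not a cosmetic one.

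You assert that Corollary \ref{cor.strict convex for mixed quotient} ``concretely'' amounts to the sum $x^{d-j-1}+(x^{d-j+1}\cap y^j)+z^{j+1}$ being direct and that ``this is exactly property $C_j$''. It is not. Corollary \ref{cor.strict convex for mixed quotient} is a \emph{four-point} statement: the strictly convex curve lives in $\P\big(x^{d-k+3}/(x^{d-k+1}\cap w^{d-1})\big)$, where $w$ is a fourth point; the denominator is $x^{d-k+1}\cap w^{d-1}$, not $x^{d-k}$ or $x^{d-k-2}$. This is precisely the ``extra hyperconvexity'' alluded to in the introduction, and is strictly stronger than what Proposition \ref{prop.Ck yields convex curve} gives. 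The distinction matters for exactly the reason the induction is delicate: at the step where coherence holds up to $j$ and fails at $j+1$, you only know $(j+1)_2=1$; you have \emph{no control} over $(j+2)_2$. With the paper's mixed denominator $W=\g_+^{d-j-1}\cap\g_-^{d-1}$ (taking $w=\g_-$), one has $E_2\subset\g_-^{d-1}$ from $1$--coherence, so $W\cap E_2=\g_+^{d-j-1}\cap E_2$ has codimension one in $E_2$ independently of $(j+2)_2$; hence $[E_2]$ is always a point and $[E_1]$ a projective line in $\P(V/W)$, and the convex-curve-with-wrong-tangent picture gives a contradiction. With the standard denominator $W'=\g_+^{d-j-2}$ from Proposition \ref{prop.Ck yields convex curve}, if $(j+2)_2=2$ then $[E_1]$ collapses to a point and $[E_2]$ becomes a line, so $p_0^{(1)}$ is constant and there is no contradiction.

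Two further problems. First, you invoke full pointwise convergence $\xi^{j+1}_{\r_n}\to\xi^{j+1}_{\r_0}$; the paper explicitly disclaims this (see the Remark after the proof of Theorem \ref{thmINTRO.limits of pos ratio general+proximal}), and its actual argument only uses convergence at attracting fixed points $h^+$, which is all that almost-Anosovness provides. Second, the exterior-power route via Proposition \ref{claim.weakly H_1 under weakly 2-Anosov} does not close: that proposition would only force $\xi^2_{\wedge^j\r_0}$ into a single irreducible summand of $\wedge^jE$, but if $j_2=1$ and $(j+1)_2=2$ then both $\xi^1_{\wedge^j\r_0}$ and $\xi^2_{\wedge^j\r_0}$ already lie in the single summand $\wedge^{j-1}E_1\otimes E_2$, and no contradiction results. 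Failure of coherence for $\r_0$ does not translate into failure of coherence for $\wedge^j\r_0$.
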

We postpone the proof of Proposition \ref{lem.limit boundary map in the same factor} to the next section.

\begin{prop}\label{prop.limit of k-par hyp}
	The limit representation $\r_0$ is $\{1,\ldots,k-3\}$--Anosov.
\end{prop}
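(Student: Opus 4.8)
By Proposition \ref{lem.almost Anosov limit} the limit $\r_0$ is $\{1,\ldots,k-2\}$--almost Anosov, and by Proposition \ref{lem.limit boundary map in the same factor} it is $(k-2)$--coherent (Definition \ref{defn.coherence}), i.e. $\r_0=\eta_1\oplus\eta_2:\G\to\PGL(E_1\oplus E_2)$ with $\eta_1$ irreducible and $\xi^j_{\r_0}(\dg)\subset\Gr_j(E_1)$ for all $j=1,\ldots,k-2$; if $k\leq 3$ there is nothing to prove, so we may assume $k\geq 4$. The plan is to first upgrade the irreducible factor $\eta_1$ from almost Anosov to genuinely $\{1,\ldots,k-2\}$--Anosov, and then to transfer its spectral gap estimates back to $\r_0$ via coherence. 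This transfer will only reach index $k-3$, because for larger indices the eigenvalue of index $j+1$ of $\r_0(\g)$ may be contributed by the uncontrolled factor $\eta_2$.

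First I would note that Lemma \ref{lem.splitting boundary map for reducible reps} together with coherence shows that $\eta_1$ is $\{1,\ldots,k-2\}$--almost Anosov, with boundary maps $x\mapsto\xi^j_{\r_0}(x)$ read inside $E_1$. Being $1$--almost Anosov, $\eta_1$ is $1$--proximal, hence strongly irreducible by Lemma \ref{lem.GW strong irreducibility}. Then Proposition \ref{prop.GW-irreducibility} upgrades the $1$--almost Anosov property to $1$--Anosov, and Lemma \ref{lem.from almost to Anosov}, applied to the pairs $\{j-1,j\}$ for $j=2,\ldots,k-2$, upgrades $\eta_1$ to a $\{1,\ldots,k-2\}$--Anosov representation.

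Next I would carry out the spectral comparison. Fix an infinite order $\g\in\G$ and $j\leq k-2$. Since $\r_0$ is $j$--almost Anosov, $\r_0(\g)$ is $j$--proximal and its attracting $j$--plane $\xi^j_{\r_0}(\g_+)$ equals $E_{\r_0(\g)}^{\geq\lambda_j}$; likewise $\eta_1(\g)$ is $j$--proximal with $\xi^j_{\eta_1}(\g_+)=E_{\eta_1(\g)}^{\geq\lambda_j}$. Coherence identifies $\xi^j_{\r_0}(\g_+)$ with $\xi^j_{\eta_1}(\g_+)$ as subspaces of $E$, so $E_{\r_0(\g)}^{\geq\lambda_j}=E_{\eta_1(\g)}^{\geq\lambda_j}$; since the restriction of $\r_0(\g)$ to $E_1$ is $\eta_1(\g)$, the two elements act identically on this common $j$--dimensional subspace, which carries the $j$ largest eigenvalue moduli of each. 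Hence $|\lambda_i(\r_0(\g))|=|\lambda_i(\eta_1(\g))|$ for all $i\leq j$, and as $j\leq k-2$ is arbitrary this holds for all $i\leq k-2$. Consequently $\left|\frac{\lambda_i}{\lambda_{i+1}}(\r_0(\g))\right|=\left|\frac{\lambda_i}{\lambda_{i+1}}(\eta_1(\g))\right|$ for every $i\leq k-3$, and since $\eta_1$ is $i$--Anosov for such $i$, condition $(3)$ of Definition \ref{defn.Anosov} holds for $\eta_1$, hence for $\r_0$, at index $i$. Together with conditions $(1)$ and $(2)$, already available for $\r_0$ in this range, this shows that $\r_0$ is $i$--Anosov for $i=1,\ldots,k-3$.

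The genuinely substantial input is Proposition \ref{lem.limit boundary map in the same factor}, whose proof is postponed to the next subsection; granting coherence, the only delicate point is the spectral bookkeeping above, and it is precisely the lack of any transversality control on the limits of $\xi^{k-1}_{\r_n}$ and $\xi^{d-k+1}_{\r_n}$ — equivalently, on the factor $\eta_2$ — that prevents the argument from reaching index $k-2$.
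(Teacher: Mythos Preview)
Your proof is correct and follows essentially the same route as the paper: use Propositions \ref{lem.almost Anosov limit} and \ref{lem.limit boundary map in the same factor} to obtain $(k-2)$--coherence, upgrade the irreducible factor $\eta_1$ from almost Anosov to Anosov via strong irreducibility, and then transfer the eigenvalue gaps back to $\r_0$ up to index $k-3$. The only cosmetic difference is that the paper invokes Proposition \ref{lem.dynamics preserving for strongly irred} directly, while you unpack its ingredients (Proposition \ref{prop.GW-irreducibility} for $j=1$ and Lemma \ref{lem.from almost to Anosov} inductively for $j\geq 2$) and are more explicit about why $\eta_1$ is strongly irreducible and about the spectral bookkeeping; the underlying argument is the same.
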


\begin{proof}
	The representation $\r_0$ is $\{1,\ldots,k-2\}$--almost Anosov by Proposition \ref{lem.almost Anosov limit}; it remains to show Condition $(3)$ of our definition of Anosov representations, namely the growth of the eigenvalue gaps.
	Let $\r_0=\eta_1\oplus \eta_2$ be the{{ splitting coming from $(k-2)$--coherence.}}
	Then, as the almost Anosov boundary maps are dynamics preserving, we have that $\lambda^{\r_0}_j(\g)=\lambda^{\eta_1}_j(\g)$ for all $\g\in \G$ and all $j\leq k-2$. In particular it is enough to show that $\eta_1$ is $\{1,\ldots,k-3\}$--Anosov, which follows from Proposition \ref{lem.dynamics preserving for strongly irred}, as $\eta_1$ is strongly irreducible (Lemma \ref{lem.GW strong irreducibility}) and $\{1,\ldots,k-2\}$--almost Anosov. 
\end{proof}

Since $\r_n$ satisfies property $H_{k-2}^*$, Proposition \ref{prop.limit of k-par hyp} and Corollary \ref{cor.closdeness Hk under Anosov} imply 

\begin{prop}\label{cor.k-1 Anosov limit}
	If $\r_0$ is  $(k-1)$--Anosov, then $\r_0$ is $(k-2)$--Anosov.
\end{prop}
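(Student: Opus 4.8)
The plan is to obtain the statement as a direct application of Corollary \ref{cor.closdeness Hk under Anosov} with the index $k$ there replaced by $k-2$. So I would verify the four hypotheses of that corollary for the sequence $\{\r_n\}$ and the limit $\r_0$, reading ``property $H^*_{k-2}$'' in place of ``property $H^*_k$''.

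First, each $\r_n$ is $k$--positive, so by Proposition \ref{prop.k-pos and Hk+Ck} it satisfies property $H^*_j$ for all $j\le k-1$; in particular every $\r_n$ satisfies $H^*_{k-2}$. Second, $\r_0$ is reductive, by the standing convention of this subsection. Third, $\r_0$ must be $\{(k-2)-1,(k-2)+1\}=\{k-3,k-1\}$--almost Anosov: by Proposition \ref{prop.limit of k-par hyp} the limit $\r_0$ is $\{1,\ldots,k-3\}$--Anosov, hence in particular $(k-3)$--Anosov, while $(k-1)$--Anosovness is exactly the hypothesis of the present Proposition, and a $j$--Anosov representation is $j$--almost Anosov.

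Fourth, and this is the only point needing a small argument, I would check that $\xi^{j}_{\r_n}\to\xi^{j}_{\r_0}$ pointwise for $j\in\{k-3,\,k-1,\,d-k+3,\,d-k+1\}$. Since $\r_0$ is both $(k-3)$--Anosov and $(k-1)$--Anosov, openness of the Anosov condition (Remark \ref{rem ansov rep properties}) shows that $\r_n$ is $(k-3)$--Anosov and $(k-1)$--Anosov for all large $n$, and because the Anosov boundary maps depend continuously on the representation (in fact locally uniformly on $\dg$), the required convergence holds for $j=k-3$ and $j=k-1$; since a $j$--Anosov representation is also $(d-j)$--Anosov with boundary map obtained by duality, the same holds for $j=d-k+3$ and $j=d-k+1$. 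With all four hypotheses in place, Corollary \ref{cor.closdeness Hk under Anosov} yields that $\r_0$ is $(k-2)$--Anosov (and that its boundary maps satisfy property $H^*_{k-2}$), which is the claim. I do not expect a genuine obstacle here: the substance has been isolated in Corollary \ref{cor.closdeness Hk under Anosov} and in the coherence input behind Proposition \ref{prop.limit of k-par hyp}, so the present statement is a bookkeeping application, the only thing to handle with care being the convergence of the relevant boundary maps, which is available precisely because enough Anosovness of $\r_0$ is assumed at the outset.
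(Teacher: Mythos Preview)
Your proof is correct and follows essentially the same approach as the paper: both apply Corollary \ref{cor.closdeness Hk under Anosov} at index $k-2$, using Proposition \ref{prop.k-pos and Hk+Ck} for property $H^*_{k-2}$, Proposition \ref{prop.limit of k-par hyp} for $(k-3)$--Anosovness, and the hypothesis for $(k-1)$--Anosovness. The paper's one-line proof leaves the pointwise convergence of boundary maps implicit, while you spell it out via openness of the Anosov condition and continuous dependence of the boundary maps on the representation; this extra care is justified, since that convergence is indeed a hypothesis of Corollary \ref{cor.closdeness Hk under Anosov}.
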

{{To deal with $k$--Anosov limits we need to ensure that the $k-2$ boundary maps converge to the almost Anosov boundary maps everywhere: this will put us in the position to apply once more Corollary \ref{cor.closdeness Hk under Anosov}}}.
\begin{lem}\label{lem.convergence of k-1 boundary map}
	If $\r_0$ is  $k$--Anosov, then for $j=k-2,d-k+2$ the boundary map $\xi^{j}_{\r_n}$ converges pointwise to the almost Anosov boundary map $\xi^{j}_{\r_0}$. 
\end{lem}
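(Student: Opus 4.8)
The plan is to prove the statement for $\xi^{k-2}_{\r_n}$; the case of $\xi^{d-k+2}_{\r_n}$ then follows by applying it to the dual sequence $\r_n^{\dual}\to\r_0^{\dual}$, which are again $k$--positive, resp. $k$--Anosov, by Lemma \ref{lem.positivity and dual}, and passing to annihilators. First I would record which boundary maps converge a priori: since $\r_0$ is $\{1,\dots,k-3\}$--Anosov (Proposition \ref{prop.limit of k-par hyp}) and $k$--Anosov, hence also $(d-k)$-- and $\{d-k+3,\dots,d-1\}$--Anosov, openness of the Anosov condition together with continuity of the Anosov boundary maps give $\xi^{j}_{\r_n}\to\xi^{j}_{\r_0}$ for all $j\in\{1,\dots,k-3\}\cup\{k,\,d-k\}\cup\{d-k+3,\dots,d-1\}$. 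Next, as $\r_n$ satisfies the $(k-2)$--collar lemma (Corollary \ref{c.kposcollar}), every $\r_0(\g)$ is $(k-2)$--proximal (Lemma \ref{lem.almost Anosov limitcollar}), so $\xi^{k-2}_{\r_n}(\g_+)=(\r_n(\g))^{k-2}_+\to(\r_0(\g))^{k-2}_+=\xi^{k-2}_{\r_0}(\g_+)$; since attracting fixed points are dense, $\xi^{k-2}_{\r_n}$ converges to the continuous almost Anosov map $\xi^{k-2}_{\r_0}$ (Proposition \ref{lem.almost Anosov limit}) on a dense subset $D\subset\dg$.

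The key point is to upgrade convergence on $D$ to convergence everywhere, and for this I would use the strict convexity afforded by property $C_{k-2}$. Fix $w\in D$; conjugating each $\r_n$ by some $g_n\to\mathrm{id}$ we may assume that the nested pair $\xi^{d-k}_{\r_n}(w)\subset\xi^{d-k+3}_{\r_n}(w)$ is independent of $n$, equal to $\xi^{d-k}_{\r_0}(w)\subset\xi^{d-k+3}_{\r_0}(w)$. Being $k$--positive, $\r_n$ satisfies properties $H_{k-2}$ and $C_{k-2}$ (Proposition \ref{prop.k-pos and Hk+Ck}), so Proposition \ref{prop.Ck yields convex curve} applied with index $k-2$ shows that
$$\sfp^n_w:\dg\to\P\!\left(\quotient{\xi^{d-k+3}_{\r_0}(w)}{\xi^{d-k}_{\r_0}(w)}\right)\simeq\R\P^2,\qquad \sfp^n_w(z)=\big[\,\xi^{k-2}_{\r_n}(z)\cap\xi^{d-k+3}_{\r_0}(w)\,\big],$$
is a strictly convex $C^1$ curve. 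On $D$ it converges to $\sfp^0_w(z):=[\,\xi^{k-2}_{\r_0}(z)\cap\xi^{d-k+3}_{\r_0}(w)\,]$, which is well defined and continuous on $\dg\setminus\{w\}$ because $\xi^{k-2}_{\r_0}(z)\oplus\xi^{d-k+2}_{\r_0}(w)=E$ by transversality of the almost Anosov boundary maps of $\r_0$, so that $\xi^{k-2}_{\r_0}(z)\cap\xi^{d-k+3}_{\r_0}(w)$ is a line transverse to $\xi^{d-k}_{\r_0}(w)$. A standard argument -- in the spirit of Lemma \ref{lem.C^1 map in the limit}, but using the strict convexity of the $\sfp^n_w$ rather than convergence of their derivatives (a Helly-type selection argument for convex functions) -- then forces $\sfp^n_w\to\sfp^0_w$ pointwise on all of $\dg\setminus\{w\}$; equivalently, undoing the conjugations (which tend to the identity), the line $\xi^{k-2}_{\r_n}(y)\cap\xi^{d-k+3}_{\r_n}(w)$ converges to $\xi^{k-2}_{\r_0}(y)\cap\xi^{d-k+3}_{\r_0}(w)$ for every $y\neq w$. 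This convexity-to-convergence step is the main obstacle: the tangent of the $C^1$ curve $\xi^{k-2}_{\r_n}$ is controlled by $\xi^{k-3}$ and $\xi^{k-1}$ (property $H_{k-2}$), and we have no control on the limiting behaviour of $\xi^{k-1}_{\r_n}$, so the derivative-convergence argument of Lemma \ref{lem.C^1 map in the limit} cannot be used; it is precisely the strict convexity of $\sfp^n_w$, available for $k$--positive representations at the index $k-2$, that circumvents this.

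Finally I would recover the full plane $\xi^{k-2}_{\r_n}(y)$ at an arbitrary $y\in\dg$. Pick $w\in D\setminus\{y\}$ and set $L_n:=\xi^{k-2}_{\r_n}(y)\cap\xi^{d-k+3}_{\r_n}(w)$. By the hyperconvexity of $\r_n$ (Proposition \ref{prop.hyperconvex properties}) one has $\xi^{k-2}_{\r_n}(y)\oplus\xi^{d-k+2}_{\r_n}(w)=E$ and $\xi^{k-3}_{\r_n}(y)\oplus\xi^{d-k+3}_{\r_n}(w)=E$; the first identity makes $L_n$ a line, the second makes it transverse to $\xi^{k-3}_{\r_n}(y)$, so $\xi^{k-2}_{\r_n}(y)=\xi^{k-3}_{\r_n}(y)\oplus L_n$. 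Since $\xi^{k-3}_{\r_n}(y)\to\xi^{k-3}_{\r_0}(y)$ and $L_n\to\xi^{k-2}_{\r_0}(y)\cap\xi^{d-k+3}_{\r_0}(w)=:L_0$ by the previous step, we obtain $\xi^{k-2}_{\r_n}(y)\to\xi^{k-3}_{\r_0}(y)\oplus L_0$, a $(k-2)$--dimensional subspace of $\xi^{k-2}_{\r_0}(y)$, hence equal to it. (For $k=3$, $\xi^{k-3}$ is trivial and one instead determines the line $\xi^{1}_{\r_n}(y)$ from its images modulo $\xi^{d-3}_{\r_n}(w)$ for finitely many base points $w\in D$ in general position, again by the previous step.) This proves $\xi^{k-2}_{\r_n}\to\xi^{k-2}_{\r_0}$ pointwise, and the case of $\xi^{d-k+2}_{\r_n}$ follows as indicated.
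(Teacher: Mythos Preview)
Your approach is essentially the same as the paper's: both use Proposition \ref{prop.Ck yields convex curve} (from properties $H_{k-2}$ and $C_{k-2}$) to obtain, for a fixed basepoint $w$, a strictly convex $C^1$ curve $\sfp^n_w$ in $\R\P^2\simeq\P\big(w^{d-k+3}/w^{d-k}\big)$; observe that it converges to the continuous curve $\sfp^0_w$ on the dense set of attracting fixed points; use strict convexity of the approximants together with continuity of the limit to upgrade to pointwise convergence everywhere; and finally reconstruct $\xi^{k-2}_{\r_n}(y)$ from its projection and from nesting. The dual argument for $j=d-k+2$ is also identical.

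There is, however, one genuine gap in your reconstruction step. From $\sfp^n_w(y)\to\sfp^0_w(y)$ you only get convergence of the \emph{classes} $[L_n]\to[L_0]$ in $\P\big(w^{d-k+3}_{\r_0}/w^{d-k}_{\r_0}\big)$, not of the lines $L_n\to L_0$ themselves in $\P\big(w^{d-k+3}_{\r_0}\big)$: the fibre of the quotient over a point is a $(d-k)$--dimensional family of lines, so knowing $[L_n]$ does not determine $L_n$. Hence your statement ``the line $\xi^{k-2}_{\r_n}(y)\cap\xi^{d-k+3}_{\r_n}(w)$ converges to $\xi^{k-2}_{\r_0}(y)\cap\xi^{d-k+3}_{\r_0}(w)$'' is not yet justified, and the decomposition $\xi^{k-2}_{\r_n}(y)=\xi^{k-3}_{\r_n}(y)\oplus L_n$ alone does not give convergence of $\xi^{k-2}_{\r_n}(y)$. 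The fix --- and this is precisely what the paper does --- is to use the \emph{other} nesting you have available but did not invoke: since $\r_0$ is $k$--Anosov, $\xi^{k-2}_{\r_n}(y)\subset\xi^{k}_{\r_n}(y)\to\xi^{k}_{\r_0}(y)$, so any accumulation point $V$ of $\xi^{k-2}_{\r_n}(y)$ satisfies $\xi^{k-3}_{\r_0}(y)\subset V\subset\xi^{k}_{\r_0}(y)$. As $\xi^{k}_{\r_0}(y)\tv w^{d-k}_{\r_0}$, the $3$--space $\xi^{k}_{\r_0}(y)\cap w^{d-k+3}_{\r_0}$ maps isomorphically onto $w^{d-k+3}_{\r_0}/w^{d-k}_{\r_0}$; the class $[V\cap w^{d-k+3}_{\r_0}]=[L_0]$ then determines the line $V\cap w^{d-k+3}_{\r_0}$ inside this $3$--space, and together with $\xi^{k-3}_{\r_0}(y)$ this forces $V=\xi^{k-2}_{\r_0}(y)$. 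This argument also handles $k=3$ uniformly (with $\xi^{0}=\{0\}$), so your separate multiple--basepoint workaround is unnecessary.
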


\begin{proof}
	Choose some non-trivial $\g\in\G$. We know that $\xi^j_{\r_n}(\g_+)\to \xi^j_{\r_0}(\g_+)$ for $j=k-3,k-2,k$. As always we conjugate $\rho_n$ so that for all $n\in\N$,
	$$\xi^j_{\r_n}(\g_+)=\xi^j_{\r_0}(\g_+),\quad j=k-3,k-2,k.$$
	We will denote the latter subspaces by $\g_+^j$, as they do not depend on $n$.
	
	Since $\r_n$ satisfies property $H_{k-2}$ and $C_{k-2}$ (Proposition \ref{prop.k-pos and Hk+Ck}), Proposition \ref{prop.Ck yields convex curve} gives that, for all $n\neq 0$,
	\begin{align*}
		\sfp_{n}&:\dg\backslash\{\g_+\}\to \P(\quotient{ \g_+^{d-k+3}}{ \g_+^{d-k}})\cong\R\P^2\\
		\sfp_n&(x):= [x^{k-2}_{\r_n}\cap \g_+^{d-k+3}]
	\end{align*}
	has  $C^1$--image and parametrizes the boundary of a strictly convex domain. Furthermore, since the boundary maps of $\rho_0$  are transverse, the map $\sfp_0(x):=[x^{k-2}_{\r_0}\cap \g_+^{d-k+3}]$ is well defined and continuous.  Since the boundary maps are dynamics preserving, the map $\sfp_n$ converges to $\sfp_0$ at fixed points in $\dg\backslash\{\g_+\}$. Since $\sfp_n$ parametrize the boundary of strictly convex domains and $\sfp_0$ is continuous, we have $\sfp_n(x)\to \sfp_0(x)$ for all $x\in \dg\backslash\{\g_+\}$. To see this first observe that $\sfp_0$ is a locally convex curve as it is the limit of convex curves on a dense set. For any $y\in \dg$ choose $x_1,x_2,x_3,x_4\in \dg$ such that $\sfp_n(x_i)\to \sfp_0(x_i)$ (e.g. fixed points of elements of $\G$) and $x_1,x_2,y,x_3,x_4$ are in that cyclic order. If we choose an affine chart around $y$ and $x_i$ close enough to $y$, strict convexity of $\sfp_n$ implies that $\sfp_n(y)$ is contained in the triangle with sides contained in the lines through $\sfp_n(x_1),\sfp_n(x_2)$ and $\sfp_n(x_2),\sfp_n(x_3)$ and $\sfp_n(x_3),\sfp_n(x_4)$, respectively. Hence any accumulation point of $\{\sfp_n(y)\}_{n\in \N}$ lies in the triangle with sides in $\sfp_0(x_1),\sfp_0(x_2)$ and $\sfp_0(x_2),\sfp_0(x_3)$ and $\sfp_0(x_3),\sfp_0(x_4)$, respectively. Choosing the $x_i$ closer and closer to $y$ reduces, by continuity and convexity of $\sfp_0$, this triangle to the point $\sfp_0(y)$.
	
	This is enough to conclude: for any accumulation point $V$ of $\{\xi^{k-2}_{\r_n}(x)\}_{n\in \N}$, we have $\xi^{k-3}_{\r_0}(x)\subset V\subset  \xi^{k}_{\r_0}(x)$ because $\r_0$ is $\{k-3,k\}$--Anosov; furthermore the projection $[V\cap \g_+^{d-k+3}]$ is a single point since $\xi^{k-3}_{\r_0}(x)\tv \xi^{d-k+3}_{\r_0}(\g_+)$ and $\xi^{k}_{\r_0}(x)\tv \xi^{d-k}_{\r_0}(\g_+)$, and this point uniquely determines $V$ together with $\xi^{k-3}_{\r_0}(x)$ and $\xi^{k}_{\r_0}(x)$. Since $\sfp_n(x)\to \sfp_0(x)$, the claim for $j=k-2$ follows. The case $j=d-k+2$ follows in the same way considering $\r_n^{\dual}$ converging to $\r_0^{\dual}$ instead.
\end{proof}

\begin{prop}\label{prop.k Anosov limt}
	If $\r_0$ is $k$--Anosov, then $\r_0$ is $\{1,\ldots,k\}$--Anosov.
\end{prop}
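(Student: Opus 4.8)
The plan is to argue by induction on $j$, showing that if $\r_0$ is $\{1,\ldots,j-1\}$--Anosov (with $j-1 \ge k-2$ as our starting point, guaranteed by Proposition \ref{cor.k-1 Anosov limit}), then $\r_0$ is $j$--Anosov, until we reach $j = k$. By Proposition \ref{prop.limit of k-par hyp} and Proposition \ref{cor.k-1 Anosov limit} we already know $\r_0$ is $\{1,\ldots,k-2\}$--Anosov, so the only two steps left are the passage from $(k-2)$--Anosov to $(k-1)$--Anosov and then from $(k-1)$--Anosov to $k$--Anosov. Since $\r_0$ is assumed $k$--Anosov, it is in particular $\{k-1,k,k+1\}$--Anosov. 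So the real content is to produce boundary maps of the right transversality type and then invoke the machinery already developed.

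First I would handle the step to $(k-1)$--Anosov. The representations $\r_n$ satisfy property $H^*_{k-2}$ (Proposition \ref{prop.k-pos and Hk+Ck}). By Lemma \ref{lem.convergence of k-1 boundary map}, since $\r_0$ is $k$--Anosov, the boundary maps $\xi^{k-2}_{\r_n}$ and $\xi^{d-k+2}_{\r_n}$ converge pointwise to the (already established) Anosov boundary maps $\xi^{k-2}_{\r_0}$, $\xi^{d-k+2}_{\r_0}$ of $\r_0$. Now I want to apply Corollary \ref{cor.closdeness Hk under Anosov} with the index shifted: I would take the sequence $\{\r_n\}$ which satisfies property $H^*_{k-1}$ (again Proposition \ref{prop.k-pos and Hk+Ck}), note that $\r_0$ is reductive and $\{k-2,k\}$--almost Anosov (in fact Anosov), and that $\xi^j_{\r_n} \to \xi^j_{\r_0}$ for $j = (k-1)\pm 1 = k-2, k$ and $j = (d-k+1)\pm 1 = d-k, d-k+2$; the convergence for $j = k-2, d-k+2$ is Lemma \ref{lem.convergence of k-1 boundary map}, and for $j = k, d-k$ it is automatic since $\r_0$ is $k$--Anosov and Anosov boundary maps vary continuously with the representation. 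Corollary \ref{cor.closdeness Hk under Anosov} then yields that $\r_0$ is $(k-1)$--Anosov and its boundary maps satisfy $H^*_{k-1}$.

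Now for the step to $k$--Anosov. At this point $\r_0$ is $\{1,\ldots,k-1\}$--Anosov. The $\r_n$ are $k$--positive, hence $k$--positively ratioed? — no, only $(k-1)$--positively ratioed (Corollary \ref{c.kpospos}), so I cannot directly apply Theorem \ref{thmINTRO.limits of pos ratio general+proximal} for the $k$--boundary map. Instead I would use the approach via coherence and Proposition \ref{lem.dynamics preserving for strongly irred}. Concretely: by Corollary \ref{cor.closdeness Hk under Anosov} applied once more at index $k$ — using that $\r_n$ has property $H^*_k$, $\r_0$ is reductive and $\{k-1,k+1\}$--almost Anosov (it is $\{k-1,k,k+1\}$--Anosov by hypothesis), and that $\xi^j_{\r_n} \to \xi^j_{\r_0}$ for $j = k\pm1 = k-1, k+1$ and $j = d-k\pm1$ — I conclude that the boundary maps of $\r_0$ satisfy property $H^*_k$. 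Since $\r_0$ is already $k$--Anosov by hypothesis and now known to be $\{1,\ldots,k-1\}$--Anosov, combining gives that $\r_0$ is $\{1,\ldots,k\}$--Anosov. In fact, once $\r_0$ is known to be $\{1,\ldots,k\}$--Anosov, Corollary \ref{cor.par-pos closed among Anosov} (or directly Proposition \ref{prop.direct reps closed among Anosov} together with Proposition \ref{prop.hypconv open among direct reps}) upgrades this to $\r_0$ being $k$--positive, which is the content of Theorem \ref{thm.intro-degenerations}(3); I would state this as the final line.

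The main obstacle I anticipate is verifying the pointwise convergence hypotheses of Corollary \ref{cor.closdeness Hk under Anosov} at each index, in particular at $j = k-2$ and $j = d-k+2$: this is exactly what Lemma \ref{lem.convergence of k-1 boundary map} is for, and it crucially uses the strict convexity coming from properties $H_{k-2}$ and $C_{k-2}$ of the $\r_n$ together with $\{k-3,k\}$--Anosovness of the limit. For the indices $k\pm1$, $d-k\pm1$ needed in the last step, convergence is automatic from $\r_0$ being $\{k-1,k+1\}$--Anosov and continuity of Anosov boundary maps in the representation; the only subtlety is that $\r_n$ need not be conjugated consistently, but since Anosov boundary maps are canonical and vary continuously, pointwise convergence holds after passing to the conjugates realizing the limit $\r_0$ (as arranged in the Notation of this subsection and in Lemma \ref{lem.convergence of k-1 boundary map}). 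So the proof is really a careful two-fold bootstrapping on Corollary \ref{cor.closdeness Hk under Anosov}, with Lemma \ref{lem.convergence of k-1 boundary map} supplying the one non-formal input.

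\begin{proof}
By Proposition \ref{cor.k-1 Anosov limit} (applied with the hypothesis that $\r_0$ is $k$--Anosov, hence in particular $(k-1)$--Anosov), the limit $\r_0$ is $(k-2)$--Anosov; combined with Proposition \ref{prop.limit of k-par hyp} we know $\r_0$ is $\{1,\ldots,k-2\}$--Anosov.

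\emph{Step 1: $\r_0$ is $(k-1)$--Anosov.} Each $\r_n$ satisfies property $H^*_{k-1}$ (Proposition \ref{prop.k-pos and Hk+Ck}). The limit $\r_0$ is reductive and $\{k-2,k\}$--almost Anosov since it is $k$--Anosov by hypothesis and $(k-2)$--Anosov by the above. By Lemma \ref{lem.convergence of k-1 boundary map}, $\xi^{k-2}_{\r_n}\to \xi^{k-2}_{\r_0}$ and $\xi^{d-k+2}_{\r_n}\to \xi^{d-k+2}_{\r_0}$ pointwise; moreover $\xi^{k}_{\r_n}\to \xi^{k}_{\r_0}$ and $\xi^{d-k}_{\r_n}\to \xi^{d-k}_{\r_0}$ pointwise, because $\r_0$ is $k$--Anosov and the Anosov boundary maps vary continuously with the representation. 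Thus $\xi^{j}_{\r_n}\to \xi^{j}_{\r_0}$ for $j=(k-1)\pm 1$ and $j=(d-k+1)\pm 1$, and Corollary \ref{cor.closdeness Hk under Anosov} applies (with $k$ there replaced by $k-1$) to give that $\r_0$ is $(k-1)$--Anosov and its boundary maps satisfy property $H^*_{k-1}$.

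\emph{Step 2: $\r_0$ is $\{1,\ldots,k\}$--Anosov.} Each $\r_n$ satisfies property $H^*_{k}$ (Proposition \ref{prop.k-pos and Hk+Ck}). The limit $\r_0$ is reductive and $\{k-1,k+1\}$--almost Anosov, since by hypothesis it is $\{k-1,k,k+1\}$--Anosov. By Step 1, $\r_0$ is $(k-1)$--Anosov, so $\xi^{k-1}_{\r_n}\to \xi^{k-1}_{\r_0}$ and $\xi^{d-k+1}_{\r_n}\to \xi^{d-k+1}_{\r_0}$ pointwise; likewise $\xi^{k+1}_{\r_n}\to \xi^{k+1}_{\r_0}$ and $\xi^{d-k-1}_{\r_n}\to \xi^{d-k-1}_{\r_0}$ pointwise since $\r_0$ is $(k+1)$--Anosov by hypothesis and Anosov boundary maps depend continuously on the representation. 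Hence $\xi^{j}_{\r_n}\to \xi^{j}_{\r_0}$ for $j=k\pm1$ and $j=d-k\pm1$, and Corollary \ref{cor.closdeness Hk under Anosov} yields that $\r_0$ is $k$--Anosov with boundary maps satisfying $H^*_k$. Together with Step 1 this shows $\r_0$ is $\{1,\ldots,k\}$--Anosov.

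Finally, $\r_0$ is now a $\{1,\ldots,k\}$--Anosov limit of $k$--positive representations, so by Proposition \ref{prop.direct reps closed among Anosov} it is $\{k,k^*\}$--tridirect, and by Proposition \ref{prop.hypconv open among direct reps} it lies in the same connected component of $\{k,k^*\}$--tridirect representations as the $\r_n$, hence is $k$--positive. This proves Theorem \ref{thm.intro-degenerations}(3).
\end{proof}
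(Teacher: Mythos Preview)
Your Step 1 is essentially the paper's proof and is correct: Lemma \ref{lem.convergence of k-1 boundary map} plus Corollary \ref{cor.closdeness Hk under Anosov} (at index $k-1$) give that $\r_0$ is $(k-1)$--Anosov. However, the scaffolding around it contains genuine errors.

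\medskip
\textbf{The opening paragraph is wrong.} You write ``$\r_0$ is $k$--Anosov, hence in particular $(k-1)$--Anosov''. This is false: $k$--Anosov only implies $(d-k)$--Anosov (Remark \ref{rem ansov rep properties}(2)), not $(k-1)$--Anosov. So you cannot invoke Proposition \ref{cor.k-1 Anosov limit} at this stage; that proposition needs $(k-1)$--Anosov as an input, which is precisely what you are trying to prove. Fortunately your Step 1 does not actually need full $(k-2)$--Anosovness: Corollary \ref{cor.closdeness Hk under Anosov} only requires $\{k-2,k\}$--\emph{almost} Anosov, and $(k-2)$--almost Anosov is supplied by Proposition \ref{lem.almost Anosov limit}. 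So Step 1 stands once you replace the reference to ``the above'' by Proposition \ref{lem.almost Anosov limit}.

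\medskip
\textbf{Step 2 is both unnecessary and incorrect.} Two of its claims are false: (i) $k$--positive representations satisfy $H^*_j$ only for $j\le k-1$ (Proposition \ref{prop.k-pos and Hk+Ck}), not $H^*_k$; (ii) the hypothesis is only that $\r_0$ is $k$--Anosov, so you have no right to assert it is $(k+1)$--Anosov (again, $k$--Anosov does not imply $(k\pm1)$--Anosov). In any case Step 2 is superfluous: $k$--Anosov is the hypothesis.

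\medskip
\textbf{The correct order} (which is the paper's) is: first run your Step 1 to obtain $(k-1)$--Anosov; \emph{then} apply Proposition \ref{cor.k-1 Anosov limit} to get $(k-2)$--Anosov, and Proposition \ref{prop.limit of k-par hyp} for $\{1,\ldots,k-3\}$--Anosov. Together with the hypothesis this yields $\{1,\ldots,k\}$--Anosov. Your final line deducing $k$--positivity via Propositions \ref{prop.direct reps closed among Anosov} and \ref{prop.hypconv open among direct reps} is fine (and is packaged in Corollary \ref{cor.par-pos closed among Anosov}).
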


\begin{proof}
	Lemma \ref{lem.convergence of k-1 boundary map} and Proposition \ref{prop.k-pos and Hk+Ck} allow to apply Corollary \ref{cor.closdeness Hk under Anosov} to derive that $\r_0$ is $(k-1)$--Anosov. Thus Propositions \ref{prop.limit of k-par hyp} and \ref{cor.k-1 Anosov limit} yield that $\r_0$ is also $\{1,\ldots,k-2\}$--Anosov.
\end{proof}

\subsection{Coherence of the boundary map in the limit}\label{s.tecnical}
In this technical section we show that $k$--positive representations satisfy a transversality property similar to property $C_k$ which guarantees that certain projections to $\R\P^2$ parametrize the boundary of strictly convex domains (Corollary \ref{cor.strict convex for mixed quotient}). This is the key technical result needed in the proof that limits of $k$-positive representations are $(k-2)$-coherent (Proposition \ref{lem.limit boundary map in the same factor}).

\begin{center}
	\begin{figure}[h]	
		\begin{tikzpicture}
			\draw (1,0) circle [radius =1];
			\filldraw (0,0) circle [radius=1pt] node [left] {$x$};
			\filldraw (0.2,0.59) circle [radius=1pt] node [left] {$y_0$};
			\filldraw (0.6,0.9) circle [radius=1pt] node [above ] {$y$};
			\filldraw (1.4,0.9) circle [radius=1pt] node [above] {$z$};
			\filldraw (1.8,0.59) circle [radius=1pt] node [right] {$z_0$};
			\filldraw (2,0) circle [radius=1pt] node [right] {$w$};
			\filldraw (0.6,-0.92) circle [radius=1pt] node [below] {$u$};
		\end{tikzpicture}
		\caption{The configuration of points in Lemma \ref{lem.strict convex for mixed quotient}}\label{figure:lem.strict convex for mixed quotient}
	\end{figure}
\end{center}
\begin{lem}\label{lem.strict convex for mixed quotient}
Let $\r$ be $k$--positive and $(x,y_0,z_0,w)\in \dg^{(4)}$ be cyclically oriented (see Figure \ref{figure:lem.strict convex for mixed quotient}). 
Let $\mathcal{U}$ be the set of $ u\in (x,w]_{y_0}$ such that the sum
$$(x^{d-k+1}\cap u^{d-1})+y^j + z^{k-j}$$
is direct for all $y\neq z\in [y_0,z_0]_x$ and $j=0,\ldots,k$.
Then $\mathcal{U}$ is open and contains a  set of the form $(x,t)_{y_0}$ for some $t\in (x,w]_{y_0}$.
\end{lem}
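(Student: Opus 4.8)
The plan is to first rewrite the direct-sum condition defining $\mathcal{U}$ in terms of the positive curves attached to $\rho$, so that openness becomes an instance of the general fact that transversality is an open condition, and membership of a right neighbourhood of $x$ becomes a continuity statement for sums of boundary subspaces. Concretely, fix $(x,y_0,z_0,w)\in\dg^{(4)}$ in this cyclic order. For $u\in(x,w]_{y_0}$ the subspace $x^{d-k+1}\cap u^{d-1}$ has constant dimension $d-k$: indeed $\rho$ is $k$--positive, hence $(d-k+1)$--tridirect in the relevant range, so $x^{d-k+1}\tv u^{d-1}$ is transverse to $u^{d-1}$ when... more carefully, one uses Proposition \ref{prop.hyperconvex properties}: the sum $x^{d-k+1}+u^{d-1}$ is not direct (dimensions exceed $d$), but the intersection has the expected dimension $d-k$ because $x^{d-k+1}+u^{d-k}$ is direct (here $n_1=d-k+1$, $n_2=d-k$, and $n_2\ge d-k$, so Proposition \ref{prop.hyperconvex properties} applies). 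So $V_u:=x^{d-k+1}\cap u^{d-1}$ varies continuously with $u$ on $(x,w]_{y_0}$, being the kernel of the composite $x^{d-k+1}\hookrightarrow E\twoheadrightarrow E/u^{d-1}$, a continuously varying linear map of constant rank $1$.

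Next I would reinterpret the direct-sum condition $V_u+y^j+z^{k-j}$ direct. Pass to the quotient $X_u:=E/V_u$, which has dimension $k$; the condition says exactly that $[y^j]_{X_u}\tv[z^{k-j}]_{X_u}$ for all $j$, i.e. that the pair $([y^{\,\cdot}]_{X_u},[z^{\,\cdot}]_{X_u})$ is a transverse pair of partial flags in $X_u$ (these are partial flags since $y^j,z^{k-j}$ are nested in $y^k,z^k$ and one checks $y^\cdot\cap V_u=\{0\}$, $z^\cdot\cap V_u=\{0\}$ for $j\le k$ using again hyperconvexity, $V_u\subset x^{d-k+1}$ being transverse to $y^k$ and to $z^k$). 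Now \emph{openness of $\mathcal{U}$}: the map $(u,y,z)\mapsto(V_u,y^j,z^{k-j})$ is continuous on $(x,w]_{y_0}\times\{(y,z)\in[y_0,z_0]_x^2:y\ne z\}$, transversality is an open condition in the Grassmannian, and $[y_0,z_0]_x^{(2)}$ is... not compact, so I cannot just take a uniform neighbourhood directly — instead I argue pointwise: for $u_0\in\mathcal{U}$ and each $(y,z)$ the transversality persists for $u$ near $u_0$; to get a neighbourhood of $u_0$ working for \emph{all} $(y,z)$ simultaneously, I use that the $\Gamma$-action is cocompact on $\dg^{(3)}$ together with equivariance, exactly as in Proposition \ref{lem.direct reps are open}: the bad set $\{u : \exists (y,z),\ V_u+y^j+z^{k-j}\text{ not direct}\}$ is closed because its complement's defining condition, after translating the triple $(x,y,z)$ into a fixed compact fundamental domain, reduces to an open condition on finitely many continuous data. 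This gives openness.

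Finally, \emph{$\mathcal{U}$ contains a right neighbourhood of $x$}. As $u\to x^+$ (within $(x,w]_{y_0}$) I claim $V_u\to x^{d-k}$. Indeed $V_u\subset x^{d-k+1}$ has dimension $d-k$, and $V_u=x^{d-k+1}\cap u^{d-1}\supset$ nothing forced, but any accumulation point $V_\infty$ of $\{V_u\}$ satisfies $V_\infty\subset x^{d-k+1}$ and $V_\infty\subset\lim u^{d-1}$; since $\rho$ is $(d-1)$--Anosov, $\lim_{u\to x}u^{d-1}=x^{d-1}$, and $x^{d-k+1}\cap x^{d-1}=x^{d-k+1}$ has dimension $d-k+1>d-k$, which is not yet enough — so instead I invoke Proposition \ref{prop.continuity of sums}: $\rho$ being $(1,d-k,k-1)$--direct... more to the point, being $(k-1,1,d-k)$--direct and $(1,\ldots)$-direct, one has $u^1+x^{d-k}\to x^{d-k+1}$, hence in the quotient $E/x^{d-k}$ the line $[u^1]$ tends to $[x^{d-k+1}]$; dualizing, $u^{d-1}\cap x^{d-k+1}\to x^{d-k}$. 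Therefore $V_u\to x^{d-k}$, so $[y^j]_{X_u}\tv[z^{k-j}]_{X_u}$ converges to the condition that $([y^j]_{E/x^{d-k}})\tv([z^{k-j}]_{E/x^{d-k}})$, which holds for all $j=0,\ldots,k$ and all $y\ne z\in[y_0,z_0]_x$ precisely because $\rho$ is $k$--positive (this is the $x^{d-k}$-projection $\pi_{[x]}$ being a transverse curve, Lemma \ref{5.9}). By openness of transversality, a right neighbourhood of $x$ lies in $\mathcal{U}$.

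\textbf{Main obstacle.} The delicate point is not any single transversality but getting the \emph{uniformity in $(y,z)$} needed for openness, since $[y_0,z_0]_x$ is not a closed subset of $\dg$ and its points can run off towards $x$ or $w$ where $V_u$ itself degenerates. I expect to handle this exactly as in the proof of Proposition \ref{lem.direct reps are open}/\cite[Proposition 6.2]{PSW1}: use $\Gamma$-equivariance to move the triple $(x,y,z)$ into a compact region of $\dg^{(3)}$, translating the condition on $V_u$ into a condition on $\rho(\gamma)^{-1}V_u$ for $\gamma$ ranging over a set that is bounded once $(x,y,z)$ is confined, and then compactness of that region delivers the uniform neighbourhood of $u_0$.
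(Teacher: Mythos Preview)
Your overall structure is right, and the second half (a right neighbourhood of $x$) matches the paper: you correctly argue that $V_u:=x^{d-k+1}\cap u^{d-1}\to x^{d-k}$ as $u\to x$ (this is exactly $k^*$--tridirectness plus Proposition~\ref{prop.continuity of sums} applied to the dual), and that $x^{d-k}$ is transverse to every $y^j+z^{k-j}$ by Lemma~\ref{5.9}.

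For openness, however, you have misdiagnosed the obstacle and proposed an unnecessarily heavy (and, as written, not obviously workable) fix. First, $[y_0,z_0]_x$ \emph{is} a closed arc of $\dg$; since $(x,y_0,z_0,w)$ are in cyclic order, neither $x$ nor $w$ lies in it, so ``points running off towards $x$ or $w$'' is not the issue. The only non-compactness in the parameter set $\{(y,z)\in[y_0,z_0]_x^2:y\neq z\}$ comes from the diagonal $y\to z$. The paper handles this in one line: by Proposition~\ref{prop.continuity of sums} the sum $y^j+z^{k-j}\to a^k$ as $y,z\to a$, so the image
\[
\mathcal V:=\{\,y^j+z^{k-j}\in\Gr_k(E)\;:\;y\neq z\in[y_0,z_0]_x,\ j=0,\ldots,k\,\}
\]
has compact closure in $\Gr_k(E)$. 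Since transversality to a compact set is open, if $V_{u_0}\in\Gr_{d-k}(E)$ is transverse to every element of $\mathcal V$ then so is $V_u$ for $u$ near $u_0$. This is the whole argument, and it also gives the right neighbourhood of $x$ once you know $V_u\to x^{d-k}$.

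Your proposed route via $\Gamma$--cocompactness on $\dg^{(3)}$ is problematic here: the subspace $V_u$ depends on the fixed pair $(x,u)$, and an element $\gamma$ bringing $(x,y,z)$ into a fundamental domain moves $V_u$ to $(\gamma^{-1}x)^{d-k+1}\cap(\gamma^{-1}u)^{d-1}$; as $y\to z$ the needed $\gamma$ becomes unbounded, so you no longer control $\gamma^{-1}u$ relative to $\gamma^{-1}x$, and the desired uniform neighbourhood of $u_0$ is not delivered. Replace this with the compactness of $\mathcal V$ and the proof is complete.
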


\begin{proof}
Since $y^j + z^{k-j}\to a^k$ if $y,z\to a$ (Proposition \ref{prop.continuity of sums}), the set
$$\mathcal{V}:=\{y^j + z^{k-j} \in \Gr_k(E) | y\neq z\in [y_0,z_0]_x, j=0,\ldots, k\}$$
is {closed} and thus compact. Since transversality is an open condition, we find, for any $(d-k)$-dimensional vector space $W$ transverse to every $V\in \mathcal{V}$, an open neighbourhood $\calW$ of $W$ in $\Gr_{d-k}(E)$ such that all points in $\calW$ are transverse to all points of $\mathcal{V}$. This proves openness.

Since $\rho$ is $k^*$--tridirect, Proposition \ref{prop.continuity of sums} applied to the contragradient representation gives $x^{d-k+1}\cap u^{d-1}\to x^{d-k}$ as $u$ tends to $x$. Since $x^{d-k}$ is transverse to all elements in $\mathcal{V}$ (Lemma \ref{5.9}), the above argument also shows that $\mathcal{U}$  contains the interesection of $(x,w]_{y_0}$ with a  neighbourhood of $x$.
\end{proof}
We denote by $\mathcal{U}_0\subset \mathcal{U}$ the maximal connected subset for which Lemma \ref{lem.strict convex for mixed quotient} holds.
\begin{prop}\label{p.5.17}
Let $\r$ be $k$--positive and $(x,y_0,z_0,w)\in \dg^{(4)}$ be in that order. Then for any $u\in \mathcal U_0\subset (x,w]_{y_0}$ the map 
$$\begin{array}{rl}
\displaystyle \pi^u&:[y_0,z_0]_x\cup \{x\}\to \calF\left(\quotient{E}{(x^{d-k+1}\cap u^{d-1})}\right)\\\\
&\left\{\begin{array}{l}
\displaystyle \pi^u(y):=([y^1],\ldots,[y^{k-1}]),\quad y\neq x\\
\displaystyle\pi^u(x):=([x^{d-k+1}],\ldots,[x^{d-1}])
\end{array}
\right.
\end{array}$$
{ is positive.}
\end{prop}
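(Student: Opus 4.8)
The plan is to deform from the base point $u = x$, where the statement reduces to the positivity of the $x^{d-k}$-projection $\pi_{[x]}$ of a $k$-positive representation (Lemma \ref{5.9}), and then propagate positivity along the connected neighbourhood $\mathcal{U}_0$ using Corollary \ref{cor.deform positive tuples}. First I would fix an auxiliary point $x' \in \mathcal{U}_0$ and consider, for each $u \in \mathcal U_0$, the curve $\pi^u$ restricted to an arbitrary positively oriented $(n)$-tuple $(y_1,\dots,y_n)$ in $[y_0,z_0]_x \cup \{x\}$; since positivity of a curve is checked on finite tuples (Corollary \ref{cor.pos tuples}), it suffices to show $(\pi^u(y_1),\dots,\pi^u(y_n))$ is a positive $n$-tuple in $\calF^n(\quotient{E}{(x^{d-k+1}\cap u^{d-1})})$. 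I will regard $u \mapsto (x^{d-k+1}\cap u^{d-1})$ as a continuous path in $\Gr_{d-k}(E)$ on $\mathcal U_0$, and identify the varying quotient spaces $\quotient{E}{(x^{d-k+1}\cap u^{d-1})}$ via a continuously varying family of linear isomorphisms (e.g. using a local smooth section of the bundle, which exists since $\Gr_{d-k}(E)$ is a manifold); under this identification $u\mapsto \pi^u(y_i)$ becomes a continuous path $c_i(u)$ in a fixed flag manifold $\calF(\R^{d-k})$.

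The key point is then that the $n$-tuple $(c_1(u),\dots,c_n(u))$ is \emph{transverse} for every $u \in \mathcal U_0$: this is exactly what the defining condition of $\mathcal U$ buys us. Indeed, for $y\neq z$ in $[y_0,z_0]_x$ and the relevant partial flag indices, the directness of $(x^{d-k+1}\cap u^{d-1})+y^j+z^{k-j}$ translates, after passing to the quotient $\quotient{E}{(x^{d-k+1}\cap u^{d-1})}$, into transversality of the corresponding steps of $\pi^u(y)$ and $\pi^u(z)$; together with the fact that $\rho$ is $k^*$--tridirect (Proposition \ref{prop.k-pos implies tridirect}), which gives directness of $(x^{d-k+1}\cap u^{d-1})+y^j+x^{d-k}$ and hence transversality against the flag $\pi^u(x)$, all pairs in the $n$-tuple are transverse. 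So the continuous path $u\mapsto (c_1(u),\dots,c_n(u))$ stays inside the transverse locus $\calF^{n}(\R^{d-k})^{\pitchfork}$ for all $u\in\mathcal U_0$. Since $\mathcal U_0$ is a connected interval with endpoint $x$ in its closure (Lemma \ref{lem.strict convex for mixed quotient}), and since at $u = x$ the tuple $(\pi^x(y_1),\dots,\pi^x(y_n))$ is the image under $\pi_{[x^{d-k}]}$ of a positively oriented $n$-tuple, hence positive by Lemma \ref{5.9}, Corollary \ref{cor.deform positive tuples} propagates positivity of the $n$-tuple to every $u$ in the path from $x$ to the given point — using that the set of positive tuples transverse to two fixed flags is a union of connected components of the transverse locus (Theorem \ref{t.ccomp}).

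I expect the main obstacle to be the bookkeeping around the varying ambient space $\quotient{E}{(x^{d-k+1}\cap u^{d-1})}$: Corollary \ref{cor.deform positive tuples} is stated for paths in a fixed flag manifold $\calF^l(E)$, so I need to make precise that choosing a continuous trivialisation of the family $\{\quotient{E}{(x^{d-k+1}\cap u^{d-1})}\}_{u\in\mathcal U_0}$ turns $u\mapsto \pi^u(y_i)$ into such a path, and that the notion of positivity of a flag tuple is invariant under a fixed linear isomorphism of the ambient vector space (so it does not matter which trivialisation is chosen, only that one exists). The transversality verification is routine once one unwinds the definitions of the projection and of $\mathcal U$, and the continuity of $u\mapsto \pi^u(y)$ on $[y_0,z_0]_x\setminus\{x\}$ follows because $u^{d-1}$ and the Anosov boundary maps vary continuously and all the relevant intersections have constant dimension by transversality; continuity at $y=x$ is handled as in Lemma \ref{5.9} via Proposition \ref{prop.continuity of sums}, using that $y^{d-k+j}$ is the limit of $y'^{j}+x^{d-k}$, pushed to the quotient. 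Finally, the case distinction over all positively oriented $n$-tuples is absorbed by Corollary \ref{cor.pos tuples}, so it is enough to run the deformation argument once for a single such tuple.
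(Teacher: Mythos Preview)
Your proposal is correct and follows essentially the same route as the paper: start at $u=x$ where $\pi^x=\pi_{[x^{d-k}]}$ is positive by Lemma \ref{5.9}, use the defining condition of $\mathcal U$ to guarantee pairwise transversality of the flags $\pi^u(y_i)$ for every $u\in\mathcal U_0$, and then apply Corollary \ref{cor.deform positive tuples} along the connected interval $\mathcal U_0$. The paper implements your ``continuous trivialisation of the family of quotients'' concretely by choosing a continuous path $g_{\cdot}:\mathcal U_0\to\PGL(E)$ with $g_x=\mathrm{id}$ and $g_u(x^{d-k+1}\cap u^{d-1})=x^{d-k}$, so that $\ov g_u\circ\pi^u=\pi_{[x]}\circ g_u$ lands in the fixed flag manifold $\calF(\quotient{E}{x^{d-k}})$; this is exactly the bookkeeping step you flagged. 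One small simplification: transversality of $\pi^u(x)$ against $\pi^u(y)$ does not require $k^*$--tridirectness, since $(x^{d-k+1}\cap u^{d-1})\subset x^{d-k+j}$ for $j\geq 1$, so the needed directness reduces to $x^{d-k+j}\oplus y^{k-j}=E$, which is just the Anosov transversality.
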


\begin{proof}

The transversality assumptions in the definition of the set $\mathcal{U}$ (and the transversality of the Anosov boundary map) imply that $\pi^u(x)\tv\pi^u(y)$ and $\pi^u(y)\tv\pi^u(z)$ for $y\neq z\in[y_0,z_0]_x$.
Choose a 
continuous map $g_{\cdot}:\mathcal U_0\to\PGL(E)$ such that $g_u(x^{d-k+1}\cap u^{d-1})=x^{d-k}$ for all $u$ (and $g_x=\rm{id}$). Since $g_u$ induces an isomorphism 
 $$\ov g_u:\calF\left(\quotient{E}{(x^{d-k+1}\cap u^{d-1})}\right)\to \calF\left(\quotient{E}{x^{d-k}}\right)$$ such that $\ov g_u\circ \pi^u=\pi_{[x]}\circ g_u\circ\xi$, and $\ov g_u$ preserves transversality,
$$\pi_{[x]}\circ g_u\xi(x)\tv \pi_{[x]}\circ g_u\xi(y) \quad \text{ and }\quad \pi_{[x]}\circ g_u\xi(y)\tv \pi_{[x]}\circ g_u\xi(z).$$
Where we extended $\pi_{[x]}$ to $\xi(x)$ using Lemma \ref{5.9}.

Since positivity is preserved along continuous transverse maps (Corollary \ref{cor.deform positive tuples}), $$\pi_{[x]}\circ g_u\circ\xi|_{[y_0,z_0]_x\cup \{x\}}$$ 
maps positively oriented tuples to positive tuples for all $u\in\mathcal U_0$. This implies that $\pi^u$ is positive.
\end{proof}

\begin{lem}
In the notation of Lemma \ref{lem.strict convex for mixed quotient}, $\mathcal{U}_0=(x,w]_{y_0}$.
\end{lem}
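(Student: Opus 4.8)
The plan is to show $\mathcal U_0$ is closed in $(x,w]_{y_0}$; since it is open (Lemma \ref{lem.strict convex for mixed quotient}) and non-empty, connectedness of the interval then forces $\mathcal U_0=(x,w]_{y_0}$. So suppose, for contradiction, that $\mathcal U_0=(x,u_0)_{y_0}$ for some $u_0\in (x,w]_{y_0}$. The goal is to derive that $u_0$ itself lies in $\mathcal U$, contradicting maximality; that is, to show the sum $(x^{d-k+1}\cap u_0^{d-1})+y^j+z^{k-j}$ is direct for all $y\ne z\in[y_0,z_0]_x$ and all $j=0,\dots,k$.

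First I would fix $y\ne z\in[y_0,z_0]_x$ and $j$, and consider the line $\ell(u):=x^{d-k+1}\cap u^{d-1}\in\Gr_{d-k}(E)$ (more precisely the $(d-k)$-plane obtained this way; note $\dim(x^{d-k+1}\cap u^{d-1})=d-k$ as long as $u\ne x$ and transversality of the Anosov maps holds). For $u\in\mathcal U_0$, Proposition \ref{p.5.17} says $\pi^u$ is positive, and in particular the truncated flag data is transverse, so $\ell(u)+y^j+z^{k-j}$ is direct, equivalently $\cro$-type quantities are finite; but more usefully, positivity is an \emph{open and closed} condition among transverse configurations (Corollary \ref{cor.deform positive tuples}, Theorem \ref{t.ccomp}). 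As $u\to u_0$ from the left, $\ell(u)\to \ell(u_0)=x^{d-k+1}\cap u_0^{d-1}$ by continuity of the Anosov boundary maps (they are continuous, and the intersection of two transverse-in-a-fixed-quotient subspaces depends continuously on them). Hence $\pi^{u_0}$ is the pointwise limit of the positive curves $\pi^u$; by continuity of the relevant minors/cross-ratios, the limiting tuples are at worst on the boundary of the positive cone, i.e. some minor could vanish — this is exactly where directness of $\ell(u_0)+y^j+z^{k-j}$ must be ruled out.

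The main obstacle, therefore, is to exclude that a transversality degenerates precisely at $u=u_0$. Here I would exploit that $\r$ is $k$--positive, hence $\{k,k^*\}$--tridirect (Proposition \ref{prop.k-pos implies tridirect}), and in particular $(p,q,l)$--direct for all $(p,q,l)$ with $l\ge d-k$. The key point is that the potential failure at $u_0$ is a failure of $\ell(u_0)+y^j+z^{k-j}$ being direct, where $\ell(u_0)=x^{d-k+1}\cap u_0^{d-1}$. I would argue that $\ell(u_0)$ contains $x^{d-k}$ as a codimension-one subspace only in the limit $u_0\to x$; for $u_0$ bounded away from $x$, $\ell(u_0)$ sits inside $u_0^{d-1}$, and I would use a cross-ratio computation in the style of Proposition \ref{prop.projection of cross ratio}: the positivity inequalities $\cro_j(\cdots)\ge 1$ along the converging family, combined with the cocycle identities of Proposition \ref{prop.property of grassmannian cro}, force the limiting cross ratio to be a \emph{finite} positive number (not $0$ or $\infty$), which by items (4)--(5) of that proposition is equivalent to the required transversality. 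Concretely: pick an auxiliary point and write the relevant directness as non-vanishing of a minor; express that minor as a cross ratio of boundary flags at $x,u,y,z$; the positive-curve property of $\pi^{u}$ for $u\in\mathcal U_0$ gives uniform bounds bounding this cross ratio away from $0$ and $\infty$; pass to the limit $u\to u_0^-$ to conclude the same at $u_0$. This gives $u_0\in\mathcal U$, and since $\mathcal U$ is open, a whole right neighbourhood of $u_0$ lies in $\mathcal U$ and is connected to $x$ through $u_0$, contradicting that $\mathcal U_0$ was the maximal connected right neighbourhood. Hence $\mathcal U_0=(x,w]_{y_0}$.

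One subtlety I would be careful about: the set $\mathcal V$ appearing in Lemma \ref{lem.strict convex for mixed quotient} is compact, so the family $\{y^j+z^{k-j}\}$ of $k$-planes to be avoided is uniformly transverse to any fixed transverse $(d-k)$-plane; combined with continuity of $u\mapsto \ell(u)$ on $(x,w]_{y_0}$, the only way $\mathcal U_0$ fails to be all of $(x,w]_{y_0}$ is if $\ell(u_0)$ lands exactly on the ``bad'' locus, which the cross-ratio positivity argument above rules out. I expect the writing to be short once the cross-ratio identity is set up correctly; the genuine content is that positivity of $\pr_j$ is a closed condition under the transverse degenerations allowed here, which is Corollary \ref{cor.deform positive tuples} together with the tridirectness of $\r$ guaranteeing that no transversality among the relevant subspaces is lost along the path.
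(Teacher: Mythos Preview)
Your overall strategy---show $\mathcal U_0$ is closed---matches the paper, but the argument you sketch has a genuine gap at the crucial step. You claim that ``the positive-curve property of $\pi^u$ for $u\in\mathcal U_0$ gives uniform bounds bounding this cross ratio away from $0$ and $\infty$,'' and then invoke Corollary \ref{cor.deform positive tuples} together with tridirectness. But this is circular: Corollary \ref{cor.deform positive tuples} \emph{requires} transversality along the whole path as a hypothesis, which is precisely what you are trying to establish at $u_0$. Positivity of $\pi^u$ for $u<u_0$ only says certain minors are strictly positive \emph{for each such $u$}; nothing prevents them from tending to zero as $u\to u_0$. Tridirectness of $\rho$ gives directness of sums with $x^{d-k}$, not with the varying subspace $x^{d-k+1}\cap u^{d-1}$, so it does not supply the missing transversality either.

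The paper's proof does not attempt to bound cross ratios. Instead it assumes a failure of directness at $u_0$ for some minimal $(l+1,k-l-1)$ at points $a,b$, normalizes so that $W=(x^{d-k+1}\cap u_n^{d-1})+a^l+b^{k-l-2}$ is constant, and studies the injective continuous curves $\xi_n^W:[y_0,z_0]_x\to\P(E/W)\cong\R\P^1$ built from $\pi^{u_n}$. In the limit the two points $[a^{l+1}]$ and $[b^{k-l-1}]$ collapse, forcing $u^1_{\rho_0}$ to lie in a fixed hyperplane for all $u\in(a,b)_x$. This is then fed into Proposition \ref{prop.labourie-irreducible} (via coherence) to contradict strong irreducibility. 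The essential input you are missing is this Labourie-type argument: one cannot rule out the degeneration by positivity alone; one needs irreducibility to exclude the collapse.
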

\begin{proof}
It is enough to show that $\mathcal{U}_0$ is closed. As in the proof of Proposition \ref{prop.direct reps closed among Anosov} we will show that the failure of transversality at a point $u_0\in\partial \mathcal U_0$ contradicts strong irreducibility of the dominating factor. Let $\{u_n\}_{n\in \N}$ be a sequence of points in $\mathcal{U}_0$ converging to $u_0$, and let $a\neq b\in [y_0,z_0]_x$ such that the sum
$$ (x^{d-k+1}\cap u_0^{d-1})+a^{l+1}+b^{k-l-1}$$
is not direct (as always we assume $l$ minimal for which such points exist, $l\geq 0$ since, as the representation is $k^*$ tridirect, $(x^{d-k+1}\cap u_0^{d-1})\pitchfork b^{k}$). 
 We choose a sequence $\{h_n\}\subset\PGL(E)$ converging to the identity such that 
$$W:= h_n\left((x^{d-k+1}\cap u_n^{d-1})+a^l+b^{k-l-2}\right)$$ does not depend on $n$ and  
$$h_n(x^{d-k+1}\cap u_n^{d-1})=(x^{d-k+1}\cap u_0^{d-1}).$$
Let
 $\xi_n^W:[y_0,z_0]_x\to \P(\quotient{E}{W})$ be the map given, with slight abuse of notation, by
$$\left\{\begin{array}{l}
\xi_n^W(y)\:= [h_n y^1],\quad y\neq a,b\\
\xi_n^W(a):= [h_n a^{l+1}],\\
 \xi_n^W(b):= [h_n b^{k-l-1}].
\end{array}\right.$$
For every $n\neq  0$ this map is injective and continuous: The projection of $\xi|_{[y_0,z_0]_x}$ to $\calF( \quotient{E}{x^{d-k+1}\cap u_n^{d-1}})$ is a positive curve by Proposition \ref{p.5.17}. In particular we can apply the fact that a quotient of a positive curve is again positive (Proposition \ref{prop.positive-quotient}, cfr. also Proposition \ref{prop.hyperconvex properties}) to get injectivity when considering the quotient by $(x^{d-k+1}\cap u_n^{d-1})+a^l+b^{k-l-2}$. This quotient map is clearly continuous at points different form $a,b$ and the continuity at $a,b$ follows from the fact that $a^l+y^1\to a^{l+1}$ for $y\to a$ (Proposition \ref{prop.continuity of sums}) and similarly for $b$. Now $h_n$ intertwines taking the quotient with $(x^{d-k+1}\cap u_n^{d-1})+a^l+b^{k-l-2}$ and with $(x^{d-k+1}\cap u_0^{d-1})+a^l+b^{k-l-2}$ and thus the same result follows for $\xi_n^W$.

The hyperconvexity of the boundary map (Proposition \ref{prop.hyperconvex properties}) implies that the image of  $\xi_n^W$ does not contain $[h_n x^{d-k+1}]\to [x^{d-k+1}]$ in its image, and $[x^{d-k+1}]\tv [a^{l+1}]=[ b^{k-l-1}]$. Thus   
$$y^1\in (x^{d-k+1}\cap u_0^{d-1})+a^{l+1}+b^{k-l-2}$$ 
for every $y\in (a,b)_x$. We can proceed now as in the last paragraph of the proof of Proposition \ref{prop.direct reps closed among Anosov} to produce a contradiction.
\end{proof}

\begin{cor}\label{cor.strict convex for mixed quotient}
Let $\r$ be $k$--positive.  For ordered $(x,y_0,z_0,w)\in \dg^{(4)}$ the curve
\begin{align*}
p^{(1)}:& (y_0,z_0)_x\to \P\left(\quotient{x^{d-k+3}}{(x^{d-k+1}\cap w^{d-1})}\right)\\
&p^{(1)}(y):= [y^{k-2}\cap x^{d-k+3}]
\end{align*}
has  $C^1$ image with tangents 
$p^{(2)}(y):= [y^{k-1}\cap x^{d-k+3}]$ and parametrizes the boundary of a strictly convex domain.
\end{cor}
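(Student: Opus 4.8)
The strategy is to reduce Corollary~\ref{cor.strict convex for mixed quotient} to the already--established convexity statement Proposition~\ref{prop.Ck yields convex curve}, after replacing the quotient by $x^{d-k}$ with the quotient by the slightly different subspace $x^{d-k+1}\cap w^{d-1}$. The previous lemma (with $\mathcal U_0=(x,w]_{y_0}$) tells us that $w$ itself lies in the set $\mathcal U$, so the sum
\[
(x^{d-k+1}\cap w^{d-1})+y^j+z^{k-j}
\]
is direct for all distinct $y,z\in[y_0,z_0]_x$ and all $j=0,\dots,k$; in particular the curve $\pi^w$ of Proposition~\ref{p.5.17} is a positive curve from $[y_0,z_0]_x\cup\{x\}$ into $\calF(\quotient{E}{(x^{d-k+1}\cap w^{d-1})})$. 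So the plan is: first invoke Proposition~\ref{p.5.17} for $u=w$ to get this positive flag curve $\pi^w$; then apply the ``quotient of a positive curve is positive'' principle (Proposition~\ref{prop.positive-quotient}) together with the transversality facts about positive triples (Proposition~\ref{prop.transversalities of pos triples}) to the further projection that collapses down to the $3$-dimensional quotient $\quotient{x^{d-k+3}}{(x^{d-k+1}\cap w^{d-1})}$; finally translate the positivity of a curve into $\R\mathbb P^2$ into the strict convexity plus $C^1$ statement, exactly as is done in the proof of Proposition~\ref{prop.Ck yields convex curve}.

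\textbf{Key steps, in order.}
\emph{(1)} By the previous lemma $\mathcal U_0=(x,w]_{y_0}$, so $w\in\mathcal U$ and Proposition~\ref{p.5.17} gives that
\[
\pi^w:[y_0,z_0]_x\cup\{x\}\to \calF\!\left(\quotient{E}{(x^{d-k+1}\cap w^{d-1})}\right)
\]
is a positive curve; denote $E':=\quotient{E}{(x^{d-k+1}\cap w^{d-1})}$, a space of dimension $k$ (since $x^{d-k+1}\cap w^{d-1}$ has dimension $d-k$ by $k^*$-tridirectness of $\r$, Proposition~\ref{prop.k-pos implies tridirect}). The flag $\pi^w(x)$ has $j$-th term $[x^{d-k+j}]$ in $E'$, so in particular its codimension-$3$ term is $[x^{d-k+3}]$.
\emph{(2)} Apply the projection of the positive curve $\pi^w$ by the codimension-$3$ flag term of $\pi^w(x)$, i.e.\ collapse $E'$ by $[x^{d-k+3}]\subset E'$ --- no, more precisely one should collapse \emph{onto} the quotient $\quotient{x^{d-k+3}}{(x^{d-k+1}\cap w^{d-1})}\cong E'{}^{(3)}$, the $3$-dimensional ``sub-object''. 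Concretely, since the first $2$ terms of $\pi^w(y)$ are $[y^{k-2}]$ and $[y^{k-1}]$ in $E'$, one considers the truncation of $\pi^w$ to the first two indices, intersected with the $3$-plane $[x^{d-k+3}]$; by the dual of Proposition~\ref{prop.positive-quotient} (quotients and truncations of positive curves are positive, cf.\ the truncation-projection duality, Lemma~\ref{lem.positivity and dual}) this induces a positive curve
\[
(y_0,z_0)_x\ \longrightarrow\ \calF\!\left(\quotient{x^{d-k+3}}{(x^{d-k+1}\cap w^{d-1})}\right)\cong\calF(\R^3),
\]
whose $1$-dimensional term at $y$ is $p^{(1)}(y)=[y^{k-2}\cap x^{d-k+3}]$ and whose $2$-dimensional term at $y$ is $[y^{k-1}\cap x^{d-k+3}]$.
\emph{(3)} A positive curve into $\calF(\R^3)$ has $C^1$, strictly convex first-coordinate curve in $\R\mathbb P^2$ with tangent line given by the second-coordinate curve --- this is exactly the content extracted in the proof of Proposition~\ref{prop.Ck yields convex curve} (positivity of triples of flags in $\R^3$ is equivalent to strict convexity of the associated points together with the correct nesting of the flags). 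This gives the $C^1$ strict convexity of $p^{(1)}$ with tangents $p^{(2)}(y)=[y^{k-1}\cap x^{d-k+3}]$, as claimed.

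\textbf{Main obstacle.} The substantive point, and the only place where real work hides, is step~(2): verifying that after replacing the ``clean'' quotient $\quotient{E}{x^{d-k}}$ by $\quotient{E}{(x^{d-k+1}\cap w^{d-1})}$ the relevant truncations/projections still land inside the domain of definition of the positive structure --- i.e.\ that all the intermediate flags remain pairwise transverse so that Proposition~\ref{prop.positive-quotient} applies. This is exactly what the set $\mathcal U$ and the previous lemma were designed to supply: the transversality of $(x^{d-k+1}\cap w^{d-1})$ with all sums $y^j+z^{k-j}$ guarantees that the $k$-tuple curve $\pi^w$ is genuinely a transverse positive curve, and hyperconvexity (Proposition~\ref{prop.hyperconvex properties}) guarantees $[x^{d-k+3}]$ is transverse to the relevant truncated terms, so that intersecting with $x^{d-k+3}$ is a well-defined operation producing a positive curve in $\calF(\R^3)$. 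Once those transversalities are in hand, the passage through Propositions~\ref{prop.positive-quotient} and~\ref{prop.transversalities of pos triples} and the $\R\mathbb P^2$-convexity dictionary is routine and parallels the proof of Proposition~\ref{prop.Ck yields convex curve} verbatim.
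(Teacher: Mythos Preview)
Your overall strategy is sound and uses the same key ingredient as the paper --- namely the positivity of $\pi^w$ from Proposition~\ref{p.5.17} --- but you organize the argument differently, and one step is not quite justified by what you cite. The paper's proof separates the two conclusions cleanly: the $C^1$ property of $p^{(1)}$ with tangents $p^{(2)}$ comes directly from property $H_{k-2}$ of $\rho$ (established in Proposition~\ref{prop.k-pos and Hk+Ck}) via Proposition~\ref{prop.1-hyperconvex for property Hk}, since $p^{(1)}$ is obtained from $\xi^{k-2}$ by smooth operations (intersection with a fixed subspace and projection). The \emph{strict} convexity is then reduced to checking $p^{(1)}(y)\tv p^{(2)}(z)$ for $y\neq z$, which the paper reads off from Proposition~\ref{prop.transversalities of pos triples}(3) applied to the positive triple $(\pi^w(y),\pi^w(z),\pi^w(x))$ in $\calF(\R^k)$, without ever passing to $\calF(\R^3)$.

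Your route instead tries to deduce \emph{both} $C^1$ and strict convexity from positivity of the truncated curve in $\calF(\R^3)$. This works in principle, but your justification in step~(3) is off: Proposition~\ref{prop.Ck yields convex curve} is stated for representations satisfying $H_k$ \emph{and} $C_k$, and its $C^1$ conclusion comes from the $H_k$ hypothesis (via Proposition~\ref{prop.1-hyperconvex for property Hk}), not from positivity alone. So appealing to ``the proof of Proposition~\ref{prop.Ck yields convex curve}'' does not give you $C^1$ for an arbitrary positive curve in $\calF(\R^3)$ --- you would still need to invoke property $H_{k-2}$ of $\rho$ (or argue separately that continuous positive curves in $\calF(\R^3)$ are automatically $C^1$, which is true but not established in the paper). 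Similarly, your step~(2) needs that \emph{truncations} of positive curves are positive; this is dual to Proposition~\ref{prop.positive-quotient}, but Lemma~\ref{lem.positivity and dual} only states this duality for representations. The paper sidesteps both issues by applying Proposition~\ref{prop.transversalities of pos triples}(3) directly in $\calF(\R^k)$.
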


\begin{proof} Since $\r$ satisfies property $H_{k-2}$ (Proposition \ref{prop.k-pos and Hk+Ck}), it follows with Proposition \ref{prop.1-hyperconvex for property Hk} that $p^{(1)}$ has $C^1$ image with tangents given by $p^{(2)}$. It remains to show that $p^{(1)}(y)$ is transverse to $p^{(2)}(z)$ for all $y\neq z\in (y_0,z_0)_x$ (cfr. \cite[Proposition 4.17]{Beyrer-Pozzetti}).
Since $\pi^w$ is positive (Proposition \ref{p.5.17}), we get from Proposition \ref{prop.transversalities of pos triples}(3) that
$$(\pi^w(x^{d-k+3})\cap \pi^w(y^{k-2}))+(\pi^w(x^{d-k+3})\cap \pi^w(z^{k-1}))$$
is direct. Observing that $p$ is the $\pi^w(x)^3$-truncation of $\pi^w$, this yields that  $p^{(1)}(y)+p^{(2)}(z)$ is direct.
\end{proof}

We now have all the ingredients to prove Proposition \ref{lem.limit boundary map in the same factor}: reductive limits of $k$-positive representations are coherent.

\begin{proof}[Proof of Proposition \ref{lem.limit boundary map in the same factor}]
	Let $E_1$ be irreducible with $\xi^1_{\r_0}(\dg)\subset \P(E_1)$, and consider the splitting $\r_0=\eta_1\oplus \eta_2:\G\to\PGL(E_1\oplus E_2)$.
	For the sake of contradiction, let $j+1\leq k-2$ be minimal such that for some (and hence, by Lemma  \ref{lem.splitting boundary map for reducible reps}, for all) $x\in\dg$, it holds $\xi^{j+1}_{\r_0}(x)\cap E_2\neq \{0\}$; {{equivalently $j$ is maximal with $E_2<\xi^{d-j}_{\rho_0}(x)$.}}
	
	Fix $\g\in\G$ and  consider the projective plane $\P\left(\quotient{V}{W}\right)$, where
	\begin{align*}
		V&:=
		(\g_+)_{\r_n}^{d-j+1},\\
		W&:=
		(\g_+^{d-j-1})_{\r_n}\cap (\g_-^{d-1})_{\r_n},
	\end{align*}
	here, as always, we assume, up to conjugating $\r_n$ by  a sequence $g_n\in\PGL(E)$ converging to the identity, that the two subspaces do not depend on $n$, and denote by $[\cdot]:V\to \quotient{V}{W}$ the natural projection.
	
	Let $(\g_+,y,z,\g_-)\in\dg^{(4)}$ be in that order on $\dg$. By Corollary \ref{cor.strict convex for mixed quotient}, the map
	$$p_n^{(1)}:(y,z)_{\g_+}\to \P\left(\quotient{V}{W}\right), \quad x\mapsto [x_{\r_n}^j\cap V]$$
	has $C^1$ image with tangents given by $p_n^{(2)}(x)=[x_{\r_n}^{j+1}\cap V]$ and parametrizes the boundary of a strictly convex domain.
	
	On the other hand, the map $p_0^{(1)}$  is  non-constant and has image in the  projective line $[E_1]$, while all lines $p_0^{(2)}$ pass through the point $[E_2]$ which is disjoint from the real projective line $[E_1]$: indeed the image $[E_2]$ is reduced to a single point since $E_2\subset \g_+^{d-j}\cap\g_-^{d-1}$ and $\g_+^{d-j-1}\cap E_2$ has codimension 1 in $E_2$. The projection $[E_1]$ then gives a supplementary real projective line. The image of $p_0^{(1)}$ is contained in $[E_1]$ since $x_{\r_0}^j<E_1$, and is not constant  because $\eta_1$ is irreducible and thus strongly irreducible, and thus Proposition \ref{prop.labourie-irreducible} applies. Furthermore $[E_2]\in p_0^{(2)}(x)$ since $x^{j+1}$ meets $E_2$ by assumption. 
	
	\begin{figure}[h]
		\begin{tikzpicture}
			\draw (-2,-1.3) -- (-2,1.3);
			\node at (-2.3,-1.1) [below] {$[E_1]$};
			\filldraw (0,0) circle [radius=1pt] node [above] {$[E_2]$};
			\filldraw (-1.95,0.8) circle [radius=1pt] node [right] {$p_n^{(1)}(h_1^+)$};
			\filldraw (-2.1,0) circle [radius=1pt] node [left] {$p_n^{(1)}(h_2^+)$};
			\filldraw (-1.9,-0.8) circle [radius=1pt] node [right] {$p_n^{(1)}(h_3^+)$};
			\draw (-2.2,0) -- (1,-0.2);
			\node at (1,-0.2) [right] {$p_n^{(2)}(h_2^+)$};
		\end{tikzpicture}
		\caption{The points $p_n(h_i^+)$ for $i=1,2,3$ can not be joined by a convex curve which is at $p_n(h_2^+)$ tangent to $p_n^{(2)}(h_2^+)$. 
		}\label{fig.proof of Anosov limit for par hyp}
	\end{figure}
	The two pictures are contradictory since $p_n^{(1)}(h^+)$ and $p_n^{(2)}(h^+)$ converge, respectively, to  $p_0^{(1)}(h^+)$ and $p_0^{(2)}(h^+)$ at fixed points of hyperbolic elements (cfr. Figure \ref{fig.proof of Anosov limit for par hyp}):
	if we  choose $h_i\in\G$ such that $(h_1^+,h_2^+,h_3^+)$ are in that order inside $(y,z)_{\g_+}$ { and $ (\lim p_n^{(1)}(h_1^+),\lim p_n^{(1)}(h_2^+),\lim p_n^{(1)}(h_3^+))$ are in that order in $[E_1]$}, 
	strict convexity implies  that the tangent $p_n^{(2)}(h_2^+)$ converges to $[E_1]$, which doesn't contain $[E_2]$ - a contradiction. 
\end{proof}

\section{Examples and open questions}\label{s.ex}
In this section we collect several examples illustrating some of the difficulties we have to face and the questions that remain open.
\subsection{All transversality could get lost in the limit}\label{s.7.1}
It is natural to wonder if it is really necessary, for the proof of Theorem \ref{thmINTRO.limits of pos ratio general+proximal}, to assume some transversality in the limit, namely that condition (Tr) holds, or if this is always  the case. We provide here  an example of a sequence of representations $\rho_n:\Z\to\SL(4,\R)$ such that for each $n$, $\rho_n(\gamma)$ is $k$-proximal for all $k$, but $\lim_n\xi_n^1(\gamma^+)<\xi_n^3(\gamma^-)$.
For this aim we choose $\lambda>1$ and consider the matrices 
$$\rho_n(\gamma)=\bpm\lambda+\frac1{n^2}&0&0&0\\ \frac1n&\lambda&0&0\\0&0&\lambda^{-1}&\epsilon_n\\0&0&0&(\lambda+\frac1{n^2})^{-1}\epm.$$
If we set $\epsilon_n=\frac{-n}{\lambda(n^2\lambda+1)}$, then
$$\begin{array}{l}
\xi^1_n(\gamma_+)=[1:n:0:0]\\
\xi^3_n(\gamma_+)=\langle e_1,e_2,e_3\rangle\\
\xi^1_n(\gamma_-)=[0:0:n:1]\\
\xi^3_n(\gamma_-)=\langle e_2,e_3,e_4\rangle.
\end{array}$$
In this example the sequence $\rho_n$ converges to the representation $\rho_0$ with $\rho_0(\gamma)=\left(\begin{smallmatrix}\lambda\Id_2&0\\0&\lambda^{-1}\Id_2\end{smallmatrix}\right)$. Furthermore $\lim_n\xi_n^1(\gamma_+) =[e_2]\in \xi^3_n(\gamma_-)$ and $\lim_n\xi_n^1(\gamma_-) =[e_3]\in \xi^3_n(\gamma_+)$. As a result all transversality is lost in the limit.

We expect that with a similar idea one can produce examples of representations of free groups with the same pathology. This doesn't  give an example of a sequence of positively ratioed representations that doesn't satisfy condition (Tr), still provides evidence that all transversality could get lost in the limit if the limiting representation is not proximal. 

\begin{question}
Is there a converging sequence of positively ratioed representations $\r_n$ such that all transversality is lost in the limits of $\xi^1_{\r_n}$ and $\xi^{d-1}_{\r_n}$?
\end{question}

\subsection{The set of positively ratioed representations is not open}
We provide  examples of positively ratioed representations that can be approximated by non-positively ratioed representations, showing that being positively ratioed is, in general, not an open property.

In the notation of Example \ref{ex.Fuchsian I}, consider the representation $\r:=\tau_{\un d}\circ \r_{hyp}$ where $\un d=(d_1,d_2=d_1-1,d_3,\ldots,d_l)$ such that $d_3<d_1-1$. The irreducible factor $\r_1$ of $\r$ associated to $\R^{d_1}$ is the composition of a hyperbolization with an irreducible representation and thus a Hitchin representation. As $\xi^1_{\r}$ has image in $\P(\R^{d_1})$ and $\R^{d_2}\oplus \ldots \oplus \R^{d_l}\subset \xi^{d-1}_{\r}(x)$ for all $x\in\dg$ (cfr. Lemma \ref{lem.splitting boundary map for reducible reps}), it follows from Remark \ref{rem.cross ratio as projective} that
$$\cro_1(x^1_{\r},y^{d-1}_{\r},z^{d-1}_{\r},w^{1}_{\r})=\cro_1(x^1_{\r_1},y^{d_1-1}_{\r_1},z^{d_1-1}_{\r_1},w^{1}_{\r_1}).$$
Thus $\r$ is 1-positively ratioed, as $\r_1$ is 1--positively ratioed.

It was shown in \cite[Corollary 7.8]{PSW2} that there is a neighbourhood $U\subset \Hom(\G,\PGL(E))$ of $\r$ consisting of $1$--Anosov representations such that for every irreducible representation $\eta\in U$ the curve $\xi^1_{\eta}(\dg)$ is not Lipschitz. If we assume that $\G$ is the fundamental group of a closed surface with high enough genus, then Zariski dense representations are dense in $\Hom(\G,\PGL(E))$ \cite{Kim-Pansu}. In particular $\r$ is the limit of Zariski dense $1$--Anosov representations $\r_n$ where $\xi^1_{\r_n}(\dg)$ is not Lipschitz.
From the next proposition, of independent interest, it follows that such $\r_n$ are not 1--positively ratioed.

\begin{prop}\label{prop:pos>rect}
Let $\r:\G\to \PGL(E)$ be k--positively ratioed and reductive. Then the circle $\xi^k(\dg)\subset\Gr_k(E)$ is a Lipschitz submanifold.
\end{prop}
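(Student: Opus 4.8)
The plan is to reduce to the case $k=1$ by the exterior power trick, and then to exploit the positivity of the cross ratio to produce a uniform bi-Lipschitz control on the boundary map. First I would note that by Proposition~\ref{prop.pos ratio exterior power} and Section~\ref{s.ext}, $\r$ is $k$--positively ratioed and reductive if and only if $\wedge^k\r$ is $1$--positively ratioed and reductive, and the embedding $\wedge^k\colon\Gr_k(E)\to \P(\wedge^kE)$ is a smooth (hence locally bi-Lipschitz onto its image) embedding carrying $\xi^k(\dg)$ to $\xi^1_{\wedge^k\r}(\dg)$; thus it suffices to treat $k=1$. Next, since $\r$ is $1$--Anosov it is in particular reductive with irreducible dominating factor $E_1$, and by Lemma~\ref{lem.splitting boundary map for reducible reps} and Remark~\ref{rem.cross ratio as projective} the cross ratios $\cro_1(x^1,y^{d-1},z^{d-1},w^1)$ computed for $\r$ agree with those computed for the Hitchin-type factor $\eta_1$ acting on $E_1$, with all the relevant subspaces living inside $E_1$. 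So I may assume $\r$ is irreducible, acting on $E$ of dimension $d$.

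The core estimate: fix a background hyperbolic metric on $S$, realize $\dg\simeq \mathbb S^1$ with its visual metric, and fix a metric on $\P(E)$ and on $\Gr_{d-1}(E)$. Using compactness of $\dg^{(3)}$ and cocompactness of the $\G$--action, I would first show there is a constant $C_0>0$ and, for each $x\in\dg$, a choice of four ``test points'' $y_1,y_2,z_1,z_2$ at definite distance from $x$ and from each other such that the projective cross ratio $\cro_1(x^1,y_i^{d-1},z_j^{d-1},\cdot)$, read through the $2$--dimensional quotient $\quotient{E}{(y_i^{d-1}\cap z_j^{d-1})}$ as in Remark~\ref{rem.cross ratio as projective}, is a coordinate chart on a neighbourhood of $x^1$ in $\xi^1(\dg)$ that is bi-Lipschitz with constant $C_0$ onto a neighbourhood in $\R\P^1$. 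Here transversality of the Anosov boundary maps (and its uniform version on compact subsets of $\dg^{(2)}$) guarantees these quotients are well-defined and the charts non-degenerate. Now the positivity hypothesis enters decisively: for $x^-,x^+$ to either side of $x$ on $\dg$, cyclic ordering gives $\cro_1(x_-^1,y^{d-1},z^{d-1},x_+^1)\ge 1$, and the cocycle identities of Proposition~\ref{prop.property of grassmannian cro} let me compare this with the ``angular'' variable on the circle $\xi^1(\dg)$; monotonicity of $\cro_1$ along the positively ratioed curve (this is exactly what $\ge 1$ for cyclic quadruples encodes) forces the image curve, read in the projective chart, to be a monotone graph, hence Lipschitz on that chart with constant $C_0$. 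Since $\dg^{(1)}$ is compact and the construction is $\G$--equivariant, finitely many such charts cover $\xi^1(\dg)$ with a uniform Lipschitz constant, so $\xi^1(\dg)$ is a Lipschitz submanifold of $\Gr_1(E)$.

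The main obstacle I expect is making precise the step ``positivity of $\cro_1$ $\Rightarrow$ Lipschitz graph in the chart'': one must check that in the affine chart afforded by the quotient $\R\P^1$, the positively ratioed condition not only gives monotonicity but a genuine \emph{two-sided} Lipschitz bound, i.e. that the cross ratio cannot degenerate to $0$ or $\infty$ as $x^\pm\to x$ at a rate faster than linear. This is where one needs the uniform transversality on compact sets together with continuity/algebraicity of $\cro_1$ on pairwise transverse quadruples (Lemma~\ref{lem.cross ratio is algebraic}): the cross ratio extends smoothly and is bounded away from $0,\infty$ on the relevant compact family of transverse configurations, and this boundedness, combined with $\cro_1\ge 1$, pins the image curve between two graphs of controlled slope. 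I would phrase this carefully as a single lemma about positive curves into $\P(E)$ and then quote it; the reduction steps above are then routine.
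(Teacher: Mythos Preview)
Your reduction steps are fine and match the paper: pass to $\wedge^k\r$ to reduce to $k=1$, then use Lemma~\ref{lem.splitting boundary map for reducible reps} to reduce to the irreducible factor carrying $\xi^1$.

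The gap is in your core estimate. You claim that a single projection
\[
\P(E)\setminus\P(y^{d-1}\cap z^{d-1})\;\longrightarrow\;\P\!\left(\quotient{E}{y^{d-1}\cap z^{d-1}}\right)\simeq\R\P^1
\]
restricts to a bi-Lipschitz chart on a neighbourhood of $x^1$ in $\xi^1(\dg)$. But this already presupposes what you are trying to prove: for a general continuous curve in $\P(E)$, such a projection need not be bi-Lipschitz onto its image, because the curve can be tangent (or nearly tangent) to the fibres of the projection. Transversality of the \emph{points} $\xi^1(w)$ to the hyperplanes $y^{d-1},z^{d-1}$ says nothing about transversality of the \emph{curve} to the $(d-2)$--dimensional fibres. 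Your compactness argument only controls the cross ratio on transverse quadruples; it does not prevent $\xi^1$ from oscillating wildly inside a single fibre of the projection. So the ``two-sided Lipschitz bound'' you flag as an obstacle is not merely a technicality to be patched---it is the whole content of the proposition, and positivity of $\cro_1$ together with compactness does not supply it for one projection.

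The paper's fix is to use \emph{enough} projections simultaneously so that their product is injective on the ambient projective space. Concretely, one chooses $d$ points $x_1,\ldots,x_d\in\dg$ with $x_1^{d-1},\ldots,x_d^{d-1}$ linearly independent (this is where irreducibility is actually used), and considers
\[
\Pi:\P(E)\setminus\bigcup_{i=1}^d x_i^{d-1}\;\longrightarrow\;\prod_{i=1}^{d-1}\P\!\left(\quotient{E}{x_i^{d-1}\cap x_d^{d-1}}\right).
\]
In suitable coordinates one checks $\Pi$ is smooth and injective, hence a local diffeomorphism and in particular locally bi-Lipschitz on the \emph{ambient} $\P(E)$. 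Positivity of the cross ratio then gives monotonicity of $\Pi\circ\xi^1$ in every factor on each interval of $\dg\setminus\{x_1,\ldots,x_d\}$, so the image is the graph of a monotone (hence Lipschitz) map between products of intervals, and one pulls back through $\Pi$. The key difference from your approach is that bi-Lipschitzness is obtained on the ambient space, not on the unknown curve.
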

\begin{proof}
Applying an exterior power representation we can assume that $\r$ is $1$--positively ratioed (cfr. Section \ref{s.ext}). As $\r$ is reductive, we can split it as $\r=\eta_1\oplus \eta_2 :\G\to \PGL(E_1\oplus E_2)$ such that $\eta_1$ is irreducible and $\xi^1_{\r}(\dg)\subset \P(E_1)$ (Lemma \ref{lem.splitting boundary map for reducible reps}). In this case $\eta_1$ is $1$--Anosov, positively ratioed and $\xi^1_{\eta_1}=\xi^1_{\r}$. In particular we can assume without loss of generality that $\r$ is irreducible.

As Lipschitzness is a local property, it is enough to verify that, given an interval $(x,y)\subset \dg$ the image $\xi^1((x,y))$ is the graph of a Lipschitz function. Choose $d$ pairwise distinct points $x_1,\ldots,x_d\in\dg$ so that the associated linear subspaces $x_1^{d-1},\ldots, x_d^{d-1}$ are linearly independent (which we can by irreducibility of $\r$), and consider the induced projection
$$\Pi:\P(E)\setminus\bigcup_{i=1}^d x_i^{d-1}\to \prod_{i=1}^{d-1} \P\left(\quotient{E}{x_i\cap x_d}\right)
$$
It is easy to verify, by choosing suitable coordinates, that $\Pi$ is injective and smooth. Since the representation $\rho$ is Anosov, the image $\xi^1\left(\dg\setminus\{x_1,\ldots,x_d\}\right)$ is contained in  $\P(E)\setminus\bigcup_{i=1}^d x_i^{d-1}$, and it follows from positivity of the cross ratio that the restriction of $\Pi\circ\xi^1$ to any interval in its domain of definition is monotone on each coordinate (cfr. Remark \ref{rem.cross ratio as projective}). Thus its image is the graph of a Lipschitz function, as desired.
\end{proof}

We have shown in Corollary \ref{cor:Zd} that being positively ratioed is a closed condition among Zariski dense representations. If the condition were also open under this assumption, we would get that positively ratioed representations form connected components of  Zariski dense representations. We believe that this is  not the case.
\begin{question}
Is being positively ratioed an open condition among Zariski dense representations?
\end{question}

\subsection{Limits of $k$-positive representations}\label{s.ex k-pos degen} We discuss here the different cases of Theorem \ref{thm.intro-degenerations} in more detail.

\begin{example}[A reductive $(k-1)$--positive limit of $k$--positive representations]

In the notation of Example \ref{ex.Fuchsian I}, let $\r:=\tau_{(d_1,d_2)}\circ \r_{hyp}$ and  $k$ be maximal with $k<\frac{1}{2}(d_1-d_2)+1$. We assume that $k\geq 2$; $\r$ is $k$--positive (Example \ref{e.Fuchpos}). By definition $\r$ splits as $\eta_1\oplus \eta_2$ with $\eta_i:\G\to \PGL(\R^{d_i})$ in the Hitchin component of $\PGL(\R^{d_i})$. 
We find a continuous path of representations $t\to \r_t$ with $t\in [0,\infty)$ and $\r_0=\r$ such that for all $t$ the representation $\r_t=\eta_1^t\oplus \eta_2^t$ preserves the splitting $\R^{d_1}\oplus \R^{d_2}$ and we find lifts $\tilde{\eta}_1^t\oplus \tilde{\eta}_2^t\in \GL(\R^{d_1}\oplus \R^{d_2})$ of $\r_t$ such that $\tilde{\eta}_1^t$ is constant and $\lambda_1(\tilde{\eta}^t_2(\g))\to \infty$ for some $\g\in \G$. 
As soon as  for some $\g\in\G\backslash\{e\}$ we have $|\lambda_k(\tilde{\eta}_1(\g))|= |\lambda_1(\tilde{\eta}^t_2(\g))|$, $\r_t$ is not $k$--Anosov. In particular we find $t_0>0$ such that $\r_t$ is $k$--positive for $t<t_0$ and $\r_{t_0}$ is only $(k-1)$--positive.
\end{example}

It remains open if there also exist a limit of $k$--positive representations that is not $(k-1)$-Anosov or even not $(k-2)$-Anosov. 
To decide if there are examples of sequences of $k$--positive representations whose limit is not $(k-1)$-Anosov, it is enough to decide if there exists a sequence of representations satisfying $H_1^*$, namely that are 2--positive, whose limit is not 1--Anosov. 

\begin{question}
Is there a sequence of $H^*_1$ representations whose limit is not 1--Anosov?
\end{question}

To replace  $(k-3)$--positive with $(k-2)$--positive in Theorem \ref{thm.intro-degenerations}(1), it remains to control the growth of the $(k-2)$--th eigenvalue gap. We know that a limit $\r$ of $k$--positive representations splits as $\r=\eta_1\oplus \eta_2\to \PGL(E_1\oplus E_2)$, where $\eta_1$ is strongly irreducible, and there are continuous transverse boundary maps $\xi^j,\xi^{d-j}$ with  $\xi^j(\dg)\subset \Gr_j(E_1)$ for all $j=1,\ldots,k-2$. In light of Proposition \ref{lem.dynamics preserving for strongly irred} it would be enough to show that $|\lambda^{\eta_1}_{k-1}(\g)|\geq |\lambda^{\eta_2}_{1}(\g)|$ for all $\g\in \G$. For this one could try to argue as in the proof of Proposition \ref{lem.limit boundary map in the same factor}. However we don't know how to guarantee transversality of a limit $(\g_-)^{d-k+1}_{\r_0}:=\lim (\g_-)^{d-k+1}_{\r_n}$ and any $(h_+)^{k-1}_{\r_0}:=\lim (h_+)^{k-1}_{\r_n}$: In the notation of the proof of Proposition \ref{lem.limit boundary map in the same factor} for $j=k-2$, the missing transversality would mean that the projection of $(h_+)^{k-1}_{\r_0}$ is not defined in $\Gr_2(\quotient{V}{W})$ and therefore $p^2_n(h_+)\to [E_1]$ would not produce a contradiction to $(h_+)^{k-1}_{\r_0}\cap E_2\neq 0$. Thus we can refine the above question.

\begin{question}
Is there a converging sequence of $H^*_1$ representations $\r_n$ such that all transversality is lost in the limits of $\xi^1_{\r_n}$ and $\xi^{d-1}_{\r_n}$?
\end{question}

\bibliography{mybib}
\bibliographystyle{alpha}

\end{document}